\documentclass[11pt]{article}

\usepackage{amsmath,amsthm,amssymb}
\usepackage[usenames,dvipsnames]{xcolor}
\usepackage{enumerate}
\usepackage{graphicx}
\usepackage{cite}
\usepackage{comment}
\usepackage{mathrsfs}
\usepackage{bbm}
\usepackage[margin=1in]{geometry}

\usepackage[colorlinks,
            linkcolor=blue,
            anchorcolor=green,
            citecolor=blue
            ]{hyperref}

\setcounter{tocdepth}{2}

\theoremstyle{plain}
\newtheorem{thm}{Theorem}[section]
\newtheorem{theorem}[thm]{Theorem}
\newtheorem{cor}[thm]{Corollary}
\newtheorem{lemma}[thm]{Lemma}
\newtheorem{prop}[thm]{Proposition}

\newtheorem{definition}[thm]{Definition}

\def\@rst #1 #2other{#1}
\newcommand\MR[1]{\relax\ifhmode\unskip\spacefactor3000 \space\fi
  \MRhref{\expandafter\@rst #1 other}{#1}}
\newcommand{\MRhref}[2]{\href{http://www.ams.org/mathscinet-getitem?mr=#1}{MR#2}}

\theoremstyle{definition}

\newtheorem{remark}[thm]{Remark}

\numberwithin{equation}{section} 

\newcommand{\dsb}{\begin{adjustwidth}{2.5em}{0pt}
\begin{footnotesize}}
\newcommand{\dse}{\end{footnotesize}
\end{adjustwidth}}

\newcommand{\ssb}{\begin{adjustwidth}{2.5em}{0pt}}
\newcommand{\sse}{\end{adjustwidth}}

\newcommand{\aryb}{\begin{eqnarray*}}
\newcommand{\arye}{\end{eqnarray*}}
\def\alb#1\ale{\begin{align*}#1\end{align*}}
\def\allb#1\alle{\begin{align}#1\end{align}}
\newcommand{\eqb}{\begin{equation}}
\newcommand{\eqe}{\end{equation}}
\newcommand{\eqbn}{\begin{equation*}}
\newcommand{\eqen}{\end{equation*}}

\newcommand{\frk}{\mathfrak}

\newcommand\p{\partial}
\newcommand\e{\varepsilon}

\newcommand\R{\mathbb{R}}

\newcommand\norm[1]{\left\Vert#1\right\Vert}

\let\epsilon\varepsilon
\newcommand{\Var}{{\rm Var}}
\newcommand{\len}{\operatorname{len}}

\let\originalleft\left
\let\originalright\right
\renewcommand{\left}{\mathopen{}\mathclose\bgroup\originalleft}
\renewcommand{\right}{\aftergroup\egroup\originalright}

\title{Weak exponential metrics for high-dimensional log-correlated Gaussian fields}
\author{Andres A. Contreras Hip\thanks{University of Chicago}  \qquad \qquad Zijie Zhuang\thanks{University of Pennsylvania}}
 
\begin{document}

\maketitle

\begin{abstract}
\noindent For log-correlated Gaussian fields on $\mathbb{R}^d$ with $d \geq 2$, Ding-Gwynne-Zhuang (2023) established the existence of subsequential limits of exponential metrics obtained from appropriate approximations. For $\gamma \in (0,\sqrt{2d})$, we define a \textit{weak $\gamma$-exponential metric} to be a map $h \mapsto D_h$ that assigns to a sample of a log-correlated Gaussian field $h$ a continuous metric on $\mathbb{R}^d$ satisfying a list of axioms. We prove that every subsequential limit of exponential metrics built from appropriate approximations of $h$ is a weak $\gamma$-exponential metric in this sense. Moreover, we establish general properties that hold for any weak exponential metric: (1).\ sharp moment bounds for several natural distances; (2).\ optimal H\"older exponents when comparing $D_h$ and the Euclidean metric; and (3).\ Hausdorff dimension and a KPZ relation. These results extend the two-dimensional Liouville quantum gravity metric theory to higher dimensions. Along the way we derive several useful properties for log-correlated Gaussian fields including the equivalence between white-noise decomposition and convolution, and a shell independence lemma.

\end{abstract}

\setcounter{tocdepth}{1}
\tableofcontents

\section{Introduction}

\subsection{High-dimensional Liouville first passage percolation}

\textit{Liouville first passage percolation} (LFPP) with parameter $\xi \ge 0$ is a family of continuous metrics on $\mathbb{C}$ obtained by integrating $e^{\xi h_\epsilon}$ along piecewise continuously differentiable paths, where $\{h_\epsilon\}_{\epsilon>0}$ is a continuous approximation of the \textit{Gaussian free field} (GFF) $h$. It was shown in~\cite{DDDF-tightness, gm-existence, dg-supercritical-lfpp, dg-uniqueness} that, after appropriate normalization and as $\epsilon \to 0$, LFPP converges to a limiting metric known as the Liouville quantum gravity (LQG) metric. This metric describes the conjectural scaling limits of random planar maps; see~\cite{sheffield-icm, DDG-ICM} for surveys. Recently, there has been progress toward extending LFPP theory to higher dimensions~\cite{dgz-exponential-metric}. One expects that, after appropriate normalization, the high-dimensional LFPP (defined below) also converges to a limiting metric on $\mathbb{R}^d$.

We now discuss the definition of high-dimensional LFPP. Let $d \ge 2$. A whole-space log-correlated Gaussian field (LGF) $\phi$ is a random generalized function on $\mathbb{R}^d$, defined modulo an additive constant, whose covariance is given by
\begin{equation}\label{eq:lgf-cov}
    \mathrm{Cov}\big[(\phi, f_1), (\phi, f_2)\big] = \int_{\mathbb{R}^d \times \mathbb{R}^d} f_1(x) f_2(y) \log \frac{1}{|x-y|} \, dx \,dy,
\end{equation}
for Schwartz functions $f_1,f_2$ with zero average. See~\cite{lgf-survey, fgf-survey} for background on LGFs. When $d=2$, the LGF coincides with the GFF, whereas for $d \ge 3$ they are distinct. With a slight abuse of notation, throughout this paper we say that a random generalized function $h$ on $\mathbb{R}^d$ is a \textit{whole-space LGF} if it has the same law as a whole-space LGF viewed modulo an additive constant. We specify the additive constant when needed.

Consider a radially symmetric real-valued function $\mathsf K$ on $\mathbb{R}^d$ with $\int_{\mathbb{R}^d} \mathsf K(x) \,dx = 1$ as defined in Remark~\ref{rmk:def-K}. For $\epsilon>0$, set $\mathsf K_\epsilon(x) = \epsilon^{-d}\mathsf K(\epsilon^{-1}x)$, and let $h$ be a whole-space LGF. We consider the mollification
\begin{equation}\label{eq:def-mollification}
    h_\epsilon^*(x) := h * \mathsf K_\epsilon(x) = \int_{\mathbb{R}^d} h(w) \mathsf K_\epsilon(x-w) \,dw, \qquad \epsilon>0,\ x \in \mathbb{R}^d,
\end{equation}
and the associated exponential metric
\begin{equation}\label{eq:def-exponential-metric}
    D_h^\epsilon(x,y) := \inf_{P:x \to y} \int_0^1 e^{\xi h_\epsilon^*(P(t))} |P'(t)| \, dt, \qquad x,y \in \mathbb{R}^d,
\end{equation}
where the infimum is over piecewise continuously differentiable paths $P:[0,1] \to \mathbb{R}^d$ joining $x$ and $y$. Define the normalizing constant
\begin{equation}\label{eq:def-a-epsilon}
    \mathsf a_\epsilon := \mathrm{median}(D_h^\epsilon(0,e_1)),
\end{equation}
where $e_1=(1,0,\ldots,0)$. It was shown in~\cite{dgz-exponential-metric} (combined with Proposition~\ref{kernelbaby}) that there exists $\xi_c \in (0,\infty)$ such that for all $\xi \in (0,\xi_c)$, as $\epsilon \to 0$, the family $\{\mathsf a_\epsilon^{-1} D_h^\epsilon\}_{\epsilon>0}$ is tight in the local uniform topology on $\mathbb{R}^d \times \mathbb{R}^d$, and each subsequential limit induces the Euclidean topology on $\mathbb{R}^d$. The goal of this paper is to formulate a list of axioms defining a weak exponential metric and to prove that every such subsequential limit satisfies these axioms. This is analogous to the role of~\cite{lqg-metric-estimates} in the construction of the two-dimensional LQG metric. We further expect uniqueness of the weak exponential metric up to a multiplicative constant, which would imply convergence in probability; cf.~\cite{gm-existence, dg-uniqueness} for the planar case.

\begin{remark}\label{rmk:def-K}
    Let $\mathfrak K: \mathbb{R}^d \to \mathbb{R}$ be any radially symmetric, compactly supported, smooth function satisfying $\int_{\mathbb{R}^d} \mathfrak K(x)^2 \, dx = 1$. Set
    \[
        \mathsf K=\mathcal F^{-1}\!\left(\,\zeta\mapsto \sqrt{\frac{2\pi^{d/2}}{\Gamma(d/2)}\int_{|\zeta|}^{\infty} t^{d-1}\,|\hat{\mathfrak K}(t)|^{2}\,dt}\right),
    \]
    where $\hat{\mathfrak K}$ denotes the Fourier transform of $\mathfrak K$. We refer to Lemma~\ref{lem:kernel} for properties of $\mathsf K$. In particular, $\mathsf K$ is a radially symmetric, real-valued function with $\int_{\mathbb{R}^d} \mathsf K(x) \,dx = 1$, and
    \[
         \sup_{x \in \mathbb{R}^d} \max \{ |\mathsf K(x)|,  |x|^{2d-1} |\mathsf K(x)|  \}<\infty.
    \]
    When $d$ is even, $\mathsf K$ is Schwartz. Note that $\mathsf K$ is not necessarily positive. As shown in Proposition~\ref{kernelbaby}, convolving $h$ with $\mathsf K_\epsilon$ yields a smooth field with the same law as the white-noise decomposition considered in~\cite{dgz-exponential-metric}. Therefore, applying their Theorem 1.2 yields the tightness of the metrics $\{\mathsf a_\epsilon^{-1} D_h^\epsilon\}_{\epsilon>0}$ as $\epsilon \to 0$. Indeed, we expect that the tightness result should hold for any $\mathsf K$ with sufficiently rapid decay at infinity.
\end{remark}

\subsection{Axioms for weak exponential metrics}\label{subsec:axiom}

We now define weak exponential metrics. The theory may be parameterized by $\xi \in (0,\xi_c)$, or equivalently by $\gamma \in (0,\sqrt{2d})$. The relation between $\gamma$ and $\xi$ is
\begin{equation}\label{eq:def-xi-q}
Q(\xi) = \frac{d}{\gamma} + \frac{\gamma}{2}, \quad \mbox{or equivalently} \quad  \xi = \frac{\gamma}{\mathsf d_\gamma},
\end{equation}
where $Q(\xi)$ is the distance exponent of high-dimensional LFPP as in~\cite[Proposition 1.1]{dgz-exponential-metric}\footnote{Proposition 1.1 there is stated for a fixed kernel $\mathfrak K$ in the white-noise decomposition; however, $Q(\xi)$ and $\xi_c$ are independent of that choice; see \cite[Remark 1.1]{CZ-bound}.} and $\mathsf d_\gamma$ is the Hausdorff dimension of the weak exponential metric (Corollary~\ref{cor:hausdorff}). Except in the case $d=2$, $\gamma=\sqrt{8/3}$, $\xi=1/\sqrt{6}$ (the Brownian map case~\cite{legall-uniqueness, miermont-brownian-map}), we do not know an explicit closed form relating $\gamma$ and $\xi$. We refer to~\cite{CZ-bound} for the fact that $\gamma$ and $\xi$ determine each other uniquely and for quantitative bounds between them. 

We recall some metric space notation.

\begin{definition}
Let $(X,D)$ be a metric space. For a curve $P: [a,b]\to X$, the $D$-length of $P$ is
\[
\mathrm{len}(P;D) := \sup_T \sum_{i=1}^n D\big(P(a_i), P(a_{i+1})\big),
\]
where the supremum is over all partitions $a=a_1<a_2<\cdots<a_{n+1}=b$. For $Y\subset X$, the \textit{internal metric} of $D$ on $Y$ is
\[
D(z,w;Y) := \inf_{P:z\to w} \mathrm{len}(P;D) \qquad \mbox{for all } z,w\in Y,
\]
where the infimum is over all paths $P$ in $Y$ joining $z$ and $w$. We say $(X,D)$ is a \textit{length space} if for any $z,w\in X$ and $\epsilon>0$ there exists a path from $z$ to $w$ whose $D$-length is at most $D(z,w)+\epsilon$. If a metric $D$ on an open set $U\subset\mathbb{R}^d$ induces the Euclidean topology, we say $D$ is a \textit{continuous metric}.
\end{definition}

We say a random generalized function $h$ on $\mathbb{R}^d$ is a \textit{whole-space LGF plus a continuous/bounded continuous function} if one can couple $h$ with a random continuous/bounded continuous function $f:\mathbb{R}^d\to\mathbb{R}$ such that the law of $h-f$ is that of a whole-space LGF. Let $\{h_r(z)\}_{r>0,z\in\mathbb{R}^d}$ denote the spherical average process of $h$, which admits a continuous modification by Kolmogorov’s continuity theorem (see, e.g., Section 11 of~\cite{fgf-survey}). In particular, a whole-space LGF $h$, with additive constant chosen so that $h_1(0)=0$, can be viewed as a whole-space LGF plus a bounded continuous function.

Let $\mathcal{D}'(\mathbb{R}^d)$ be the space of generalized functions on $\mathbb{R}^d$ endowed with the weak topology. For $\gamma \in (0, \sqrt{2d})$, a \textit{weak $\gamma$-exponential metric} is a measurable function $h \mapsto D_h$ from $\mathcal{D}'(\mathbb{R}^d)$ to the space of continuous metrics on $\mathbb{R}^d$ if the following holds whenever $h$ is a whole-space LGF plus a continuous function:

\begin{enumerate}[I]
\item \textbf{Length space.}\label{axiom-length} Almost surely, $(\mathbb{R}^d,D_h)$ is a length space.

\item \textbf{Locality.}\label{axiom-local} For every deterministic open set $U\subset\mathbb{R}^d$, the internal metric of $D_h$ on $U$ is measurable with respect to $h|_U$.\footnote{Suppose $h$ is a whole-space LGF plus a continuous function. For an open set $U$, the $\sigma$-algebra generated by $h|_U$ is the Borel $\sigma$-algebra generated by the random variables $(h,\phi)$ with $\phi$ smooth and compactly supported in $U$. For a closed set $K$, the $\sigma$-algebra generated by $h|_K$ is defined as $\bigcap_{\epsilon>0}\sigma(h|_{B_\epsilon(K)})$, where $B_\epsilon(K)$ is the $\epsilon$-neighborhood of $K$.}

\item \textbf{Weyl scaling.}\label{axiom-weyl} For each continuous $f\colon\mathbb{R}^d\to\mathbb{R}$, define
\begin{equation}\label{eq:weyl}
(e^{\xi f}\cdot D_h)(z,w) := \inf_{P:z\to w} \int_0^{\mathrm{len}(P;D_h)} e^{\xi f(P(t))} \,dt \qquad \mbox{for } z,w\in\mathbb{R}^d,
\end{equation}
where the infimum is over all paths from $z$ to $w$ parametrized by $D_h$-length. Then almost surely $D_{h+f} = e^{\xi f}\cdot D_h$ simultaneously for all continuous $f$.

\item \textbf{Translation invariance.}\label{axiom-translation} For each $z\in\mathbb{R}^d$ and all $x,y\in\mathbb{R}^d$, almost surely
\[
D_{h(\cdot+z)}(x,y) = D_h(x+z,y+z).
\]

\item \textbf{Tightness across scales.}\label{axiom-tight} There exist deterministic constants $\{\mathfrak c_r\}_{r>0}$ such that for any $R>0$,
the laws of the metrics
\[
\big\{\mathfrak c_r^{-1} e^{-\xi h_r(0)} D_h(r\cdot,r\cdot)\big\}_{0<r<R}
\]
are tight with respect to the local uniform topology on $\mathbb{R}^d \times \mathbb{R}^d$, and every subsequential limit is a continuous metric on $\mathbb{R}^d$. Moreover, when $h$ is a whole-space LGF plus a bounded continuous function, the same conclusion holds with $R = \infty$.
\end{enumerate}
We do not specify $\mathfrak c_r$ in Axiom~\ref{axiom-tight}, since Theorem~\ref{thm:sharp-c} shows one may take $\mathfrak c_r = r^{\xi Q}$.

Our first main result is that any subsequential limit of $\{\mathsf a_\epsilon^{-1} D_h^\epsilon\}_{\epsilon>0}$ as $\epsilon \to 0$ satisfies these axioms.

\begin{theorem}\label{thm:axiom}
Let $\gamma\in(0,\sqrt{2d})$. For every sequence $\epsilon_n \to 0$ there exists a subsequence $\mathcal E$ and a weak $\gamma$-exponential metric $D_h$ such that for any $h$ which is a whole-space log-correlated Gaussian field plus a bounded continuous function, the rescaled exponential metrics $\mathsf a_{\epsilon}^{-1} D_h^{\epsilon}$ converge in probability to $D_h$ along $\mathcal E$.
\end{theorem}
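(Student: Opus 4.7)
The plan is to adapt the axiomatic framework developed for the two-dimensional LQG metric to the high-dimensional LGF setting, using the analytic tools from earlier in this paper. The starting point is Proposition~\ref{kernelbaby}, which identifies the mollified field $h_\epsilon^*$ with the white-noise approximation of~\cite{dgz-exponential-metric}; hence Theorem~1.2 of that paper gives tightness of $\{\mathsf a_\epsilon^{-1} D_h^\epsilon\}_{\epsilon>0}$ in the local uniform topology, with every subsequential limit a continuous metric on $\mathbb{R}^d$. Given $\epsilon_n \to 0$ I extract a subsequence $\mathcal E$ along which the joint law of $(h, \mathsf a_\epsilon^{-1} D_h^\epsilon)$ converges, and pass to a Skorokhod coupling where the convergence is almost sure; call the limit $\tilde D_h$. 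The task then becomes verifying the five axioms on this coupling and upgrading the subsequential distributional convergence to convergence in probability.

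Axioms~\ref{axiom-length}, \ref{axiom-weyl}, \ref{axiom-translation} and \ref{axiom-tight} follow essentially from the prelimit definitions. The length space property is inherited by any local uniform limit of length metrics that induces the Euclidean topology. Weyl scaling follows from the exact prelimit identity $D_{h+f}^\epsilon = e^{\xi (f * \mathsf K_\epsilon)} \cdot D_h^\epsilon$ together with $f * \mathsf K_\epsilon \to f$ locally uniformly for continuous $f$. Translation invariance is immediate from the same property of the LGF law and of the convolution $h \mapsto h_\epsilon^*$. For tightness across scales I use the exact scale invariance $h(r \cdot) - h_r(0) \eqD h$ (modulo additive constant) combined with the change of variables $D_{h(r \cdot)}^\epsilon(x,y) = r^{-1} D_h^{r\epsilon}(rx, ry)$ obtained by substitution in the path integral; these identities translate the tightness of $\{\mathsf a_\epsilon^{-1} D_h^\epsilon\}$ at a single scale into the tightness of $\{\mathfrak c_r^{-1} e^{-\xi h_r(0)} D_h(r\cdot, r\cdot)\}_{r \in (0, R)}$ for any fixed $R < \infty$, and the case $R = \infty$ when $h$ is a whole-space LGF plus a bounded continuous function reduces to this using Gaussian tail bounds on $\sup_{r>0} |h_r(0)|$ together with Weyl scaling.

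The heart of the proof is locality, Axiom~\ref{axiom-local}. Since the mollifier $\mathsf K$ is not compactly supported and high-dimensional LGF has no clean domain Markov property, I cannot directly write the internal metric of $D_h^\epsilon$ on $U$ as a functional of $h|_U$. Instead I use the white-noise/convolution equivalence together with the shell independence lemma advertised in the abstract to decompose $h = h^{\mathrm{in}} + h^{\mathrm{out}}$ relative to a pair of nested neighborhoods of a deterministic compact $K \Subset U$, in such a way that $h^{\mathrm{in}}$ is measurable with respect to $h|_U$ and $h^{\mathrm{out}}$ is H\"older continuous on a neighborhood of $K$ with good tail control. Absorbing $h^{\mathrm{out}}$ into $D_h^\epsilon$ via Weyl scaling shows that the internal metric of $D_h^\epsilon$ on $K$ is an $h|_U$-measurable functional up to a multiplicative factor which becomes negligible as the inner neighborhood expands to fill $U$. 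Passing to the limit along $\mathcal E$ and exhausting $U$ by compacts identifies the internal metric of $\tilde D_h$ on $U$ as $h|_U$-measurable, establishing the axiom and, in particular, showing that $\tilde D_h$ is a measurable functional of $h$, which I denote $D_h$. A joint law limit whose limit is a deterministic function of $h$ upgrades automatically to convergence in probability, yielding the final conclusion of the theorem.

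Finally, to extend $D_h$ from pure LGF to LGF plus a continuous function I set $D_{h+f} := e^{\xi f} \cdot D_h$; Weyl scaling makes this consistent with the original definition, and Borel measurability of the resulting map $\mathcal D'(\mathbb R^d) \to$ (continuous metrics) follows by a monotone class argument based on the spherical averages that generate the LGF sigma-algebra. The main obstacle is locality: in the absence of a genuine Markov property, I must extract it from the shell independence lemma and carefully control how the long-range tail of the mollifier kernel is absorbed through Weyl scaling, so that the passage to the subsequential limit is not spoiled by the non-compactly-supported nature of $\mathsf K$.
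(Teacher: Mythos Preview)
Your locality argument has a genuine gap, and it stems from a false premise. The high-dimensional LGF \emph{does} have a clean domain Markov property: for any allowable open $V$, one can write $h = \mathfrak h + \mathring h$ where $\mathfrak h$ is the $d/2$-harmonic extension of $h|_{\mathbb R^d\setminus V}$ and $\mathring h$ is an independent LGF on $V$ (Lemmas~\ref{lem:markov} and~\ref{lem:markov-whole}). The shell independence lemma, by contrast, is a statement about approximate independence of \emph{events} measurable with respect to the field in disjoint concentric shells; it does not produce a decomposition $h = h^{\mathrm{in}} + h^{\mathrm{out}}$ of the kind you describe, and no such decomposition with $h^{\mathrm{in}}\in\sigma(h|_U)$ and $h^{\mathrm{out}}$ continuous on $K$ is available. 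So the mechanism you propose for Axiom~\ref{axiom-local} does not exist.

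The paper's route is substantially different and more involved. First, the domain Markov property is used to show that any subsequential limit $\tilde D_h$ is a \emph{$\xi$-additive local metric} in the sense of Definition~\ref{def:local-metric}, i.e.\ internal metrics on disjoint open sets are conditionally independent given the field (Lemma~\ref{lem:locality}). This is weaker than Axiom~\ref{axiom-local} and does not by itself make $\tilde D_h$ a function of $h$. One then samples two conditionally i.i.d.\ copies of $\tilde D_h$ given $h$, uses tightness across scales plus the shell independence lemma (this is where it actually enters) to prove they are a.s.\ bi-Lipschitz equivalent (Proposition~\ref{prop:bi-lipschitz}), and finally applies an Efron--Stein argument (Proposition~\ref{prop:measurable}) to upgrade bi-Lipschitz equivalence to a.s.\ equality. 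Only after measurability is established does the paper verify Axiom~\ref{axiom-local}, using the truncated mollifier $\bar h_\epsilon^*$ of Section~\ref{subsec:localize} (which depends only on $h$ in a $\sqrt\epsilon$-neighborhood) together with the now-available a.s.\ convergence along a subsequence. Your sketch collapses this entire chain into a single step that the tools you name cannot support. A smaller issue: the length-space property is not automatically inherited by local-uniform limits; one needs the crossing estimate of Lemma~\ref{lem:cross-estimate} to keep near-geodesics from escaping to infinity before applying Arzel\`a--Ascoli.
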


We expect that for $\gamma\in(0,\sqrt{2d})$ the weak exponential metric is unique up to a multiplicative constant, and that $\mathsf a_\epsilon^{-1} D_h^\epsilon$ converges in probability to a limiting metric as $\epsilon \to 0$. In two dimensions this was proved in~\cite{gm-existence}. We hope to adapt the arguments of~\cite{gm-existence, dg-uniqueness} to establish uniqueness and full convergence in higher dimensions in future work.

\subsection{Properties of weak exponential metrics}\label{subsec:properties}

We now derive properties of weak exponential metrics. Fix $\gamma \in (0,\sqrt{2d})$ and recall $\xi$, $Q$, and $\mathsf d_\gamma$ from~\eqref{eq:def-xi-q}. Let $D_h$ be a weak $\gamma$-exponential metric, and let $h$ be a whole-space LGF with the additive constant chosen so that $h_1(0) = 0$.

The next theorem records moment bounds for several types of distances, extending the two-dimensional results in~\cite[Theorems 1.8--1.11]{lqg-metric-estimates}.

\begin{theorem}\label{thm:moments}
Let $U\subset \mathbb{R}^d$ be connected and open. Then:
\begin{enumerate}
\item\label{moment-diam} \textbf{Diameters.} For any connected compact set $K \subset U$ that is not a singleton,
\[
\mathbb{E}\left[\left(\sup_{z,w \in K} D_h(z,w;U)\right)^p\right] < \infty \qquad \mbox{for all } p \in \left(-\infty,\frac{2 d \mathsf d_\gamma}{\gamma^2}\right).
\]

\item\label{moment-set-set} \textbf{Set-to-set distance.} For any two disjoint connected compact sets $K_1,K_2 \subset U$ that are not singletons,
\[
\mathbb{E}\big[(D_h(K_1,K_2;U))^p\big] < \infty \qquad \mbox{for all } p \in \mathbb{R}.
\]

\item\label{moment-point-set} \textbf{Point-to-set distance.} For any connected compact set $K \subset U$ that is not a singleton and any $x \in U \setminus K$,
\begin{equation}\label{eq:thm1.3-3}
\mathbb{E}\left[\big(D_h(x,K;U)\big)^p\right] < \infty \qquad \mbox{for all } p \in \left(-\infty,\frac{2 Q \mathsf d_\gamma}{\gamma}\right).
\end{equation}

\item\label{moment-point-point} \textbf{Point-to-point distance.} For any distinct $x,y \in U$,
\begin{equation}\label{eq:thm1.3-4}
\mathbb{E}\left[\big(D_h(x,y;U)\big)^p\right] < \infty \qquad \mbox{for all } p \in \left(-\infty,\frac{2 Q \mathsf d_\gamma}{\gamma}\right).
\end{equation}
\end{enumerate}
\end{theorem}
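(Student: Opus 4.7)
The plan is to deduce all four moment bounds directly from Axioms (I)--(V), with the tightness across scales (Axiom~\ref{axiom-tight}) and Weyl scaling (Axiom~\ref{axiom-weyl}) doing the bulk of the work. I would first handle the negative moments uniformly across all four statements. Every distance appearing in the theorem dominates the $D_h$-distance across some Euclidean annulus $A_{r,z_0} := B_{2r}(z_0)\setminus B_r(z_0)\subset U$ chosen to separate the relevant points or sets. Applying Weyl scaling, translation invariance, and Axiom~\ref{axiom-tight},
\[
D_h(\partial B_r(z_0),\partial B_{2r}(z_0);A_{r,z_0}) = \mathfrak c_r e^{\xi h_r(z_0)} X_r,
\]
where $X_r > 0$ is a tight random variable (positivity uses that every subsequential limit in Axiom~\ref{axiom-tight} is a continuous metric on $\mathbb{R}^d$). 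Iterating over $\asymp \log(1/\delta)$ disjoint concentric annuli and invoking the shell-independence lemma for LGFs (advertised in the abstract) to decouple the crossings yields $P(D_h(\cdot,\cdot;U) < \delta) \leq \delta^c$ for every $c>0$, hence all negative moments at once.

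For the positive moments of the set-to-set distance I would fix a finite-length smooth curve $\Gamma\subset U$ joining $K_1$ and $K_2$, cover it by $O(1)$ balls $B_{r_0}(z_j)$ at a fixed scale $r_0>0$, and bound
\[
D_h(K_1,K_2;U) \leq \operatorname{len}(\Gamma;D_h) \leq \sum_j \sup_{z,w\in B_{r_0}(z_j)} D_h(z,w;B_{2r_0}(z_j)).
\]
By Axiom~\ref{axiom-tight}, translation invariance and Weyl scaling, each summand is at most $\mathfrak c_{r_0} e^{\xi h_{r_0}(z_j)} Y_j$ with $Y_j$ tight; since $r_0$ is fixed, $e^{p\xi h_{r_0}(z_j)}$ lies in every $L^q$, and together with a bootstrap giving all positive moments of $Y_j$ (see below) one obtains all positive moments of the sum.

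The two sharp-exponent cases need finer covering. For the diameter, cover $K$ by $N_r\asymp r^{-d}$ balls $B_r(z_i)$, chain them along $K$, and bound
\[
\sup_{z,w\in K} D_h(z,w;U) \leq C \mathfrak c_r \sum_i e^{\xi h_r(z_i)} Y_i.
\]
A Gaussian-multiplicative-chaos-style moment computation (sharp for $p\in(0, 2d/\gamma^2)$ via Kahane-type inequalities for the underlying LGF) combined with $\mathfrak c_r = r^{\xi Q}$ (Theorem~\ref{thm:sharp-c}) and $\xi = \gamma/\mathsf d_\gamma$ yields the sharp threshold $p<2d\mathsf d_\gamma/\gamma^2$. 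For point-to-set (and hence point-to-point via an identical chain from $y$) I would instead chain through dyadic Euclidean annuli $A_k := B_{2^{-k}}(x)\setminus B_{2^{-k-1}}(x)$ down to the Euclidean scale $\operatorname{dist}(x,K)$, obtaining
\[
D_h(x,K;U) \leq C \sum_k \mathfrak c_{2^{-k}} e^{\xi h_{2^{-k}}(x)} Y_k,
\]
whose $p$-th moment is $\lesssim \sum_k 2^{-k(\xi Q p - p^2\xi^2/2)}$ and converges precisely when $p< 2Q/\xi = 2Q\mathsf d_\gamma/\gamma$. The anchoring at a single point $x$ (as opposed to $r^{-d}$ points in the diameter case) is what shifts the threshold from $2d\mathsf d_\gamma/\gamma^2$ to the larger $2Q\mathsf d_\gamma/\gamma$.

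The main obstacle is the diameter estimate: achieving the sharp exponent $2d\mathsf d_\gamma/\gamma^2$ requires quantitative control of the joint distribution of the $Y_i$ across Euclidean balls whose associated fields are correlated, together with a bootstrap that starts from the very weak moment bound supplied by Axiom~\ref{axiom-tight} and closes on the sharp GMC-type threshold. The shell-independence lemma for LGFs should be the key tool reducing this correlated covering problem to an almost-independent one, mirroring the handling of the analogous two-dimensional case in~\cite{lqg-metric-estimates}.
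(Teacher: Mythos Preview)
Your overall architecture---multiscale chaining, tightness across scales plus Weyl scaling to produce factors $\mathfrak c_r e^{\xi h_r(z)}Y$, shell independence for decorrelation---matches the paper's, and your treatment of negative moments and of items~\ref{moment-point-set}--\ref{moment-point-point} (dyadic annuli around the marked point, series converging for $p<2Q/\xi$) is essentially the argument in Propositions~\ref{thing1}--\ref{thing2} combined with the exponential-integral estimate in Appendix~A.

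There is, however, a genuine gap in your route to item~\ref{moment-set-set}. Your fixed-scale covering gives $D_h(K_1,K_2;U)\le\sum_{j=1}^N \mathfrak c_{r_0}e^{\xi h_{r_0}(z_j)}Y_j$ with $N$ bounded, so to get the $p$-th moment you need $\mathbb E[Y_j^p]<\infty$. But the ``bootstrap'' you point to is the diameter bound, which only furnishes moments of $Y_j$ for $p<2d\mathsf d_\gamma/\gamma^2$; it cannot give all $p\in\mathbb R$. The paper's proof (Proposition~\ref{prop:superpolynomial-cross}) does \emph{not} go through the diameter. It covers at scales $r\in[\epsilon^2\mathbbm r,\epsilon\mathbbm r]$ with $\epsilon\to 0$ as $A\to\infty$, uses shell independence (Lemma~\ref{lem:shell-independence}) to guarantee that around every lattice point some shell satisfies the regularity event~\eqref{eq:def-sec4.2-ErC} with probability $1-O(\epsilon^M)$ for arbitrary $M$, and couples $\epsilon$ to $A$ so that the resulting tail is superpolynomial in $A$. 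Crucially this establishes Propositions~\ref{prop:superpolynomial-cross} and~\ref{prop:superpolynomial-around} \emph{before} any diameter estimate.

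Your plan for item~\ref{moment-diam} also differs from the paper's and has a circularity. Writing the diameter as $\sum_i \mathfrak c_r e^{\xi h_r(z_i)}Y_i$ over $r^{-d}$ balls requires moments of the $Y_i$, which is the very quantity you are trying to bound. The paper (Proposition~\ref{squarebound}) instead proves a tail estimate: Lemma~\ref{circavgdiff2} controls $\sup_w|h_{2^{-n}\mathbbm r}(w)-h_{\mathbbm r}(0)|\le\log(C2^{qn})$ for all $n$ with probability $1-C^{-q-\sqrt{q^2-2d}+o(1)}$; on this event one builds paths using the already-proved superpolynomial around-distance bound (Proposition~\ref{prop:superpolynomial-around}), summing a geometric series in $n$ since $q<Q$. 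Sending $q\to Q$ gives the tail exponent $\xi^{-1}(Q+\sqrt{Q^2-2d})=2d\mathsf d_\gamma/\gamma^2$. So the dependency order is reversed from yours: set-to-set/around concentration first (via shell independence alone), diameter second (using the former as a black box).
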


\begin{remark}
The exponents in \ref{moment-diam}, \ref{moment-point-set}, and \ref{moment-point-point} are expected to be sharp. For the set-to-set distance in \ref{moment-set-set}, we in fact obtain a superpolynomial concentration bound; see Proposition~\ref{prop:superpolynomial-cross}. Analogous moment estimates with modified exponents hold in the presence of marked-point singularities. For example, if we add an $\alpha$-singularity $\alpha \log \tfrac{1}{|\cdot-x|}$ at marked points $x$ with $\alpha<Q$, then \eqref{eq:thm1.3-3} and \eqref{eq:thm1.3-4} hold with $p < \tfrac{2 (Q-\alpha)\mathsf d_\gamma}{\gamma}$, which should be optimal.
\end{remark}

Recall $\mathfrak c_r$ from Axiom~\ref{axiom-tight}. We can in fact take $\mathfrak c_r = r^{\xi Q}$, extending the planar case~\cite[Theorem 1.9]{dg-constant}. If full scaling invariance
\[
D_h(rx,ry) = D_{h(r\cdot)+Q\log r}(x,y), \qquad r>0, \quad x,y \in \mathbb{R}^d,
\]
were known for weak exponential metrics, Theorem~\ref{thm:sharp-c} would be immediate. At present we only assume tightness across scales.

\begin{theorem}\label{thm:sharp-c}
The constants in Axiom~\ref{axiom-tight} may be chosen as $\mathfrak c_r = r^{\xi Q}$. Equivalently, for any weak exponential metric $D_h$ and any $h$ that is a whole-space LGF plus a bounded continuous function, the laws of
\[
\big\{ r^{-\xi Q} e^{-\xi h_r(0)} D_h(r\cdot,r\cdot) \big\}_{0 < r < \infty}
\]
are tight with respect to the local uniform topology on $\mathbb{R}^d \times \mathbb{R}^d$, and every subsequential limit is a continuous metric on $\mathbb{R}^d$.
\end{theorem}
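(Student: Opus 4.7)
The plan is to upgrade the abstract tightness constants $\mathfrak c_r$ from Axiom~\ref{axiom-tight} to the explicit power $r^{\xi Q}$. Because Axiom~\ref{axiom-tight} determines $\mathfrak c_r$ only up to a bounded multiplicative factor, I may redefine $\mathfrak c_r := \mathrm{median}(e^{-\xi h_r(0)} D_h(0, \partial B_r(0)))$, which is a valid choice by Axiom~\ref{axiom-tight} applied at scale $r$. The task then reduces to proving $\log \mathfrak c_r = \xi Q \log r + O(1)$, uniformly over $r \in (0, R)$ in general and over $r \in (0, \infty)$ when $h$ is a whole-space LGF plus a bounded continuous function.

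The first step is to establish an approximate multiplicativity $\mathfrak c_{r_1 r_2} \asymp \mathfrak c_{r_1} \cdot \mathfrak c_{r_2}$ for $r_1, r_2$ in any compact subset of $(0, \infty)$. The central input is the scale invariance of the LGF covariance $\log(1/|x-y|)$: if $h$ is a whole-space LGF, then $\tilde h(\cdot) := h(r_1\cdot) - h_{r_1}(0)$ is again a whole-space LGF normalized so that $\tilde h_1(0) = 0$, hence $\tilde h \stackrel{d}{=} h$. Combining this with Weyl scaling (Axiom~\ref{axiom-weyl}), translation invariance (Axiom~\ref{axiom-translation}), locality (Axiom~\ref{axiom-local}), and the shell independence lemma promised in the abstract, one decomposes a crossing of the Euclidean annulus between radii $r_1$ and $r_1 r_2$ into two near-independent portions matching scales $r_1$ and $r_2$, each contributing in the correct normalization. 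A Fekete-type argument applied to $\log \mathfrak c_r$ then yields $\log \mathfrak c_r = \alpha \log r + O(1)$ for some real exponent $\alpha$.

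The main step is to identify $\alpha = \xi Q$, using the sharp moment bound from Theorem~\ref{thm:moments}(\ref{moment-point-point}): $\mathbb{E}[D_h(0, re_1)^p] < \infty$ precisely for $p < p_c := 2Q\mathsf d_\gamma/\gamma$. Writing $D_h(0, re_1) \stackrel{d}{=} \mathfrak c_r \, e^{\xi h_r(0)} X_r$ with $X_r$ tight by Axiom~\ref{axiom-tight}, and using that $h_r(0)$ is Gaussian with variance $|\log r| + O(1)$ so that $\mathbb{E}[e^{p \xi h_r(0)}] \asymp r^{-p^2 \xi^2/2}$, the heuristic moment scaling $\mathbb{E}[D_h(0, re_1)^p] \asymp r^{\alpha p - p^2 \xi^2/2}$ holds modulo uniform moment control on $X_r$. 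Requiring finiteness as $p \to p_c^{-}$, together with the identity $\xi = \gamma/\mathsf d_\gamma$, pins down $\alpha = \xi Q$ (since at $p_c$ the exponent $\alpha p - p^2\xi^2/2$ must just touch zero). The extension of tightness from $r \in (0, R)$ to $r \in (0, \infty)$ under the bounded-continuous-perturbation assumption is a direct consequence of the multiplicativity, since the decomposition works at arbitrarily large scales.

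The main obstacle is the identification $\alpha = \xi Q$: Axiom~\ref{axiom-tight} provides bounded-in-probability control of $X_r$, not the uniform moment control required to cleanly execute the moment-matching argument for $p$ approaching the critical $p_c$. Bridging this gap requires Gaussian multiplicative chaos-type estimates that exploit the explicit Gaussian structure of $h_r(0)$ together with Cameron--Martin shifts against Weyl scaling, promoting scale-by-scale tightness to the uniform moment control needed near $p_c$. The shell independence lemma will play a crucial role in decoupling these estimates across scales, both in establishing the multiplicativity and in the moment upgrade.
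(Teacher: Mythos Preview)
Your proposal has two genuine gaps.

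\textbf{Multiplicativity.} You cannot obtain $\mathfrak c_{r_1 r_2} \asymp \mathfrak c_{r_1} \mathfrak c_{r_2}$ from scale invariance of the LGF together with Weyl scaling alone. The relation $h(r\cdot) - h_r(0) \stackrel{d}{=} h$ concerns the field, but the axioms for a weak $\gamma$-exponential metric include \emph{no} scaling axiom linking $D_{h(r\cdot)}$ to $D_h(r\cdot, r\cdot)$. The paper explicitly flags this just before stating Theorem~\ref{thm:sharp-c}: if full scaling invariance of $D_h$ were known, the theorem would be immediate. With only Axiom~\ref{axiom-tight} in hand, your annulus-crossing decomposition has nothing to match the pieces against at the second scale.

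\textbf{Identification of $\alpha$.} The argument via Theorem~\ref{thm:moments}(\ref{moment-point-point}) is circular: all the moment bounds in Section~\ref{sec:moment} (Propositions~\ref{prop:superpolynomial-cross}, \ref{compactbound}, etc.) already invoke Theorem~\ref{thm:sharp-c} to set $\mathfrak c_r = r^{\xi Q}$. And even granting the moment threshold, finiteness of $\mathbb{E}[D_h(x,y)^p]$ for \emph{fixed} $x,y$ and $p < p_c$ constrains nothing about $\alpha$; your ``just touch zero'' criterion would require both sharpness of the threshold (infinite moments above $p_c$, only conjectured) and a uniform-in-$r$ moment statement that Theorem~\ref{thm:moments} does not assert.

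The paper's route resolves both problems by bringing in LFPP, which \emph{does} scale exactly: $D_{h^r}^{\epsilon/r}(u,v) = r^{-1} e^{-\xi h_r(0)} D_h^\epsilon(ru, rv)$ with $h^r \stackrel{d}{=} h$. Proposition~\ref{prop:compare-lqg-lfpp} shows $\mathsf a_\epsilon^{-1} D_h^\epsilon$ and $D_h$ are up-to-constants comparable, via shell independence and a bootstrap on the ratios $r\mathsf a_{\epsilon/r}/(\mathfrak c_r\mathsf a_\epsilon)$ (Lemma~\ref{lem:good-ratios}). This transfers exact LFPP scaling to $\mathfrak c_r$, giving $\mathfrak c_r \mathfrak c_s \asymp \mathfrak c_{rs}$ and hence $\mathfrak c_r \asymp r^\alpha$. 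The value $\alpha = \xi Q$ is then read off directly from $\mathsf a_\epsilon = \epsilon^{1-\xi Q + o(1)}$, which is precisely how $Q$ is defined in~\eqref{eq:def-xi-q}.
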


We next identify the optimal H\"older exponents comparing $D_h$ with the Euclidean metric, in the spirit of~\cite[Theorem 1.7]{lqg-metric-estimates}.

\begin{theorem}\label{thm:holder-continuous}
Let $\gamma \in (0,\sqrt{2d})$ and let $D_h$ be a weak $\gamma$-exponential metric.
\begin{enumerate}
\item The identity map $(\mathbb{R}^d,|\cdot|)\to(\mathbb{R}^d,D_h)$ is almost surely locally H\"older continuous with any exponent smaller than $\xi(Q-\sqrt{2d})$, and almost surely not locally H\"older continuous with any exponent larger than $\xi(Q-\sqrt{2d})$. Note that $Q=\tfrac{d}{\gamma}+\tfrac{\gamma}{2}>\sqrt{2d}$.

\item The identity map $(\mathbb{R}^d,D_h)\to(\mathbb{R}^d,|\cdot|)$ is almost surely locally H\"older continuous with any exponent smaller than $\xi^{-1}(Q+\sqrt{2d})^{-1}$, and almost surely not locally H\"older continuous with any exponent larger than $\xi^{-1}(Q+\sqrt{2d})^{-1}$.
\end{enumerate}
\end{theorem}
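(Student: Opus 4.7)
The plan is to prove matching H\"older bounds in both directions by combining Axiom~\ref{axiom-tight} (tightness across scales, normalized by $\mathfrak c_r = r^{\xi Q}$ thanks to Theorem~\ref{thm:sharp-c}), the moment bounds of Theorem~\ref{thm:moments}, and classical Gaussian tail/concentration estimates for the spherical average process $\{h_r(z)\}$, which has variance $\log(1/r) + O(1)$. All four bounds reduce to controlling appropriate $D_h$-distances at a dyadic sequence of scales $r_k = 2^{-k}$ along an $r_k$-net of a fixed compact set, followed by a Borel--Cantelli summation over $k$.

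For the positive direction of Part~1 the key ingredients are: (i) the diameter moment bound Theorem~\ref{thm:moments}\ref{moment-diam}, promoted via Axiom~\ref{axiom-tight} and the shell-independence lemma advertised in the abstract to a uniform-in-$r$ upper tail for $r^{-\xi Q} e^{-\xi h_r(z)}\operatorname{diam}_{D_h}(B_r(z))$; and (ii) the Gaussian bound $P(h_r(z) > a\log(1/r)) \lesssim r^{a^2/2}$ combined with a union bound over an $r_k$-net of $[-1,1]^d$. Given $\chi < \xi(Q-\sqrt{2d})$, the threshold $a = Q - \chi/\xi$ satisfies $a > \sqrt{2d}$, so the probability of $\operatorname{diam}_{D_h}(B_{r_k}(z_i)) \geq r_k^\chi$ at some grid point is at most $r_k^{a^2/2 - d - o(1)}$, which is summable in $k$. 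A standard chaining then yields local $\chi$-H\"older continuity of the identity map $(\mathbb{R}^d,|\cdot|) \to (\mathbb{R}^d, D_h)$. The positive direction of Part~2 is dual: the superpolynomial lower tail on the crossing distance $D_h(\partial B_r(z),\partial B_{2r}(z))$ from Theorem~\ref{thm:moments}\ref{moment-set-set} combined with $P(h_r(z) < -a\log(1/r)) \lesssim r^{a^2/2}$ yields $D_h(\partial B_{r_k}(z_i),\partial B_{2r_k}(z_i)) \geq r_k^{\xi(Q+\sqrt{2d})+\xi\epsilon}$ uniformly in $i$; any $D_h$-path of length less than this cannot cross an annulus, delivering the reverse H\"older exponent $\xi^{-1}(Q+\sqrt{2d})^{-1}$.

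For the sharpness in both parts, I use existence of thick and thin points. A standard second-moment / Borel--Cantelli argument over $r_k$-nets shows that for each $a < \sqrt{2d}$, almost surely there exist $z_\star \in [0,1]^d$ and a sequence $r_n \to 0$ with $\pm h_{r_n}(z_\star) \geq a\log(1/r_n)$. Tightness in Axiom~\ref{axiom-tight} together with the fact that all subsequential limits are continuous metrics on $\mathbb{R}^d$ (hence assign strictly positive distance to distinct points) yields constants $c_0, C_0 > 0$ with
\[
P\!\left(c_0\, r^{\xi Q} e^{\xi h_r(z)} \leq D_h(z,z+re_1) \leq C_0\, r^{\xi Q} e^{\xi h_r(z)}\right) \geq \tfrac{1}{2}
\]
uniformly in $r \in (0,1]$ and $z \in [0,1]^d$. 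Decoupling the thickness/thinness event at $z_\star$ from the $D_h$-ratio event across widely separated scales via shell independence, a second-moment argument on the $r_k$-nets produces almost surely grid points witnessing both events at infinitely many $k$, yielding pairs $x_n, y_n$ with $|x_n - y_n| = r_n$ and $D_h(x_n,y_n)$ of order $r_n^{\xi(Q \mp a)}$. Letting $a \to \sqrt{2d}$ excludes any H\"older exponent strictly above $\xi(Q - \sqrt{2d})$ (respectively $\xi^{-1}(Q+\sqrt{2d})^{-1}$).

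The main technical hurdle is the sharpness: synchronizing the thick- or thin-point event at $z_\star$ with the Axiom~\ref{axiom-tight} probabilistic distance bound so that both fire at the same point and scale. This requires a clean scale decomposition via the shell independence lemma to make the two events asymptotically independent, together with a sparse Borel--Cantelli along geometrically spaced scales, following the two-dimensional template of \cite{lqg-metric-estimates}. A secondary technical issue is upgrading the tightness in Axiom~\ref{axiom-tight} to the uniform-in-$r$ polynomial tails used in the positive direction; this also rests on shell independence combined with the scale-$1$ moment bounds of Theorem~\ref{thm:moments}.
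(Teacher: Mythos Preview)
Your positive directions (the H\"older upper bounds in both parts) are essentially the paper's argument: Lemma~\ref{lemmathing} and Lemma~\ref{other} compute exactly the tail exponents $(\xi Q - s)^2/(2\xi^2)$ by factoring the diameter/crossing distance as $r^{\xi Q} e^{\xi h_r(z)}$ times a quantity with all moments, using the independence of $h_{2\epsilon\mathbbm r}(z)-h_{\mathbbm r}(z)$ from $(h-h_{2\epsilon\mathbbm r}(z))|_{B_{2\epsilon\mathbbm r}(z)}$, then optimize and take a union bound over dyadic nets. Your description via ``Gaussian bound $P(h_r(z)>a\log(1/r))\lesssim r^{a^2/2}$ plus moment bound'' is the same computation in different clothing.

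Your sharpness argument is a genuinely different route. The paper (Lemma~\ref{optimal}) does \emph{not} run a second-moment thick-point argument; instead it samples a point $\mathbbm z$ from the $\alpha$-Gaussian multiplicative chaos measure $\mu_{h^V}^\alpha$, uses the rooted-measure description (Girsanov) to rewrite $h=\tilde h-\alpha\log|\cdot-\mathbbm z|+g_{\mathbbm z}$ with $\tilde h$ an unweighted LGF, and then invokes Proposition~\ref{thing1} to get $D_h(\mathbbm z,\partial B_r(\mathbbm z))=r^{\xi(Q-\alpha)+o_r(1)}$ directly. Letting $\alpha\to\pm\sqrt{2d}$ finishes. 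This sidesteps entirely the synchronization problem you flag: the GMC-typical point automatically carries the singularity, and Proposition~\ref{thing1} already packages the metric estimate for fields with a log-singularity. Your approach is viable --- the metric-ratio event for an annulus crossing is measurable with respect to $(h-h_{r_k}(z))|_{A_{r_k/4,r_k}(z)}$ by Axioms~\ref{axiom-local} and~\ref{axiom-weyl}, and the paper establishes precisely the independence from $h_{r_k}(z)-h_1(z)$ you need (see the proof of Lemma~\ref{lemmathing}); the second-moment machinery for thick points then goes through. What you gain is that you avoid importing GMC theory in $d\ge 3$; what you lose is economy, since the paper's route reuses Proposition~\ref{thing1} (already proved for the point-to-set moment bound) and is a few lines.
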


Knizhnik-Polyakov-Zamolodchikov (KPZ) relation~\cite{kpz-scaling} gives a quadratic correspondence between Euclidean and quantum scaling dimensions. In two dimensions, a metric KPZ relation for the LQG metric was established in~\cite{GP-kpz}, relating the Euclidean Hausdorff dimension of a set to its Hausdorff dimension with respect to the LQG metric. We prove the metric KPZ relation for weak exponential metrics in higher dimensions. For $X \subset \mathbb{R}^d$, write ${\rm dim}_{\mathcal H}^0 X$ for its Euclidean Hausdorff dimension and ${\rm dim}_{\mathcal H}^\gamma X$ for its $D_h$-Hausdorff dimension.

\begin{theorem}[KPZ relation for weak exponential metrics]\label{thm:kpz}
Let $\gamma \in (0,\sqrt{2d})$ and let $D_h$ be a weak $\gamma$-exponential metric. If $X \subset \mathbb{R}^d$ is a deterministic Borel set or a random Borel set independent of $h$, then almost surely
\begin{align*}
&{\rm dim}_{\mathcal H}^0 X = \xi Q \,{\rm dim}_{\mathcal H}^\gamma X - \frac{\xi^2}{2}\big({\rm dim}_{\mathcal H}^\gamma X\big)^2,\\
&{\rm dim}_{\mathcal H}^\gamma X = \frac{1}{\xi}\bigl(Q - \sqrt{Q^2 - 2 {\rm dim}_{\mathcal H}^0 X}\bigr).
\end{align*}
\end{theorem}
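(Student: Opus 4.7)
My plan is to adapt the two-dimensional argument of Gwynne and Pfeffer~\cite{GP-kpz} to the higher-dimensional setting, leveraging the moment bounds of Theorem~\ref{thm:moments}, the sharp scaling constants of Theorem~\ref{thm:sharp-c}, and the H\"older continuity of Theorem~\ref{thm:holder-continuous}. Since the two identities in the statement are algebraically equivalent (the second is obtained by solving a quadratic in the first), it suffices to prove, almost surely for every deterministic Borel $X \subset \mathbb{R}^d$, the two matching inequalities
\[
{\rm dim}_{\mathcal H}^\gamma X \,\le\, \tfrac{1}{\xi}\bigl(Q - \sqrt{Q^2 - 2{\rm dim}_{\mathcal H}^0 X}\bigr) \quad \text{and} \quad {\rm dim}_{\mathcal H}^\gamma X \,\ge\, \tfrac{1}{\xi}\bigl(Q - \sqrt{Q^2 - 2{\rm dim}_{\mathcal H}^0 X}\bigr).
\]
The extension to a random Borel set $X$ independent of $h$ then follows by conditioning on $X$ and applying Fubini to the resulting almost-sure statement.

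\textbf{Upper bound.} Fix $\Delta_0 > {\rm dim}_{\mathcal H}^0 X$ and $\delta > 0$, and choose a Euclidean cover $\{B(z_i, r_i)\}$ of $X$ with $r_i \le \delta$ and $\sum_i r_i^{\Delta_0} \le 1$. The combination of Axiom~\ref{axiom-tight}, Theorem~\ref{thm:sharp-c}, and the Weyl scaling axiom shows that the $D_h$-diameter of $B(z,r)$ has the same distributional behavior as $r^{\xi Q} e^{\xi h_r(z)}$ times a tight random constant, and $h_r(z)$ is approximately $\mathcal{N}(0, \log(1/r))$. Combined with the moment bound of Theorem~\ref{thm:moments}\ref{moment-diam}, a direct Gaussian computation yields
\[
\mathbb{E}\bigl[\mathrm{diam}_{D_h}(B(z,r))^p\bigr] \le r^{p\xi Q - p^2 \xi^2/2 + o(1)}
\]
uniformly for $z$ in a compact set and $p$ below the moment threshold. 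For any $\alpha$ strictly greater than the smaller root $p_- = \tfrac{1}{\xi}(Q - \sqrt{Q^2-2\Delta_0})$ of $p \xi Q - p^2 \xi^2/2 = \Delta_0$, one has $\alpha \xi Q - \alpha^2 \xi^2/2 > \Delta_0$; so by Markov's inequality and Borel-Cantelli along dyadic covers, $\mathcal H^\alpha_{D_h}(X) = 0$ almost surely. Letting $\alpha \downarrow p_-$ gives the upper bound.

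\textbf{Lower bound.} Given a $D_h$-cover $\{B^{D_h}(z_i, s_i)\}$ of $X$, we must convert it into a Euclidean cover efficiently. At each dyadic scale $2^{-n}$ and each point $z$ on a $2^{-n}$-grid, define the good event $G_n(z)$ that (i) $h_{2^{-n}}(z)$ is within an additive constant of its typical value, and (ii) two-sided comparisons between $D_h(z, \partial B(z, 2^{-n}))$, $\mathrm{diam}_{D_h}(B(z, 2^{-n}))$, and the prediction $2^{-n\xi Q} e^{\xi h_{2^{-n}}(z)}$ hold. Theorem~\ref{thm:moments}\ref{moment-diam} and Theorem~\ref{thm:moments}\ref{moment-point-set}, combined with the shell independence lemma advertised in the abstract, provide quantitative tail bounds and approximate independence of $\{G_n(z)\}_n$ across scales, so that Borel-Cantelli yields a dense set of good scales at every point outside a set $B$ of Euclidean dimension strictly below ${\rm dim}_{\mathcal H}^0 X$. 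Restricting to $X \setminus B$, one converts the $D_h$-cover to a Euclidean cover at the matched good scale, and the covering sum $\sum r_i^{\Delta_0 - \eta}$ is controlled by the same quadratic balance as in the upper bound. Sending the relevant parameters to the boundary of summability gives the matching lower bound on ${\rm dim}_{\mathcal H}^\gamma X$.

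\textbf{Main obstacle.} The hard step is handling the thick points of $h$, where the field is atypically positive or negative on infinitely many scales, since these are precisely the points at which the inverse comparison used in the lower bound fails. The dyadic good-event scheme removes these points, but one must simultaneously verify that the resulting exceptional set has Euclidean Hausdorff dimension strictly below ${\rm dim}_{\mathcal H}^0 X$; this requires a uniform large-deviation bound for the multifractal spectrum of $h_r$, for which the shell independence lemma is essential. Matching the quadratic KPZ balance on both sides of the argument is automatic once the sharp moment exponents from Theorem~\ref{thm:moments} are in hand, so the principal work is to arrange the Borel-Cantelli argument at exactly the exponent where the two sides saturate.
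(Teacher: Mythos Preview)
Your upper bound is essentially the same as the paper's (Section~\ref{sec:KPZ}, ``Proof of Theorem~\ref{thm:kpz}'', upper bound): both use the diameter moment estimate $\mathbb{E}[\mathrm{diam}_{D_h}(B(z,r))^p] \le r^{\xi Q p - \xi^2 p^2/2 + o(1)}$ together with Markov/Borel--Cantelli on a Euclidean cover, and optimize in $p$.

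Your lower bound, however, has a genuine gap. As you describe it, the good event $G_n(z)$ asks that $h_{2^{-n}}(z)$ be ``within an additive constant of its typical value'', i.e.\ that $z$ behave like a $0$-thick point at scale $2^{-n}$. On the set where this holds at a dense sequence of scales, a $D_h$-ball of radius $s$ centered near $z$ has Euclidean radius comparable to $s^{1/(\xi Q)}$, and converting a $D_h$-cover to a Euclidean cover on $X\setminus B$ then only yields $\dim_{\mathcal H}^\gamma X \ge \dim_{\mathcal H}^0 X / (\xi Q)$, which is strictly smaller than the KPZ value $\tfrac{1}{\xi}(Q-\sqrt{Q^2-2\dim_{\mathcal H}^0 X})$ whenever $\dim_{\mathcal H}^0 X>0$. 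The quadratic in the KPZ formula comes precisely from the fact that the $D_h$-Hausdorff dimension of $X$ is \emph{attained on a set of atypical (thick) points}, so removing thick points throws away the part of $X$ that carries the correct $\gamma$-dimension.

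The paper's lower bound runs in the opposite direction: it \emph{intersects} with the thick-point set $\mathcal T_h^\alpha$ for the optimal $\alpha = Q-\sqrt{Q^2-2\dim_{\mathcal H}^0 X}$, proves (Proposition~\ref{prop:kpz-1}) that on $X\cap\mathcal T_h^\alpha$ the $D_h$- and Euclidean dimensions are related linearly with slope $1/(\xi(Q-\alpha))$, and then combines this with the known Euclidean dimension $\dim_{\mathcal H}^0(X\cap\mathcal T_h^\alpha)=\dim_{\mathcal H}^0 X-\alpha^2/2$ from~\eqref{eq:dim-thick}. The optimization over $\alpha$ is what produces the quadratic. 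To fix your argument you would need to replace ``typical value'' in $G_n(z)$ by ``value near $\alpha\log 2^n$'' for this specific $\alpha$, at which point you are essentially reproducing the paper's Proposition~\ref{prop:kpz-1}.
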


Taking $X=\mathbb{R}^d$ in Theorem~\ref{thm:kpz} yields:

\begin{cor}\label{cor:hausdorff}
Let $\gamma \in (0,\sqrt{2d})$ and let $D_h$ be a weak $\gamma$-exponential metric. Then the Hausdorff dimension of $\mathbb{R}^d$ with respect to $D_h$ is almost surely $\mathsf d_\gamma$.
\end{cor}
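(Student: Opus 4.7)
The plan is to apply Theorem~\ref{thm:kpz} directly to the deterministic Borel set $X=\mathbb{R}^d$, whose Euclidean Hausdorff dimension is ${\rm dim}_{\mathcal H}^0 \mathbb{R}^d = d$, and then to verify by elementary algebra based on~\eqref{eq:def-xi-q} that the resulting expression for ${\rm dim}_{\mathcal H}^\gamma \mathbb{R}^d$ equals $\mathsf d_\gamma$.

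Concretely, the second identity in Theorem~\ref{thm:kpz} will yield almost surely
$$
{\rm dim}_{\mathcal H}^\gamma \mathbb{R}^d \;=\; \frac{1}{\xi}\Bigl(Q - \sqrt{Q^2 - 2d}\Bigr).
$$
Substituting $Q = d/\gamma + \gamma/2$ from~\eqref{eq:def-xi-q} and expanding, I would compute
$$
Q^2 - 2d \;=\; \frac{d^2}{\gamma^2} - d + \frac{\gamma^2}{4} \;=\; \Bigl(\frac{d}{\gamma} - \frac{\gamma}{2}\Bigr)^{\!2}.
$$
The hypothesis $\gamma\in(0,\sqrt{2d})$ forces $\gamma^2<2d$, hence $d/\gamma > \gamma/2 > 0$, so taking the positive square root gives $\sqrt{Q^2-2d} = d/\gamma - \gamma/2$. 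Therefore $Q - \sqrt{Q^2-2d} = \gamma$, and using $\xi = \gamma/\mathsf d_\gamma$ one concludes ${\rm dim}_{\mathcal H}^\gamma \mathbb{R}^d = \gamma/\xi = \mathsf d_\gamma$.

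The only conceptual point to check is that Theorem~\ref{thm:kpz} is applicable to the unbounded set $\mathbb{R}^d$; as stated it covers any deterministic Borel set, so this is automatic. As a safety net, one can exhaust $\mathbb{R}^d$ by the compact cubes $[-n,n]^d$, apply Theorem~\ref{thm:kpz} to each (each has Euclidean Hausdorff dimension $d$, yielding $D_h$-dimension $\mathsf d_\gamma$ almost surely), and invoke the countable stability of Hausdorff dimension in the metric space $(\mathbb{R}^d, D_h)$ to obtain
$$
{\rm dim}_{\mathcal H}^\gamma \mathbb{R}^d \;=\; \sup_{n\geq 1}{\rm dim}_{\mathcal H}^\gamma [-n,n]^d \;=\; \mathsf d_\gamma \qquad \text{a.s.}
$$
No real obstacle is expected: the corollary is flagged in the text as an immediate consequence of Theorem~\ref{thm:kpz}, and all of the genuine work is absorbed into the proof of that theorem. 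The content of the corollary is essentially the observation that the two identities in~\eqref{eq:def-xi-q} are precisely the KPZ relation in the special case $X=\mathbb{R}^d$.
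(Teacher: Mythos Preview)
Your proposal is correct and is exactly the approach the paper takes: the corollary is stated immediately after Theorem~\ref{thm:kpz} as the special case $X=\mathbb{R}^d$, with no further proof given. Your explicit verification of the algebra (that $Q-\sqrt{Q^2-2d}=\gamma$ and hence $\gamma/\xi=\mathsf d_\gamma$ via~\eqref{eq:def-xi-q}) spells out what the paper leaves implicit.
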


\subsection{Useful results about log-correlated Gaussian fields}

One of the key ingredients in the proof of Theorem~\ref{thm:axiom} is the equivalence between the white-noise decomposition of the LGF and convolution with an explicit family of kernels. This extends Lemma 2.4 in~\cite{chg-support} which is for the heat kernel.


Suppose $\mathfrak{K} : \mathbb{R}^d \to \mathbb{R}$ is a radially symmetric, compactly supported, smooth function satisfying $\int_{\mathbb{R}^d} \mathfrak{K}(x)^2 \,dx = 1$. Let $W(dy,dt)$ denote the space-time white noise on $\mathbb{R}^d \times (0,\infty)$. For $\epsilon>0$, define 
\begin{equation}\label{eq:white-noise}
\mathfrak h_\epsilon(x) := \int_{\mathbb{R}^d} \int_{\epsilon}^\infty \mathfrak K(\frac{x-y}{t}) t^{-\frac{d+1}{2}} \,W(dy,dt), \quad x \in \mathbb{R}^d.
\end{equation}
Then $\mathfrak h_\epsilon$ can be defined as a random continuous function modulo a global additive constant (see, e.g., Section 4.1.1 of~\cite{lgf-survey}). Recall the kernel $\mathsf K$ from Remark~\ref{rmk:def-K}, which can be obtained explicitly from $\mathfrak{K}$.

\begin{prop}\label{kernelbaby}
    Let $h$ be a whole-space LGF. For any $\epsilon>0$ and $x \in \mathbb{R}^d$, let $\mathsf K_\epsilon(x) = \epsilon^{-d} \mathsf K(x/\epsilon)$. Then $\mathfrak h_\epsilon$ agrees in law with $h * \mathsf K_\epsilon$, viewed as a random continuous function modulo a global constant.
\end{prop}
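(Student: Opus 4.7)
The plan is to verify that $\mathfrak h_\epsilon$ and $h\ast\mathsf K_\epsilon$ are both centered Gaussian random fields with continuous sample paths (defined modulo an additive constant) and with matching covariance on mean-zero test functions. Since both are Gaussian, equality of the covariance of increments forces equality in law as random continuous functions modulo a global constant, giving the proposition.

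First, by the Wiener/It\^o isometry applied to the white-noise integral \eqref{eq:white-noise},
\[
\operatorname{Cov}\bigl(\mathfrak h_\epsilon(x),\mathfrak h_\epsilon(x')\bigr) = \int_\epsilon^\infty\!\int_{\mathbb R^d}\mathfrak K\bigl(\tfrac{x-y}{t}\bigr)\mathfrak K\bigl(\tfrac{x'-y}{t}\bigr)t^{-(d+1)}\,dy\,dt.
\]
The inner $y$-integral is a self-convolution of $\mathfrak K(\cdot/t)$ at $x-x'$; its Fourier symbol is a scalar multiple of $|\hat{\mathfrak K}(t\zeta)|^2$, so Plancherel and the substitution $s=t|\zeta|$ in the $t$-integral produce a Fourier representation of the form
\[
\operatorname{Cov}\bigl(\mathfrak h_\epsilon(x),\mathfrak h_\epsilon(x')\bigr) \;\propto\; \int_{\mathbb R^d}\frac{e^{i\zeta\cdot(x-x')}}{|\zeta|^d}\Bigl(\int_{\epsilon|\zeta|}^\infty s^{d-1}|\hat{\mathfrak K}(s)|^2\,ds\Bigr)\,d\zeta.
\]

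Second, I would compute $\operatorname{Cov}\bigl((h\ast\mathsf K_\epsilon)(x),(h\ast\mathsf K_\epsilon)(x')\bigr)$ modulo an $(x,x')$-independent constant. Pairing $h$ against $\mathsf K_\epsilon(x-\cdot)$ and $\mathsf K_\epsilon(x'-\cdot)$, using Parseval together with the Fourier identity $\widehat{\log(1/|\cdot|)}(\zeta)\propto|\zeta|^{-d}$ on mean-zero test functions (with the implicit constant fixed by the Fourier convention), and noting $\widehat{\mathsf K_\epsilon}(\zeta)=\hat{\mathsf K}(\epsilon\zeta)$, one obtains
\[
\operatorname{Cov}\bigl((h\ast\mathsf K_\epsilon)(x),(h\ast\mathsf K_\epsilon)(x')\bigr) \;\propto\; \int_{\mathbb R^d}\frac{e^{i\zeta\cdot(x-x')}}{|\zeta|^d}|\hat{\mathsf K}(\epsilon\zeta)|^2\,d\zeta \pmod{\text{const}}.
\]
Comparing with the previous display reduces the proposition to the pointwise spectral identity
\[
|\hat{\mathsf K}(\zeta)|^2 \;=\; \frac{2\pi^{d/2}}{\Gamma(d/2)}\int_{|\zeta|}^\infty s^{d-1}|\hat{\mathfrak K}(s)|^2\,ds,
\]
where $2\pi^{d/2}/\Gamma(d/2)$ is the surface area of $S^{d-1}$; this is exactly how $\mathsf K$ is defined in Remark~\ref{rmk:def-K}. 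In other words, $\mathsf K$ was engineered precisely so that the spectral density of $h\ast\mathsf K_\epsilon$ matches that of $\mathfrak h_\epsilon$.

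The main obstacle will be the careful book-keeping of the ``modulo a global additive constant'' aspect, since neither $\log(1/|\cdot|)$ nor the fields themselves are tempered distributions without quotienting: all Fourier identities must be phrased against mean-zero Schwartz test functions, or equivalently realized through two-point increments $F(x)-F(x')$, and one must be sure the Fourier-transform constants on both sides use the same convention. Once the covariances of increments agree, joint Gaussianity together with continuity of sample paths (from Kolmogorov's criterion applied to each side, using the explicit control on the integrand in \eqref{eq:white-noise} and the smoothness of $\mathsf K_\epsilon$) yields the claimed equality in law.
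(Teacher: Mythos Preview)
Your proposal is correct and follows essentially the same route as the paper: compute the spectral density of the white-noise field $\mathfrak h_\epsilon$ via the It\^o isometry, compute the spectral density of $h\ast\mathsf K_\epsilon$ via the Fourier symbol of the log kernel, and match them to recover precisely the defining identity for $\hat{\mathsf K}$. The only cosmetic difference is that the paper works throughout with variances $\mathrm{Var}\bigl(\int f\cdot(\text{field})\bigr)$ against mean-zero Schwartz $f$, which automatically handles the ``modulo additive constant'' issue you flag in your last paragraph, whereas you phrase things in terms of point covariances and defer that book-keeping; both lead to the same Plancherel computation and the same spectral identity $|\hat{\mathsf K}(\zeta)|^2=\tfrac{2\pi^{d/2}}{\Gamma(d/2)}\int_{|\zeta|}^\infty s^{d-1}|\hat{\mathfrak K}(s)|^2\,ds$.
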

See Lemma \ref{lem:kernel} for properties of $\mathsf{K}$ and for a proof.

\subsection{Proof strategy}

The proof adapts the two-dimensional arguments of~\cite{lqg-metric-estimates, local-metrics, Pfeffer-weak-metric, GP-kpz, dg-constant} with modifications to handle higher dimensions. We highlight three technical obstacles and explain how we address them.

First, the approximation scheme in~\cite{dgz-exponential-metric} is given by the white-noise decomposition, which is not convenient for verifying the axioms, in particular Axiom~\ref{axiom-local} (locality). To circumvent this, in Proposition~\ref{kernelbaby} we show that the same approximation can be realized as convolution of $h$ with an explicit family of kernels, following computations in~\cite{chg-support}. This representation allows us to adapt the arguments of~\cite{lqg-metric-estimates, local-metrics} to prove Theorem~\ref{thm:axiom}.

Second, the annulus independence lemma~\cite{MQ18-geodesic, gm-existence}, which is used to propagate regularity events and control the LQG metric (see~\cite[Section 4.2]{DDG-ICM}), is specific to the two-dimensional GFF. We therefore prove an analogue for the LGF, namely near independence across shells (Lemma~\ref{lem:shell-independence}). The proof uses the domain Markov property for the LGF based on $d/2$-harmonic extension as described in~\cite{fgf-survey}. Such near independence is used repeatedly throughout this paper and may be of independent interest for the study of the LGF. Some care is also needed to deal with the spherical average process of the LGF, which does not have the same law as Brownian motion.

Third, planar arguments that force intersections of macroscopic paths do not extend to higher dimensions. In particular, the distance around an annulus used, for example, in~\cite[Definition 3.7]{lqg-metric-estimates} does not make sense in higher dimensions, since paths do not disconnect shells. We therefore use the notion of distance around a shell (see~\eqref{eq:def-around}) and use it to build stringing arguments that replace planar intersection arguments.

The organization of the paper is as follows. Section~\ref{sec:lgf} records properties and estimates for log-correlated Gaussian fields, Section~\ref{sec:axiom} proves Theorem~\ref{thm:axiom}, Section~\ref{sec:moment} establishes the properties of weak exponential metrics and proves all results stated in Section~\ref{subsec:properties}.

\subsection{Basic notation}

\subsubsection*{Numbers}

For quantities $a=a(r)$ and $b=b(r)$ depending on a parameter $r$, we write $a=o_r(b)$ (respectively $a=O_r(b)$) if $a/b \to 0$ (respectively $a/b$ remains bounded) as $r \to \infty$ or $r \to 0$, as indicated by context. We write $a \asymp_r b$ if $a =O_r(b)$ and $b = O_r(a)$. For a family of events $\{E^\epsilon\}_{\epsilon>0}$, we say that $E^\epsilon$ occurs with \textit{polynomially high probability} if there exists $p>0$ such that $1-\mathbb{P}[E^\epsilon]=O(\epsilon^p)$ as $\epsilon \to 0$. It occurs with \textit{superpolynomially high probability} if this holds for every $p>0$.

\subsubsection*{Euclidean space}

Throughout we work in $\mathbb{R}^d$ with $d \ge 2$. We write $|\cdot|$ for the Euclidean norm. For $x \in \mathbb{R}^d$ and $r>0$, let $B_r(x)=\{z \in \mathbb{R}^d: |z-x|<r\}$. For $0<r<R<\infty$ define the shell
\[
A_{r,R}(x) := B_R(x) \setminus \overline{B_r(x)}.
\]
The Fourier transform is
\[
\mathcal F(f)(\zeta)=\hat f(\zeta) := \int_{\mathbb{R}^d} e^{-2\pi i x\cdot \zeta} f(x)\, dx \qquad \mbox{for } \zeta \in \mathbb{R}^d.
\]

\subsubsection*{Metrics}

Let $D$ be a metric on $\mathbb{R}^d$. For a shell $A=A_{r,R}(x)$, the \textit{$D$-distance across $A$} is the $D$-distance between its boundary components, namely $D(\partial B_r(x),\partial B_R(x))$. The \textit{$D$-distance around $A$} is
\begin{equation}\label{eq:def-around}
\mbox{$D$(around $A$)} := \sup_{P_1,P_2} D\big(P_1,P_2;A\big),
\end{equation}
where the supremum is over all continuous paths $P_1,P_2 \subset A$ that each join the inner to the outer boundary, and $D(\cdot,\cdot;A)$ denotes the internal $D$-distance on $A$. Both quantities are determined solely by the internal $D$-metric on $A$.

\medskip

\noindent\textbf{Acknowledgement.} The authors thank Ewain Gwynne for suggesting the problem and for useful discussions. Z.Z. was partially supported by NSF grant DMS-1953848.
 
\section{Properties of log-correlated Gaussian fields}\label{sec:lgf}

In Section~\ref{subsec:basic-lgf}, we review basic properties of log-correlated Gaussian fields (LGF). In Section~\ref{subsec:shell-independence}, we prove the shell independence lemma for the LGF, which is the higher-dimensional analog of the annulus independence lemma (see Section 4.2 of~\cite{DDG-ICM}). In Section~\ref{subsec:kernel}, we show that the white-noise approximations of the log-correlated Gaussian field have the same law as convolutions with an explicit family of kernels. In Section~\ref{subsec:localize}, we show that for this family of kernels, the convolution approximation can be localized.

\subsection{Basic properties of log-correlated Gaussian fields}\label{subsec:basic-lgf}

We first review the background on \textit{log-correlated Gaussian fields} (LGF). Most of the material in this section is from~\cite{fgf-survey} and Section 2 of~\cite{JSW-log}. We also refer to~\cite{sheffield-gff} for the two-dimensional case, which is the Gaussian free field. We let
$$
d \geq 2 \quad \mbox{and} \quad s = \frac{d}{2} \,.
$$
Let $\mathcal{S}(\mathbb{R}^d)$ be the space of Schwartz functions on $\mathbb{R}^d$. Define $\mathcal{S}_0(\mathbb{R}^d) \subset \mathcal{S}(\mathbb{R}^d)$ as the subspace of Schwartz functions $\phi$ satisfying $\hat \phi(0) = 0$, or equivalently, $\int_{\mathbb{R}^d} \phi(x) dx = 0$. The space $\mathcal{S}'(\mathbb{R}^d)$ of generalized functions is the dual space of $\mathcal{S}(\mathbb{R}^d)$. Similarly, $\mathcal{S}_0'(\mathbb{R}^d)$ is the dual space of $\mathcal{S}_0(\mathbb{R}^d)$, i.e., the space of generalized functions defined modulo an additive constant. For each $f \in \mathcal{S}(\mathbb{R}^d)$ (resp.\ $f \in \mathcal{S}_0(\mathbb{R}^d$)) and $a>-\frac{d}{2}$ (resp.\ $a > -\frac{d+1}{2}$), we define
$$
(-\Delta)^a f:= \mathcal{F}^{-1}[\xi \to (2 \pi |\xi|)^{2a} \hat f(\xi)].
$$
This is well-defined since $|\xi|^{2a} \hat f(\xi) \in L^1(\mathbb{R}^d)$.

We first consider the whole-space LGF, which is a random generalized function defined modulo an additive constant. For all $f,g \in \mathcal{S}_0(\mathbb{R}^d)$, define the inner product
$$
(f, g)_{\mathcal{H}^{-s}(\mathbb{R}^d)} := \frac{1}{k_d} (f, (-\Delta)^{-s} g)_{L^2(\mathbb{R}^d)} = \iint_{\mathbb{R}^d \times \mathbb{R}^d} f(x) g(y) \log \frac{1}{|x-y|} dxdy,
$$
where \[k_d = \frac{2^{1-d} \pi^{-d/2}}{\Gamma(\frac{d}{2})}.\] Let $\mathcal{H}^{-s}(\mathbb{R}^d)$ be its Hilbert space closure. A whole-space LGF $h$ is a random element in $\mathcal{S}_0'(\mathbb{R}^d)$ such that
$$
(h, \phi) \sim \mathcal{N}(0, \Vert \phi \Vert_{\mathcal{H}^{-s}(\mathbb{R}^d)}^2) \quad \mbox{for $\phi \in \mathcal{H}^{-s}(\mathbb{R}^d)$}.
$$
We refer to~\cite{fgf-survey} for well-definedness. Throughout this paper, with slight abuse of notation, we call a random generalized function $h$ a whole-space LGF if it has the same law as a whole-space LGF viewed modulo an additive constant. We specify the additive constant when needed. Let $\{h_r(z)\}_{z\in\mathbb{R}^d,\ r>0}$ denote the spherical average process of $h$, which admits a continuous modification; see Section~11 of~\cite{fgf-survey}. On many occasions, we choose the global constant of $h$ so that $h_1(0)=0$.

Next, we consider the LGF defined on an allowable subdomain of $\mathbb{R}^d$ (not necessarily bounded). For an open domain $D \subset \mathbb{R}^d$, let $C_c^\infty(D)$ denote the space of smooth functions on $\mathbb{R}^d$ that are compactly supported in $D$. Let $\mathcal{H}_0^s(D)$ denote the Hilbert space closure of $C_c^\infty(D)$ equipped with the inner product \[(f, g)_{\mathcal{H}^s(\mathbb{R}^d)} := k_d \, (f, (-\Delta)^{s} g)_{L^2(\mathbb{R}^d)}.\]

\begin{definition}[Allowable domain]\label{def:allowable}
    We call an open domain $D \subset \mathbb{R}^d$ \textit{allowable} if, for all $\phi \in \mathcal{S}(\mathbb{R}^d)$, there exists $C = C(D, \phi) > 0$ such that
\begin{equation}\label{eq:allowable}
|(\phi, g)_{L^2(\mathbb{R}^d)}| \leq C \Vert g \Vert_{\mathcal{H}^s(\mathbb{R}^d)} \quad \mbox{for all }g \in \mathcal{H}_0^s(D).
\end{equation}
By~\cite[Lemma 4.2]{fgf-survey}, an open domain $D$ such that $\mathbb{R}^d \setminus D$ contains an open set is allowable.
\end{definition}

When $D$ is allowable, define the seminorm $\Vert \cdot \Vert_{\mathcal{H}^{-s}(D)}$ on $\mathcal{S}(\mathbb{R}^d)$ as in~\cite{fgf-survey}. Specifically, for $\phi \in \mathcal{S}(\mathbb{R}^d)$, by~\eqref{eq:allowable} and the Riesz representation theorem there exists a unique $f \in \mathcal{H}_0^s(D)$ such that $(\phi,g)_{L^2(\mathbb{R}^d)} = (f,g)_{\mathcal{H}^s (\mathbb{R}^d)}$ for all $g \in \mathcal{H}^s_0(D)$. Define $\Vert \phi \Vert_{\mathcal{H}^{-s}(D)}:= \Vert f \Vert_{\mathcal{H}^s(\mathbb{R}^d)}$. Note that if $\phi$ is supported in $\mathbb{R}^d \setminus D$, then $f = 0 $ and $\Vert \phi \Vert_{\mathcal{H}^{-s}(D)} = 0$. Let $\mathcal{H}^{-s}(D)$ denote the Hilbert space closure of $\mathcal{S}(\mathbb{R}^d)$ equipped with $\Vert \cdot \Vert_{\mathcal{H}^{-s}(D)}$. A LGF $h$ on $D$ is a random element in $\mathcal{S}'(\mathbb{R}^d)$ such that
$$
(h,\phi) \sim \mathcal{N}(0, \Vert \phi \Vert^2_{\mathcal{H}^{-s}(D)}) \quad \mbox{for } \phi \in \mathcal{H}^{-s}(D).
$$
We again refer to~\cite{fgf-survey} for well-definedness.

We note that when $d$ is even, $(-\Delta)^s$ is a local operator and thus (with $\nabla^s$ understood as the Fourier multiplier by $(2\pi|\xi|)^s$)
\begin{equation}\label{eq:norm-even-dimen}
\Vert f \Vert^2_{\mathcal{H}^s(\mathbb{R}^d)} = k_d \, \int_{D} |\nabla^s f(x)|^2 dx \quad \mbox{for all }f \in C_c^\infty(D).
\end{equation}
However, this does not hold when $d$ is odd. In the two-dimensional case, the LGF on $D$ coincides with the  Gaussian free field on $D$ as considered in~\cite{sheffield-gff}.

There is another equivalent way to define the LGF using eigenfunctions; see Section 9 of~\cite{fgf-survey} and Section 2 of~\cite{JSW-log}. Let $D$ be a bounded domain (hence allowable). Let $\{f_n\}_{n \geq 1}$ denote an orthonormal basis of $\mathcal{H}_0^s(D)$ and define the LGF on $D$ as the formal sum
\begin{equation}\label{eq:eigenvalue}
h = \sum_{n=1}^\infty \alpha_n f_n,
\end{equation}
where $\{\alpha_n\}_{n \geq 1}$ are i.i.d.\ standard Gaussian random variables. Then the above sum converges a.s. in $H^{-\epsilon}(\mathbb{R}^d)$ for any $\epsilon>0$. 

As follows from the definition, the log-correlated Gaussian field has a scaling invariance property.

\begin{lemma}\label{lem:scale-invariance}
    The following holds:

    \begin{enumerate}
        \item Let $h$ be a whole-space LGF with the additive constant chosen so that $h_1(0) = 0$. Then, for any $r>0$, $h(r \cdot) - h_r(0)$ has the same law as $h(\cdot)$.

        \item Let $D$ be an allowable domain and $r>0$. Let $h_D$ be a LGF on $D$ and $h_{rD}$ be a LGF on $rD$. Then $h_D(\cdot)$ and $h_{rD}(r \cdot)$ have the same law.
    \end{enumerate}
    
\end{lemma}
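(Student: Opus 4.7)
The plan is to reduce both parts of the lemma to scaling computations for the Gaussian covariance, exploiting the fact that $s=d/2$ makes the relevant inner products invariant under $x\mapsto x/r$.

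For Part 1, I would first verify $h(r\cdot)\stackrel{d}{=}h(\cdot)$ modulo an additive constant. Writing $(h(r\cdot),\phi)=(h,r^{-d}\phi(\cdot/r))$ for $\phi\in\mathcal{S}_0(\mathbb{R}^d)$ and noting that $r^{-d}\phi(\cdot/r)$ is still in $\mathcal{S}_0(\mathbb{R}^d)$, a change of variable yields
\[
\mathrm{Cov}\big[(h(r\cdot),\phi_1),(h(r\cdot),\phi_2)\big]=\iint \phi_1(u)\phi_2(v)\Big[\log\tfrac{1}{|u-v|}-\log r\Big]du\,dv=\iint \phi_1(u)\phi_2(v)\log\tfrac{1}{|u-v|}\,du\,dv,
\]
the $-\log r$ term vanishing because $\phi_1,\phi_2$ have zero mean. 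Having established this up to a constant, the additive constant is pinned down by noting that $h(r\cdot)-h_r(0)$ has spherical average zero at radius $1$ around the origin, matching the normalization $h_1(0)=0$ imposed on $h$; the well-definedness of $h_r(0)$ uses the continuous modification of the spherical-average process from Section~11 of~\cite{fgf-survey}.

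For Part 2, I would work via the eigenfunction representation~\eqref{eq:eigenvalue}, for which the core step is to show that the map $g\mapsto g(r\cdot)$ is a Hilbert space isometry from $\mathcal{H}^s_0(rD)$ onto $\mathcal{H}^s_0(D)$. Using the Fourier-multiplier identity $(-\Delta)^s[g(r\cdot)](x)=r^{2s}((-\Delta)^s g)(rx)$ and a change of variable,
\[
(g_1(r\cdot),g_2(r\cdot))_{\mathcal{H}^s(\mathbb{R}^d)}=k_d\,r^{2s}\!\int g_1(rx)((-\Delta)^s g_2)(rx)\,dx=k_d\,r^{2s-d}(g_1,(-\Delta)^s g_2)_{L^2}=(g_1,g_2)_{\mathcal{H}^s(\mathbb{R}^d)},
\]
since $d=2s$. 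Bijectivity is immediate from the inverse map $f\mapsto f(\cdot/r)$, and density extends the isometry from $C_c^\infty$ to the Sobolev closures. If $h_{rD}=\sum_n\alpha_n g_n$ for an orthonormal basis $\{g_n\}$ of $\mathcal{H}^s_0(rD)$ with i.i.d.\ standard Gaussians $\alpha_n$, then $\{g_n(r\cdot)\}$ is an orthonormal basis of $\mathcal{H}^s_0(D)$, and the distributional identity $h_{rD}(r\cdot)=\sum_n\alpha_n g_n(r\cdot)$, together with~\eqref{eq:eigenvalue}, shows that $h_{rD}(r\cdot)$ has the law of $h_D$.

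These arguments are largely routine once the scaling identity for $(-\Delta)^s$ is in hand. The main place to exercise care is the odd-dimensional case, where $(-\Delta)^s$ is nonlocal and the local formula~\eqref{eq:norm-even-dimen} is unavailable; the Fourier-multiplier computation above is, however, dimension-agnostic, so no new idea is needed. A minor bookkeeping item is to confirm that the distributional pullback of the eigenfunction sum commutes with the scaling, which follows from the convergence of~\eqref{eq:eigenvalue} in $H^{-\epsilon}(\mathbb{R}^d)$ and the continuity of composition with a smooth diffeomorphism on that space.
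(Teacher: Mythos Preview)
The paper does not give an explicit proof of this lemma; it is simply stated as following from the definitions. Your verification is correct and is exactly the natural argument: both parts reduce to the observation that the dilation $x\mapsto rx$ preserves the relevant Gaussian covariance structure precisely because $2s=d$.

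One small technical caveat: the eigenfunction representation~\eqref{eq:eigenvalue} you invoke for Part~2 is stated in the paper only for bounded domains, whereas the lemma is asserted for general allowable $D$. Your isometry computation for $\mathcal{H}^s_0$ is valid regardless of boundedness, so for unbounded allowable $D$ you can bypass the series expansion and argue directly from the defining seminorm: if $f\in\mathcal{H}^s_0(D)$ is the Riesz representer of $\phi$ (so that $\|\phi\|_{\mathcal{H}^{-s}(D)}=\|f\|_{\mathcal{H}^s(\mathbb{R}^d)}$), then $f(\cdot/r)\in\mathcal{H}^s_0(rD)$ is the Riesz representer of $r^{-d}\phi(\cdot/r)$, with the same $\mathcal{H}^s$-norm by your isometry, giving $\|r^{-d}\phi(\cdot/r)\|_{\mathcal{H}^{-s}(rD)}=\|\phi\|_{\mathcal{H}^{-s}(D)}$. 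This is a bookkeeping point, not a gap in the idea.
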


Given a domain $D$ and a generalized function $f$ defined on $\mathbb{R}^d \setminus D$, we say that a generalized function $g : \mathbb{R}^d \to \mathbb{R}$ is the \textit{$s$-harmonic extension of $f$} if it satisfies
\begin{align*}
    f|_{\mathbb{R}^d \setminus D} = g|_{\mathbb{R}^d \setminus D} \quad \mbox{and} \quad ((-\Delta)^s g)|_D = 0.
\end{align*}

The following domain Markov property is taken from Section 5 of~\cite{fgf-survey}.

\begin{lemma}[Domain Markov property]\label{lem:markov}
    Let $U \subset D$ be allowable domains. Let $h$ be a LGF on $D$, and let $h^U$ be the $s$-harmonic extension of $h|_{D \setminus U}$ into $U$. Then $h - h^U$ has the same law as a LGF on $D$ and is independent of $h|_{D \setminus U}$.
\end{lemma}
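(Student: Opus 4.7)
My plan is to adapt the classical Hilbert-space decomposition argument that underlies the Markov property for the Gaussian free field, as developed in~\cite{sheffield-gff, fgf-survey}. The LGF $h$ on $D$ may be viewed as the isonormal Gaussian process on the Cameron--Martin space $\mathcal{H}_0^s(D)$ equipped with the inner product $(f,g)_{\mathcal{H}^s(\mathbb{R}^d)} = k_d\,(f,(-\Delta)^s g)_{L^2}$. The goal is to exhibit an orthogonal decomposition $\mathcal{H}_0^s(D) = \mathcal{H}_0^s(U) \oplus W$ and then transport it to a decomposition of $h$ into two independent Gaussian generalized functions, one of which is the LGF on $U$ and the other the $s$-harmonic extension. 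The key preliminary step is to identify $W$: for $f \in \mathcal{H}_0^s(D)$ the condition $f \perp \mathcal{H}_0^s(U)$ reads $(f,(-\Delta)^s g)_{L^2}=0$ for every $g \in C_c^\infty(U)$, which, by moving $(-\Delta)^s$ onto $f$ via duality, is equivalent to $(-\Delta)^s f = 0$ on $U$ in the distributional sense, i.e.\ $f$ is $s$-harmonic in $U$.

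Granted this characterization of $W$, I would write $h = h_U + h^U$, where $h_U$ and $h^U$ are the Gaussian pieces associated with the subspaces $\mathcal{H}_0^s(U)$ and $W$ respectively. Orthogonality of the two subspaces and joint Gaussianity immediately yield independence of $h_U$ and $h^U$. Since the Cameron--Martin space of the LGF on $U$ is precisely $\mathcal{H}_0^s(U)$, the law of $h_U$ is that of a LGF on $U$ (this is the natural content of the statement; the printed conclusion ``LGF on $D$'' appears to be a typographical error for ``LGF on $U$''). To identify $h^U$ as the $s$-harmonic extension of $h|_{D \setminus U}$, I would use that $h_U$ pairs to zero against any test function supported in $\mathbb{R}^d \setminus U$, so $h^U$ and $h$ agree on $D \setminus U$, while $(-\Delta)^s h^U|_U = 0$ follows from $h^U \in W$. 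Consequently $h^U$ is a measurable function of $h|_{D \setminus U}$, and $h - h^U = h_U$ is independent of $h^U$ and hence of $h|_{D \setminus U}$.

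The main obstacle is Step 1, specifically when $d$ is odd, because $(-\Delta)^s$ is then a non-local pseudo-differential operator and the clean integration by parts $(f,(-\Delta)^s g)_{L^2} = ((-\Delta)^s f, g)_{L^2}$ cannot be read off from the local identity~\eqref{eq:norm-even-dimen}. I would justify this symmetry at the level of Schwartz functions via the Fourier characterization of $(-\Delta)^s$, and then extend by density in $\mathcal{H}_0^s(D)$, using the allowability of $D$ and $U$ (Definition~\ref{def:allowable}) to ensure that test functions compactly supported in $U$ are dense in $\mathcal{H}_0^s(U)$. Some additional care is needed to make the ``same law as a LGF on $U$'' statement precise modulo the appropriate topology on $\mathcal{S}'(\mathbb{R}^d)$, but once the orthogonal decomposition of $\mathcal{H}_0^s(D)$ is established, the remaining manipulations are standard facts about Gaussian Hilbert spaces and their isonormal processes.
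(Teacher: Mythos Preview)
Your approach is correct and is essentially the standard Hilbert-space decomposition argument from~\cite{fgf-survey}, which is precisely what the paper invokes: the paper does not give its own proof of this lemma but simply states that it is taken from Section~5 of~\cite{fgf-survey}. Your observation that ``LGF on $D$'' should read ``LGF on $U$'' is also correct.
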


We also have the domain Markov property for the whole-space LGF, again from Section 5 of~\cite{fgf-survey}.

\begin{lemma}[Domain Markov property for the whole-space case]\label{lem:markov-whole}
    Fix $z \in \mathbb{R}^d$ and $r>0$. Let $h$ be a whole-space LGF with the additive constant so that $h_r(z) = 0$. For each allowable domain $V \subset \mathbb{R}^d$, let $h^V$ denote the $s$-harmonic extension of $h|_{\mathbb{R}^d \setminus V}$ into $V$. Then, $h - h^V$ has the same law as a LGF on $V$ minus its average over $\partial B_r(z)$ and is independent of $h|_{\mathbb{R}^d \setminus V}$.
\end{lemma}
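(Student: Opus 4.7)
I would deduce the whole-space Markov property from the bounded version (Lemma~\ref{lem:markov}) by exhausting $\mathbb{R}^d$ with bounded allowable domains. Take an increasing sequence $\{D_n\}_{n\ge 1}$ of bounded allowable domains with $\overline{B_r(z)} \subset D_1$ and $\bigcup_n D_n = \mathbb{R}^d$, and set $V_n := V \cap D_n$, which remains allowable. Let $\widetilde h_n$ be a LGF on $D_n$ and set $h_n := \widetilde h_n - (\widetilde h_n)_r(z)$; a covariance computation using the convergence of the seminorms $\Vert \cdot \Vert_{\mathcal{H}^{-s}(D_n)}$ shows that $h_n$ converges in law to the whole-space LGF $h$ with $h_r(z) = 0$, and that a LGF on $V_n$ converges in law to a LGF on $V$.

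\textbf{Decomposition at finite scale.} Applying Lemma~\ref{lem:markov} to $\widetilde h_n$ with subdomain $V_n$ gives the independent decomposition $\widetilde h_n = \widetilde h_n^{V_n} + (\widetilde h_n - \widetilde h_n^{V_n})$, where the second summand is a LGF on $V_n$. The key structural fact is that constant functions are $s$-harmonic (their Fourier transforms are supported at the origin, which is killed by the multiplier $(2\pi|\xi|)^{2s}$), so the $s$-harmonic extension commutes with the addition of a constant. Consequently $h_n^{V_n} = \widetilde h_n^{V_n} - (\widetilde h_n)_r(z)$, and hence
\[
h_n - h_n^{V_n} \;=\; \widetilde h_n - \widetilde h_n^{V_n},
\]
which is a LGF on $V_n$ and independent of $h_n|_{\mathbb{R}^d \setminus V_n}$.

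\textbf{Passage to the limit and identification of the law.} Sending $n \to \infty$, the continuity of the $s$-harmonic extension in the appropriate distributional topology (coming from allowability) lets the decomposition and the independence pass to the limit: $h = h^V + (h - h^V)$ with $h - h^V$ independent of $h|_{\mathbb{R}^d \setminus V}$, and $h - h^V$ arising as the limit of LGFs on $V_n$. The normalization $h_r(z) = 0$ forces $(h - h^V)_r(z) + (h^V)_r(z) = 0$; combined with the independence of $h - h^V$ from $h^V$, this pins down the additive constant of $h - h^V$ to be precisely the one obtained by subtracting the spherical average on $\partial B_r(z)$ from an (unnormalized) LGF on $V$, matching the claim.

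\textbf{Main obstacle.} The bookkeeping of additive constants is the main challenge. The whole-space LGF is intrinsically defined only modulo a constant, and the normalization $h_r(z) = 0$ is chosen after the decomposition, so consistency has to be checked carefully; this is especially delicate when $B_r(z) \subset V$, in which case $(h^V)_r(z)$ is a genuinely random Gaussian variable. A secondary technical point is the continuity of the $s$-harmonic extension under the approximation $V_n \to V$, which relies on the monotone convergence of the seminorms $\Vert \cdot \Vert_{\mathcal{H}^{-s}(V_n)} \to \Vert \cdot \Vert_{\mathcal{H}^{-s}(V)}$.
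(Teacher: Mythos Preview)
Your exhaustion approach is a genuinely different route from the paper's. The paper does not re-derive the whole-space decomposition from the bounded case at all: it simply cites Section~5 of \cite{fgf-survey}, which already provides the decomposition $h = h_1 + h_2$ (modulo additive constant) with $h_1$ $s$-harmonic in $V$ and determined by $h|_{\mathbb{R}^d \setminus V}$, and $h_2$ a LGF on $V$ independent of $h|_{\mathbb{R}^d \setminus V}$. The only additional work is a single line: subtract the spherical average over $\partial B_r(z)$ from both sides to fix the normalization. Your approach is more self-contained (it does not rely on the whole-space statement from \cite{fgf-survey}) but pays for this with the limiting procedure, whose continuity properties --- particularly stability of the $s$-harmonic extension under $V_n \uparrow V$ and $D_n \uparrow \mathbb{R}^d$ --- are genuinely delicate for the non-local operator $(-\Delta)^s$ and are only sketched in your proposal.

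One smaller point: your final paragraph on pinning down the additive constant via independence is more convoluted than needed. Once you know $h - h^V$ has the law of a LGF on $V$ up to a constant and that $h^V$ agrees with $h$ on $\mathbb{R}^d\setminus V$ (so $h - h^V$ vanishes there), the normalization $h_r(z)=0$ directly forces $(h-h^V)_r(z) = -(h^V)_r(z)$; the constant is determined algebraically, and independence plays no role in this identification. The paper's ``subtract the spherical average from both sides'' achieves the same thing in one stroke.
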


\begin{proof}
    By Section 5 of~\cite{fgf-survey}, the whole-space LGF $h$ has a decomposition $h_1 + h_2$. Here, $h_1$ is a random generalized function defined up to a global constant, which is $s$-harmonic in $V$ and determined by $h|_{\mathbb{R}^d \setminus V}$. The field $h_2$ has the same law as a LGF on $V$ and is independent of $h|_{\mathbb{R}^d \setminus V}$. The lemma follows by subtracting from both sides the spherical average over $\partial B_r(z)$.
\end{proof}

Using the eigenfunction definition, one can establish the Cameron-Martin property of LGF; see Section 2 of~\cite{JSW-log}.

\begin{prop}\label{prop:cameron}
    Let $D$ be a bounded open domain and $h$ be a LGF on $D$. Fix $f \in \mathcal{H}^s_0(D)$. Let $\mathbb{P}$ denote the law of $h$ and $\mathbb{P}_f$ denote the law of $h + f$. Then, $\mathbb{P}$ and $\mathbb{P}_f$ are absolutely continuous with respect to each other. Furthermore, the Radon-Nikodyn derivative is given by
    $$
    \frac{d\mathbb{P}}{d\mathbb{P}_f}(h) = \exp\left(\frac{1}{2} \Vert f \Vert^2_{\mathcal{H}^s(\mathbb{R}^d)} - k_d \,(h, (-\Delta)^s f) \right)\,.
    $$
    In particular, for any $\alpha \in \mathbb{R}$,
    $$\mathbb{E}\left[\left(\frac{d\mathbb{P}}{d\mathbb{P}_f}(h)\right)^\alpha\right] = \exp\left(\frac{\alpha^2 + \alpha}{2} \Vert f \Vert_{\mathcal{H}^s(\mathbb{R}^d)}^2\right) \quad \mbox{and} \quad \mathbb{E}_f\left[\left(\frac{d\mathbb{P}}{d\mathbb{P}_f}(h)\right)^\alpha\right] = \exp\left(\frac{\alpha^2 - \alpha}{2} \Vert f \Vert_{\mathcal{H}^s(\mathbb{R}^d)}^2\right)\,.$$
\end{prop}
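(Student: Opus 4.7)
The plan is to use the eigenfunction expansion \eqref{eq:eigenvalue} to reduce the Cameron--Martin identity to the finite-dimensional shift theorem for Gaussian measures on $\R^N$, then pass to the limit. Let $\{f_n\}_{n\ge 1}$ be an orthonormal basis of $\mathcal{H}^s_0(D)$, so that under $\mathbb{P}$ we may write $h = \sum_n \alpha_n f_n$ with $\alpha_n$ i.i.d.\ standard Gaussian. Since $f\in\mathcal{H}^s_0(D)$, expand $f = \sum_n c_n f_n$ with $c_n = (f,f_n)_{\mathcal{H}^s(\R^d)}$ and $\sum_n c_n^2 = \|f\|^2_{\mathcal{H}^s(\R^d)}$. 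Under $\mathbb{P}_f$, the coefficients of $h$ in this basis are $\alpha_n + c_n$, again a Gaussian sequence but with mean $(c_n)$.

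Next I would compute the Radon--Nikodym derivative at the level of the first $N$ coordinates. The densities of $(\alpha_1,\dots,\alpha_N)$ under $\mathbb{P}$ and $(\alpha_1+c_1,\dots,\alpha_N+c_N)$ under $\mathbb{P}_f$ are explicit Gaussian densities on $\R^N$, and a direct calculation gives
\[
\frac{d\mathbb{P}_N}{d(\mathbb{P}_f)_N} = \exp\!\left(\tfrac{1}{2}\sum_{n\le N} c_n^2 - \sum_{n\le N} c_n\alpha_n\right).
\]
The key identification is $\sum_n c_n\alpha_n = k_d\,(h,(-\Delta)^s f)$. To see this, use that for $f\in\mathcal{H}^s_0(D)$ one has $c_n = (f,f_n)_{\mathcal{H}^s(\R^d)} = k_d\,((-\Delta)^s f,f_n)_{L^2(\R^d)}$, so the partial sums $\sum_{n\le N} c_n\alpha_n$ equal $k_d\,(h_N,(-\Delta)^s f)_{L^2}$ where $h_N$ is the truncation of the eigenfunction expansion; and the random variable $k_d(h,(-\Delta)^s f)$ is well-defined as an element of $\mathcal{H}^{-s}(D)$-pairing because one computes $\|(-\Delta)^s f\|^2_{\mathcal{H}^{-s}(D)} = k_d^{-2}\|f\|^2_{\mathcal{H}^s(\R^d)}<\infty$ using the Riesz representer $f/k_d$ from Definition of $\|\cdot\|_{\mathcal{H}^{-s}(D)}$.

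To pass from finite $N$ to the full statement, I would observe that $\sum_{n\le N} c_n\alpha_n$ converges in $L^2(\mathbb{P})$ to $k_d\,(h,(-\Delta)^s f)$ by the $\ell^2$-summability of $(c_n)$, so the right-hand side of the finite-dimensional Radon--Nikodym formula converges almost surely (along a subsequence) and in $L^p(\mathbb{P})$ for every $p$, using the explicit exponential moment bounds for Gaussians. Combining this with the fact that any Borel functional of $h$ is a pointwise limit of functionals of the first $N$ coordinates, one deduces that $\mathbb{P}\ll\mathbb{P}_f$ with the stated Radon--Nikodym derivative; the reverse absolute continuity follows by applying the same argument to $-f$.

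Finally, the two moment identities reduce to one-dimensional Gaussian computations. Setting $Z := k_d(h,(-\Delta)^s f)$, under $\mathbb{P}$ one has $Z\sim \mathcal{N}(0,\|f\|^2_{\mathcal{H}^s(\R^d)})$, so
\[
\mathbb{E}\!\left[e^{\alpha(\frac{1}{2}\|f\|^2 - Z)}\right] = e^{\frac{\alpha}{2}\|f\|^2}\,e^{\frac{\alpha^2}{2}\|f\|^2} = e^{\frac{\alpha^2+\alpha}{2}\|f\|^2}.
\]
Under $\mathbb{P}_f$ one has $Z\sim\mathcal{N}(\|f\|^2_{\mathcal{H}^s(\R^d)},\|f\|^2_{\mathcal{H}^s(\R^d)})$ by the shift, and the analogous computation gives $e^{\frac{\alpha^2-\alpha}{2}\|f\|^2}$. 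The main technical obstacle is the identification of $\sum_n c_n\alpha_n$ with the pairing $k_d(h,(-\Delta)^s f)$ and the justification of the passage to the limit in the Radon--Nikodym derivative; both rely on the fact that $(-\Delta)^s f$ actually lies in $\mathcal{H}^{-s}(D)$ when $f\in\mathcal{H}^s_0(D)$, which was verified above via the explicit Riesz representer.
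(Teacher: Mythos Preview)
Your proposal is correct and takes essentially the same approach as the paper: both use the eigenfunction expansion~\eqref{eq:eigenvalue} to view $h$ as the standard Gaussian on the Hilbert space $\mathcal{H}^s_0(D)$, identify $(h,f)_{\mathcal{H}^s_0(D)}$ with $k_d\,(h,(-\Delta)^s f)$, and then compute the Gaussian moments. The only difference is that the paper invokes the abstract Cameron--Martin theorem for Gaussian measures on Hilbert spaces in one line, whereas you unpack that theorem via the finite-dimensional truncations and an $L^2$ limit; this is additional detail rather than a different route.
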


\begin{proof}
    By~\eqref{eq:eigenvalue} the law of $h$ is the standard centered Gaussian measure on the Hilbert space $\mathcal H_0^s(D)$. For any $f\in H_0^s(D)$, the Cameron-Martin theorem for Gaussian measures on Hilbert spaces therefore yields
    \[
    \frac{d\mathbb{P}}{d\mathbb{P}_f}(h)
    = \exp\Big(\tfrac12 \|f\|_{\mathcal H_0^s(D)}^2 - (h,f)_{\mathcal H_0^s(D)}\Big)
    = \exp\Big(\tfrac12 \|f\|_{\mathcal{H}^s(\mathbb{R}^d)}^2 - k_d\,(h,(-\Delta)^s f)\Big),
    \]
    which is exactly the desired formula. The moment identities follow from the fact that
    $$
    \mathbb{E} \left[ \left(h, (-\Delta)^s f \right)^2 \right] = \Vert (-\Delta)^s f \Vert^2_{\mathcal{H}^{-s}(D)} = \frac{1}{k_d^2} \Vert f \Vert_{\mathcal{H}^s(\mathbb{R}^d)}^2. \qedhere
    $$
\end{proof}

\subsection{Shell independence lemma}\label{subsec:shell-independence}

The annulus independence lemma for two-dimensional GFF, introduced in~\cite{MQ18-geodesic, local-metrics}, is a useful tool to study LQG metric; see Section 4.2 of~\cite{DDG-ICM}. It says that the restrictions of the GFF to disjoint concentric annuli (viewed modulo additive constant) are nearly independent. In this section, we prove the higher-dimensional analog for the LGF: the restrictions of the LGF to disjoint concentric shells (viewed modulo additive constant) are also nearly independent.

Let $h$ be a whole-space LGF. For $r>0$, define the $\sigma$-algebra \[\mathcal{F}_r := \sigma\left((h - h_r(0))|_{\mathbb{R}^d \setminus B_r(0)}\right).\] Then, for $r'>r$, we have $\mathcal{F}_{r'} \subset \mathcal{F}_r$. Note that $\widetilde h:= h- h_r(0)$ has the law as a whole-space LGF with the additive constant chosen so that the average over $\partial B_r(0)$ is 0. Let $\widetilde{\mathfrak{h}}^r$ denote the $s$-harmonic extension of $\widetilde h|_{\mathbb{R}^d \setminus B_r(0)}$ into $B_r(0)$. Then, by Lemma~\ref{lem:markov-whole}, $\widetilde h - \widetilde{\mathfrak{h}}^r$ has the same law as a LGF on $B_r(0)$ and is independent of $\mathcal{F}_r$. For $M>0$, $r>0$, and $q \in (0,1)$, let
\begin{equation}\label{eq:sec2-def-G}
\mathcal{G}_{qr}^r(M):= \Big{\{} \sup_{u,v \in B_{qr}(0)} |{\rm D}^j \widetilde{\mathfrak{h}}^r(u) - {\rm D}^j \widetilde{\mathfrak{h}}^r(v)| \leq M \quad \mbox{for all $0 \leq j \leq \lceil s \rceil $} \Big{\}},
\end{equation}
where $\lceil s \rceil$ denotes the smallest integer not less than $s$. The following property of the LGF is a consequence of the Cameron-Martin property (Proposition~\ref{prop:cameron}).

\begin{lemma}\label{lem:absolute-continuity}
    (Absolute continuity) Fix $0 < q < q' < 1$. For any $M>0$ and $\alpha>0$, there exists a constant $C = C(q, q', \alpha, M)>0$ such that the following holds. Let $h$ be a whole-space LGF. On the event $\mathcal{G}_{q'r}^r(M)$, conditioned on $\mathcal{F}_r$, the law of $(h - h_r(0))|_{B_{qr}(0)}$ is absolutely continuous with respect to its marginal law. Let $H$ denote the corresponding Radon-Nikodyn derivative. Then, on the event $\mathcal{G}_{q'r}^r(M)$,
    $$
        \max\{ \mathbb{E}[H^{-\alpha}\,|\,\mathcal{F}_r], \mathbb{E}[H^\alpha\,|\,\mathcal{F}_r] \} \leq C.
    $$
\end{lemma}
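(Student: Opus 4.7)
The strategy is a Cameron--Martin argument built on the domain Markov decomposition. First, apply Lemma~\ref{lem:markov-whole} at scale $r$ to write
\[
\widetilde{h} := h - h_r(0) = \widetilde{\mathfrak{h}}^r + h^\circ,
\]
where $h^\circ$ has the law of an LGF on $B_r(0)$ minus its spherical average on $\partial B_r(0)$ and is independent of $\mathcal{F}_r$. Conditionally on $\mathcal{F}_r$, the random field $\widetilde{h}|_{B_{qr}(0)}$ is the translate of $h^\circ|_{B_{qr}(0)}$ by the fixed, $\mathcal{F}_r$-measurable smooth function $\widetilde{\mathfrak{h}}^r|_{B_{qr}(0)}$, while marginally it is the independent sum of $h^\circ|_{B_{qr}(0)}$ and $\widetilde{\mathfrak{h}}^r|_{B_{qr}(0)}$.

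Fix a smooth cutoff $\chi \in C_c^\infty(\mathbb{R}^d)$ with $\chi \equiv 1$ on $B_{qr}(0)$ and $\operatorname{supp}\chi \subset B_{q'r}(0)$, and set
\[
\varphi := \chi \cdot \bigl(\widetilde{\mathfrak{h}}^r - \widetilde{\mathfrak{h}}^r(0)\bigr).
\]
On the event $\mathcal{G}_{q'r}^r(M)$, I would combine the oscillation bounds of $\mathcal{G}$ with interior regularity for $s$-harmonic functions to prove $\|\varphi\|_{\mathcal{H}^s(\mathbb{R}^d)} \le C_1(q,q',M)$; note that $\varphi \in C_c^\infty(B_{q'r}(0)) \subset \mathcal{H}_0^s(B_r(0))$. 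Proposition~\ref{prop:cameron} applied to the shift $\varphi$ of $h^\circ$ gives absolute continuity with $\pm\alpha$-moments of the Radon--Nikodym derivative bounded by $\exp\!\bigl(\tfrac{\alpha^2+|\alpha|}{2}C_1^2\bigr)$. Pushing this comparison forward to $B_{qr}(0)$, where $\chi=1$, identifies $L_{\mathrm{cond}}$ as absolutely continuous with respect to the law of $h^\circ|_{B_{qr}(0)}$ translated by $\widetilde{\mathfrak{h}}^r(0)$, with controlled moments. A parallel Feldman--H\'ajek-type comparison of the marginal law $L_{\mathrm{marg}}$ with the same translated reference, exploiting that the unconditional Gaussian law of $\widetilde{\mathfrak{h}}^r|_{B_{qr}(0)}$ has a smooth covariance kernel (hence trace-class relative to the log-singular covariance of $h^\circ|_{B_{qr}(0)}$), yields controlled moments on that side too. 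Chaining the two comparisons via the chain rule for Radon--Nikodym derivatives and H\"older's inequality then produces the required bounds on $\mathbb{E}[H^{\pm \alpha}\mid \mathcal{F}_r]$.

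\textbf{Main obstacle.} The technical heart of the argument is the estimate $\|\varphi\|_{\mathcal{H}^s(\mathbb{R}^d)} \le C_1(q,q',M)$. Since $\mathcal{G}_{q'r}^r(M)$ controls only oscillations, the centering by $\widetilde{\mathfrak{h}}^r(0)$ is crucial: combined with interior estimates for polyharmonic and fractional-harmonic functions on $B_{q'r}(0)\subsetneq B_r(0)$, it promotes oscillation control of derivatives up to order $\lceil s \rceil$ into pointwise derivative bounds on a neighborhood of $B_{qr}(0)$, and thence into an $\mathcal{H}^s$-estimate on the cutoff $\varphi$. Care is then needed when comparing $L_{\mathrm{marg}}$ to the shifted reference so that the possibly large additive constant $\widetilde{\mathfrak{h}}^r(0)$, which is not controlled by $\mathcal{G}$, enters symmetrically on both sides and does not spoil the moment bounds; this is where the smoothness of the harmonic-extension covariance is used.
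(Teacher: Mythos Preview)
Your core strategy matches the paper's: domain Markov decomposition plus a Cameron--Martin shift by the cut-off harmonic extension. The paper's proof is much more direct, though: it fixes a bump $\psi_r$ equal to $1$ on $B_{qr}$ and supported in $B_{\hat qr}$ with $\hat q=(q+q')/2$, bounds $\|\psi_r\,\widetilde{\mathfrak{h}}^r\|_{\mathcal{H}^s}$ straight from the oscillation hypotheses in $\mathcal{G}_{q'r}^r(M)$ (via Leibniz for even $d$, and interpolation between $\mathcal{H}^{s\pm 1/2}$ for odd $d$), and then invokes Proposition~\ref{prop:cameron} \emph{once} with $D=B_{\hat q r}(0)$ and $f=\psi_r\widetilde{\mathfrak{h}}^r$. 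There is no centering by $\widetilde{\mathfrak{h}}^r(0)$, no appeal to interior regularity for $s$-harmonic functions, and no Feldman--H\'ajek step. Your extra machinery targets two points the paper passes over in silence --- that the oscillation event does not directly bound $|\widetilde{\mathfrak{h}}^r|$ itself, and that the conditional and marginal laws of $\widetilde h|_{B_{qr}}$ have different covariances --- but the paper treats the single Cameron--Martin shift as already delivering the comparison with the marginal. Your route is more explicit about these issues, at the cost of introducing a second equivalence-of-Gaussian-measures argument whose trace-class hypothesis and $\pm\alpha$-moment bounds you still have to verify; the paper's route is shorter but telegraphic on exactly those points.
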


\begin{proof}

Let $\hat q = (q + q')/2$. Fix a bump function $\psi$ that equals 1 in $B_q(0)$ and is supported in $B_{\hat q}(0)$, and define $\psi_r(x):= \psi(x/r)$ for $x \in \mathbb{R}^d$. We claim that for any function $g:\mathbb{R}^d \to \mathbb{R}$ that is $s$-harmonic in $B_r(0)$ and satisfies 
\begin{equation}\label{eq:lem2.6-cond}
    \sup_{u,v \in B_{q'r}(0)}|{\rm D}^j g(u) - {\rm D}^j g(v)| \leq M \quad \mbox{for all $0 \leq j \leq \lceil s \rceil$},
\end{equation}we have
\begin{equation}\label{eq:lem2.6-claim}
\Vert \psi_r \cdot g \Vert_{\mathcal{H}^s(\mathbb{R}^d)}^2 \leq C 
\end{equation}
for some constant $C = C(\psi, q,q', M)>0$. The lemma then follows by applying Proposition~\ref{prop:cameron} with $D = B_{\hat qr}(0)$ and $f = \psi_r \cdot \widetilde{\mathfrak{h}}^r$.

We now prove~\eqref{eq:lem2.6-claim}. When $d$ is even and hence $s$ is an integer, Claim~\eqref{eq:lem2.6-claim} follows from~\eqref{eq:lem2.6-cond} and~\eqref{eq:norm-even-dimen}. Indeed, by~\eqref{eq:lem2.6-cond} and the chain rule, we have $|{\rm D}^s(\psi_r \cdot g)| \leq C r^{-s}$ on $B_{\hat q r}(0)$. Combining this with~\eqref{eq:norm-even-dimen} yields Claim~\eqref{eq:lem2.6-claim}. 

When $d$ is odd and hence $s$ is not an integer, \eqref{eq:lem2.6-cond} implies
$$\big{|}{\rm D}^{s-1/2}(\psi_r \cdot g)\big{|} \leq C r^{-s+1/2} \quad \mbox{and} \quad \big{|}{\rm D}^{s+1/2}(\psi_r \cdot g)\big{|} \leq C r^{-s-1/2} \quad \mbox{on $B_{\hat q r}(0)$}.$$
By interpolation (or H\"older's inequality in Fourier space),
$$
\Vert \psi_r \cdot g \Vert_{\mathcal{H}^s(\mathbb{R}^d)}^2 \lesssim \Vert \psi_r \cdot g \Vert_{\mathcal{H}^{s-1/2}(\mathbb{R}^d)} \cdot \Vert \psi_r \cdot g \Vert_{\mathcal{H}^{s+1/2}(\mathbb{R}^d)} \leq C,
$$
which proves~\eqref{eq:lem2.6-claim}. \qedhere

\end{proof}

We can now state the shell independence lemma for the LGF.

\begin{lemma}\label{lem:shell-independence}
    Let $h$ be a whole-space LGF. Fix $q \in (0,1)$ and $q'>1$. Let $\{r_k\}_{k \geq 1}$ be a decreasing sequence of positive numbers such that $r_{k+1}/r_k \leq q$ for all $k \geq 1$. Let $\{E_{r_k}\}_{k \geq 1}$ be a sequence of events measurable with respect to $\sigma\big((h - h_{r_k}(0))|_{A_{r_k/q', q'r_k}(0)}\big)$. For $K \geq 1$, let $\mathcal{N}(K)$ be the number of $k \in [1,K] \cap \mathbb{Z}$ for which $E_{r_k}$ occurs.
    \begin{enumerate}
        \item For any $a>0$ and $b \in (0,1)$, there exists $p = p(a,b,q,q') \in (0,1)$ such that if $\mathbb{P}[E_{r_k}] \geq p$ for all $k \geq 1$, then
        $$
        \mathbb{P}[\mathcal{N}(K) \geq  bK] \geq 1 - e^{-aK} \quad \mbox{for all $K \geq 1$}.
        $$
        \item For each $p \in (0,1)$, there exists $a>0$ and $b \in (0,1)$ depending only on $p,q,q'$ such that the above relation holds.
    \end{enumerate}
\end{lemma}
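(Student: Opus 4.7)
The plan is a coupling argument based on the domain Markov property together with Cameron--Martin absolute continuity, which reduces the problem to Chernoff concentration for an essentially independent sum. Parts (1) and (2) come from the same construction; I describe them together.

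As a preliminary reduction, I would pass to the subsequence $\{r_{Nk}\}_{k \ge 1}$ for an integer $N = N(q, q')$ with $q^N q'^2 < 1/4$. The thinned sequence satisfies $q' r_{N(k+1)} < r_{Nk}/(2 q')$, so the shells $A_{r_{Nk}/q',\, q' r_{Nk}}(0)$ are pairwise disjoint with a definite multiplicative gap, and the count differs from $\mathcal N(K)$ by at most a factor of $N$. Introduce barrier radii $\rho_k := r_k/(q' \sqrt{2})$ lying strictly between consecutive shells. Since $\rho_k < r_k/q'$, the event $E_{r_k}$ is $\mathcal F_{\rho_k}$-measurable (the required constant $h_{r_k}(0)-h_{\rho_k}(0)$ is itself a function of the field outside $B_{\rho_k}$); since $q' r_{k+1} < \rho_k$, the shell controlling $E_{r_{k+1}}$ lies inside $B_{\rho_k}(0)$. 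Applying Lemma~\ref{lem:markov-whole} at scale $\rho_k$, I write $h - h_{\rho_k}(0) = \widetilde{\mathfrak h}^{\rho_k} + \widetilde h^{\rho_k}$ on $B_{\rho_k}(0)$, with $\widetilde{\mathfrak h}^{\rho_k}$ being $\mathcal F_{\rho_k}$-measurable and $\widetilde h^{\rho_k}$ an independent LGF on $B_{\rho_k}(0)$.

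Let $\mathcal G_k := \mathcal G^{\rho_k}_{q_\star \rho_k}(M)$ from~\eqref{eq:sec2-def-G}, with $q_\star \in (0,1)$ chosen so that $B_{q_\star \rho_k} \supset A_{r_{k+1}/q',\, q' r_{k+1}}(0)$, and with $M$ a large constant (scaled appropriately in $k$) to be fixed. On $\mathcal G_k$, Lemma~\ref{lem:absolute-continuity} gives that the conditional law of $(h - h_{\rho_k}(0))|_{B_{q_\star \rho_k}(0)}$ given $\mathcal F_{\rho_k}$ is absolutely continuous with respect to its marginal, with Radon--Nikodym derivative $H$ satisfying $\mathbb E[H^{\pm \alpha} \mid \mathcal F_{\rho_k}] \le C(M, \alpha)$. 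Since $E_{r_{k+1}}$ is measurable with respect to this restriction, H\"older's inequality yields, on $\mathcal G_k$,
\[
\mathbb P[E_{r_{k+1}} \mid \mathcal F_{\rho_k}] \;\ge\; c(M) \, \mathbb P[E_{r_{k+1}}]^{\beta}
\]
for some $c(M) > 0$ and $\beta > 1$ depending only on $q, q', M$. Using the scale invariance of the LGF (Lemma~\ref{lem:scale-invariance}) together with Gaussian tail bounds for the derivatives of $\widetilde{\mathfrak h}^1$, $M$ can be chosen so that $\mathbb P[\mathcal G_k^c] \le \eta$ uniformly in $k$.

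To conclude, given $a > 0$ and $b \in (0,1)$, pick $\eta$ small and $M$ large so that $\mathbb P[\mathcal G_k^c] \le \eta$, and then $p$ close enough to $1$ that $p_0 := c(M) p^\beta$ satisfies $p_0 > b + 3\eta$. By stochastic dominance, couple $\{1_{E_{r_k}}\}$ with an i.i.d.\ Bernoulli$(p_0)$ sequence $\{Y_k\}$ so that $1_{E_{r_{k+1}}} \ge Y_{k+1}$ whenever $\mathcal G_k$ holds. Then
\[
\mathcal N(K) \;\ge\; \sum_{k=1}^K Y_k \;-\; \#\{k \le K : \mathcal G_{k-1}^c\},
\]
and Chernoff bounds on both terms yield the desired $1 - e^{-aK}$ tail: the first is the standard Bernoulli Chernoff, while the second uses that the events $\mathcal G_k$ are $\mathcal F_{\rho_k}$-measurable at well-separated scales, so by the same Markov decomposition they are nearly independent and their count concentrates. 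Part (2) follows from the same construction with $p$ fixed and $(a, b)$ produced as outputs. The main technical obstacle will be the scale-covariant synchronization of the regularity threshold in $\mathcal G^r_{qr}(M)$: higher derivatives of $\widetilde{\mathfrak h}^r$ scale like $r^{-j}$, so choosing the threshold so that both $\mathbb P[\mathcal G_k^c] \le \eta$ and the constants $c(M), \beta$ in Lemma~\ref{lem:absolute-continuity} remain uniform in $k$ requires some care. Once this is synchronized, the coupling-plus-Chernoff step is classical.
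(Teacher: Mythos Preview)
Your proposal is correct and follows essentially the same route as the paper. The paper's proof is very brief: it invokes Lemmas~\ref{lem:absolute-continuity} and~\ref{lem:2.8} as the higher-dimensional analogues of Lemmas~3.3 and~3.4 in \cite{local-metrics}, and then says the argument of Section~3 there goes through verbatim. What you have written is precisely a sketch of that argument---domain Markov at barrier scales, absolute continuity on the regularity event $\mathcal G_k$, H\"older to lower-bound conditional probabilities, and a coupling/Chernoff finish.

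One organizational difference worth noting: the paper isolates the concentration of the regularity events $\mathcal G_k$ into a separate statement (Lemma~\ref{lem:2.8}), whose proof is again deferred to \cite{MQ18-geodesic}. Your proposal folds this step into the endgame (``the events $\mathcal G_k$ are $\mathcal F_{\rho_k}$-measurable at well-separated scales, so by the same Markov decomposition they are nearly independent''). That sentence is exactly the content of Lemma~\ref{lem:2.8}, and it does require its own iteration of the Markov/Gaussian-tail machinery rather than being automatic; treating it as a black box, as the paper does, is cleaner. Your closing remark about synchronizing the derivative thresholds in $\mathcal G^r_{qr}(M)$ across scales is on point: this is handled in the paper via scale invariance (Lemma~\ref{lem:scale-invariance}) inside the proofs of Lemmas~\ref{lem:absolute-continuity} and~\ref{lem:2.8}, and is indeed the only place where some care is needed.
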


Lemma~\ref{lem:shell-independence} is a consequence of the following lemma together with Lemma~\ref{lem:absolute-continuity}.

\begin{lemma}\label{lem:2.8}
    Fix $0<q<1$. Let $\{r_k\}_{k \geq 1}$ be a decreasing sequence of positive numbers such that $r_{k+1}/r_k \leq q$ for all $k \geq 1$. For $K \geq 1$ and $M>0$, let $\mathcal{L}(K, M)$ be the number of $k \in [1,K] \cap \mathbb{Z}$ for which $\mathcal{G}^{r_k}_{qr_k}(M)$ occurs. For each $a>0$ and $b \in (0,1)$, there exists $M = M(a,b,q) >0 $ such that
    $$
        \mathbb{P}[\mathcal{L}(K, M) \geq bK] \geq 1 - e^{-aK} \quad \mbox{for all $K \geq 1$}.
    $$
\end{lemma}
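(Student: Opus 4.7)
The plan is to combine scale invariance of the LGF with an iterated Markov decomposition of $h$ adapted to the sequence $\{r_k\}$, and then apply concentration for i.i.d.\ events together with a Bernstein-type tail bound for a far-scale error. Iterating Lemmas~\ref{lem:markov} and~\ref{lem:markov-whole} along the nested balls $B_{r_1} \supset B_{r_2} \supset \cdots$ yields jointly independent Gaussian fields $\{\psi_j\}_{j \geq 0}$ with $h = \sum_{j \geq 0} \psi_j$ modulo an additive constant: $\psi_0$ is the $s$-harmonic extension of $h|_{\mathbb{R}^d \setminus B_{r_1}}$ into $B_{r_1}$, and for $j \geq 1$, $\psi_j$ is the $s$-harmonic extension into $B_{r_{j+1}}$ of the residual LGF component surviving on $A_{r_{j+1}, r_j}$. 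Since $\psi_j$ is supported in $B_{r_j}$, only $\{\psi_j\}_{j<k}$ contribute to $h|_{\mathbb{R}^d \setminus B_{r_k}}$, and by linearity of the $s$-harmonic extension,
\[
\widetilde{\mathfrak{h}}^{r_k} = \sum_{j=0}^{k-1} \Phi_j^{(k)} \pmod{\text{const}}, \qquad \Phi_j^{(k)} := \text{$s$-harmonic extension of } \psi_j|_{\mathbb{R}^d \setminus B_{r_k}} \text{ into } B_{r_k},
\]
with the summands $\{\Phi_j^{(k)}\}_{j<k}$ jointly independent for each fixed $k$.

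I then quantify each summand on $B_{qr_k}$. Let $\mathrm{OSC}(F)$ denote the relevant maximum oscillation appearing in the definition of $\mathcal{G}^{r_k}_{qr_k}(M)$. The closest-scale piece $\Phi_{k-1}^{(k)}$ depends only on $\psi_{k-1}$, and by scale invariance (Lemma~\ref{lem:scale-invariance}) together with Borell--TIS, $q_0(M) := \mathbb{P}[\mathrm{OSC}(\Phi_{k-1}^{(k)}) \leq M/2]$ does not depend on $k$ and satisfies $q_0(M) \to 1$ as $M \to \infty$. For $j < k-1$, the source $\psi_j$ lives at scale $r_j \geq q^{-(k-j)} r_k \gg r_k$ while we evaluate $\Phi_j^{(k)}$ deep inside $B_{r_k}$: standard estimates on derivatives of the $s$-fractional Poisson kernel of a ball, combined with Borell--TIS, yield $\mathbb{E}[\mathrm{OSC}(\Phi_j^{(k)})^2] \leq C q^{\alpha(k-j)}$ with Gaussian tails for some $\alpha > 0$.

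Finally, set $F_k := \{\mathrm{OSC}(\Phi_{k-1}^{(k)}) \leq M/2\}$ and $T_k := \sum_{j<k-1} \mathrm{OSC}(\Phi_j^{(k)})$, so that $F_k \cap \{T_k \leq M/2\} \subset \mathcal{G}^{r_k}_{qr_k}(M)$. The $\{F_k\}$ are independent Bernoullis of success probability $q_0(M)$, so Chernoff for i.i.d.\ Bernoullis gives $|\{k \leq K : F_k\}| \geq (1 - \tfrac{1-b}{2}) K$ with probability $\geq 1 - e^{-K I(q_0(M))}$, where $I(q_0(M)) \to \infty$ as $M \to \infty$. For the second piece, the key identity $\sum_{k \leq K} T_k = \sum_j S_j$ with $S_j := \sum_{k > j+1} \mathrm{OSC}(\Phi_j^{(k)})$ a function of $\psi_j$ alone shows $\{S_j\}$ are independent, each sub-Gaussian with norm uniformly bounded in $j$ by the geometric decay of the previous paragraph. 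Bernstein's inequality then gives $\sum_k T_k \leq CK$ with exponential rate in $K$, forcing $|\{k \leq K : T_k > M/2\}| \leq 2CK/M \leq \tfrac{1-b}{2} K$ once $M$ is taken large. Combining yields $\mathcal{L}(K,M) \geq bK$ with probability $\geq 1 - e^{-aK}$ for $M = M(a,b,q)$ sufficiently large. The main obstacle is the quantitative decay estimate: one must carefully bound derivatives of the $s$-fractional Poisson kernel on the ball for source points on a far shell and interior points in $B_{qr_k}$. For $d$ even this reduces to classical polyharmonic estimates; for $d$ odd, the half-integer order requires interpolation-style arguments analogous to those used in the proof of Lemma~\ref{lem:absolute-continuity}.
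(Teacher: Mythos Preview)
Your approach is correct and takes a genuinely different route from the paper. The paper defers entirely to the argument of \cite[Proposition~4.3]{MQ18-geodesic}: once one has the uniform Gaussian tail \eqref{eq:lem2.8-1} (from Borell--TIS and scale invariance) and the domain Markov property, that conditional/filtration-based stochastic-domination argument carries over verbatim to the LGF. You instead unfold the full Markov tower into independent increments $\{\psi_j\}$ (note that in fact $\Phi_j^{(k)}=\psi_j$ on $B_{r_k}$ for every $j<k$, since each $\psi_j$ is already $s$-harmonic in $B_{r_{j+1}}\supset B_{r_k}$, so the extension is trivial), split off the nearest-scale contribution from a geometrically decaying tail, and combine Chernoff for the former with a Markov/Bernstein bound on $\sum_k T_k$ for the latter. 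This is more explicit but more laborious. Two comments. First, the decay estimate you flag as the main obstacle is easier than advertised: no fractional Poisson-kernel computations are needed. Interior smoothness of $s$-harmonic functions (already invoked in the paper via \cite[Theorem~8.3]{fgf-survey}) gives that $r_{j+1}^{m}\sup_{B_{r_{j+1}/2}}|D^m\psi_j|$ is uniformly sub-Gaussian, and the mean value theorem then yields the geometric decay of $\mathrm{OSC}_{B_{qr_k}}(\psi_j)$ directly. Second, scale invariance alone does not make $q_0(M)$ independent of $k$, since the law of $\psi_{k-1}$ depends on the ratio $r_{k-1}/r_k\in[q^{-1},\infty)$; you need a sub-Gaussian bound uniform over this range, which follows for instance by embedding the LGF on $B_{r_{k-1}}$ into a whole-space LGF and applying \eqref{eq:lem2.8-1}.
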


\begin{proof}
    The proof is identical to the argument in Proposition 4.3 of~\cite{MQ18-geodesic}. The main inputs there are Lemma 4.4 and the domain Markov property for the GFF, both of which extend to the LGF. Specifically, Lemma 4.4 of~\cite{MQ18-geodesic} requires Gaussian tail bounds for the oscillation of the harmonic extension. In the case of the LGF, we can show that there exists $c = c(q)>0$ such that for all $r >0$ and $M>0$,
    \begin{equation}\label{eq:lem2.8-1}
    \mathbb{P}\left[ \mathcal{G}^r_{qr}(M)^c \right] \leq c^{-1} e^{-cM^2}.
    \end{equation}
    In fact, for $u,v \in B_r(0)$, ${\rm D}^j \widetilde{\mathfrak{h}}^r(u) - {\rm D}^j \widetilde{\mathfrak{h}}^r(v)$ is a centered smooth Gaussian process in $(u,v)$ (see \cite[Theorem 8.3]{fgf-survey}), and~\eqref{eq:lem2.8-1} then follows from the Borel-TIS inequality \cite{itsjustborell,sudakov}. Using the scaling invariance of $h$ (Lemma~\ref{lem:scale-invariance}), we see that the constant $c$ can be chosen independent of $r$. 
    Lemmas~\ref{lem:markov} and~\ref{lem:markov-whole} give the desired Markov property for the LGF. Thus, we can follow the argument in Proposition 4.3 of~\cite{MQ18-geodesic} verbatim. Finally, although Proposition 4.3 of~\cite{MQ18-geodesic} includes an additional constant $c_0(a,b)$, we can absorb it into $a$ by choosing $M$ sufficiently large.
\end{proof}

\begin{proof}[Proof of Lemma~\ref{lem:shell-independence}]
    The proof follows the argument in Section 3 of~\cite{local-metrics}. The main inputs are Lemmas 3.3 and 3.4 therein, which are generalized by Lemmas~\ref{lem:absolute-continuity} and~\ref{lem:2.8}, respectively.
\end{proof}

\subsection{Different approximations of the LGF}\label{subsec:kernel}

In this section, we prove Proposition \ref{kernelbaby}, more precisely the version below (Lemma~\ref{lem:kernel}) which implies it. 

We assume that
\begin{equation}\label{eq:asmp-kernel}
\mbox{$\mathfrak K$ is a radially symmetric, compactly supported, smooth function satisfying $\int_{\mathbb{R}^d} \mathfrak K(x)^2 dx = 1$.}
\end{equation}
Note that such a function $\mathfrak K$ is automatically Schwartz. Let $W(dy,dt)$ denote the space-time white noise on $\mathbb{R}^d \times (0,\infty)$. For $\epsilon>0$, recall that 
\begin{equation*}
\mathfrak h_\epsilon(x) = \int_{\mathbb{R}^d} \int_{\epsilon}^\infty \mathfrak K(\frac{x-y}{t}) t^{-\frac{d+1}{2}} W(dy,dt), \quad x \in \mathbb{R}^d.
\end{equation*}
Then $\mathfrak h_\epsilon$ can be defined as a random continuous function modulo a global additive constant.

We now state a more precise version of Proposition \ref{kernelbaby}.

\begin{lemma}\label{lem:kernel}
    Let $h$ be a whole-space LGF. For any $\mathfrak K$ satisfying~\eqref{eq:asmp-kernel}, set
    \[
        \mathsf K=\mathcal F^{-1}\!\left(\,\zeta\mapsto \sqrt{\frac{2\pi^{d/2}}{\Gamma(d/2)}\int_{|\zeta|}^{\infty} t^{d-1}\,|\hat{\mathfrak K}(t)|^{2}\,dt}\right),
    \]
    which is a radially symmetric real-valued function with $\int_{\mathbb{R}^d} \mathsf K(x) \, dx = 1$ and satisfies
    \begin{equation} \label{eq:K-tail}
         \sup_{x \in \mathbb{R}^d} \max \{ |\mathsf K(x)|,  |x|^{2d-1} |\mathsf K(x)|  \}<\infty.
    \end{equation}
    Moreover, when $d$ is even, $\mathsf K$ is Schwartz. For $\epsilon>0$ and $x \in \mathbb{R}^d$, let $\mathsf K_\epsilon(x) = \epsilon^{-d} \mathsf K(x/\epsilon)$. Then $\mathfrak h_\epsilon$ agrees in law with $h * \mathsf K_\epsilon$, viewed as a random continuous function modulo a global constant.
\end{lemma}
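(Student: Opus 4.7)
The plan is to identify both $\mathfrak h_\epsilon$ and $h*\mathsf K_\epsilon$ as centered Gaussian generalized functions modulo additive constants and to match their laws by comparing variances when paired against any mean-zero Schwartz test function. Since a centered Gaussian element of $\mathcal S_0'(\mathbb R^d)$ is determined by its variance functional on $\mathcal S_0(\mathbb R^d)$, and both processes admit continuous modifications, this will give the claimed equality in law as continuous functions modulo a global constant.

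For the variance matching I take any $\phi\in\mathcal S_0(\mathbb R^d)$ and use the stochastic Fubini theorem to write $(\mathfrak h_\epsilon,\phi) = \int\!\int_\epsilon^\infty t^{-(d+1)/2}\,\psi_t(y)\,W(dy,dt)$ with $\psi_t(y) := \int \phi(x)\,\mathfrak K((x-y)/t)\,dx$. Itô's isometry combined with Plancherel and the scaling identity $\hat\psi_t(\xi) = t^d\hat\phi(\xi)\hat{\mathfrak K}(t\xi)$ yields
\[
\Var((\mathfrak h_\epsilon,\phi)) = \int_{\mathbb R^d}|\hat\phi(\xi)|^2 \int_\epsilon^\infty t^{d-1}|\hat{\mathfrak K}(t\xi)|^2\,dt\,d\xi.
\]
On the other hand, $(h*\mathsf K_\epsilon,\phi) = (h,\phi*\mathsf K_\epsilon)$ by the radiality of $\mathsf K_\epsilon$, and $\widehat{\phi*\mathsf K_\epsilon}(\xi) = \hat\phi(\xi)\hat{\mathsf K}(\epsilon\xi)$, so the Fourier characterization of the LGF covariance gives
\[
\Var((h*\mathsf K_\epsilon,\phi)) = \tfrac{1}{k_d}\int_{\mathbb R^d}|\hat\phi(\xi)|^2\,|\hat{\mathsf K}(\epsilon\xi)|^2\,(2\pi|\xi|)^{-d}\,d\xi.
\]
Substituting $u=t|\xi|$, using the radial symmetry of $\hat{\mathfrak K}$, and simplifying with $k_d(2\pi)^d = 2\pi^{d/2}/\Gamma(d/2)$, the two variances agree for every $\phi\in\mathcal S_0$ exactly when $|\hat{\mathsf K}(\eta)|^2 = \tfrac{2\pi^{d/2}}{\Gamma(d/2)}\int_{|\eta|}^\infty u^{d-1}|\hat{\mathfrak K}(u)|^2\,du$, which is precisely the defining identity for $\mathsf K$ in the statement.

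Next I would verify the listed properties of $\mathsf K$. The expression under the square root is a non-negative smooth radial function of $|\zeta|$ and is strictly positive for all $\zeta$ (the Schwartz function $\hat{\mathfrak K}$, being the Fourier transform of a non-trivial compactly supported smooth function, is real-analytic and cannot vanish on any half-ray), so $\hat{\mathsf K}$ is a well-defined non-negative radial function and $\mathsf K = \mathcal F^{-1}\hat{\mathsf K}$ is real-valued and radial. Plancherel applied to $\int|\mathfrak K|^2 = 1$, together with polar coordinates, gives $\hat{\mathsf K}(0) = 1$ and hence $\int\mathsf K(x)\,dx = 1$. For \eqref{eq:K-tail} I Taylor-expand the integral defining $|\hat{\mathsf K}|^2$ near $|\zeta|=0$: since the integrand $r^{d-1}|\hat{\mathfrak K}(r)|^2$ vanishes to order at least $d-1$ at $r=0$, one gets $\hat{\mathsf K}(\zeta) = 1 + c|\zeta|^d + O(|\zeta|^{d+1})$, so the only non-smooth piece near the origin comes from the homogeneous distribution $|\zeta|^d$. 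The $k$-th distributional derivatives of $|\zeta|^d$ are locally integrable functions bounded pointwise by $C|\zeta|^{d-k}$, which is integrable on the unit ball of $\mathbb R^d$ iff $k<2d$; the smooth and rapidly decaying remainder contributes integrable derivatives of any order. Hence $\partial^\alpha\hat{\mathsf K}\in L^1(\mathbb R^d)$ for all $|\alpha|\leq 2d-1$, and the Fourier identity $x^\alpha\mathsf K(x) = (-2\pi i)^{-|\alpha|}\mathcal F^{-1}(\partial^\alpha\hat{\mathsf K})(x)$ forces $x^\alpha\mathsf K(x)$ to be bounded for these $\alpha$, giving \eqref{eq:K-tail}. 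When $d$ is even, $|\zeta|^d$ is already a polynomial in $\zeta$ (equivalently, $H(r) := |\hat{\mathsf K}(\zeta)|^2$ extends to an even smooth function on $\mathbb R$), so $\hat{\mathsf K}$ is smooth at $0$; together with rapid decay of all derivatives at infinity this makes $\hat{\mathsf K}$ Schwartz, and hence so is $\mathsf K$.

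The main technical point is the decay estimate in odd dimensions, where $\hat{\mathsf K}$ has only $C^{d-1}$ regularity at the origin coming from the $|\zeta|^d$ term, and the exponent $2d-1$ in the statement corresponds precisely to the number of integer-order derivatives of $\hat{\mathsf K}$ that are $L^1$ near the origin against the volume element $r^{d-1}\,dr$. A secondary subtlety is the rigorous identification of the two Gaussian processes modulo a global additive constant, which is handled by working exclusively with $\mathcal S_0$-pairings and using that both sides admit continuous modifications.
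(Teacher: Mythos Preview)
Your variance-matching argument is essentially identical to the paper's: compute $\Var((\mathfrak h_\epsilon,\phi))$ via It\^o isometry and Plancherel, compute $\Var((h*\mathsf K_\epsilon,\phi))$ via the Fourier representation of the LGF covariance, and equate. The near-origin analysis of $\hat{\mathsf K}$ (the $|\zeta|^d$ singularity and its consequences for integrability of derivatives up to order $2d-1$) also matches the paper's Lemma~\ref{lem:schwartz}.

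There is, however, a genuine gap in your treatment of the behavior at infinity. You assert ``rapid decay of all derivatives at infinity'' for $\hat{\mathsf K}$, but this is not automatic: $\hat{\mathsf K}(\zeta)=\sqrt{F(|\zeta|)}$ with $F(r)=\tfrac{2\pi^{d/2}}{\Gamma(d/2)}\int_r^\infty t^{d-1}|\hat{\mathfrak K}(t)|^2\,dt$, and while $F>0$ everywhere and $F$ together with all its derivatives decays faster than any polynomial, the derivatives of $\sqrt F$ involve inverse powers of $\sqrt F$ (e.g.\ $(\sqrt F)'=-g/(2\sqrt F)$ with $g(r)=r^{d-1}|\hat{\mathfrak K}(r)|^2$), and there is no a priori lower bound on $F$ in terms of $g$. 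The paper isolates this as a separate lemma (Lemma~\ref{lem:sqrt-schwartz}) and proves it by an inductive inequality of the form $\int_x^\infty f \geq c_n\, f(x)^{1+1/n}\big/(\sup_{y\ge x}|f^{(n)}(y)|)^{1/n}$, which controls precisely these ratios. Without something of this kind, neither your $L^1$ bound on $\partial^\alpha\hat{\mathsf K}$ for $|\alpha|\le 2d-1$ nor the Schwartz claim for even $d$ is justified at infinity.
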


Recall from~\eqref{eq:def-exponential-metric} that $D_h^\epsilon$ is the exponential metric defined with respect to $h_\epsilon^* = h * \mathsf K_\epsilon$. The above lemma combined with \cite[Theorem 1.2]{dgz-exponential-metric} implies that the rescaled exponential metrics $\mathsf{a}_\epsilon^{-1} D_h^{\epsilon}$ are tight in the local uniform topology. Furthermore, each possible subsequential limit is a continuous metric. We refer to Lemma~\ref{lem:tightness-whole-plane} for details.

\begin{proof}[Proof of Lemma~\ref{lem:kernel}]
We proceed as in Lemma 2.4 in \cite{chg-support}. Let $f:\R^d\to \R$ be compactly supported and smooth such that $\int_{\R^d} f(z) \,dz = 0.$ 

\medskip

\noindent\textbf{Step 1: Covariance of the white-noise approximation.} Let $R>\epsilon$, and define the truncated field
$$
\mathfrak h_{\epsilon,R}(x) := \int_{\mathbb{R}^d} \int_{\epsilon}^R \mathfrak K(\frac{y-x}{t}) t^{-\frac{d+1}{2}} W(dy,dt), \quad x \in \mathbb{R}^d.
$$
Then
\[
\mathrm{Var}\left(\int_{\mathbb{R}^d} f(z) \mathfrak h_{\epsilon, R}(z)dz\right) = \mathbb{E}\left(\left(\int_{\R^d} f(z) \mathfrak h_{\epsilon, R}(z)dz\right)^2\right),
\]
since $\mathbb{E}\big(\int f(z) \mathfrak h_{\epsilon,R}(z)\,dz\big)=0$. By the definition of $\mathfrak h_{\epsilon,R}$ and the translation invariance of white noise,
\begin{align}\label{eqn:wn-var0}
\mathrm{Var}\left(\int_{\mathbb{R}^d} f(z) \mathfrak h_{\epsilon, R}(z)\,dz\right)
 &= \int_{\mathbb{R}^d}\int_{\mathbb{R}^d} f(x)f(y)\,
     \kappa_{\epsilon,R}(x-y)\,dx\,dy,
\end{align}
where
\[
\kappa_{\epsilon,R}(x-y)
 := \mathbb{E}\big[\mathfrak h_{\epsilon,R}(x)\,\mathfrak h_{\epsilon,R}(y)\big].
\]
A direct computation using the definition of white noise yields
\begin{equation}\label{eq:kappa-epsilon-R}
\kappa_{\epsilon,R}(x)
 = \int_{\mathbb{R}^d}\int_{\epsilon}^R
   t^{-(d+1)}
   \mathfrak K\!\left(\frac{x-u}{t}\right)\mathfrak K\!\left(\frac{u}{t}\right)
   \,dt\,du .
\end{equation}

Let $\mathcal F$ denote the Fourier transform, 
\[
\mathcal{F}(f)(\zeta)=\hat{f}(\zeta) := \int_{\R^d} e^{-2\pi i x\cdot \zeta} f(x) \,dx.
\]
Using Plancherel's theorem and the fact that $\mathcal{F}(f\ast g)=\hat{f}\hat{g}$, we obtain
$$
\int_{\R^d}\int_{\R^d} f(x)f(y)\kappa_{\epsilon, R}(x-y)dxdy = \int_{\R^d} \hat{\kappa}_{\epsilon, R}(\zeta) \vert \hat{f}(\zeta)\vert^2 d\zeta.
$$
From~\eqref{eq:kappa-epsilon-R},
\begin{align*}
    \hat \kappa_{\epsilon, R}(\zeta) &= \int_{\epsilon}^R t^{-(d+1)} \mathcal{F}\left(x \mapsto\int_{\R^d} \mathfrak K(\frac{x-u}{t})\mathfrak K(\frac{u}{t})du\right)(\zeta)\, dt \\
    &= \int_{\epsilon}^R t^{-(d+1)} t^{2d} |\hat{\mathfrak K}(t\xi)|^2 \, dt \\
    &= \int_{\epsilon}^R t^{d-1} |\hat{\mathfrak K}(t\zeta)|^2 \,dt.
\end{align*}
    Since $\mathfrak K$ is Schwartz, $\hat{\mathfrak K}$ is also Schwartz and $|\hat{\mathfrak K}(x)| \leq C |x|^{-d}$ for all $x \in \mathbb{R}^d$. This implies that the last integral converges for all $\zeta \neq 0$, and moreover
    \begin{equation}\label{eq:def-hatkappa}
    \hat \kappa_{\epsilon}(\zeta) := \lim_{R \to \infty} \hat{\kappa}_{\epsilon, R}(\zeta) = \int_{\epsilon}^\infty t^{d-1} |\hat{\mathfrak K}(t\zeta)|^2 dt.
    \end{equation}
Thus,
\begin{equation} \label{eqn:kernel-plancherel}
\mathrm{Var}\left(\int_{\R^d} f(z) \mathfrak h_{\epsilon}(z)dz\right) = \int_{\R^d} \hat{\kappa}_{\epsilon}(\zeta) \vert \hat{f}(\zeta)\vert^2 d\zeta.
\end{equation}
Since $\mathfrak K$ is radial, it follows from~\eqref{eq:kappa-epsilon-R} that $\kappa_{\epsilon,R}$ is radial and positive definite. Therefore, $\hat{\kappa}_{\epsilon}$ is radial and non-negative.

\medskip

\noindent\textbf{Step 2: Covariance of the convolution approximation.} By the covariance of the LGF in~\eqref{eq:lgf-cov},
\begin{eqnarray}\label{lhseq}
&&\mathrm{Var} \left( \int_{\R^d} f(z) h\ast \mathsf K_\epsilon(z) dz\right)\nonumber\\
&&= \mathbb{E} \left( \int_{\R^d}\int_{\R^d} f(x)f(y) (h\ast \mathsf K_\epsilon) (x) (h\ast \mathsf K_\epsilon)(y) dxdy \right)\nonumber\\
&& = \int_{\R^d}\int_{\R^d} \int_{\R^d} \int_{\R^d} f(x)f(y) \mathsf{K}_\epsilon (x-u) \mathsf{K}_\epsilon (y-v) \log \left( \frac{1}{|u-v|} \right)dudv dxdy.
\end{eqnarray}
Let
\[
F_x(v):=\int_{\R^d} \mathsf K_\epsilon (x-u)\log\left(\frac{1}{\vert u-v\vert}\right)du.
\]
Then
\[
\hat{F}_x(\zeta) = \mathcal{F}(\mathsf K_\epsilon (x-\cdot)) \frac{c_d}{\vert \zeta\vert^d} = \frac{c_d}{\vert \zeta\vert^d} e^{-2\pi i x\cdot \zeta} \hat{\mathsf K}_\epsilon(-\zeta) ,
\]
where $c_d = \tfrac{\Gamma(d/2)}{2 \pi^{d/2}}$ is the constant such that \[\mathcal{F}\left(\log \frac{1}{|x|}\right)(\zeta) = c_d |\zeta|^{-d} \quad \mbox{for $\zeta \neq 0$.}\] Hence,
\begin{eqnarray*}
&&\mathrm{Var} \left( \int_{\R^d} f(z) h\ast \mathsf K_\epsilon(z) dz\right)\nonumber\\
&& = \int_{\R^d}\int_{\R^d} f(x)f(y) \left(\int_{\R^d} \mathsf K_\epsilon(x-u) F_y(u) du\right) dxdy\nonumber\\
&& = \int_{\R^d} \int_{\R^d} f(x)f(y) \left(\int_{\R^d} \frac{c_d}{\vert \zeta\vert^d} e^{2\pi i (y-x)\cdot \zeta} \vert \hat{\mathsf K}_\epsilon(\zeta)\vert^2 d\zeta\right)dxdy\nonumber\\
&&= \int_{\R^d} \vert \hat{f}(\zeta)\vert^2 \frac{c_d}{\vert \zeta\vert^d} \vert \hat{\mathsf K}_\epsilon (\zeta)\vert^2 d\zeta.
\end{eqnarray*}

\medskip

\noindent\textbf{Step 3: Choosing the kernel.} Choose ${\mathsf K}_\epsilon$ such that
\begin{equation}\label{eq:kernel-formula}
\frac{c_d \vert \hat{\mathsf K}_\epsilon \vert^2}{\vert \zeta\vert^d} = \hat{\kappa}_\epsilon, \quad \mbox{i.e.,} \quad {\mathsf K}_\epsilon = \mathcal F^{-1} \left( \zeta \mapsto \sqrt{\frac{2\pi^{d/2}}{\Gamma(d/2)} \vert \zeta\vert^{d}\hat{\kappa}_\epsilon(\zeta)} \right)\,.
\end{equation}
With this choice, $\mathfrak h_\epsilon$ agrees in law with $h * \mathsf K_\epsilon$ when viewed as a random continuous function modulo an additive constant.

Since $|{\rm D}^j(\sqrt{|\zeta|^d \hat{\kappa}_\epsilon(\zeta)})| \in L^1(\mathbb{R}^d)$ for all $0 \leq j \leq 2d-1$ (Lemma~\ref{lem:schwartz}), we obtain \[\sup_{x \in \mathbb{R}^d} \max \,\{ |\mathsf K_\epsilon(x)|,  |x|^{2d-1} |\mathsf K_\epsilon(x)|  \}<\infty.\] Moreover, when $d$ is even, $\sqrt{|\zeta|^d \hat{\kappa}_\epsilon(\zeta)}$ is Schwartz (Lemma~\ref{lem:schwartz}), hence $\mathsf K_\epsilon$ is Schwartz. By definition, $\hat{\kappa}_{\epsilon}$ is radial and non-negative, so ${\mathsf K}_\epsilon$ is radial and real-valued. Moreover, by~\eqref{eq:def-hatkappa} and polar coordinates,
\begin{align*}
\widehat{\mathsf K}_\epsilon(0)
= \sqrt{\frac{2\pi^{d/2}}{\Gamma(d/2)}
   \int_0^\infty t^{d-1}|\hat{\mathfrak K}(t)|^2\,dt} = \sqrt{\int_{\mathbb{R}^d} |\hat{\mathfrak K}(x)|^2\,dx}.
\end{align*}
Using Plancherel's theorem and $\int_\mathbb{R^d} \mathfrak K(x)^2 \,dx = 1$, we see that $\int_{\mathbb R^d} \mathsf K_\epsilon(x)\,dx = \widehat{\mathsf K}_\epsilon(0) = 1$.

It follows from~\eqref{eq:def-hatkappa} that $\hat{\kappa}_{\epsilon}(\zeta) = \epsilon^d \hat{\kappa}_1(\epsilon \zeta)$ for $\zeta \neq 0$. Therefore, from~\eqref{eq:kernel-formula}, we have $\mathsf K_\epsilon(x) = \epsilon^{-d} \mathsf K_1(x/\epsilon)$ for $x \in \mathbb{R}^d$. Taking $\mathsf K = \mathsf K_1$ completes the proof of the lemma.
\end{proof}

\begin{lemma}\label{lem:schwartz}
    Suppose $\mathfrak K$ satisfies~\eqref{eq:asmp-kernel}. For $\epsilon>0$, let
    \[
    \hat \kappa_{\epsilon}(\zeta) = \int_{\epsilon}^\infty t^{d-1} |\hat{\mathfrak K}(t\zeta)|^2 dt \quad \mbox{and} \quad o(\zeta):= \sqrt{|\zeta|^d \hat \kappa_{\epsilon}(\zeta)}, \quad \zeta \in \mathbb{R}^d.
    \]
    Then, for all $0 \leq j \leq 2d-1$, $|{\rm D}^j o| \in L^1(\mathbb{R}^d)$. Moreover, when $d$ is even, $o(\zeta)$ is a Schwartz function on $\mathbb{R}^d$.
\end{lemma}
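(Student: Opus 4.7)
The plan is to reduce the question to a one-variable analysis of the radial profile of $o$. First, using the substitution $u = t|\zeta|$ in the definition of $\hat{\kappa}_\epsilon$ together with the radial symmetry of $\hat{\mathfrak K}$ (write $\hat{\mathfrak K}(\xi) = \Phi(|\xi|)$), I would rewrite
\[
o(\zeta)^2 = |\zeta|^d\,\hat{\kappa}_\epsilon(\zeta) = G(\epsilon|\zeta|), \qquad G(r) := \int_r^\infty s^{d-1}\Phi(s)^2\,ds,
\]
and then record three elementary properties of $G$: (a) $G \in C^\infty([0,\infty))$, since each $G^{(j)}$ for $j \ge 1$ is a polynomial in $r$ times a derivative of $\Phi^2$; (b) $G(r) > 0$ for every $r \ge 0$, because $\hat{\mathfrak K}$ is entire of exponential type by the Paley--Wiener theorem and hence $\Phi$ has only isolated zeros; (c) $G$ is Schwartz on $[0,\infty)$, via the bound $r^N G(r) \le \int_r^\infty s^{N+d-1}\Phi(s)^2\,ds \to 0$ together with the explicit expression for $G^{(j)}$.

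Next I would analyze the regularity of $o$ near the origin by Taylor expansion. Smoothness of $\hat{\mathfrak K}$ on $\mathbb{R}^d$ forces $\Phi(s)^2 = \sum_{k \ge 0} a_k s^{2k}$ in a neighborhood of $s = 0$ (only even powers), so termwise integration gives
\[
G(r) = G(0) - \sum_{k \ge 0} \frac{a_k}{d+2k}\,r^{d+2k}.
\]
When $d$ is even every non-constant term is an even power of $r$, so $G$ extends to an even smooth function on $\mathbb{R}$, and since $G(0) > 0$ it follows that $o(\zeta) = \sqrt{G(\epsilon|\zeta|)}$ is smooth on $\mathbb{R}^d$. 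When $d$ is odd the leading correction is $r^d$, giving $o(\zeta) = c_0 + c_1 |\zeta|^d + O(|\zeta|^{d+2})$ near $0$; since each component of $D^\alpha(|\zeta|^d)$ with $|\alpha| = j$ is homogeneous of degree $d - j$ with bounded angular profile, $|D^j o(\zeta)| \lesssim |\zeta|^{d-j}$ near $0$. This is locally $L^1$ on $\mathbb{R}^d$ precisely when $d - j > -d$, i.e., $j \le 2d - 1$, which is the stated range.

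The last ingredient is decay at infinity. Writing $g(r) := \sqrt{G(r)}$, I would differentiate $g^2 = G$ inductively to express each $g^{(k)}$ as a rational combination of $g$ and $G', \ldots, G^{(k)}$ with denominators in powers of $g$, and combine this with the radial chain rule $|D^j o(\zeta)| \lesssim \sum_{k=1}^{j} |g^{(k)}(\epsilon|\zeta|)|\,|\zeta|^{k-j}$ for $\zeta \ne 0$. Every $G^{(j)}$ is Schwartz, so the essential point, which is the main obstacle, is to verify that the factors $g^{-p}$ do not destroy rapid decay at infinity. I would address this using the Paley--Wiener entireness of $\Phi$: Bernstein's inequality controls its oscillation on unit-length intervals, which yields a local lower bound $G(r) \gtrsim r^{d-1}\Phi(r)^2$ away from the isolated zeros of $\Phi$, and a short averaging argument across those zeros handles the remaining points. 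This converts the rapid decay of $G^{(j)}$ into sufficient decay of $g^{(k)}$, which combined with the local analysis at the origin yields $|D^j o| \in L^1(\mathbb{R}^d)$ for $0 \le j \le 2d-1$ in all dimensions and the Schwartz property of $o$ in the even case.
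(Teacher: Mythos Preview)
Your overall structure matches the paper's: both reduce to the one-variable profile $g(r)=\sqrt{G(r)}$ with $G(r)=\int_r^\infty s^{d-1}\Phi(s)^2\,ds$, both handle the behaviour near $\zeta=0$ via the even Taylor expansion of $\Phi^2$ to get $o(\zeta)=C+\sum b_m'|\zeta|^{d+2m}$ (giving smoothness in even dimension and the $|\zeta|^{d-j}$ bound in odd dimension), and both identify the lower bound on $G$ as the crux of the decay at infinity. Up to this point your proposal is essentially the paper's argument.

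The gap is in your decay-at-infinity step. You invoke Bernstein's inequality to control the oscillation of $\Phi$ on unit intervals and conclude $G(r)\gtrsim r^{d-1}\Phi(r)^2$, but Bernstein in its standard form gives only $|\Phi'(r)|\le \sigma\|\Phi\|_\infty$, a \emph{global} bound. For large $r$ the Schwartz function $\Phi(r)$ is much smaller than $\|\Phi\|_\infty$, so this says nothing about the local ratio $\Phi'(r)/\Phi(r)$ and does not yield the claimed lower bound with a uniform constant. The ``short averaging argument across zeros'' is also not a proof as stated: the zeros of $\Phi$ are infinitely many with asymptotic density $\sim\sigma/\pi$, and you would need quantitative control on how $G$ compares to the hump heights between them.

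The paper replaces this step by a self-contained inequality (its Lemma~\ref{lem:sqrt-schwartz}) that uses only the Schwartz property of $f(s)=s^{d-1}\Phi(s)^2$, with no Paley--Wiener input. The key estimate, proved by induction on $n$ and an AM--GM combination, is
\[
\int_x^\infty f(y)\,dy \;\ge\; c_n\,\frac{f(x)^{1+1/n}}{\bigl(\sup_{y\ge x}|f^{(n)}(y)|\bigr)^{1/n}},
\]
and more generally the analogous bound with $f^{(k)}$ in the numerator. Letting $n\to\infty$ shows that $G$ decays at least as fast as any $|f^{(k)}|$ up to an exponent arbitrarily close to $1$, which is exactly what is needed to absorb the negative powers of $g$ in the formula for $g^{(n)}$. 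This is the missing ingredient in your sketch; once you have it, the rest of your outline goes through.
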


To prove Lemma~\ref{lem:schwartz}, we need the following result.

\begin{lemma}\label{lem:sqrt-schwartz}
    Let $f: (0,\infty) \to [0,\infty)$ be a non-negative function such that for any integers $\alpha, n\geq 0$, $\sup_{r>0} |r^\alpha f^{(n)}(r)| < \infty$. Let \[g(r) = \left(\int_r^\infty f(s) ds\right)^{1/2}, \quad r>0.\]Then for any integers $\alpha, n\geq 0$, $\sup_{r>0} |r^\alpha g^{(n)}(r)| < \infty$.
\end{lemma}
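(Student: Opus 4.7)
My plan is to reduce to the case where $F(r) := \int_r^\infty f(s)\,ds$ is strictly positive on $(0,\infty)$ and then prove $\sup_{r>0} r^\alpha |g^{(n)}(r)| < \infty$ by induction on $n$, analyzing $r$ small and $r$ large separately. For the reduction, if $f \equiv 0$ the claim is trivial, so assume $f$ is not identically zero. Let $R = \sup\{r > 0: F(r) > 0\} \in (0, \infty]$; since $f \geq 0$, $F$ is nonincreasing, so $F > 0$ on $(0, R)$ and $F \equiv 0$ on $[R,\infty)$ when $R < \infty$. Smoothness of $f$ on $(0, \infty)$ then forces $f^{(k)} \equiv 0$ on $[R,\infty)$ for all $k$, so $F$ and hence $g = \sqrt{F}$ vanish to infinite order at $R^-$; accordingly $g$ extends smoothly to $[R, \infty)$ by zero, and it suffices to prove the bound on $(0, R)$.

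For $r$ small, $F(0^+) = \int_0^\infty f \in (0,\infty)$ is positive and bounded, and each $F^{(k)}(0^+) = -f^{(k-1)}(0^+)$ is finite by hypothesis, so $F$ extends smoothly and positively to a neighborhood of $0$; consequently $g = \sqrt F$ is smooth there with uniformly bounded derivatives, handling $r \in (0,1]$. For $r \geq 1$ I argue by induction on $n$. The base case $n=0$ follows from $F(r) \leq C_\beta r^{-\beta}$ for every $\beta$, obtained by integrating the Schwartz-type decay of $f$; hence $g(r) \leq C_{\beta/2} r^{-\beta/2}$ decays faster than any polynomial.

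For the inductive step, differentiating the identity $g^2 = F$ gives
\[
2\, g(r)\, g^{(n)}(r) \;=\; F^{(n)}(r) \;-\; \sum_{k=1}^{n-1} \binom{n}{k} g^{(k)}(r)\, g^{(n-k)}(r),
\]
and the inductive hypothesis together with $|F^{(n)}| = |f^{(n-1)}| \leq C_\alpha r^{-\alpha}$ shows the right-hand side is $O(r^{-\alpha})$ for every $\alpha$. To isolate $g^{(n)}$ without dividing by the rapidly vanishing $g$, I would use the weighted Glaeser-type inequality
\[
|F^{(n)}(r)|^2 \;\leq\; C_{n,\alpha}\, r^{-\alpha}\, F(r), \qquad r \geq 1,
\]
which for $n = 1$ is the classical Glaeser inequality applied on $[r/2, \infty)$ together with $\sup_{[r/2,\infty)} |F''| \leq C_\alpha r^{-\alpha}$, and which for $n \geq 2$ would be obtained by iterating Glaeser on the nonnegative smooth functions $(F^{(k)})^2$ and combining with the $n = 1$ case. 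Substituting this refined estimate into the identity above gives $|g(r)\, g^{(n)}(r)| \leq C_\alpha r^{-\alpha}\, g(r)$, and division by $g(r) > 0$ yields $|g^{(n)}(r)| \leq C_\alpha r^{-\alpha}$.

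The main obstacle is the refined Glaeser inequality for $n \geq 2$: a direct application of Glaeser to $(F^{(n-1)})^2$ only yields $|F^{(n)}| = O(r^{-\alpha})$ without the $\sqrt F$ factor that cancels the $g$ on the left. Recovering this factor requires an interpolation or iterated argument that tracks the power of $F$ through the cascade of Glaeser applications, and arranging this bookkeeping so as to avoid ever dividing by a quantity that decays super-polynomially is the delicate combinatorial step.
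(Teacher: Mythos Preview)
Your Leibniz induction has a genuine gap that appears already at $n=2$, and it is not the one you identify. Even granting your refined Glaeser bound $|F^{(n)}(r)|^2 \le C_{n,\alpha}\,r^{-\alpha} F(r)$, the identity
\[
2\,g\,g^{(n)} \;=\; F^{(n)} \;-\; \sum_{k=1}^{n-1}\binom{n}{k} g^{(k)} g^{(n-k)}
\]
also contains the cross terms, and your inductive hypothesis only gives $|g^{(k)}(r)\,g^{(n-k)}(r)|\le C_\beta\,r^{-\beta}$, with no factor of $g$. Dividing by $g$ therefore leaves $C_\beta\,r^{-\beta}/g(r)$, which blows up super-polynomially. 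Concretely, for $n=2$ one has $(g')^2/g = f^2/(4F^{3/2})$, and the $n=1$ Glaeser bound $f^2\le C\,r^{-\alpha}F$ only yields $(g')^2/g \le C\,r^{-\alpha}/\sqrt{F}$, which is unbounded. The obstacle is thus not only producing a $\sqrt{F}$ for $F^{(n)}$, but producing one for the cross terms as well; the latter cannot come from the weak inductive hypothesis $|g^{(k)}|=O(r^{-\alpha})$.

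What is actually needed is the \emph{sharper} family of bounds
\[
|f^{(k)}(r)| \;\le\; C_{k,\epsilon}\,r^{-\beta_\epsilon}\,F(r)^{\,1-\epsilon}
\qquad\text{for every }\epsilon>0,
\]
i.e.\ exponent on $F$ arbitrarily close to $1$, not just $1/2$. The paper obtains this by proving, via an induction in the derivative order that combines iterated integration with an AM--GM step, the two-parameter inequality
\[
F(x)\;\ge\; c_{k,m}\,\frac{|f^{(k)}(x)|^{(k+1+m)/m}}{\bigl(\sup_{y\ge x}|f^{(k+m)}(y)|\bigr)^{(k+1)/m}},
\]
and then letting $m\to\infty$. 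With this in hand the paper bypasses the Leibniz recursion entirely: it writes $g^{(n)}$ via Fa\`a di Bruno as a finite sum of terms of the form $g^{-(2m-1)}\prod_{i=1}^m f^{(a_i)}$, and bounds each term directly by choosing $\epsilon<1/(2m)$ so that $\prod|f^{(a_i)}|/F^{m-1/2}\le C\,r^{-N}F^{1/2-m\epsilon}$ still carries a positive power of $F$. Your framework can be salvaged along the same lines, but only by upgrading the refined Glaeser to this $\epsilon$-form and then either abandoning the Leibniz induction for Fa\`a di Bruno, or strengthening the inductive hypothesis to $|g^{(k)}|\le C\,r^{-\alpha}g^{\theta_k}$ with a carefully decreasing sequence $\theta_k$.
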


\begin{proof}
    The constants $c$ below may change from line to line but are independent of $f$. We first prove by induction that, for integers $n \geq 1$,
    \begin{equation}\label{eq:est-derivative-new}
    \int_x^\infty f(y) dy \geq c_n\frac{f(x)^{1+\tfrac{1}{n}}}{\big(\sup_{y \geq x} |f^{(n)}(y)|\big) ^{\tfrac{1}{n}}}, \quad x>0,
    \end{equation}
    where $c_n>0$ depends only on $n$. The case $n = 1$ follows from
    \[
    f(w) \geq \frac{1}{2} f(x), \quad x < w < x + \frac{f(x)}{2 \sup_{y \geq x} |f^{(1)}(y)| }.
    \]
    Suppose \eqref{eq:est-derivative-new} holds for $n = k$. We claim that
    \begin{equation}\label{eq:observ-new}
    \int_x^\infty f(y) dy \geq c \frac{|f^{(k)}(x)|^{k+2}}{\big(\sup_{y \geq x} |f^{(k+1)}(y)|\big)^{k+1}}, \quad x>0.
    \end{equation}
    Indeed, 
    \[
    |f^{(k)}(w)| \geq \frac{1}{2} |f^{(k)}(x)|, \quad x < w < x + \frac{|f^{(k)}(x)|}{2 \sup_{y \geq x} |f^{(k+1)}(y)| }.
    \]
    Integrating over $w$ iteratively, this implies that for $0 \leq l \leq k$,
    \[
    |f^{(l)}(w)| \geq c \frac{|f^{(k)}(x)|^{k-l+1}}{\big(\sup_{y \geq x} |f^{(k+1)}(y)|\big)^{k-l}}
    \]
    for a positive fraction of $w$ in $(x, x + \frac{|f^{(k)}(x)|}{2 \sup_{y \geq x} |f^{(k+1)}(y)| })$. Taking $l = 0$ yields~\eqref{eq:observ-new}. Furthermore, \eqref{eq:observ-new} implies that for any $w \geq x$,
    \[|f^{(k)}(w)|^{k+2} \leq c \int_w^\infty f(y) dy \,\big(\sup_{y \geq w} |f^{(k+1)}(y)|\big)^{k+1} \leq c \int_x^\infty f(y) dy \,\big(\sup_{y \geq x} |f^{(k+1)}(y)|\big)^{k+1} .\]
    Therefore,
    \begin{equation}\label{eq:observ-1-new}
    \int_x^\infty f(y) dy \geq c \frac{\big(\sup_{y \geq x} |f^{(k)}(y)|\big)^{k+2}}{\big(\sup_{y \geq x} |f^{(k+1)}(y)|\big)^{k+1}}.
    \end{equation}
    Finally, \eqref{eq:est-derivative-new} for $n = k+1$ follows from the case $n = k$ and~\eqref{eq:observ-1-new}:
    \begin{align*}
    \int_x^\infty f(y) dy &\geq \frac{1}{k(k+2)+1} \left[ \underbrace{c_k\frac{f(x)^{1+\tfrac{1}{k}}}{\big(\sup_{y \geq x} |f^{(k)}(y)|\big) ^{\tfrac{1}{k}}} + \ldots + c_k\frac{f(x)^{1+\tfrac{1}{k}}}{\big(\sup_{y \geq x} |f^{(k)}(y)|\big)^{\tfrac{1}{k}}}}_{k(k+2) \mbox{ copies}}  + c \frac{\big(\sup_{y \geq x} |f^{(k)}(y)|\big)^{k+2}}{\big(\sup_{y \geq x} |f^{(k+1)}(y)|\big)^{k+1}} \right] \\
    &\geq c' \left( f(x)^{(1+\tfrac{1}{k})k(k+2) }  \frac{1}{\big(\sup_{y \geq x} |f^{(k+1)}(y)|\big)^{k+1}} \right)^{\tfrac{1}{k(k+2)+1}} = c' \frac{f(x)^{1+\tfrac{1}{k+1}}}{\big(\sup_{y \geq x} |f^{(k+1)}(y)|\big) ^{\tfrac{1}{k+1}}}.
    \end{align*}

    Similarly, using~\eqref{eq:observ-new}, \eqref{eq:observ-1-new} and an induction, for all integers $k \geq 0$ and $m \geq 1$ we have
    \[
    \int_x^\infty f(y) dy \geq c_{k,m} \frac{|f^{(k)}(x)|^{\tfrac{k+1+m}{m}}}{\big(\sup_{y \geq x} |f^{(k+m)}(y)|\big)^{\tfrac{k+1}{m}}}, \quad x>0,
    \]
    Since $m$ can be taken arbitrarily large, this implies that the decay rate of $g(r)^2 = \int_r^\infty f(s) ds$ at infinity is at least as fast as that of $|f^{(k)}(r)|$ for each $k$. Using that the decay rate of $|f^{(k)}(r)|$ is faster than any power of $r$, it follows that the derivatives of $g$ also decay faster than any power of $r$, since the derivatives of $g$ are of the form
    \[
    g^{(n)}(r) = \sum_{m=1}^n \frac{1}{g(r)^{2m-1}}\sum_{0 \leq a_1 \leq a_2 \leq \ldots \leq a_m \leq n-1} c_{m,a_1,a_2,\ldots, a_m}\prod_{i=1}^m f^{(a_i)}(r). \qedhere
    \]
    
\end{proof}

\begin{proof}[Proof of Lemma~\ref{lem:schwartz}]
    By definition and changing variables $s=t |\zeta|$, 
    \[
    o(\zeta)=\sqrt{|\zeta|^d \hat \kappa_{\epsilon}(\zeta)} = \sqrt{\int_{\epsilon|\zeta|}^\infty s^{d-1} |\hat{\mathfrak K}(s)|^2 ds}.
    \]
    Since $s^{d-1} |\hat{\mathfrak K}(s)|^2 $ is smooth on $(0,\infty)$, it follows that $o(\zeta)$ is smooth on $\mathbb{R}^d \setminus \{0\}$. Furthermore, Lemma~\ref{lem:sqrt-schwartz} implies that the derivatives of $o(\zeta)$ have decay rate at infinity faster than $|\zeta|^{-\alpha}$ for any integers $\alpha \geq 0$. 
    
    It remains to analyze the behavior near $\zeta = 0$. Since $|\hat{\mathfrak K}(s)|^2$ is radial and smooth at 0, it admits an expansion of the form
    \[
    |\hat{\mathfrak K}(s)|^2 = \sum_{m \geq 0} a_m s^{2m} \quad \mbox{for small $s$}.
    \]
    Hence
    \[
    \int_{\epsilon|\zeta|}^\infty s^{d-1} |\hat{\mathfrak K}(s)|^2 ds = \int_0^\infty s^{d-1} |\hat{\mathfrak K}(s)|^2 ds  + \sum_{m \geq 0} b_m(\epsilon) |\zeta|^{d+2m}
    \]
    for suitable coefficients $b_m(\epsilon)$, and thus
    \[
    o(\zeta) = \sqrt{\int_0^\infty s^{d-1} |\hat{\mathfrak K}(s)|^2 ds  + \sum_{m \geq 0} b_m(\epsilon) |\zeta|^{d+2m}} = C + \sum_{m \geq 0} b_m'(\epsilon) |\zeta|^{d+2m},
    \]
    with $C>0$. When $d$ is even, this expansion shows that $o(\zeta)$ is smooth at 0 and hence Schwartz. When $d$ is odd, $o(\zeta)$ is not necessarily smooth at 0, but for any multi-index $\alpha$,
    \[
    |\partial^\alpha o(\zeta)| \leq C' |\zeta|^{d - |\alpha|} \quad \mbox{for small }|\zeta|.
    \]
    In particular, for $0 \leq j \leq 2d-1$, $|{\rm D}^j o| \in L^1(\mathbb{R}^d)$.
\end{proof}

\subsection{Localization of the LGF}\label{subsec:localize}

In this section, we show that the convolution considered in Lemma~\ref{lem:kernel} can be localized through truncation (Lemma~\ref{lem:equifield}). Throughout this section, let $h$ be a whole-space LGF with the additive constant chosen such that $h_1(0) = 0$. 

Define
\begin{equation}\label{eq:sec2-field1}
h_\epsilon^*(z) = h\ast {\mathsf K}_\epsilon(z) = \int_{\R^d} h(w) {\mathsf K}_\epsilon (z-w) dw,
\end{equation}
and
\begin{equation}\label{eq:sec2-field2}
\bar{h}_\epsilon^*(z) = h\ast (\Psi_\e {\mathsf K}_\epsilon)(z) = \int_{\R^d} h(w) {\mathsf K}_\e(z-w)\Psi_\epsilon(z-w) dw,
\end{equation}
where $\Psi_\e : \mathbb{R}^d \to [0,1]$ is a radial bump function such that $\Psi_\e(z)=1$ if $z \in B_{\sqrt{\e}/2}(0)$ and $\Psi_\e(z)=0$ if $z \in \R^d \setminus B_{\sqrt{\e}}(0).$ We have the following comparison between $h_\epsilon^\ast$ and $\bar{h}_\epsilon^*.$
\begin{lemma}\label{lem:equifield}
Let $h$ be a whole-space LGF with the additive constant chosen so that $h_1(0) = 0$, and let $\mathsf K$ be defined as in Lemma~\ref{lem:kernel}. Then for each bounded open set $U \subset \R^d,$ a.s.
\[
\lim_{\e\to 0} \sup_{z \in \overline{U}}|h_{\epsilon}^*(z)-\bar{h}_{\epsilon}^*(z)|=0.
\]
\end{lemma}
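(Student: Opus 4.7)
Write $F_\epsilon(z):=h_\epsilon^*(z)-\bar h_\epsilon^*(z)=(h\ast g_\epsilon)(z)$ with $g_\epsilon:=\mathsf K_\epsilon(1-\Psi_\epsilon)$, which is radial, smooth, and supported in $\{|x|\geq\sqrt{\epsilon}/2\}$. Since $h$ with $h_1(0)=0$ is a centered Gaussian generalized function, $F_\epsilon$ is a centered continuous Gaussian field on $\mathbb{R}^d$. The strategy is to prove $\sup_{z\in\overline{U}}|F_\epsilon(z)|\to 0$ a.s.\ via a uniform variance-decay bound and a modulus-of-continuity bound for $F_\epsilon$, fed into the Borel--TIS inequality and then Borel--Cantelli along a dyadic sequence.

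\textbf{Kernel bounds and pointwise variance.} The tail estimate $|\mathsf K(y)|\leq C|y|^{-(2d-1)}$ from~\eqref{eq:K-tail} (with Schwartz decay in even $d$) yields $|g_\epsilon(x)|\leq C\epsilon^{d-1}|x|^{-(2d-1)}$ on $\{|x|\geq\sqrt{\epsilon}/2\}$, hence $\|g_\epsilon\|_{L^1}=O(\epsilon^{(d-1)/2})$, $\|g_\epsilon\|_{L^2}^2=O(\epsilon^{\alpha_2})$ for some $\alpha_2>0$, and $\int|g_\epsilon(x)|\log(1+|x|)\,dx\to 0$. Decomposing $h=\tilde h-\tilde h_1(0)$ produces the covariance $\mathbb{E}[h(x)h(y)]=\log\tfrac{1}{|x-y|}+r(x,y)$ with $r$ continuous on compact subsets of $\mathbb{R}^d\times\mathbb{R}^d$. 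After the change of variables $u=z-x$, $v=z-y$ this gives $\Var(F_\epsilon(z))=\iint g_\epsilon(u)g_\epsilon(v)\log\tfrac{1}{|u-v|}\,du\,dv+O(\|g_\epsilon\|_1^2)$, and the leading term is independent of $z$. Split the double integral at $|u-v|=1$: the far region is bounded by $2\|g_\epsilon\|_{L^1}\cdot\int|g_\epsilon(v)|\log(1+|v|)\,dv\to 0$, and the near region is bounded via AM--GM by $C\|g_\epsilon\|_{L^2}^2\cdot\int_{|w|\leq 1}\log\tfrac{1}{|w|}\,dw\to 0$. This yields $\sup_{z\in\overline{U}}\Var(F_\epsilon(z))=O(\epsilon^{\alpha'})$ for some $\alpha'>0$.

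\textbf{Modulus, concentration, and a.s.\ convergence.} Applied to the zero-integral test function $\Delta_{z,z'}(w):=g_\epsilon(z-w)-g_\epsilon(z'-w)$, the same splitting (together with the vanishing of the $r$-correction when $\int\Delta_{z,z'}=0$) yields $\Var(F_\epsilon(z)-F_\epsilon(z'))\leq C|z-z'|^{2\beta}$ for some $\beta>0$, with constant uniform in $\epsilon$, by combining pointwise bounds on $g_\epsilon$ with first-order Taylor in $z$. Borel--TIS then gives $\mathbb{P}[\sup_{z\in\overline{U}}|F_\epsilon(z)|>t]\leq 2\exp(-(t-m_\epsilon)^2/(2\sigma_\epsilon^2))$ with $m_\epsilon,\sigma_\epsilon=O(\epsilon^{\alpha'/2})$. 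Summability at $\epsilon_n=2^{-n}$ with threshold $t_n=\epsilon_n^{\alpha'/4}$ and Borel--Cantelli give a.s.\ vanishing of the supremum along the dyadic subsequence. Extension to all $\epsilon\to 0$ follows by applying the same concentration estimate to the joint Gaussian field $(z,\epsilon)\mapsto F_\epsilon(z)$ on $\overline{U}\times[\epsilon_{n+1},\epsilon_n]$, using the smooth dependence of $g_\epsilon$ on $\epsilon$ to produce a uniform modulus of continuity in the $\epsilon$-direction as well.

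\textbf{Main obstacle.} The heart of the argument is the variance estimate in odd dimensions, where $\mathsf K$ has only polynomial decay $|y|^{-(2d-1)}$ (not Schwartz), so that moments $\int|g_\epsilon(x)||x|^k\,dx$ can diverge for $k$ above a dimension-dependent threshold. A Fourier-space approach through $\|\cdot\|_{\mathcal H^{-s}}$ is obstructed by the small but nonzero mean $c_\epsilon:=\int g_\epsilon\neq 0$, which would force a mean-compensation trick and delicate low-frequency expansion of $\hat g_\epsilon$. The physical-space split described above sidesteps Fourier entirely and only uses the polynomially small quantities $\|g_\epsilon\|_{L^1}$, $\|g_\epsilon\|_{L^2}^2$, and $\int|g_\epsilon|\log(1+|\cdot|)\,dx$, all of which follow directly from~\eqref{eq:K-tail}.
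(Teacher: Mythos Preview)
Your approach differs substantially from the paper's. The paper's proof is purely pathwise: it uses the radiality of $g_\epsilon=\mathsf K_\epsilon(1-\Psi_\epsilon)$ to write $F_\epsilon(z)=c_d\int_{\sqrt\epsilon/2}^\infty r^{d-1}h_r(z)\,g_\epsilon(r)\,dr$ in terms of the spherical-average process, then invokes the almost-sure bound $|h_r(z)|\leq C\max\{\log\tfrac1r,\log r,1\}$ uniformly over $z\in\overline U$ (Lemma~\ref{lem:sphereavg}) together with $|\mathsf K(y)|\lesssim|y|^{-(2d-1)}$. A direct estimate of the resulting $r$-integral gives $\sup_z|F_\epsilon(z)|\to 0$ for all $\epsilon$ simultaneously, with no moment or concentration input and no dyadic subsequence.

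Your variance decay is essentially right, modulo one correction: the remainder $r(x,y)$ in the covariance of the normalized field grows like $\log|x|+\log|y|$ at infinity (since $h_1(0)=0$), and the $g_\epsilon$-integration runs over an unbounded region, so the ``$O(\|g_\epsilon\|_1^2)$'' should be $O\big(\|g_\epsilon\|_1\int|g_\epsilon|(1+\log(1+|\cdot|))\big)$; still small. The real gap is the modulus-of-continuity step. You claim a constant uniform in $\epsilon$ via ``first-order Taylor in $z$'', which requires bounds on $\nabla g_\epsilon$; but the paper only records decay of $\mathsf K$, not of $\nabla\mathsf K$, and even granting $|\nabla\mathsf K(y)|\lesssim|y|^{-(2d-1)}$, your near-region AM--GM bound needs $\|\nabla g_\epsilon\|_{L^2}^2$, which for odd $d=3,5$ blows up like $\epsilon^{(d-6)/2}$. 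Separately, Borel--TIS only gives concentration around $m_\epsilon=\mathbb E\sup|F_\epsilon|$ and does not by itself bound $m_\epsilon$; that requires Dudley or Fernique, fed by the modulus. Both issues are repairable---a polynomially growing modulus constant still yields $m_\epsilon\to 0$ via Dudley since the canonical diameter is $O(\epsilon^{\alpha'/2})$---but you would need to spell this out, and also justify the extension from dyadic $\epsilon_n$ to all $\epsilon$ more carefully. The paper's pathwise route via Lemma~\ref{lem:sphereavg} sidesteps all of this in a few lines.
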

To prove Lemma \ref{lem:equifield}, we will need the following technical lemma. Recall that by \cite[Section 11.1]{fgf-survey} the sphere average process $\{h_r(z)\}_{r > 0, z \in \mathbb{R}^d}$ is well defined and admits a modification continuous in both $r$ and $z$.
\begin{lemma}\label{lem:sphereavg} Let $h$ be a whole-space LGF with the additive constant chosen so that $h_1(0) = 0$, and let $\{h_r(z)\}_{r > 0, z \in \mathbb{R}^d}$ be its sphere average process. For each $R>0,$ $\zeta>0,$ a.s. we have
\[
\sup_{z\in B_R(0)}\sup_{r > 0} \frac{|h_r(z)|}{\max\{ \log\frac{1}{r},(\log r)^{\frac{1}{2}+\zeta},1\}} < \infty.
\]
\end{lemma}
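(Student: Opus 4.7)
The plan is to decompose the supremum over $r>0$ into three regimes---moderate $r\in[1/2,2]$, small $r\in(0,1/2)$, and large $r>2$---and in the last two regimes use a dyadic-scale argument combined with the Borel--TIS inequality and Borel--Cantelli. The moderate regime is immediate: by the continuous modification of $(r,z)\mapsto h_r(z)$, its supremum on $B_R(0)\times[1/2,2]$ is a.s.\ finite, and the denominator is bounded below there.

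For the small-$r$ regime I would work on dyadic blocks $I_k:=[2^{-k-1},2^{-k}]$, $k\ge 1$. Starting from the covariance formula
\[
\mathrm{Cov}(h_r(z),h_{r'}(z'))=\iint \log\frac{1}{|x-y|}\,d\mu_{r,z}(x)\,d\mu_{r',z'}(y),
\]
where $\mu_{r,z}$ is the normalized surface measure on $\partial B_r(z)$ (with the $h_1(0)=0$ normalization incorporated as usual), a direct computation gives $\mathrm{Var}(h_r(z))\le k\log 2+O_R(1)$ uniformly on $I_k\times B_R(0)$, together with the Gaussian pseudometric bounds $d((r,z),(r',z'))\lesssim |z-z'|/r$ for $|z-z'|\le r$ and $d((r,z),(r',z'))\lesssim \sqrt{\log(|z-z'|/r)}$ for $|z-z'|\ge r$, plus $O(1)$ contribution from the $r$-variable. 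The Dudley entropy bound then yields $\mathbb{E}[\sup_{(r,z)\in I_k\times B_R(0)}|h_r(z)|]\lesssim k$ (the $(R2^k)^d$ degrees of freedom combine with variance $\sim k$ to produce linear growth in $k$). Since the maximal variance is $\lesssim k$, Borel--TIS gives
\[
\mathbb{P}\Bigl[\sup_{z\in B_R(0),\,r\in I_k}|h_r(z)|>C_1 k\Bigr]\le Ce^{-ck},
\]
which is summable in $k$. Borel--Cantelli then yields $|h_r(z)|\le C_1\log(1/r)+O(1)$ uniformly in $z\in B_R(0)$ for all sufficiently small $r$, a.s.

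For the large-$r$ regime I would decompose into $J_k:=[2^k,2^{k+1}]$, $k\ge 1$. A direct covariance computation gives $\mathbb{E}[(h_r(z)-h_r(0))^2]=O(R^2/r^2)$ for $r\ge 2R$, so $\sup_{z\in B_R(0)}|h_r(z)-h_r(0)|$ is negligible uniformly in $r\in J_k$. It thus suffices to control $\sup_{r\in J_k}|h_r(0)|$. The process $r\mapsto h_r(0)$ on $J_k$ is a centered Gaussian with $\mathrm{Var}(h_r(0))=\log r+O(1)\le Ck$ but with Gaussian diameter $\sqrt{\mathrm{Var}(h_r(0)-h_{r'}(0))}=O(1)$ on $J_k$, so $\mathbb{E}[\sup_{r\in J_k}|h_r(0)|]\lesssim\sqrt{k}$. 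Borel--TIS then gives
\[
\mathbb{P}\Bigl[\sup_{r\in J_k}|h_r(0)|>k^{1/2+\zeta}\Bigr]\le Ce^{-ck^{2\zeta}},
\]
which is summable in $k$, and Borel--Cantelli completes the bound $|h_r(z)|\le (\log r)^{1/2+\zeta}+O(1)$ for large $r$.

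The main obstacle will be the small-$r$ regime: verifying that the Dudley integral really does give linear growth $\sim k$ (not a smaller power) in $\mathbb{E}[\sup]$ so that the threshold matches the $\log(1/r)$ denominator, which requires careful handling of the crossover between the Euclidean-type metric $|z-z'|/r$ at short scales and the log-type metric $\sqrt{\log(|z-z'|/r)}$ at longer scales. Once these covariance and entropy estimates are in place, the concentration and Borel--Cantelli steps in both regimes are routine from the LGF properties collected in Section~\ref{subsec:basic-lgf}.
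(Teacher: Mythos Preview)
Your proposal is essentially correct and uses the same three-regime decomposition as the paper, but the arguments in the small-$r$ and large-$r$ regimes differ.

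For small $r$, the paper does not argue directly but simply cites the thick-points literature \cite{thick,otherthick} for the bound $\sup_{z\in B_R(0)}\sup_{r\le 1/2}|h_r(z)|/\log(1/r)<\infty$. Your self-contained Dudley/Borel--TIS argument on dyadic blocks $I_k$ does produce $\mathbb{E}[\sup]\lesssim k$ and hence the summable tail $e^{-ck}$; it is more work but avoids external references.

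For large $r$, the paper's argument (Lemma~\ref{lem:sphereavg2}) is structurally the same as yours---split $h_r(z)=(h_r(z)-h_r(0))+h_r(0)$ and control each piece over dyadic blocks $[2^{k-1},2^k]$ via Borel--TIS and Borel--Cantelli---but it exploits the scaling invariance of the LGF (Lemma~\ref{lem:scale-invariance}) rather than direct covariance estimates: since $h(r\cdot)-h_r(0)\overset{d}{=}h$, the Borel--TIS tail bound for $\sup_{z\in B_{R}(0),\,r\in[2^{k-1},2^k]}|h_r(z)-h_r(0)|$ is inherited from that for $\sup_{z\in B_{R}(0),\,r\in[1/2,1]}|h_r(z)-h_r(0)|$, with no $k$-dependence to track. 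Your route through the claim $\mathrm{Var}(h_r(z)-h_r(0))=O(R^2/r^2)$ also works, but note that the precise decay rate is not needed (a uniform $O(1)$ bound suffices, and that is all the scaling argument yields), and that when you say this piece is ``negligible uniformly in $r\in J_k$'' you still owe a sup bound over $(z,r)\in B_R(0)\times J_k$, not just a pointwise variance bound---one more application of Borel--TIS, which the scaling shortcut handles for free.
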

We now prove Lemma \ref{lem:equifield} assuming Lemma \ref{lem:sphereavg}.

\begin{proof}[Proof of Lemma \ref{lem:equifield}.]
Using polar coordinates and radiality of ${\mathsf K}_{\epsilon}$, we have
\[
h_{\epsilon}^*(z) = \frac{2 \pi^{d/2}}{\Gamma(d/2)} \, \int_0^\infty r^{d-1}h_r(z) {\mathsf K}_{\epsilon} (r)dr,
\]
\[
\bar{h}_{\epsilon}^*(z) = \frac{2 \pi^{d/2}}{\Gamma(d/2)} \, \int_0^\infty r^{d-1}h_r(z) \Psi_{\epsilon}(r) {\mathsf K}_{\epsilon} (r)dr.
\]
Therefore,
\[
|h_{\epsilon}^*(z)-\bar{h}_{\epsilon}^*(z)| \leq \frac{2 \pi^{d/2}}{\Gamma(d/2)} \, \int_{\sqrt{\e}/2}^\infty r^{d-1} \,|h_r(z)| \, |{\mathsf K}_{\epsilon} (r)| \, dr.
\]
By Lemma \ref{lem:sphereavg} (taking $\zeta = 1/2$), there exists a random constant $C$ such that 
\[
|h_r(z)| \leq C\max\{\log \frac{1}{r},\log r,1\}
\]
for all $z \in \overline{U}$ and $r>0$. Hence
\begin{equation*}
\sup_{z \in \overline{U}} |h_{\epsilon}^*(z)-\bar{h}_{\epsilon}^*(z)| \leq C \int_{\sqrt{\e}/2}^\infty r^{d-1} \max\{\log \frac{1}{r},\log r,1\} \epsilon^{-d} |\mathsf K (r/\epsilon)| \,dr.
\end{equation*}
Substituting $r = \epsilon t$, we obtain
\begin{align*}
    \sup_{z \in \overline{U}} |h_{\epsilon}^*(z)-\bar{h}_{\epsilon}^*(z)| &\leq C \int_{\frac{1}{2 \sqrt{\epsilon}}}^\infty t^{d-1} \max\{\log \frac{1}{\epsilon t},\log \epsilon t,1\} \, |\mathsf K(t)| \,dt \\
    &\leq C \int_{\frac{1}{2 \sqrt{\epsilon}}}^\infty t^{d-1} \, \sqrt{t} \, |\mathsf K(t)| \,dt
\end{align*}
By Lemma~\ref{lem:kernel}, there exists $C'>0$ such that $|{\mathsf K}(x)| \leq C' |x|^{-2d+1}$ for $x \in \mathbb{R}^d$. Combining this with the above inequality,
\[
\sup_{z \in \overline{U}} |h_{\epsilon}^*(z)-\bar{h}_{\epsilon}^*(z)|
 \leq C'' \int_{\frac{1}{2 \sqrt{\epsilon}}}^\infty t^{-d+1/2}\,dt,
\]
which tends to $0$ as $\epsilon \to 0$ for all $d \ge 2$. This proves the lemma. \qedhere
\end{proof}

We now prove Lemma \ref{lem:sphereavg}. We will need the following estimate.
\begin{lemma}\label{lem:sphereavg2}
For all $R>0$ and $\zeta>0,$ we have a.s.
\[
\lim_{r \to \infty} \sup_{z\in B_R(0)}\frac{|h_r(z)|}{(\log r)^{\frac{1}{2}+\zeta}} = 0.
\]
\end{lemma}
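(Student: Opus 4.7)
The plan is to reduce to the sequence of dyadic scales $r_n := 2^n$ and apply Borel--Cantelli, controlling separately the value of the sphere average at $0$ and its oscillation over $[r_n, r_{n+1}] \times B_R(0)$; the latter is handled using the scaling invariance of the LGF (Lemma~\ref{lem:scale-invariance}).

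First, using the covariance~\eqref{eq:lgf-cov} and the identity $\int \log|x-y|\,d\sigma_{r,z}(y)=\log\max(|x-z|,r)$ for the normalized surface measure $\sigma_{r,z}$ on $\partial B_r(z)$, a direct computation with the mean-zero measure $\sigma_{r,0}-\sigma_{1,0}$ yields $\mathrm{Var}(h_r(0)) = \log r$ for $r\geq 1$ under the normalization $h_1(0)=0$. Thus $h_{r_n}(0) \sim \mathcal{N}(0, n\log 2)$, and standard Gaussian tail bounds give $\mathbb{P}[|h_{r_n}(0)|>n^{1/2+\zeta/2}]\leq 2\exp(-n^{\zeta}/(2\log 2))$, which is summable. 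By Borel--Cantelli, almost surely $|h_{r_n}(0)|\leq n^{1/2+\zeta/2}$ for all large $n$.

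Second, by Lemma~\ref{lem:scale-invariance}, the rescaled field $\widetilde h^{(n)}(\cdot) := h(r_n \cdot) - h_{r_n}(0)$ is a whole-space LGF with $\widetilde h^{(n)}_1(0)=0$, and its sphere average process satisfies $\widetilde h^{(n)}_t(w) = h_{r_n t}(r_n w) - h_{r_n}(0)$. Hence for each $n$,
\[
\sup_{r\in[r_n,r_{n+1}],\,z\in B_R(0)}|h_r(z)-h_{r_n}(0)| \stackrel{d}{=} \sup_{t\in[1,2],\,w\in B_{R/r_n}(0)}|\widetilde h_t(w)| \leq \sup_{t\in[1,2],\,w\in B_R(0)}|\widetilde h_t(w)| =: Y_n,
\]
where $Y_n$ has the law of the fixed random variable $Y := \sup_{t\in[1,2],\,w\in B_R(0)}|\widetilde h_t(w)|$ for $\widetilde h$ a whole-space LGF. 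Joint continuity of the sphere average process in $(t,w)$ (Section~11 of~\cite{fgf-survey}) gives $Y<\infty$ almost surely, so $\mathbb{E} Y<\infty$ by Borell's inequality; and by Borell--TIS, $Y$ has sub-Gaussian tails with variance parameter $\sigma^2 := \sup_{(t,w)\in[1,2]\times B_R(0)}\mathrm{Var}(\widetilde h_t(w))<\infty$. Taking $C > \sigma\sqrt{2}$, one has $\mathbb{P}[Y_n>\mathbb{E}Y + C\sqrt{\log n}]\leq n^{-C^2/(2\sigma^2)}$, which is summable, so Borel--Cantelli yields $\sup_{r\in[r_n,r_{n+1}],\,z\in B_R(0)}|h_r(z)-h_{r_n}(0)| = O(\sqrt{\log n})$ almost surely.

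Combining the two estimates, $\sup_{z\in B_R(0)}|h_r(z)|\leq |h_{r_n}(0)|+O(\sqrt{\log n}) = n^{1/2+\zeta/2}+O(\sqrt{\log n}) = o(n^{1/2+\zeta})$ almost surely for every $r\in[r_n,r_{n+1}]$ and $n$ large. Since $(\log r)^{1/2+\zeta}\geq (n\log 2)^{1/2+\zeta}$, dividing yields the desired limit. The main obstacle is verifying the reduction in the second step: one must check that the two-parameter process $(r,z)\mapsto h_r(z)-h_{r_n}(0)$ rescales correctly under Lemma~\ref{lem:scale-invariance} so that the tail bound uses a stochastic dominator $Y$ whose law is independent of $n$. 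Once this reduction is made, the finiteness of $\mathbb{E} Y$ and of $\sigma^2$ follow from Borell's inequality and the continuity of the sphere average process on a compact set.
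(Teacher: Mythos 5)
Your argument is correct and follows essentially the same route as the paper: dyadic scales, the Borel--TIS inequality for the oscillation of the sphere-average process on a compact parameter set, scaling invariance (Lemma~\ref{lem:scale-invariance}) to make that bound uniform over scales, and Borel--Cantelli. One caveat: the identity $\int \log|x-y|\,d\sigma_{r,z}(y)=\log\max(|x-z|,r)$, and hence the exact formula $\mathrm{Var}(h_r(0))=\log r$, relies on harmonicity of the log kernel and therefore only holds for $d=2$; for $d\ge 3$ the kernel is subharmonic and one only gets $\mathrm{Var}(h_r(0))=\log r+O(1)$, which is all your tail bound actually needs, so the conclusion is unaffected.
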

\begin{proof}
The process $\{h_r(z)-h_r(0):z \in B_R(0), r \in [\frac{1}{2},1]\}$ is a centered Gaussian process with variances bounded above by an $R$-dependent constant. By \cite[Section 11.1]{fgf-survey}, this process has a continuous modification. Therefore,
\[
\sup_{z\in B_R(0)}\sup_{r \in [\frac{1}{2},1]} |h_r(z) - h_r(0)| < \infty.
\]
By the Borel-TIS inequality \cite{itsjustborell,sudakov} (see e.g. \cite[Theorem 2.1.1]{rfgbook}), we have
\[
\mathbb{E}\left(\sup_{z \in B_R(0)}\sup_{r \in [\frac{1}{2},1]}|h_r(z)-h_r(0)|\right)<\infty
\]
and
\[
\mathbb{P}\left(\sup_{z \in B_R(0)} \sup_{r \in[\frac{1}{2},1]}|h_r(z)-h_r(0)|>A\right) \leq c^{-1}e^{-cA^2}
\]
for some constant $c>0$ only depending on $R$. Using the scaling invariance of $h$ (Lemma~\ref{lem:scale-invariance}), we obtain
\[
\mathbb{P}\left(\sup_{z \in B_{R2^k}(0)}\sup_{r \in[2^{k-1},2^k]}|h_r(z)-h_r(0)|>A\right) \leq c^{-1}e^{-cA^2}
\]
for all $k \in \mathbb{N}$. Applying this with $A = k^{\frac{1}{2}+\frac{\zeta}{2}}$ and the Borel-Cantelli lemma, we obtain
\[
\lim_{k\to\infty}\sup_{z\in B_{R2^k}(0)}\sup_{r \in[2^{k-1},2^k]} \frac{h_r(z)-h_r(0)}{(\log r)^{\frac{1}{2}+\zeta}} = 0 \quad \mbox{a.s.}
\]
Finally, using that $\frac{h_r(0)}{(\log r)^{\frac{1}{2}+\zeta}} \to 0$ as $r \to \infty$ (as can be seen from direct computation), we obtain the result.
\end{proof}

\begin{proof}[Proof of Lemma \ref{lem:sphereavg}.]
By \cite{thick,otherthick}, we have a.s.
\[
\sup_{z \in B_R(0)} \sup_{r \in [0,\frac{1}{2}]} \frac{|h_r(z)|}{\log\left(\frac{1}{r}\right)} < \infty.
\]
By continuity of the sphere average process, for any fixed $r_0 > \frac{1}{2},$
\[
\sup_{z \in B_R(0)}\sup_{r \in [\frac{1}{2},r_0]}|h_r(z)| < \infty \quad \mbox{a.s.}
\]
Combining these bounds with Lemma \ref{lem:sphereavg2} and using that $(\log r)^{\frac{1}{2}+\zeta}$ controls $|h_r(z)|$ for large $r$, we obtain the stated uniform bound over all $r>0$.
\end{proof}

\section{Weak exponential metrics: proof of Theorem~\ref{thm:axiom}}\label{sec:axiom}

In this section, we prove Theorem~\ref{thm:axiom}. Namely, we verify that all subsequential limits of the rescaled exponential metrics $\mathsf a_\epsilon^{-1} D_h^\epsilon$ as $\epsilon \to 0$ satisfy Axioms~\ref{axiom-length}---\ref{axiom-tight}. The proof mainly follows the argument in~\cite{lqg-metric-estimates} (see also~\cite{Pfeffer-weak-metric}) with modifications to handle higher dimensions. In this section, we use $h$ to denote a whole-space LGF (we specify the choice of additive constant when needed), and $\mathsf h$ to denote a whole-space LGF plus a bounded continuous function. Throughout this section, we fix $\mathfrak K$ satisfying~\eqref{eq:asmp-kernel} and choose $\mathsf K$ as in Lemma~\ref{lem:kernel}.

\subsection{Convergence among dyadic boxes, Weyl scaling, and tightness
across scales}

In this section, we consider a whole-space LGF plus a bounded continuous function $\mathsf h$. Let $D_{\mathsf h}^\epsilon$ be the exponential metric associated with $\mathsf h_\epsilon^*$ defined as in~\eqref{eq:def-exponential-metric} with $\mathsf h$ in place of $h$, and recall $\mathsf a_\epsilon$ from~\eqref{eq:def-a-epsilon} which depends on $\mathsf h$. In fact, we will show in the proof of Lemma~\ref{lem:tightness-whole-plane} that for different $\mathsf h$, the normalizing constants $\mathsf a_\epsilon$ are up-to-constants equivalent as $\epsilon \to 0$. For an open domain $U \subset \mathbb{R}^d$, let $D_{\mathsf h}^\epsilon(\cdot, \cdot; U)$ denote the $D_{\mathsf h}^\epsilon$-internal metric on $U$. We call a domain of the form $x + [0, 2^{-n}]^d$ a dyadic box, where $n$ is an integer and $x \in 2^{-n} \mathbb{Z}^d$. An open domain $U$ is said to be a dyadic domain if it can be written as the interior of a finite union of dyadic boxes. Let $\mathcal{W}$ be the set of all dyadic domains.

\begin{lemma}\label{lem:dyadicconv}
    Let $\mathsf h$ be a whole-space LGF plus a bounded continuous function. Then the following hold.
    \begin{enumerate}
        \item As $\epsilon \to 0$, the laws of the rescaled metrics $\mathsf{a}_\epsilon^{-1} D_{\mathsf h}^\epsilon(\cdot, \cdot)$ are tight in the local uniform topology on $\mathbb{R}^d \times \mathbb{R}^d$. Furthermore, each subsequential limit is a continuous length metric on $\mathbb{R}^d$.

        \item For every sequence $\{\epsilon_k\}_{k \geq 1}$ tending to 0, there exists a subsequence $\mathcal{E}$ such that along $\mathcal{E}$,
        $$
        (\mathsf{a}_\epsilon^{-1} D_{\mathsf h}^\epsilon, \{\mathsf{a}_\epsilon^{-1} D_{\mathsf h}^\epsilon(\cdot, \cdot; W) \}_{W \in \mathcal{W}}) \to (\widetilde D, \{\widetilde D_W\}_{W \in \mathcal{W}})
        $$
        where the convergence is in the local uniform topology on $\mathbb{R}^d \times \mathbb{R}^d$ and on $W \times W$, respectively. Moreover, for all $W \in \mathcal{W}$, $\widetilde D_W$ is a continuous metric on $W$, and we have 
        \begin{equation}\label{eq:internal-consistent}
            \widetilde D(\cdot, \cdot; W) = \widetilde D_W(\cdot, \cdot; W),
        \end{equation} where $\widetilde D(\cdot, \cdot; W)$ is the $\widetilde D$-internal metric on $W$ and $\widetilde D_W(\cdot, \cdot; W)$ is the $\widetilde D_W$-internal metric on $W$.
    \end{enumerate}
\end{lemma}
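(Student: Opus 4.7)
For Part 1, the tightness of $\{\mathsf a_\epsilon^{-1} D_h^\epsilon\}_{\epsilon>0}$ in the local uniform topology on $\mathbb{R}^d\times\mathbb{R}^d$ for a whole-space LGF $h$ follows from Proposition~\ref{kernelbaby} (which identifies $h*\mathsf K_\epsilon$ in law with the white-noise approximation $\mathfrak h_\epsilon$) combined with \cite[Theorem 1.2]{dgz-exponential-metric}. To extend this to $\mathsf h = h + f$ with $f$ a bounded continuous function, I apply Weyl scaling at the approximation level: $\mathsf h_\epsilon^* = h_\epsilon^* + f_\epsilon^*$ with $f_\epsilon^* = f*\mathsf K_\epsilon$ satisfying $\|f_\epsilon^*\|_\infty \le \|f\|_\infty \|\mathsf K\|_{L^1}<\infty$ by Lemma~\ref{lem:kernel}. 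This yields the pointwise two-sided comparison
\[
e^{-C\xi\|f\|_\infty}\, D_h^\epsilon \;\le\; D_{\mathsf h}^\epsilon \;\le\; e^{C\xi\|f\|_\infty}\, D_h^\epsilon,
\]
which transfers tightness and makes the corresponding medians $\mathsf a_\epsilon$ comparable up to a universal constant. Each $D_{\mathsf h}^\epsilon$ is a length metric by construction, and local uniform limits of length metrics on the locally compact space $\mathbb{R}^d$ remain length metrics, so subsequential limits are continuous length metrics inducing the Euclidean topology (the latter via the H\"older estimates already present in \cite{dgz-exponential-metric}).

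For Part 2, I first establish tightness of $\{\mathsf a_\epsilon^{-1} D_{\mathsf h}^\epsilon(\cdot,\cdot;W)\}$ in the local uniform topology on $W\times W$ for each fixed $W\in\mathcal{W}$. The pointwise inequality $D_{\mathsf h}^\epsilon(\cdot,\cdot;W)\ge D_{\mathsf h}^\epsilon$ together with Part 1 yields tightness from below and continuity at the diagonal. For the upper bound on a compact $K\subset W$, I cover $K$ by a finite chain of overlapping Euclidean balls $B_1,\ldots,B_N$ with $\bigcup_i\overline{B_i}\subset W$, and observe that $D_{\mathsf h}^\epsilon(z,w;W)$ for $z,w\in K$ is dominated by a sum of full-metric diameters of the $B_i$, which is tight by Part 1; this also controls the modulus of continuity. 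Since $\mathcal{W}$ is countable, a Cantor diagonal argument then produces a single subsequence $\mathcal{E}$ along which $\mathsf a_\epsilon^{-1}D_{\mathsf h}^\epsilon\to\widetilde D$ and each $\mathsf a_\epsilon^{-1}D_{\mathsf h}^\epsilon(\cdot,\cdot;W)\to\widetilde D_W$ in the indicated topologies.

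It remains to prove the consistency identity \eqref{eq:internal-consistent}. Since $\widetilde D_W$ is a local uniform limit of length metrics on $W$, it is itself a length metric on $W$, so $\widetilde D_W(\cdot,\cdot;W)=\widetilde D_W$. Passing to the limit along $\mathcal E$ in $D_{\mathsf h}^\epsilon\le D_{\mathsf h}^\epsilon(\cdot,\cdot;W)$ gives $\widetilde D\le\widetilde D_W$ on $W\times W$, and monotonicity of path length in the metric then yields the easy direction $\widetilde D(\cdot,\cdot;W)\le\widetilde D_W$. The reverse inequality is the main obstacle and I expect it to follow from a localization argument: by the H\"older continuity of $(\mathbb{R}^d,D_{\mathsf h}^\epsilon/\mathsf a_\epsilon)\to(\mathbb{R}^d,|\cdot|)$ that holds uniformly in $\epsilon$ with polynomially high probability (from \cite{dgz-exponential-metric}), any $D_{\mathsf h}^\epsilon$-near geodesic between $z,w\in W$ satisfying $D_{\mathsf h}^\epsilon(z,w)/\mathsf a_\epsilon$ sufficiently small (depending on $\mathrm{dist}(z,\partial W)$) stays inside a Euclidean ball about $z$ contained in $W$; hence $D_{\mathsf h}^\epsilon(z,w)=D_{\mathsf h}^\epsilon(z,w;W)+o(1)$ for such pairs, and passing to the limit gives $\widetilde D=\widetilde D_W$ on sufficiently close pairs in $W$. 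For any rectifiable path $P\subset W$ with compact image $K\subset W$, partitions of sufficiently fine mesh then have each consecutive pair close enough that $\widetilde D(P(t_i),P(t_{i+1}))=\widetilde D_W(P(t_i),P(t_{i+1}))$, so $\mathrm{len}(P;\widetilde D)=\mathrm{len}(P;\widetilde D_W)$; taking the infimum over $P\subset W$ completes the proof.
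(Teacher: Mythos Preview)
Your proof has a genuine error: you claim that $\widetilde D_W$ is a length metric on $W$ because it is a local uniform limit of length metrics. This is not true in general for bounded domains, and in fact the paper explicitly flags this as \emph{open} in higher dimensions (see the Remark immediately after Lemma~\ref{lem:dyadicconv}). The obstruction is that a $D_{\mathsf h}^\epsilon(\cdot,\cdot;W)$-near-geodesic between points close to $\partial W$ may approach $\partial W$, and local uniform convergence on $W\times W$ gives no control there; this is why the paper only asserts tightness in the \emph{local} uniform topology on $W\times W$, not the uniform topology. Fortunately, you do not actually need this claim: your subsequent localization argument already proves the correct statement $\widetilde D(\cdot,\cdot;W)=\widetilde D_W(\cdot,\cdot;W)$ directly, without ever reducing the right-hand side to $\widetilde D_W$. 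So the fix is simply to delete the length-metric claim for $\widetilde D_W$ and observe that $\widetilde D\le\widetilde D_W$ immediately gives $\widetilde D(\cdot,\cdot;W)\le\widetilde D_W(\cdot,\cdot;W)$ by comparing path lengths.

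A second, smaller gap is in Part~1: the assertion that local uniform limits of length metrics on $\mathbb{R}^d$ remain length metrics is not automatic, because $D_{\mathsf h}^\epsilon$-near-geodesics between fixed points could in principle wander off to infinity. One needs to show they stay in a compact set uniformly in $\epsilon$. The paper does this via Lemma~\ref{lem:cross-estimate} (diameter of a ball bounded by crossing distance of a much larger shell), then applies Arzel\`a--Ascoli. You can alternatively argue directly from the fact that the limit $\widetilde D$ is already known to be a continuous metric: choose $R$ with $\widetilde D(u,\partial B_R(u))>\widetilde D(u,v)+1$, pass to a Skorokhod coupling, and use uniform convergence on $\overline{B_R(u)}$ to trap the near-geodesics. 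Either way, the step needs to be made explicit.

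Your localization argument for the consistency identity is essentially the paper's Lemma~\ref{lem:internal-metric}. The paper's version is cleaner: rather than invoking H\"older estimates from \cite{dgz-exponential-metric}, it works abstractly with any sequence of continuous length metrics converging locally uniformly, and shows that $\widetilde D(u,v)=\widetilde D_W(u,v)$ whenever $\widetilde D(u,v)<\widetilde D(u,\partial W)$ by trapping pre-limit near-geodesics inside a compact $\widetilde D$-ball in $W$. This avoids any probabilistic input at that step.
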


\begin{remark}
    In the two-dimensional case~\cite{lqg-metric-estimates}, it is known that $\widetilde D_W$ is a length metric on $W$, i.e., $\widetilde D_W(\cdot, \cdot; W) = \widetilde D_W(\cdot, \cdot)$. The main obstacle to extending this result to higher dimensions lies in establishing the tightness of $\mathsf{a}_\epsilon^{-1} D_{\mathsf h}^\epsilon(\cdot, \cdot; \overline W)$ in the uniform topology on $W \times W$. In~\cite{dgz-exponential-metric}, only tightness in the local uniform topology was obtained, whereas tightness in the uniform topology was proved in two dimensions in~\cite{DDDF-tightness}. Nevertheless, we expect that the tightness result in the uniform topology can be achieved by adapting the arguments in~\cite{dgz-exponential-metric}. If so, this would imply that $\widetilde D_W$ is also a length metric on $W$ in higher dimensions.

\end{remark}

We first tailor Theorem 1.2 of~\cite{dgz-exponential-metric} to show the tightness of $\mathsf{a}_\epsilon^{-1} D_{\mathsf h}^\epsilon$ as $\epsilon \to 0$.

\begin{lemma}\label{lem:tightness-whole-plane}
    Let $\mathsf h$ be a whole-space log-correlated Gaussian field plus a  bounded continuous function. Then as $\epsilon \to 0$, the family of rescaled metrics $\{\mathsf{a}_\epsilon^{-1} D_{\mathsf h}^\epsilon(\cdot,\cdot)\}_{\epsilon>0}$ is tight with respect to the local uniform topology on $\mathbb{R}^d \times \mathbb{R}^d$. Moreover, any subsequential limit is supported on continuous metrics on $\mathbb{R}^d$.
\end{lemma}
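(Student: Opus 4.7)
The plan is to reduce the statement to the already-established tightness for a pure whole-space LGF via a bounded Weyl-scaling argument. Write $\mathsf h = h + f$ with $h$ a whole-space LGF and $f$ a bounded continuous function, so that $\mathsf h_\epsilon^* = h_\epsilon^* + f_\epsilon^*$ with $f_\epsilon^* := f * \mathsf K_\epsilon$. By Proposition~\ref{kernelbaby}, the convolution $h_\epsilon^* = h * \mathsf K_\epsilon$ agrees in law with the white-noise approximation used in~\cite{dgz-exponential-metric}. Combined with their Theorem~1.2, this yields tightness of $(\mathsf a_\epsilon^h)^{-1} D_h^\epsilon$ in the local uniform topology on $\mathbb{R}^d \times \mathbb{R}^d$, with every subsequential limit a continuous metric on $\mathbb{R}^d$; here $\mathsf a_\epsilon^h := \mathrm{median}(D_h^\epsilon(0,e_1))$.

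The second step is a deterministic sandwich bound. By the estimate in Lemma~\ref{lem:kernel}, $|\mathsf K(x)| \le C \min\{1, |x|^{-(2d-1)}\}$, which for $d \ge 2$ is integrable on $\mathbb{R}^d$, so $\|f_\epsilon^*\|_\infty \le \|f\|_\infty \|\mathsf K\|_{L^1}$ uniformly in $\epsilon$. Setting $M := \xi \|f\|_\infty \|\mathsf K\|_{L^1}$ and substituting $\mathsf h_\epsilon^* = h_\epsilon^* + f_\epsilon^*$ into the definition~\eqref{eq:def-exponential-metric} produces the pointwise sandwich
\[
e^{-M}\,D_h^\epsilon(x,y) \le D_{\mathsf h}^\epsilon(x,y) \le e^{M}\,D_h^\epsilon(x,y), \qquad x,y \in \mathbb{R}^d.
\]
Since the median is monotone with respect to pointwise inequalities on the common probability space, $\mathsf a_\epsilon^{\mathsf h}$ and $\mathsf a_\epsilon^h$ are comparable up to the factor $e^{\pm M}$, which is the up-to-constants equivalence claim preceding the lemma. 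Dividing the sandwich by $\mathsf a_\epsilon^{\mathsf h}$ then transfers tightness of $(\mathsf a_\epsilon^h)^{-1} D_h^\epsilon$ to $(\mathsf a_\epsilon^{\mathsf h})^{-1} D_{\mathsf h}^\epsilon$.

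For the statement that every subsequential limit is supported on continuous metrics on $\mathbb{R}^d$, I would extract a joint subsequential limit of the pair $\bigl((\mathsf a_\epsilon^{\mathsf h})^{-1} D_{\mathsf h}^\epsilon,\ (\mathsf a_\epsilon^h)^{-1} D_h^\epsilon\bigr)$ to $(\widetilde D_{\mathsf h}, \widetilde D_h)$ and pass the sandwich to the limit to obtain $e^{-M} \widetilde D_h \le \widetilde D_{\mathsf h} \le e^{M} \widetilde D_h$. Since $\widetilde D_h$ is a continuous metric inducing the Euclidean topology by~\cite{dgz-exponential-metric}, the same follows for $\widetilde D_{\mathsf h}$, including positivity at distinct points. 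I expect the main (and essentially only) technical point to be verifying the uniform $L^\infty$ bound $\|f_\epsilon^*\|_\infty \le C\|f\|_\infty$; here it is crucial that $\mathsf K \in L^1(\mathbb{R}^d)$, which is a non-trivial consequence of the explicit decay estimate in Lemma~\ref{lem:kernel} rather than of any positivity assumption on $\mathsf K$ (which indeed may fail to be positive).
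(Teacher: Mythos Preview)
Your sandwich argument for the extension from a pure LGF $h$ to $\mathsf h = h + f$ with $f$ bounded continuous is correct and in fact cleaner than the paper's treatment of that step (the paper re-runs its geodesic-confinement argument for $\mathsf h$ rather than observing that a global $L^\infty$ bound on $f_\epsilon^*$ gives a direct multiplicative comparison of $D_{\mathsf h}^\epsilon$ and $D_h^\epsilon$). The verification that $\mathsf K \in L^1(\mathbb{R}^d)$ from the decay estimate in Lemma~\ref{lem:kernel} is the right point to isolate.

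However, there is a real gap in your first step. Proposition~\ref{kernelbaby} identifies $h * \mathsf K_\epsilon$ in law with the \emph{untruncated} white-noise field $\mathfrak h_\epsilon$ (integrated over $t \in (\epsilon,\infty)$), whereas Theorem~1.2 of~\cite{dgz-exponential-metric} is stated for the metric $\hat D_{\mathfrak h}^\epsilon$ built from the \emph{truncated} field $\mathfrak h_{\epsilon,1}$ (integrated over $t \in (\epsilon,1)$). The two differ by $\mathfrak h_{1,\infty}$, which is continuous but only \emph{locally} bounded, so your global sandwich does not apply: a $D_h^\epsilon$-near-geodesic between two nearby points could \emph{a priori} wander arbitrarily far, into regions where $\mathfrak h_{1,\infty}$ is large. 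The paper's proof spends most of its length on exactly this point: it invokes \cite[Lemma~6.7]{dgz-exponential-metric} to obtain, with high probability, a confinement event on which $\hat D_{\mathfrak h}^\epsilon$-geodesics between points of $B_r(0)$ stay inside a fixed $B_R(0)$, and only then compares the two metrics via $\sup_{B_R(0)} \mathfrak h_{1,\infty}$ and $\inf_{B_R(0)} \mathfrak h_{1,\infty}$ (equations~\eqref{eq:lem-whole-plane-1}--\eqref{eq:lem-whole-plane-3}). This confinement step is the missing idea in your reduction for the pure LGF case; once it is supplied, your bounded-$f$ extension goes through as written.
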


\begin{proof}
    Let $\mathfrak h_{\epsilon, 1}$ (resp.\ $\mathfrak h_{1,\infty}$) denote the variant of $\mathfrak h_\epsilon$ in~\eqref{eq:white-noise} obtained by integrating over the time interval $(\epsilon,1)$ (resp.\ $(1, \infty)$) rather than $(\epsilon, \infty)$. Then $\mathfrak h_{\epsilon, 1}$ is a random continuous function and $\mathfrak h_{1,\infty}$ is a random continuous function viewed modulo a global constant. Let $\hat D_{\mathfrak h}^\epsilon$ be the exponential metric defined with respect to $\mathfrak h_{\epsilon, 1}$, and let $\hat{\mathsf{a}}_\epsilon$ be the median of $\hat D_{\mathfrak h}^\epsilon(0,e_1)$. As proved in Theorem 1.2 of~\cite{dgz-exponential-metric}, as $\epsilon \to 0$ the family of rescaled metrics $\hat{\mathsf{a}}_\epsilon^{-1} \hat D_{\mathfrak h}^\epsilon(\cdot,\cdot)$ is tight with respect to the local uniform topology on $\mathbb{R}^d \times \mathbb{R}^d$. Moreover, any subsequential limit is supported on continuous metrics on $\mathbb{R}^d$.

    Let $h$ be a whole-space LGF with the additive constant chosen so that $h_1(0) = 0$. We now explain how to extend the tightness result to $\mathsf a_\epsilon^{-1} D_h^\epsilon$. The key ingredient is Lemma~\ref{lem:kernel}, which states that $h * \mathsf K_\epsilon$ and $\mathfrak h_\epsilon = \mathfrak h_{\epsilon,1} + \mathfrak h_{1,\infty}$ have the same law when viewed as random continuous functions modulo a global constant. Therefore, there exists a coupling of $h$ and the white noise such that \[h * \mathsf K_\epsilon - \mathfrak h_{\epsilon, 1}\] is a random continuous function whose law is independent of $\epsilon$. Note that the coupling may depend on $\epsilon$. With slight abuse of notation, we still use $\mathfrak h_{1,\infty}$ to denote $h * \mathsf K_\epsilon - \mathfrak h_{\epsilon, 1}$. We now work with this coupling.
    
    By \cite[Lemma 6.7]{dgz-exponential-metric}, for any $r>0$ and $p \in (0,1)$, there exists $R >r $ (depending on $r,p$) such that
    $$
    \liminf_{\epsilon \to 0} \mathbb{P}\left[\sup_{u,v \in B_r(0)} \hat D_{\mathfrak h}^\epsilon(u,v) < \hat D_{\mathfrak h}^\epsilon(\partial B_r(0), \partial B_R(0))\right] \geq p.
    $$
    On this event, the $\hat D_{\mathfrak h}^{\epsilon}$-geodesic between any two points $u,v \in B_r(0)$ cannot cross $A_{r,R}(0) = B_R(0) \setminus \overline{B_r(0)}$ and thus lies in $B_R(0)$. Therefore, for all $u,v \in B_r(0)$, 
    \begin{equation}\label{eq:lem-whole-plane-1}
         D_h^{\epsilon}(u,v) \leq \exp\left(\xi \sup_{w \in B_R(0)} \mathfrak h_{1, \infty}(w)\right) \hat D_{\mathfrak h}^{\epsilon}(u,v) .
    \end{equation}
    Furthermore, on the same event, by considering whether the $D_h^\epsilon$-geodesic between $u,v$ crosses $A_{r,R}(0)$, for all $u,v \in B_r(0)$ we have
    \begin{equation}\label{eq:lem-whole-plane-2}
    \begin{aligned}
        D_h^{\epsilon}(u,v) &\geq \exp\left(\xi \inf_{w \in B_R(0)}  \mathfrak h_{1, \infty}(w)\right) \min \{ \hat D_{\mathfrak h}^\epsilon (u,v), \hat D_{\mathfrak h}^\epsilon(\partial B_r(0), \partial B_R(0)) \} \\
        &= \exp\left( \xi \inf_{w \in B_R(0)} \mathfrak h_{1, \infty}(w) \right) \hat D_{\mathfrak h}^\epsilon (u,v),
    \end{aligned}
    \end{equation}
    where in the last equality we use $\sup_{u,v \in B_r(0)} \hat D_{\mathfrak h}^\epsilon(u,v) < \hat D_{\mathfrak h}^\epsilon(\partial B_r(0), \partial B_R(0))$.

    Combining~\eqref{eq:lem-whole-plane-1} and~\eqref{eq:lem-whole-plane-2}, it follows that as $\epsilon \to 0$, the ratios $\mathsf a_\epsilon / \hat{\mathsf a}_\epsilon$ are bounded from above and below, and the family of rescaled metrics $\{\mathsf{a}_\epsilon^{-1} D_h^\epsilon(\cdot,\cdot)\}_{\epsilon>0}$ is tight with respect to the local uniform topology on $\mathbb{R}^d \times \mathbb{R}^d$. 
    
    By~\cite[Proposition 6.4]{dgz-exponential-metric}, for any $R>r>0$ and $p \in (0,1)$, there exists $c = c(r,R,p)>0$ such that
    $$
    \liminf_{\epsilon \to 0} \mathbb{P}\left[\hat D_{\mathfrak h}^\epsilon(\partial B_r(x), \partial B_{R}(x)) \geq c \hat{\mathsf{a}}_\epsilon \right] \geq p \quad \mbox{for all $x \in \mathbb{R}^d$}.
    $$
    Combining this with
    \begin{equation}\label{eq:lem-whole-plane-3}
    D_h^{\epsilon}(\partial B_r(x), \partial B_{R}(x)) \geq \exp\left( \xi \inf_{w \in B_{R}(x)} \mathfrak h_{1, \infty}(w) \right) \hat D_{\mathfrak h}^{\epsilon}(\partial B_r(x), \partial B_{R}(x))
    \end{equation}
    implies that every subsequential limit of $\{\mathsf{a}_\epsilon^{-1} D_h^\epsilon(\cdot,\cdot)\}_{\epsilon>0}$ as $\epsilon \to 0$ is a continuous metric, i.e., induces the Euclidean topology. This proves the lemma for the whole-space LGF. The case of a whole-space LGF plus a bounded continuous function can be proved similarly, since the analogs of~\eqref{eq:lem-whole-plane-1}, ~\eqref{eq:lem-whole-plane-2}, and~\eqref{eq:lem-whole-plane-3} hold.
\end{proof}

The following lemma is used to show that any subsequential limit of $\mathsf{a}_\epsilon^{-1} D_{\mathsf h}^\epsilon$ is a length metric. It follows from Lemma~\ref{lem:tightness-whole-plane} and the scaling invariance of the whole-space LGF (Lemma~\ref{lem:scale-invariance}).

\begin{lemma}\label{lem:cross-estimate}
    Let $\mathsf h$ be a whole-space LGF plus a bounded continuous function. For each $p \in (0,1)$ and $C>0$, there exists a constant $R>0$, depending on $p, C$, and the law of $\mathsf h$, such that for all $r>0$ and $x \in \mathbb{R}^d$,
    \begin{equation}\label{eq:lem3.4-1}
    \liminf_{\epsilon \to 0} \mathbb{P}\left[\sup_{u,v \in B_r(x)} D_{\mathsf h}^\epsilon(u,v) \leq \frac{1}{C} D_{\mathsf h}^\epsilon(\partial B_r(x), \partial B_{Rr}(x))\right] \geq p.
    \end{equation}
\end{lemma}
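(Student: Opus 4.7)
The plan is to reduce the problem to a canonical setup using the symmetries of the whole-space LGF, and then bootstrap from the qualitative crossing estimate from~\cite[Lemma 6.7]{dgz-exponential-metric} (which already appears in the proof of Lemma~\ref{lem:tightness-whole-plane}) by iterating across scales.

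\emph{Reductions.} Write $\mathsf h = h + f$ with $h$ a whole-space LGF and $f$ bounded continuous. By the $L^1$ bound on $\mathsf K$ in Lemma~\ref{lem:kernel}, $\Vert f \ast \mathsf K_\epsilon \Vert_\infty \leq \Vert f \Vert_\infty \Vert \mathsf K \Vert_{L^1}$ uniformly in $\epsilon$, so passing from $\mathsf h$ to $h$ changes the ratio in~\eqref{eq:lem3.4-1} by a uniformly bounded multiplicative factor, which can be absorbed into $C$. Translation invariance of $h$ reduces to $x=0$. Setting $\tilde h(y) := h(ry) - h_r(0) \eqD h$ (Lemma~\ref{lem:scale-invariance}), the change of variables $Q = rP$ in~\eqref{eq:def-exponential-metric} yields the scaling identity $D_h^{r\epsilon}(ru, rv) = r \, e^{\xi h_r(0)} D_{\tilde h}^{\epsilon}(u,v)$, whose common multiplicative factor cancels in~\eqref{eq:lem3.4-1}. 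Taking $\delta := \epsilon/r$ then reduces the problem to $r = 1$, $\mathsf h = h$, $x = 0$.

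\emph{Qualitative $C=1$ estimate.} The proof of Lemma~\ref{lem:tightness-whole-plane} shows that, combining~\cite[Lemma 6.7]{dgz-exponential-metric} applied to the white-noise approximation $\hat D_\mathfrak{h}^\delta$ with comparison to $D_h^\delta$ via the bounded perturbation $\mathfrak h_{1,\infty}$, for each $p' \in (0,1)$ there exists $R^* = R^*(p')$ with
\[
\liminf_{\delta \to 0} \mathbb{P}\!\left[\sup_{u,v \in B_1(0)} D_h^\delta(u,v) < D_h^\delta(\partial B_1(0), \partial B_{R^*}(0))\right] \ge p'.
\]

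\emph{Iteration.} Set $p' := 1 - (1-p)/C$, let $R^* = R^*(p')$ be as above, define $R_j := (R^*)^j$ for $j = 0, 1, \dots, C$, and put $R := R_C$. Applying the scaling identity from the reductions step at scale $R_j$, the event
\[
E_j := \Bigl\{\sup_{u,v \in B_{R_j}(0)} D_h^\delta(u,v) < D_h^\delta(\partial B_{R_j}(0), \partial B_{R_{j+1}}(0))\Bigr\}
\]
has the same probability as its $C = 1$ analogue at scale $1$ with $\delta$ replaced by $\delta/R_j$, so $\liminf_{\delta \to 0} \mathbb{P}[E_j] \ge p'$. Since any continuous path from $\partial B_1(0)$ to $\partial B_R(0)$ crosses every intermediate shell and $\sup_{u,v \in B_{R_j}} D_h^\delta \ge \sup_{u,v \in B_1} D_h^\delta$, on $\bigcap_{j=0}^{C-1} E_j$ we have
\[
D_h^\delta(\partial B_1(0), \partial B_R(0)) \ge \sum_{j=0}^{C-1} D_h^\delta(\partial B_{R_j}(0), \partial B_{R_{j+1}}(0)) > C \sup_{u,v \in B_1(0)} D_h^\delta(u,v).
\]
A union bound then yields $\liminf_{\delta \to 0} \mathbb{P}\bigl[\bigcap_{j=0}^{C-1} E_j\bigr] \ge 1 - C(1-p') = p$, which is the desired conclusion. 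The main care required is in deriving the $C = 1$ case for $D_h^\delta$ itself (rather than merely $\hat D_\mathfrak{h}^\delta$), which involves transferring the DGZ estimate via the bounded perturbation $\mathfrak h_{1,\infty}$ and controlling its oscillation; once that is in hand, the iteration is a routine finite union bound.
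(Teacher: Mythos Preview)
Your reductions and iteration are correct, but the ``qualitative $C=1$ estimate'' step has a gap, and once patched the whole argument becomes more circuitous than necessary.

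\textbf{The gap.} The transfer from $\hat D_{\mathfrak h}^\delta$ to $D_h^\delta$ via $\mathfrak h_{1,\infty}$ does not directly yield the ratio $1$. Inequalities~\eqref{eq:lem-whole-plane-1}--\eqref{eq:lem-whole-plane-3} in the proof of Lemma~\ref{lem:tightness-whole-plane} compare the two metrics only up to the random factor $\exp(\xi\, \mathrm{osc}(\mathfrak h_{1,\infty}; B_{R^*}))$, where $R^*$ is the radius produced by \cite[Lemma~6.7]{dgz-exponential-metric}. So from that transfer you only obtain
\[
\sup_{u,v\in B_1(0)} D_h^\delta(u,v)\;\le\; e^{2\xi M}\, D_h^\delta(\partial B_1(0),\partial B_{R^*}(0))
\]
on an event where additionally the oscillation is at most $M$; this is not the $C=1$ estimate you claim. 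You flag this at the end (``controlling its oscillation'') but do not carry it out. The fix is either to accept the weaker constant $e^{2\xi M}$ and iterate $\lceil C e^{2\xi M}\rceil$ times rather than $C$ times, or to invoke the full conclusion of Lemma~\ref{lem:tightness-whole-plane} (tightness with continuous-metric limits) to get the $C=1$ case --- but the latter already gives every $C$, so the iteration becomes superfluous.

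\textbf{Comparison with the paper.} The paper's proof works directly with $D_h^\epsilon$ and avoids both the transfer and the iteration. From Lemma~\ref{lem:tightness-whole-plane}, $\mathsf a_\epsilon^{-1} D_h^\epsilon$ is tight and every subsequential limit is a continuous metric. Hence, along any subsequence, $\sup_{u,v\in B_{1/R}(0)} \mathsf a_\epsilon^{-1} D_h^\epsilon(u,v)\to 0$ in probability as $R\to\infty$ (continuity of the limit at $0$), while $\mathsf a_\epsilon^{-1} D_h^\epsilon(\partial B_{1/R}(0),\partial B_1(0))\ge \mathsf a_\epsilon^{-1} D_h^\epsilon(\partial B_{1/2}(0),\partial B_1(0))$ is bounded below in probability (the limit separates disjoint compacta). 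Choosing $R$ large enough handles every $C$ in one stroke; scaling then gives all $r$ and $x$. Your shell-iteration idea is a valid alternative route once the constant issue is fixed, but it buys nothing here since the needed input (Lemma~\ref{lem:tightness-whole-plane}) already delivers arbitrary $C$ directly.
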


\begin{proof}
    We first consider the case of the whole-space LGF $h$. By Lemma~\ref{lem:tightness-whole-plane}, for each $p \in (0,1)$ and $C>0$, there exists $R>0$ such that 
    \begin{equation*}
        \liminf_{\epsilon \to 0} \mathbb{P}\left[\sup_{u,v \in B_{1/R}(0)} \mathsf{a}_\epsilon^{-1} D_h^\epsilon(u,v) \leq \frac{1}{C} \mathsf{a}_\epsilon^{-1} D_h^\epsilon(\partial B_{1/R}(0), \partial B_1(0))\right] \geq p.
    \end{equation*}
    Combining this with the scaling invariance of $h$ (Lemma~\ref{lem:scale-invariance}) and the definition of $D_h^\epsilon$ implies~\eqref{eq:lem3.4-1}. For the case where $\mathsf h$ is a whole-space LGF plus a bounded continuous function, say $\mathsf h = h + g$ with \(g\) bounded, we note that  
    $$
    \exp\left(\xi \inf_{w \in \mathbb{R}^d} g(w)\right) D_h^\epsilon 
    \leq D_{\mathsf h}^\epsilon 
    \leq \exp\left(\xi \sup_{w \in \mathbb{R}^d} g(w)\right) D_h^\epsilon ,
    $$
    which gives the desired result. \qedhere
    
\end{proof}

The following technical lemma is used to show~\eqref{eq:internal-consistent}. It follows from standard arguments for length metrics. 

\begin{lemma}\label{lem:internal-metric}
    Let $V \subset U \subset \mathbb{R}^d$ be two open domains, and let $\{D_n\}$ be a sequence of continuous length metrics on $U$. Suppose that $\{D_n\}$ converges with respect to the local uniform topology on $U \times U$ to a continuous metric $D$, and $\{D_n(\cdot, \cdot; V)\}$ converges with respect to the local uniform topology on $V \times V$ to a continuous metric $\widehat D$. Then, $D(u,v; V) = \widehat D(u,v; V)$ for all $u,v \in V$. (Note that $D$ and $\widehat D$ are not assumed to be length metrics).
\end{lemma}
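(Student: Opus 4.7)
The plan is to establish the two inequalities $D(u,v;V) \leq \widehat D(u,v;V)$ and $\widehat D(u,v;V) \leq D(u,v;V)$ separately. First, the pointwise bound $D(z,w) \leq \widehat D(z,w)$ for $z,w \in V$ follows by passing to the limit in the trivial inequality $D_n(z,w) \leq D_n(z,w;V)$ (the internal metric uses a strictly smaller family of paths). The first direction is then immediate: for any path $P \subset V$ from $u$ to $v$, $D \leq \widehat D$ gives $\mathrm{len}(P;D) \leq \mathrm{len}(P;\widehat D)$, so taking infima yields $D(u,v;V) \leq \widehat D(u,v;V)$.

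The reverse inequality is the main step and uses the length metric hypothesis on $D_n$. The crucial intermediate claim is: for every compact $K \subset V$ there exists $r > 0$ such that $\widehat D(z,w) = D(z,w)$ whenever $z, w \in K$ and $|z - w| \leq r$. To prove this, I would fix $\delta > 0$ small enough that the closed Euclidean $\delta$-neighborhood $K'$ of $K$ still lies in $V$. Continuity of $D$ on the compact $K' \times K'$ and the fact that $D$ induces the Euclidean topology yield a $\rho_0 > 0$ with $D(z,w) \geq 2\rho_0$ whenever $z,w \in K'$ and $|z-w| \geq \delta$; uniform convergence $D_n \to D$ on $K' \times K'$ then gives the equicontinuity statement that for large $n$, $D_n(z,w) < \rho_0 \Rightarrow |z-w| < \delta$. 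Because $D_n$ is a length metric, for $z,w \in K$ with $D_n(z,w) < \rho_0/2$ there is a path $\gamma_n$ from $z$ to $w$ with $\mathrm{len}(\gamma_n;D_n) \leq D_n(z,w) + 1/n < \rho_0$; every $p \in \gamma_n$ then has $D_n(z,p) < \rho_0$, hence $|z-p| < \delta$, so $\gamma_n \subset V$. This path witnesses $D_n(z,w;V) \leq D_n(z,w) + 1/n$, which combined with $D_n(z,w;V) \geq D_n(z,w)$ and $n \to \infty$ gives $\widehat D(z,w) = D(z,w)$. Choosing $r$ so that $D(z,w) < \rho_0/4$ for $z,w \in K$ with $|z-w| \leq r$ (by continuity of $D$ on $K$) forces $D_n(z,w) < \rho_0/2$ for large $n$ and completes the claim.

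The second inequality then follows by a partition argument. Take a path $\gamma \subset V$ from $u$ to $v$ with $\mathrm{len}(\gamma;D) \leq D(u,v;V) + \epsilon$, set $K = \gamma([a,b])$, and apply the claim to obtain the corresponding $r$. Since lengths are suprema of partition sums that increase monotonically under refinement, on any partition fine enough that consecutive points satisfy $|\gamma(t_i) - \gamma(t_{i+1})| \leq r$, the local equality yields $\sum_i \widehat D(\gamma(t_i),\gamma(t_{i+1})) = \sum_i D(\gamma(t_i),\gamma(t_{i+1}))$. Passing to the supremum over such refinements gives $\mathrm{len}(\gamma;\widehat D) = \mathrm{len}(\gamma;D)$. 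Therefore $\widehat D(u,v;V) \leq \mathrm{len}(\gamma;\widehat D) = \mathrm{len}(\gamma;D) \leq D(u,v;V) + \epsilon$, and letting $\epsilon \to 0$ finishes the proof.

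The main obstacle is the local equality $\widehat D = D$, which requires converting uniform convergence of metrics into uniform control that $D_n$-near-geodesics between nearby points cannot escape $V$. This uses both the length metric property of $D_n$ (to produce near-geodesics whose $D_n$-diameter equals their length) and the fact that $D$ is a continuous metric (to bound $D_n$-balls by Euclidean balls uniformly in $n$). Without either input, near-geodesics could wander far from their endpoints and the comparison would break down.
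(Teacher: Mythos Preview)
Your proof is correct and follows essentially the same strategy as the paper: first pass to the limit in $D_n \leq D_n(\cdot,\cdot;V)$ to get $D \leq \widehat D$, then establish a local equality $D = \widehat D$ for nearby points (the paper phrases this as $D(u,v) = \widehat D(u,v)$ whenever $D(u,v) < D(u,\partial V)$, citing \cite[Lemma 2.11]{lqg-metric-estimates}, while you give the equivalent compact-set formulation with full details), and deduce that $D$-length and $\widehat D$-length agree for every path in $V$. One small point to tighten: in your local-equality step, the implication ``$D_n(z,p) < \rho_0 \Rightarrow |z-p| < \delta$'' was stated only for $z,p \in K'$, so to conclude $\gamma_n \subset K'$ you should argue via the first exit time of $\gamma_n$ from $K'$ (the exit point lies in $K'$ at Euclidean distance $\geq \delta$ from $z$, forcing $D_n$-distance $\geq \rho_0$, a contradiction).
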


\begin{proof}
    Since $D_n(\cdot, \cdot) \leq D_n(\cdot, \cdot; V)$, we have $D \leq \widehat D$ on $V \times V$. Similar to \cite[Lemma 2.11]{lqg-metric-estimates}, one can show that for all $u,v \in V$ with $D(u,v) < D(u, \partial V)$, we have $D(u,v) = \widehat D(u,v)$. It follows that the $D$-length of any path in $V$ equals its $\widehat D$-length. Hence $D(u,v; V) = \widehat D(u,v; V)$ for all $u,v \in V$. \qedhere
    
\end{proof}

We are now ready to prove Lemma~\ref{lem:dyadicconv}.

\begin{proof}[Proof of Lemma~\ref{lem:dyadicconv}]
    The tightness of the rescaled metrics $\{\mathsf{a}_\epsilon^{-1} D_{\mathsf h}^\epsilon\}_{\epsilon >0}$ as $\epsilon \to 0$ follows from Lemma~\ref{lem:tightness-whole-plane}. Suppose that along $\epsilon_n \to 0$, the laws of $\{\mathsf{a}_{\epsilon_n}^{-1} D_{\mathsf h}^{\epsilon_n}\}$ converge in the local uniform topology to $\widetilde D$. Again by Lemma~\ref{lem:tightness-whole-plane}, $\widetilde D$ is a.s.\ a continuous metric on $\mathbb{R}^d$. The fact that $\widetilde D$ is a length metric follows from Lemma~\ref{lem:cross-estimate}, as we now elaborate. 
    
    Fix $u,v \in \mathbb{R}^d$ and $\delta>0$. By definition, for any $\epsilon>0$ we can choose a path $P^\epsilon$ connecting $u$ and $v$ in $\mathbb{R}^d$ such that \[{\rm len}(P^\epsilon; \mathsf{a}_\epsilon^{-1} D_{\mathsf h}^\epsilon) \leq  \mathsf{a}_\epsilon^{-1} D_{\mathsf h}^\epsilon(u,v) + \delta.\] Parametrize $P^\epsilon$ by its $\mathsf{a}_\epsilon^{-1} D_{\mathsf h}^\epsilon$-length. It follows from Lemma~\ref{lem:cross-estimate} that $P^\epsilon$ is uniformly bounded as $\epsilon \to 0$. Moreover, Lemma~\ref{lem:tightness-whole-plane} implies that $P^\epsilon$ is uniformly equicontinuous. By the Arzel\`a-Ascoli theorem, we may pass to a subsequence $\{\epsilon_n'\}_{n \geq 1}$ along which
    \[
    (\mathsf{a}_{\epsilon_n'}^{-1} D_{\mathsf h}^{\epsilon_n'}, P^{\epsilon_n'}) \to (\widetilde D, P) \quad \mbox{jointly in law},
    \]
    where the second convergence is in the topology of parametrized curves. 
    
    We claim that ${\rm len}(P; \widetilde D) \leq \widetilde D(u,v) + \delta$. Indeed, for each fixed partition $0 = t_0 < t_1 <\ldots < t_n $ of $P$, we have
    \begin{align*}
    \sum_{i=0}^{n-1} \widetilde D(P(t_i), P(t_{i+1})) &= \sum_{i=0}^{n-1} \lim_{\epsilon_n' \to 0} \mathsf{a}_{\epsilon_n'}^{-1} D_{\mathsf h}^{\epsilon_n'}(P^{\epsilon_n'}(t_i), P^{\epsilon_n'}(t_{i+1}))\\
    &\leq \liminf_{\epsilon_n' \to 0} {\rm len}(P^{\epsilon_n'}; \mathsf{a}_{\epsilon_n'}^{-1} D_{\mathsf h}^{\epsilon_n'} )\\
    &\leq \liminf_{\epsilon_n' \to 0} \big(\mathsf{a}_{\epsilon_n'}^{-1} D_{\mathsf h}^{\epsilon_n'}(u,v) + \delta \big)= \widetilde D (u,v) + \delta.
    \end{align*}
    Taking the supremum over all partitions, we obtain ${\rm len}(P; \widetilde D) \leq \widetilde D(u,v) + \delta$. Since $\delta$ is arbitrary, we see that $\widetilde D$ is a length metric. This proves the first claim in the lemma.

    We now prove the second claim. The tightness of $\{\mathsf{a}_\epsilon^{-1} D_{\mathsf h}^\epsilon(\cdot, \cdot; W)\}_{\epsilon >0}$ as $\epsilon \to 0$ for dyadic domains $W$ follows from Lemma~\ref{lem:tightness-whole-plane}. Using a diagonal argument over $W \in \mathcal W$, we can pass to a subsequence along which the laws of $\mathsf{a}_\epsilon^{-1} D_{\mathsf h}^\epsilon$ and $\{\mathsf{a}_\epsilon^{-1} D_{\mathsf h}^\epsilon(\cdot, \cdot; W)\}_{W \in \mathcal W}$ jointly converge. The equality~\eqref{eq:internal-consistent} then follows from Lemma~\ref{lem:internal-metric}.
\end{proof}

The following lemma is used to prove Weyl scaling (Axiom~\ref{axiom-weyl}). Its proof is similar to \cite[Lemma 2.12]{lqg-metric-estimates}, so we omit it here.

\begin{lemma}\label{lem:Weyl-scaling}
    Let $\mathsf h$ be a whole-space LGF plus a bounded continuous function. Let $\{\epsilon_n\}_{n \geq 1}$ be a sequence tending to 0 such that $\mathsf{a}_{\epsilon_n}^{-1} D_{\mathsf h}^{\epsilon_n}$ converges in law, in the local uniform topology on $\mathbb{R}^d \times \mathbb{R}^d$, to a limit $\widetilde D$. By the Skorokhod representation theorem, there exists a coupling under which this convergence holds almost surely. The following property holds almost surely under this coupling.
    
    For every sequence $(f_n)$ of continuous functions on $\mathbb{R}^d$  that converges in the local uniform topology to a bounded continuous function $f$, let $D_{\mathsf h+f_n}^{\epsilon_n}$ be the exponential metric associated with $\mathsf h+f_n$. Then, as $\epsilon_n \to 0$, $\mathsf{a}_{\epsilon_n}^{-1} D_{\mathsf h+f_n}^{\epsilon_n}$ converges to $e^{\xi f} \cdot \widetilde D$, as defined in~\eqref{eq:weyl}.
\end{lemma}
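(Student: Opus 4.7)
The plan is to exploit the decomposition $(\mathsf h+f_n)_{\epsilon_n}^* = \mathsf h_{\epsilon_n}^* + (f_n)_{\epsilon_n}^*$ to write
\[
\mathsf a_{\epsilon_n}^{-1} D_{\mathsf h+f_n}^{\epsilon_n}(z,w) = \inf_{P:z\to w} \int_0^{\mathrm{len}(P;\,\mathsf a_{\epsilon_n}^{-1} D_{\mathsf h}^{\epsilon_n})} e^{\xi (f_n)_{\epsilon_n}^*(P(s))}\,ds,
\]
where each candidate path $P$ is reparametrized by its $\mathsf a_{\epsilon_n}^{-1} D_{\mathsf h}^{\epsilon_n}$-length. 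This exhibits the left-hand side as a Weyl scaling of the $\epsilon_n$-metric by $(f_n)_{\epsilon_n}^*$. The metric factor converges locally uniformly to $\widetilde D$ by hypothesis, and a standard convolution argument using $\int_{\mathbb R^d}\mathsf K(x)\,dx=1$ together with the integrable decay $|\mathsf K(x)|\le C|x|^{-2d+1}$ from Lemma~\ref{lem:kernel} shows that $(f_n)_{\epsilon_n}^*\to f$ locally uniformly. Combined with $f_n\to f$ locally uniformly and the boundedness of $f$, this gives, on any compact set $B\subset\mathbb R^d$ and for large $n$, the sandwich $e^{\xi\inf_B f + o_n(1)}\mathsf a_{\epsilon_n}^{-1} D_{\mathsf h}^{\epsilon_n} \le \mathsf a_{\epsilon_n}^{-1} D_{\mathsf h+f_n}^{\epsilon_n} \le e^{\xi\sup_B f + o_n(1)}\mathsf a_{\epsilon_n}^{-1} D_{\mathsf h}^{\epsilon_n}$.

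For the upper bound, fix $z,w$ and $\delta>0$ and pick a $\delta$-minimizer $P:[0,L]\to\mathbb R^d$ (parametrized by $\widetilde D$-length) for $(e^{\xi f}\cdot \widetilde D)(z,w)$. Subdivide $[0,L]$ into vertices $z=P(s_0),\ldots,P(s_N)=w$ with $\widetilde D$-diameter of each arc at most $\eta$. For each $i$, insert a near-optimal $\mathsf a_{\epsilon_n}^{-1} D_{\mathsf h}^{\epsilon_n}$-path $Q_i^{\epsilon_n}$ from $P(s_i)$ to $P(s_{i+1})$; by Lemma~\ref{lem:cross-estimate} applied at scale $\eta$, the $Q_i^{\epsilon_n}$ are confined to a Euclidean neighborhood of $P(s_i)$ whose diameter tends to $0$ with $\eta$. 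Concatenation yields a candidate path whose $\mathsf a_{\epsilon_n}^{-1} D_{\mathsf h+f_n}^{\epsilon_n}$-length is bounded by $\sum_i e^{\xi(f(P(s_i))+o_\eta(1)+o_n(1))}\mathsf a_{\epsilon_n}^{-1} D_{\mathsf h}^{\epsilon_n}(P(s_i),P(s_{i+1}))+o_n(1)$. Sending $n\to\infty$, then $\eta\to 0$, then $\delta\to 0$ gives $\limsup_n \mathsf a_{\epsilon_n}^{-1} D_{\mathsf h+f_n}^{\epsilon_n}(z,w)\le (e^{\xi f}\cdot \widetilde D)(z,w)$. For the lower bound, take a $\delta$-minimizer $P^{\epsilon_n}$ for $\mathsf a_{\epsilon_n}^{-1} D_{\mathsf h+f_n}^{\epsilon_n}(z,w)$, parametrized by $\mathsf a_{\epsilon_n}^{-1} D_{\mathsf h}^{\epsilon_n}$-length. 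The above sandwich plus Lemma~\ref{lem:cross-estimate} force $P^{\epsilon_n}$ to lie in a fixed compact set, and in its chosen parametrization $P^{\epsilon_n}$ is $1$-Lipschitz with respect to $\mathsf a_{\epsilon_n}^{-1} D_{\mathsf h}^{\epsilon_n}$ and hence equicontinuous for the Euclidean distance. By Arzel\`a--Ascoli, along a subsequence $P^{\epsilon_n}\to P$ uniformly, and lower semicontinuity of length under uniform convergence of paths and metrics bounds the right-hand side of the displayed identity below by $\int_0^{\mathrm{len}(P;\widetilde D)} e^{\xi f(P(s))}\,ds\ge (e^{\xi f}\cdot \widetilde D)(z,w)$.

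Pointwise convergence combined with the sandwich above and the known equicontinuity of $\{\mathsf a_{\epsilon_n}^{-1} D_{\mathsf h}^{\epsilon_n}\}$ from Lemma~\ref{lem:tightness-whole-plane} upgrades to local uniform convergence on $\mathbb R^d\times\mathbb R^d$ via Arzel\`a--Ascoli. The hardest step is the concatenation argument in the upper bound: one must verify that the inserted pieces $Q_i^{\epsilon_n}$ remain in a neighborhood of $P(s_i)$ whose Euclidean diameter is controlled by $\eta$ alone, independent of $n$ and of the field, so that the local uniform convergence of $(f_n)_{\epsilon_n}^*$ can replace $e^{\xi (f_n)_{\epsilon_n}^*(\cdot)}$ by $e^{\xi f(P(s_i))}$ with error $o_\eta(1)+o_n(1)$. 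This spatial confinement is exactly what Lemma~\ref{lem:cross-estimate} provides, but it must be applied at scale $\eta$ around each $P(s_i)$ rather than at the global scale.
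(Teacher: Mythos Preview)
Your approach is correct and follows the standard route that the paper defers to (\cite[Lemma~2.12]{lqg-metric-estimates}): rewrite $\mathsf a_{\epsilon_n}^{-1} D_{\mathsf h+f_n}^{\epsilon_n}$ as the Weyl scaling $e^{\xi (f_n)_{\epsilon_n}^*}\cdot(\mathsf a_{\epsilon_n}^{-1} D_{\mathsf h}^{\epsilon_n})$, show $(f_n)_{\epsilon_n}^*\to f$ locally uniformly, and pass to the limit via path concatenation for the upper bound and Arzel\`a--Ascoli for the lower bound.

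One refinement is worth making. Your appeal to Lemma~\ref{lem:cross-estimate} for the spatial confinement of the inserted pieces $Q_i^{\epsilon_n}$ (and of the near-minimizers $P^{\epsilon_n}$ in the lower bound) is not quite right as written: Lemma~\ref{lem:cross-estimate} is a statement in probability, whereas the almost sure event in the present lemma must be fixed once and for all, independent of the sequence $(f_n)$ and of the scale $\eta$. The confinement you need follows more cleanly from the hypothesis itself. Since $\widetilde D$ is a continuous metric inducing the Euclidean topology, for any compact $K'$ and any $\rho>0$ one has $c_\rho:=\inf_{x\in K'}\widetilde D(x,\partial B_\rho(x))>0$, and local uniform convergence gives $\mathsf a_{\epsilon_n}^{-1} D_{\mathsf h}^{\epsilon_n}(x,\partial B_\rho(x))\ge c_\rho/2$ for all $x\in K'$ and all large $n$. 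Thus any $\mathsf a_{\epsilon_n}^{-1} D_{\mathsf h}^{\epsilon_n}$-path of length below $c_\rho/2$ starting in $K'$ stays in the $\rho$-neighborhood of $K'$; combined with your sandwich $e^{-\xi\|f\|_\infty+o_n(1)}\le e^{\xi(f_n)_{\epsilon_n}^*}\le e^{\xi\|f\|_\infty+o_n(1)}$ on compacts, this yields the required confinement for both directions, on a single almost sure event and uniformly over all admissible $(f_n)$.
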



Next, we show tightness across scales (Axiom~\ref{axiom-tight}).

\begin{lemma}\label{lem:tight-scale}
    Let $h$ be a whole-space LGF with the additive constant chosen so that $h_1(0) = 0$. Let $\{\epsilon_n\}_{n \geq 1}$ be a sequence tending to 0 such that $(h, \mathsf a_{\epsilon_n}^{-1} D_h^{\epsilon_n})$ converges jointly in law to $( h, D_h)$. Then there exists a deterministic function $r \mapsto \mathfrak{c}_r >0$ such that the family of random metrics $\{\mathfrak{c}_r^{-1} e^{-\xi h_r(0)} D_h(r \cdot, r \cdot)\}_{0<r<\infty}$ has tight laws with respect to the local uniform topology on $\mathbb{R}^d \times \mathbb{R}^d$. Moreover, each subsequential limit of these laws is supported on continuous metrics on $\mathbb{R}^d$.

\end{lemma}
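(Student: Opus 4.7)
The plan is to combine the scaling invariance of the LGF (Lemma~\ref{lem:scale-invariance}) with the scale-$\epsilon$ tightness in Lemma~\ref{lem:tightness-whole-plane}, following the strategy of~\cite{lqg-metric-estimates}. The first step is to derive a scaling identity relating $D_h$ at different scales. Fix $r > 0$ and set $\tilde h^r(x) := h(rx) - h_r(0)$; by Lemma~\ref{lem:scale-invariance}, $\tilde h^r \overset{d}{=} h$. A direct change of variables using $\mathsf K_\epsilon(w/r) = r^d \mathsf K_{r\epsilon}(w)$ shows $\tilde h^r \ast \mathsf K_\epsilon(x) = h_{r\epsilon}^*(rx) - h_r(0)$, and reparametrizing paths in~\eqref{eq:def-exponential-metric} via $Q(t) = rP(t)$ yields
\[
D_h^{r\epsilon}(rx, ry) = r\,e^{\xi h_r(0)}\,D_{\tilde h^r}^{\epsilon}(x,y).
\]
Dividing by $\mathsf a_{r\epsilon}$, this rewrites as
\[
\mathsf a_{r\epsilon}^{-1} D_h^{r\epsilon}(r\cdot, r\cdot) = \frac{r \mathsf a_\epsilon}{\mathsf a_{r\epsilon}}\,e^{\xi h_r(0)}\,\mathsf a_\epsilon^{-1} D_{\tilde h^r}^{\epsilon}(\cdot,\cdot).
\]

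The next step is to show the deterministic ratio $r \mathsf a_\epsilon / \mathsf a_{r\epsilon}$ is bounded both above and away from zero as $\epsilon \to 0$. Both the left-hand side and the random factor $\mathsf a_\epsilon^{-1} D_{\tilde h^r}^{\epsilon}$ on the right (using $\tilde h^r \overset{d}{=} h$) are tight families by Lemma~\ref{lem:tightness-whole-plane}, whose subsequential limits are continuous metrics on $\mathbb{R}^d$ and in particular a.s.\ strictly positive at any fixed pair of distinct points. Since $e^{\xi h_r(0)}$ is a fixed a.s.\ finite and positive random variable, evaluating the identity at a pair of distinct points and comparing distributions forces $r \mathsf a_\epsilon / \mathsf a_{r\epsilon}$ to stay bounded and away from $0$ (otherwise tightness on one of the two sides would fail). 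Along a further subsequence $\{\epsilon_{n_k}\} \subset \{\epsilon_n\}$ (depending on $r$), I can then arrange jointly that (i) $r \mathsf a_{\epsilon_{n_k}/r}/\mathsf a_{\epsilon_{n_k}} \to \mathfrak c_r \in (0, \infty)$, and (ii) $\mathsf a_{\epsilon_{n_k}/r}^{-1} D_{\tilde h^r}^{\epsilon_{n_k}/r}$ converges jointly with $h$, in the local uniform topology, to a random continuous metric $D_h^{(r)}$.

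Finally I apply the rewritten identity at $\epsilon = \epsilon_{n_k}/r$ and pass to the limit. The left-hand side $\mathsf a_{\epsilon_{n_k}}^{-1} D_h^{\epsilon_{n_k}}(r\cdot, r\cdot)$ converges jointly with $h$ to $D_h(r\cdot, r\cdot)$ by the hypothesis of the lemma, and the right-hand side converges to $\mathfrak c_r e^{\xi h_r(0)} D_h^{(r)}$, so
\[
\mathfrak c_r^{-1}\,e^{-\xi h_r(0)}\,D_h(r\cdot, r\cdot) = D_h^{(r)}.
\]
Each $D_h^{(r)}$ is by construction a subsequential limit of the tight family $\{\mathsf a_\delta^{-1} D_h^\delta\}_{\delta \to 0}$ (the relation $\tilde h^r \overset{d}{=} h$ is crucial here). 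By Lemma~\ref{lem:tightness-whole-plane} the set of all such subsequential limit laws is itself tight and supported on continuous metrics, so the family $\{\mathfrak c_r^{-1} e^{-\xi h_r(0)} D_h(r\cdot, r\cdot)\}_{0 < r < \infty}$ has tight laws with each subsequential limit a continuous metric on $\mathbb{R}^d$. The main obstacle in this argument is verifying that the deterministic ratio $r \mathsf a_\epsilon / \mathsf a_{r\epsilon}$ admits a strictly positive, finite subsequential limit; this is handled by cleanly separating the deterministic normalization from the tight random factors using the nondegeneracy afforded by Lemma~\ref{lem:tightness-whole-plane}.
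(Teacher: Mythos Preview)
Your proof is correct and follows essentially the same route as the paper: both derive the scaling identity $D_h^{\epsilon}(r\cdot,r\cdot) = r\,e^{\xi h_r(0)} D_{\tilde h^r}^{\epsilon/r}(\cdot,\cdot)$, use $\tilde h^r \overset{d}= h$ together with Lemma~\ref{lem:tightness-whole-plane} to control the rescaled metrics, and then pass to the limit along $\{\epsilon_n\}$. The one difference is that the paper pins down $\mathfrak c_r$ as the \emph{full} limit $\lim_{\epsilon_n\to 0}\mathsf a_{\epsilon_n}/(r\,\mathsf a_{\epsilon_n/r})$ by observing that the median of $\mathsf a_{\epsilon/r}^{-1} D_{\tilde h^r}^{\epsilon/r}(0,e_1)$ is exactly $1$ for every $\epsilon$, which forces the deterministic ratio to converge to the reciprocal of the median of $e^{-\xi h_r(0)} D_h(0,re_1)$; you instead extract $\mathfrak c_r$ only as a subsequential limit via compactness, which is enough for the lemma as stated but yields a less canonical constant.
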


\begin{proof}

For $r>0$ define
\[
  h^r(z) := h(rz) - h_r(0).
\]
By the scaling invariance of the whole-space LGF (Lemma~\ref{lem:scale-invariance}), we have $h^r \overset{d}{=} h$.

\medskip

\noindent\textbf{Step 1: Construction of $\mathfrak c_r$.} 
By~\eqref{eq:def-exponential-metric}, we have
\[
  D_{h^r}^{\varepsilon/r}(u,v)
  = r^{-1} e^{-\xi h_r(0)} D_h^\varepsilon(ru,rv), \qquad u,v\in\mathbb{R}^d.
\]
Dividing by $\mathsf a_{\varepsilon/r}$ gives
\begin{equation}\label{eq:scaled-h-r}
  \mathsf a_{\varepsilon/r}^{-1} D_{h^r}^{\varepsilon/r}(u,v)
  = \frac{\mathsf a_\varepsilon}{r \mathsf a_{\varepsilon/r}}\,
    e^{-\xi h_r(0)} \bigl( \mathsf a_\varepsilon^{-1} D_h^\varepsilon(ru,rv) \bigr).
\end{equation}
By Lemma~\ref{lem:tightness-whole-plane}, the family $\{\mathsf a_\varepsilon^{-1} D_h^\varepsilon\}_{0<\varepsilon<1}$
is tight and any subsequential limit is a continuous metric on $\mathbb{R}^d$.
Since $h^r \overset{d}= h$, the same holds for
$\{\mathsf a_{\varepsilon/r}^{-1} D_{h^r}^{\varepsilon/r}\}_{0<\varepsilon<r,\ r>0}$.

Along the sequence $\{\epsilon_n \}_{n \geq 1}$, the right-hand side of \eqref{eq:scaled-h-r} converges in law to
\[
  \mathfrak c_r\, e^{-\xi h_r(0)} D_h(r\cdot,r\cdot),
  \qquad
  \mbox{where} \quad \mathfrak c_r := \lim_{\varepsilon_n\to0}
         \frac{\mathsf a_{\varepsilon_n}}{r \mathsf a_{\varepsilon_n/r}},
\]
provided the limit $\mathfrak c_r$ exists. On the other hand, by definition of $\mathsf a_{\varepsilon/r}$ as the median of $D_{h^r}^{\varepsilon/r}(0, e_1)$, any subsequential limit of
$\mathsf a_{\varepsilon/r}^{-1} D_{h^r}^{\varepsilon/r}$ has median distance between $0$ and $e_1$ equal to $1$. Comparing this median distance in the limit of \eqref{eq:scaled-h-r} shows that $\mathfrak c_r$
exists, is finite and positive, and is uniquely determined by the law of $D_h$.

\medskip

\noindent\textbf{Step 2: Tightness across scales.} Rewriting \eqref{eq:scaled-h-r} using $\mathfrak c_r = \lim_{\varepsilon_n\to0}
        \frac{\mathsf a_{\varepsilon_n}}{r \mathsf a_{\varepsilon_n/r}}$ and the convergence $\mathsf a_{\varepsilon_n}^{-1} D_h^{\varepsilon_n} \to D_h$ in law, we get
\[
  \mathsf a_{\varepsilon_n/r}^{-1} D_{h^r}^{\varepsilon_n/r}(u,v)
  \;\to\;
  \mathfrak c_r^{-1} e^{-\xi h_r(0)} D_h(ru,rv) \quad \mbox{in law as $\epsilon_n \to 0$.}
\]
Since the family
$\{\mathsf a_{\varepsilon/r}^{-1} D_{h^r}^{\varepsilon/r}\}_{0<\varepsilon<r,\ r>0}$ is tight
and any subsequential limit is a continuous metric on $\mathbb{R}^d$, it follows that the family of laws
\[
  \bigl\{ \mathfrak c_r^{-1} e^{-\xi h_r(0)} D_h(r\cdot,r\cdot) \bigr\}_{0<r<\infty}
\]
is tight in the local uniform topology on
$\mathbb{R}^d\times\mathbb{R}^d$, and each subsequential limit is a subsequential limit of $\{\mathsf a_{\varepsilon/r}^{-1} D_{h^r}^{\varepsilon/r}\}_{0<\varepsilon<r,\ r>0}$, hence supported on continuous metrics on $\mathbb{R}^d$. \qedhere

\end{proof}

\begin{remark}
    Note that in Lemma~\ref{lem:tight-scale}, the tightness is through $r \in (0,\infty)$ instead of bounded $r$. Using Weyl scaling (Axiom~\ref{axiom-weyl}), the tightness across scales also holds for a whole-space LGF plus a bounded continuous function $\mathsf h$. However, for a whole-space LGF plus a general continuous function, the tightness across scales only holds for bounded $r$, say $r \in (0,R)$ where $R$ is fixed; see Axiom~\ref{axiom-tight}. In Axiom V of~\cite{lqg-metric-estimates}, the authors restricted to the whole-space GFF, so this is consistent with our treatment.
\end{remark}

\subsection{Local metric}

In this section, we introduce the notion of a local metric and prove that any subsequential limit of $\{\mathsf{a}_\epsilon^{-1} D_h^\epsilon \}_{\epsilon>0}$ as $\epsilon \to 0$ is a local metric. The two-dimensional version was introduced in~\cite{local-metrics} and proved in~\cite{lqg-metric-estimates}.

\begin{definition}[Local metric]\label{def:local-metric}

    Let $h$ be a whole-space LGF, and let $D$ be a random continuous length metric on $\mathbb{R}^d$, coupled with $h$. We say that $D$ is a \emph{local metric for $h$} if for every open set $V \subset \mathbb{R}^d$, the internal metric $D(\cdot, \cdot; V)$ is conditionally independent of the pair $(h, \; D(\cdot, \cdot; \mathbb{R}^d \setminus \overline{V}))$ given $h|_{\overline V}$.

\end{definition}

\begin{definition}[$\xi$-additive local metric]

Suppose $(h,D)$ is a coupling of a whole-space LGF $h$ and a random continuous length metric on $\mathbb{R}^d$. For $\xi \in \mathbb{R}$, we say that $D$ is a \emph{$\xi$-additive local metric for $h$} if for each $z \in \mathbb{R}^d$ and each $r>0$, the rescaled metric $e^{-\xi h_r(z)}D$ is a local metric for the field $h-h_r(z)$, which is a whole-space LGF normalized so that its average over $\partial B_r(z)$ equals zero.

\end{definition}

We will show the following.

\begin{lemma}\label{lem:locality}
Let $h$ be a whole-space LGF, and let $(h, D_h)$ be any subsequential limit of the laws of the pairs $(h, \mathsf a_\epsilon^{-1} D_h^\epsilon)$. Then $D_h$ is a $\xi$-additive local metric for $h$. More explicitly, fix $z \in \mathbb{R}^d$ and $r > 0,$ and choose the additive constant of $h$ so that the average over $\partial B_r(z)$ equals zero. Let $V \subset \mathbb{R}^d$ be an open set. Then the internal metric $D_h(\cdot, \cdot; V)$ is conditionally independent of the pair $(h, D_h(\cdot, \cdot; \mathbb{R}^d \setminus \overline{V}))$ given $h|_{\overline V}$.
\end{lemma}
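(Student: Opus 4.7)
The plan is to adapt the two-step approximation argument from Section~2 of~\cite{lqg-metric-estimates}, with the essential new ingredient being the truncation constructed in Section~\ref{subsec:localize} to remedy the fact that the mollifier $\mathsf K$ has only polynomial decay. Concretely, I will work with the localized exponential metric $\bar D_h^\epsilon$, defined as in~\eqref{eq:def-exponential-metric} but built from the truncated field $\bar h_\epsilon^*=h*(\Psi_\epsilon \mathsf K_\epsilon)$ of~\eqref{eq:sec2-field2}. Since $\Psi_\epsilon$ is supported in $B_{\sqrt\epsilon}(0)$, the restriction $\bar h_\epsilon^*|_W$ is measurable with respect to $h|_{B_{\sqrt\epsilon}(W)}$ for any set $W$, and consequently so is $\bar D_h^\epsilon(\cdot,\cdot;W)$.

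The first step is a measurability statement at the approximation level. Fix a dyadic domain $W$ with $\overline W\subset V$ and $\delta>0$ such that $B_\delta(W)\subset V$. For $\epsilon<\delta^2$, the random variable $\mathsf a_\epsilon^{-1}\bar D_h^\epsilon(\cdot,\cdot;W)$ is measurable with respect to $h|_V$. By Lemma~\ref{lem:equifield}, $\sup_{x\in\overline W}|h_\epsilon^*(x)-\bar h_\epsilon^*(x)|\to 0$ almost surely; since the integrands in~\eqref{eq:def-exponential-metric} differ by the multiplicative factor $e^{\xi(h_\epsilon^*-\bar h_\epsilon^*)}$, which tends to $1$ uniformly on $\overline W$, this gives $\mathsf a_\epsilon^{-1}\bigl|D_h^\epsilon(u,v;W)-\bar D_h^\epsilon(u,v;W)\bigr|\to 0$ uniformly in $u,v\in W$. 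Along the subsequence supplied by Lemma~\ref{lem:dyadicconv}, after passing to a Skorokhod coupling, $\mathsf a_\epsilon^{-1}D_h^\epsilon(\cdot,\cdot;W)$ converges almost surely and locally uniformly to $D_h(\cdot,\cdot;W)$, and hence so does $\mathsf a_\epsilon^{-1}\bar D_h^\epsilon(\cdot,\cdot;W)$. Thus $D_h(\cdot,\cdot;W)$ is the a.s.\ limit of random variables measurable with respect to $h|_V$, and is itself measurable with respect to $h|_V$.

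The second step is to exhaust $V$ from inside by dyadic domains: choose an increasing sequence $W_n$ with $\overline{W_n}\subset W_{n+1}\subset V$ and $\bigcup_n W_n=V$. Because $D_h$ is a continuous length metric on $\mathbb R^d$, for every $u,v\in V$ an approximately minimizing $D_h$-path from $u$ to $v$ in $V$ is a compact subset of $V$ and eventually lies in some $W_n$; hence $D_h(u,v;W_n)\to D_h(u,v;V)$. Therefore $D_h(\cdot,\cdot;V)$ is measurable with respect to $h|_V$. Since every smooth test function compactly supported in $V$ is also compactly supported in $B_\delta(\overline V)$ for all $\delta>0$, the footnote in Section~\ref{subsec:axiom} gives $\sigma(h|_V)\subset\sigma(h|_{\overline V})$, so $D_h(\cdot,\cdot;V)$ is measurable with respect to $h|_{\overline V}$. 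A $\sigma(h|_{\overline V})$-measurable random variable is automatically conditionally independent of any other random variable given $h|_{\overline V}$, which is exactly the conditional independence required by Definition~\ref{def:local-metric}.

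For the $\xi$-additive upgrade, the construction of the subsequential limit $D_h$ and each step of the argument above use only the fact that $h$ is a whole-space LGF and do not depend on the choice of additive constant. Thus applying the same argument with the constant chosen so that $h_r(z)=0$ produces a $D_h$ that is local for this normalized field, which is precisely what $\xi$-additive locality requires. The principal technical obstacle is the uniform smallness of $h_\epsilon^*-\bar h_\epsilon^*$: this relies on the polynomial tail bound for $\mathsf K$ in Lemma~\ref{lem:kernel} together with the sphere-average growth estimates, and it is packaged into the statement of Lemma~\ref{lem:equifield}.
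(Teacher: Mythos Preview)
Your argument has a genuine gap at the Skorokhod step. You write that ``$D_h(\cdot,\cdot;W)$ is the a.s.\ limit of random variables measurable with respect to $h|_V$, and is itself measurable with respect to $h|_V$.'' But at this point in the paper we only know that $(h,\mathsf a_\epsilon^{-1}D_h^\epsilon)\to(h,D_h)$ \emph{in law}. In a Skorokhod coupling the field changes along the sequence: you get $(\tilde h_n,\tilde D_n)\to(\tilde h_\infty,\tilde D_\infty)$ a.s., and although each $\tilde D_n$ is a measurable function of $\tilde h_n|_V$, there is no reason that $\tilde D_\infty$ is measurable with respect to $\tilde h_\infty|_V$. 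Measurability with respect to a sub-$\sigma$-algebra is simply not preserved under weak limits. (A toy version: if $Y$ is uniform on $[0,1]$ and $X_n$ is its $n$th binary digit, then each $X_n\in\sigma(Y)$, yet $(Y,X_n)\Rightarrow(Y,Z)$ with $Z$ an independent Bernoulli.) The upgrade from convergence in law to convergence in probability with a \emph{fixed} $h$ is exactly Lemma~\ref{lem:measurable}, which is proved \emph{after} Lemma~\ref{lem:locality} and uses locality as an input (via Propositions~\ref{prop:bi-lipschitz} and~\ref{prop:measurable}). So your route is circular.

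The paper's proof sidesteps this by proving conditional independence rather than measurability. Using the Markov decomposition $h=\mathfrak h+\mathring h$ on $\mathbb{R}^d\setminus\overline V$, it shows that for each fixed $\epsilon$ the pair $(h|_{\overline V},\mathsf a_\epsilon^{-1}\bar D_h^\epsilon(\cdot,\cdot;W))$ is \emph{independent} of $(\mathring h,\mathsf a_\epsilon^{-1}\bar D_{h'}^\epsilon(\cdot,\cdot;W_0))$, where $h'=h-\phi\mathfrak h$. Independence \emph{is} preserved under weak convergence, so one passes to the limit and then unwinds the bump function via Weyl scaling. Your localization via $\bar D_h^\epsilon$ and Lemma~\ref{lem:equifield} is indeed the right ingredient and is used in the paper as well; what is missing is the Markov decomposition and the independence argument that makes the limit step legitimate. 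Once Lemma~\ref{lem:measurable} is available later, your argument becomes valid and is essentially how the paper verifies Axiom~\ref{axiom-local} in Section~3.4, Step~2.
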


\begin{proof}[Proof of Lemma~\ref{lem:locality}]
We argue in several steps, similarly to the proof of Lemma 2.17 in \cite{lqg-metric-estimates}.

\medskip

\noindent\textbf{Step 1: Reduction to $B_r(z) \subset V.$} Suppose Lemma \ref{lem:locality} is true whenever $B_r(z)\subset V.$ Fix any $z_0\in\mathbb{R}^d$ and $r_0>0$ such that $B_{r_0}(z_0) \subset V$, and assume moreover that $h$ is normalized so that $h_{r_0}(z_0)=0.$

Let $z \in \mathbb{R}^d$ and $r > 0$. Let $\tilde{h}$ be the field defined by
\[
\tilde{h} := h - h_r(z),
\]
so that $\tilde{h}$ is a whole-space LGF normalized so that $\tilde{h}_r(z) = 0$. Then Lemma~\ref{lem:Weyl-scaling} implies that $\mathsf{a}_\epsilon^{-1} D_{\tilde h}^\epsilon \to e^{-\xi h_r(z)} \cdot D_h =: D_{\tilde{h}}$ in law along the same subsequence for which $\mathsf{a}_\epsilon^{-1} D_h^\e \to D_h$ in law. We need to show that the internal metric $D_{\tilde h}(\cdot, \cdot; V)$ is conditionally independent of the pair $(\tilde h, D_{\tilde h}(\cdot, \cdot; \mathbb{R}^d \setminus \overline{V}))$ given ${\tilde h}|_{\overline V}$.

Note that $\tilde{h}_{r_0}(z_0) = - h_r(z)$. Since $B_{r_0}(z_0) \subset V$, this implies that $h_r(z) \in \sigma(\tilde h|_{\overline V})$, and hence $h|_{\overline V} = \tilde h|_{\overline V} + h_r(z) \in \sigma(\tilde h|_{\overline V}) $. By assumption, the internal metric $D_h(\cdot, \cdot; V)$ is conditionally independent of the pair $(h, D_h(\cdot, \cdot; \mathbb{R}^d \setminus \overline{V}))$ given $h|_{\overline V}$ (and hence given ${\tilde h}|_{\overline V}$). Note that $\tilde h = h - h_r(z)$ is determined by $h$ and ${\tilde h}|_{\overline V}$. In addition, we have $D_{\tilde{h}}(\cdot, \cdot; V) = e^{-\xi h_r(z)} \cdot D_h(\cdot, \cdot; V)$, so $D_{\tilde{h}}(\cdot, \cdot; V)$ is determined by $\tilde{h}|_{\overline V}$ and $D_h(\cdot, \cdot; V)$. Similarly, $D_{\tilde{h}}(\cdot, \cdot; \mathbb{R}^d \setminus \overline V)$ is determined by $\tilde{h}|_{\overline V}$ and $D_h(\cdot, \cdot; \mathbb{R}^d \setminus \overline V)$. Hence $D_{\tilde h}(\cdot, \cdot; V)$ is conditionally independent of the pair $(\tilde h, D_{\tilde h}(\cdot, \cdot; \mathbb{R}^d \setminus \overline V))$ given $\tilde{h}|_{\overline V}$. This completes the reduction.

\medskip

\noindent\textbf{Step 2: Further reductions.} Recall $\bar h_\epsilon^*$ from Lemma~\ref{lem:equifield}, and let $\bar D_h^\e$ be the associated exponential metric. By Lemma~\ref{lem:equifield}, along the same subsequence, $(h, \mathsf a_\epsilon^{-1} \bar D_h^\e)$ converges in law to $(h, D_h)$. From now we work with $\bar D_h^\e$. 

Assume without loss of generality that $\overline{V} \neq \mathbb{R}^d$ (hence $\mathbb{R}^d \setminus \overline V$ is allowable). Let $B_r(z) \subset V$, and let $h$ be normalized so that $h_r(z)=0$. By Lemma~\ref{lem:markov-whole}, we can then decompose the field $h$ as
\begin{equation}\label{eq:Markov-decomp}
h|_{\mathbb{R}^d\setminus \overline{V}} = \mathfrak{h}+ \mathring{h}
\end{equation}
where $\mathfrak{h}$ is a random $s$-harmonic function on $\mathbb{R}^d\setminus \overline{V}$ which is determined by $h|_{\overline{V}}$, and $\mathring{h}$ is a LGF in $\mathbb{R}^d\setminus \overline {V}$ which is independent of $h|_{\overline{V}}$. 

\medskip

\noindent\textbf{Step 3: LFPP independence.} Fix dyadic domains $W\subset V$ and $W_0\subset\mathbb{R}^d\setminus \overline{V}$. Choose a bump function $\phi$ which is identically $1$ on a neighborhood of $W_0$
and whose support is contained in $\mathbb{R}^d \setminus \overline{V}$.
For $\varepsilon>0$ sufficiently small, the $\varepsilon$-neighborhood of
$W_0$ is contained in $\{\phi=1\}$ and the $\varepsilon$-neighborhood of $W$
is contained in $V$. Define a new field
\[
h' := h - \phi\, \mathfrak{h}.
\]
On the set $\{\phi=1\}\supset W_0$ we have $h' = h - \mathfrak{h} = \mathring{h}$, so $h'$ and $\mathring{h}$ agree there. By locality of the
exponential metrics,
\begin{align*}
  \bar D_{h'}^\varepsilon(\cdot,\cdot;W_0)
  \in \sigma(\mathring{h})\qquad \mbox{and} \qquad \bar D_{h}^\varepsilon(\cdot,\cdot;W)
  \in \sigma(h|_{\overline V})
\end{align*}
for all sufficiently small $\varepsilon$. Since $h|_{\overline V}$ and $\mathring{h}$ are
independent, it follows that for each such $\varepsilon$ 
\[
\bigl(h|_{\overline V},\; \mathsf a_\varepsilon^{-1} \bar D_h^\varepsilon(\cdot,\cdot;W)\bigr)
\quad\text{and}\quad
\bigl(\mathring{h},\; \mathsf a_\varepsilon^{-1}\bar D_{h'}^\varepsilon(\cdot,\cdot;W_0)\bigr)
\]
are independent.

\medskip

\noindent\textbf{Step 4: Passing to the subsequential limit.} 
Let $\varepsilon_k\to0$ be a subsequence along which
$(h,\mathsf a_{\varepsilon_k}^{-1} \bar D_h^{\varepsilon_k})\to(h,D_h)$ in law.
Using Lemma~\ref{lem:dyadicconv}, we may pass to a subsequence $\epsilon_k' \to 0$ along which we have the joint convergence in law
\[
\bigl(h,\mathring{h},
      \mathsf a_{\varepsilon_k'}^{-1} \bar D_h^{\varepsilon_k'},
      \mathsf a_{\varepsilon_k'}^{-1} \bar D_{h'}^{\varepsilon_k'}\bigr)
\;\longrightarrow\;
\bigl(h,\mathring{h},D_h,D_{h'}\bigr),
\]
where $D_{h'} = e^{-\xi \phi \mathfrak{h}}\cdot D_h$ by Weyl scaling (Lemma~\ref{lem:Weyl-scaling}). By Lemma~\ref{lem:dyadicconv}, after possibly passing to a further deterministic subsequence $\epsilon_k'' \to 0$ we can construct continuous metrics $D_{h,W}$ and $D_{h',W_0}$ on $W$ and $W_0$, respectively, such that
\[
D_{h,W}(\cdot,\cdot;W) = D_h(\cdot,\cdot;W),
\qquad
D_{h',W_0}(\cdot,\cdot;W_0) = D_{h'}(\cdot,\cdot;W_0)
\]
and such that the internal metrics
$\mathsf a_{\varepsilon_k''}^{-1} \bar D_h^{\varepsilon_k''}(\cdot,\cdot;W)$ and
$\mathsf a_{\varepsilon_k''}^{-1} \bar D_{h'}^{\varepsilon_k
''}(\cdot,\cdot;W_0)$
converge jointly in law to $D_{h,W}$ and $D_{h',W_0}$. The independence from Step 3 is preserved under convergence in law, so in the
limit we obtain
\[
\bigl(h|_{\overline V}, D_h(\cdot,\cdot;W)\bigr)
\;\perp\!\!\!\perp\;
\bigl(\mathring{h}, D_{h'}(\cdot,\cdot;W_0)\bigr).
\]

\medskip

\noindent\textbf{Step 5: Adding the harmonic part.} By the previous step, $D_h(\cdot, \cdot; W)$ is conditionally independent of $(\mathring{h}, D_{h'}(\cdot, \cdot; W_0))$ given $h|_{\overline V}$. By Lemma~\ref{lem:Weyl-scaling}, a.s.\ we have that $D_h(\cdot, \cdot; W_0) = (e^{\xi \phi \mathfrak{h}} \cdot D_{h'})(\cdot, \cdot; W_0)$. Hence $D_h(\cdot, \cdot; W_0)$ is a measurable function of $\mathfrak{h} \in \sigma(h|_{\overline{V}})$ and $D_{h'}(\cdot, \cdot; W_0)$. Since $h|_{\mathbb{R}^d \setminus \overline
{V}} = \mathring{h} + \mathfrak{h}$ and $\mathfrak{h}$ is determined by $h|_{\overline{V}}$, we see that $h$ is a measurable function of $\mathring{h}$ and $h|_{\overline{V}}$. Thus $D_h(\cdot, \cdot; W)$ is conditionally independent of $(h, D_h(\cdot, \cdot; W_0))$ given $h|_{\overline{V}}$. Letting $W$ increase to $V$ and $W_0$ increase to $\mathbb{R}^d \setminus \overline{V}$ now proves the result.
\end{proof}

\subsection{Each subsequential limit is a measurable function of $h$}

In this section, we prove that any subsequential limit of $\{\mathsf a_\epsilon^{-1} D_h^\epsilon\}_{\epsilon>0}$ as $\epsilon \to 0$ is a measurable function of $h$ (see Lemma~\ref{lem:measurable}). The main idea follows \cite{local-metrics}: if a local metric is determined by $h$ up to bi-Lipschitz equivalence, then it is measurable with respect to $h$ (see Proposition~\ref{prop:measurable}). Along the way, we extend the results of~\cite{local-metrics} to higher dimensions.

\begin{definition}[Joint local metrics]

    Let $h$ be a whole-space LGF, and let $\{D_i\}_{1\leq i \leq n}$ be random continuous length metrics on $\mathbb{R}^d$, coupled with $h$. We say that $\{D_i\}_{1\leq i \leq n}$ are \emph{joint local metrics for $h$} if for every open set $V \subset \mathbb{R}^d$, the internal metrics $\{D_i(\cdot, \cdot; V)\}_{1\leq i \leq n}$ are conditionally independent of the pair $(h, \; \{D_i(\cdot, \cdot; \mathbb{R}^d \setminus \overline{V})\}_{1\leq i \leq n})$ given $h|_{\overline V}$. We say that $\{D_i\}_{1 \leq i \leq n}$ are \emph{joint $\xi$-additive local metrics for $h$} if for each $z \in \mathbb{R}^d$ and each $r>0$, the rescaled metrics $\{e^{-\xi h_r(z)}D_i\}_{1 \leq i \leq n}$ are joint local metrics for the shifted field $h-h_r(z)$.

\end{definition}

Suppose $D$ is a local metric for $h$ and, conditionally on $h$, we sample $D_1$ and $D_2$ independently from the conditional law of $D$. Then $(D_1, D_2)$ are joint local metrics for $h$. The proof is analogous to that of \cite[Lemma 1.4]{local-metrics}, so we omit it. If $D$ is $\xi$-additive, then $(D_1, D_2)$ are joint $\xi$-additive for $h$.

\begin{prop}[Bi-Lipschitz equivalence for local metrics]\label{prop:bi-lipschitz}
    Let $d\ge 2$ and let $h$ be a whole-space LGF on $\mathbb{R}^d$. Let $(D_1, D_2)$ be random continuous length metrics on $\mathbb{R}^d$, coupled with $h$, which are joint $\xi$-additive local metrics for $h$. There exists a universal constant $p \in (0,1)$ with the following property. Suppose there is a deterministic constant $C>0$ such that for every $z\in \mathbb{R}^d$ and every $r > 0$, the event
    $$
    \mathcal{E}(z,r):= \Bigl\{ \sup_{u,v \in \partial B_r(z)} D_2(u,v; B_{2r}(z) \setminus \overline{B_{r/2}(z)}) \leq C \, D_1(\partial B_{r/2}(z), \partial B_r(z)) \Bigr\}
    $$
    holds with probability at least $p$.
    Then almost surely $D_2(x,y) \leq C\, D_1(x,y)$ for all $x,y \in \mathbb{R}^d$.
\end{prop}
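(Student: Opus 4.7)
The plan is to adapt the two-dimensional argument of \cite[Proposition 3.1]{local-metrics} to higher dimensions. The key replacement is that the planar disconnection argument used there is replaced by the formulation of $\mathcal{E}(z,r)$ in terms of the $D_2$-diameter of $\partial B_r(z)$ internal to the shell $A_{r/2, 2r}(z)$, which plays the role of the ``distance around a shell'' notion from equation~\eqref{eq:def-around}. First, by the joint $\xi$-additive local property together with the scaling invariance of the whole-space LGF (Lemma~\ref{lem:scale-invariance}), I would verify that the probability $\mathbb{P}[\mathcal{E}(z,r)]$ is the same for every $z$ and $r$; the hypothesis therefore yields a uniform lower bound $\mathbb{P}[\mathcal{E}(z,r)] \ge p$. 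The joint locality of $(D_1, D_2)$ also ensures that $\mathcal{E}(z,r)$ is measurable, up to the conditional law of the local metrics given $h$, with respect to the restriction of $h$ to a neighborhood of $A_{r/2, 2r}(z)$.

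Next, I would apply the shell independence lemma (Lemma~\ref{lem:shell-independence}(2)) to the events $\mathcal{E}(z_0, q^k r_0)$ for $k = 1, 2, \ldots$, with $q \in (0,1)$ small enough that the corresponding shells are disjoint. Taking $p$ close enough to $1$ in the hypothesis, a fraction $b$ close to $1$ of these events occur with probability at least $1 - e^{-aK}$. Combined with a Borel-Cantelli argument, this yields: almost surely, for every $z$ in a countable dense subset of $\mathbb{R}^d$ and every $R > 0$, there is an abundance of scales $r \le R$ at which $\mathcal{E}(z, r)$ holds.

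The main step is then a stringing argument. Given $x, y \in \mathbb{R}^d$ one takes a near-$D_1$-geodesic $\gamma$ from $x$ to $y$, and uses the abundance of good scales above to cover $\gamma$ by a chain of disjoint shells $A_{r_i/2, 2r_i}(z_i)$ with $z_i$ on $\gamma$ and $\mathcal{E}(z_i, r_i)$ holding, arranged so that consecutive shells meet along $\partial B_{r_i}(z_i)$. Since $\gamma$ must cross the inner shell $A_{r_i/2, r_i}(z_i)$, the contribution of $\gamma$ to $D_1$-length inside that shell is at least $D_1(\partial B_{r_i/2}(z_i), \partial B_{r_i}(z_i))$; the event $\mathcal{E}(z_i, r_i)$ then gives a $D_2$-path from the entry point of $\gamma$ into $\partial B_{r_i}(z_i)$ to the corresponding exit point of the next segment, of length at most $C$ times this $D_1$-contribution. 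Concatenating all such $D_2$-paths and passing to a fine-cover limit yields $D_2(x,y) \le C\, D_1(x,y)$.

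The main obstacle is the last step. In two dimensions one uses that any crossing path of a planar annulus disconnects it, so any two paths from inside to outside must meet. This is lost in higher dimensions, and the ``distance around shell'' formulation encoded in $\mathcal{E}(z,r)$ is precisely what allows us to concatenate $D_2$-paths across shells at the transition points $z_{i+1} \in \partial B_{r_i}(z_i)$ without relying on planar intersection. Additional care is needed to choose the scales $r_i$ so that the cover is sufficiently tight along $\gamma$ for the total $D_1$-length of the uncovered portion to tend to zero in the limit, and to ensure the concatenated $D_2$-path remains an admissible path from $x$ to $y$.
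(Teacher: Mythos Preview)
Your outline has the right overall shape (shell independence $\Rightarrow$ abundance of good scales $\Rightarrow$ stringing along a near-$D_1$-geodesic), but there are two genuine gaps and one unnecessary claim.

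\textbf{Wrong independence lemma.} You invoke Lemma~\ref{lem:shell-independence}, which applies only to events that are measurable with respect to the field $h$ restricted to a shell. The event $\mathcal{E}(z,r)$ is \emph{not} determined by $h$ alone: $D_1$ and $D_2$ are general local metrics, not measurable functions of $h$. Your phrase ``measurable, up to the conditional law of the local metrics given $h$'' does not fix this---conditioning on $h$ still leaves $\mathcal{E}(z,r)$ random. What is needed is an extension of the shell independence lemma to events measurable with respect to $(h-h_r(z))|_{A}$ \emph{together with} the rescaled internal metrics $e^{-\xi h_r(z)} D_i(\cdot,\cdot;A)$, and this is exactly where the joint $\xi$-additive locality hypothesis enters. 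The paper states and uses this extension as Lemma~\ref{lem:metric-shell-independence}.

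\textbf{Centers on $\gamma$ versus lattice centers.} You propose to take the shell centers $z_i$ \emph{on} the random path $\gamma$, but the events $\mathcal{E}(z_i,r_i)$ have only been established for $z_i$ in your countable dense set. More seriously, a Borel--Cantelli argument on a fixed dense set only tells you that at each such $z$ there are infinitely many good scales, with no lower bound on those scales. To string along $\gamma$ you need, for each point $w$ on $\gamma$, a nearby center $z$ and a good radius $r$ with $|w-z|<r/2$; nothing in your setup prevents the good radii at $z$ from all being smaller than $|w-z|$. The paper handles this quantitatively: for each $\epsilon$ it takes a union bound over $z$ in an $\frac{\epsilon}{10d}$-lattice inside $B_{\epsilon^{-1}}(0)$, and uses the shell independence lemma to find, with polynomially high probability, a good $r\in[\epsilon,\sqrt{\epsilon}]$ at every lattice point simultaneously. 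Then the lattice point $u_k$ nearest to $P(t_k)$ automatically satisfies $P(t_k)\in B_{r_k/2}(u_k)$, and the stringing proceeds by successive exit times from $B_{r_k}(u_k)$, followed by a loop-erasure on the union of spheres $\cup_j\partial B_{r_j}(u_j)$.

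\textbf{Unnecessary scaling claim.} Your first paragraph asserts that $\mathbb{P}[\mathcal{E}(z,r)]$ is independent of $(z,r)$ by scaling invariance. This is not needed---the hypothesis already gives the uniform lower bound $p$---and it need not be true, since $D_1,D_2$ are arbitrary joint local metrics with no scaling assumption.
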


This result is the higher-dimension analogue of \cite[Theorem 1.6]{local-metrics}, proved there in Section 4. The proof idea is as follows. By choosing $p$ sufficiently close $1$ and using the near-independence across disjoint concentric shells (see Lemma~\ref{lem:metric-shell-independence} below), we can show that for each point in $\mathbb{R}^d$ there exists a pair $(z,r)$ such that the ball $B_{r/2}(z)$ contains it and the event $\mathcal{E}(z,r)$ occurs. Then, for any path connecting $x$ and $y$, we can use these events to detour the path so that the $D_2$-length of the new path is bounded above by $C$ times the $D_1$-length of the original path.

The $\xi$-additive of local metric allows us to extend the shell independence lemma (Lemma~\ref{lem:shell-independence}) to this setting. Its proof follows verbatim that of Lemma~\ref{lem:shell-independence}, so we omit it.

\begin{lemma}\label{lem:metric-shell-independence}
   Let $h$ be a whole-space LGF on $\mathbb{R}^d$. Let $(D_1, D_2)$ be random continuous length metrics on $\mathbb{R}^d$, coupled with $h$, which are joint $\xi$-additive local metrics for $h$. Fix $q \in (0,1)$ and $q'>1$. Let $\{r_k\}_{k \geq 1}$ be a decreasing sequence of positive numbers such that $r_{k+1}/r_k \leq q$ for all $k \geq 1$. Fix $z \in \mathbb{R}^d$. Let $\{E_{r_k}(z)\}_{k \geq 1}$ be a sequence of events measurable with respect to \[\sigma\big((h - h_{r_k}(z))|_{A_{r_k/q', q'r_k}(z)}\big), \quad e^{-\xi h_{r_k}(z)} D_1(\cdot,\cdot; A_{r_k/q', q'r_k}(z)), \quad \mbox{and} \quad e^{-\xi h_{r_k}(z)} D_2(\cdot,\cdot; A_{r_k/q', q'r_k}(z)).\] For $K \geq 1$, let $\mathcal{N}(K)$ be the number of $k \in [1,K] \cap \mathbb{Z}$ for which $E_{r_k}(z)$ occurs. For any $a>0$ and $b \in (0,1)$, there exists $p = p(a,b,q,q') \in (0,1)$ (independent of the laws of $D_1, D_2$) such that if $\mathbb{P}[E_{r_k}(z)] \geq p$ for all $k \geq 1$, then
    $$
        \mathbb{P}[\mathcal{N}(K) \geq  bK] \geq 1 - e^{-aK} \quad \mbox{for all $K \geq 1$}.
    $$
\end{lemma}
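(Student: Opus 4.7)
The plan is to adapt the proof of Lemma~\ref{lem:shell-independence} essentially verbatim, inserting the extra metric factors into the relevant sigma-algebras at each step. Translation invariance of the whole-space LGF together with joint $\xi$-additivity of $(D_1,D_2)$ reduce the problem to the case $z=0$: one conjugates the field by $h(\cdot+z)$ and the metrics by $D_i(\cdot+z,\cdot+z)$, which preserves both the joint law and the measurability hypothesis on $E_{r_k}(z)$. At each scale $r_k$ I would then work with the shifted field $\widetilde h := h - h_{r_k}(0)$ and the rescaled metrics $e^{-\xi h_{r_k}(0)} D_i$, which by the $\xi$-additive hypothesis are joint local metrics for $\widetilde h$.

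The two inputs to the proof of Lemma~\ref{lem:shell-independence} carry over with only bookkeeping changes. First, Lemma~\ref{lem:2.8} shows that, with superpolynomially high probability, a positive density of the shells satisfy the good event $\mathcal{G}^{r_k}_{\hat q r_k}(M)$ on which the $s$-harmonic extension of $\widetilde h|_{\mathbb{R}^d\setminus B_{r_k}(0)}$ into $B_{r_k}(0)$ has controlled derivatives up to order $\lceil s\rceil$; this step is purely about the field and requires no modification. Second, on the good event Lemma~\ref{lem:absolute-continuity} provides, for any $q<1/q'$, a Radon-Nikodym derivative $H$ with bounded $\pm\alpha$ moments between the conditional law of $\widetilde h|_{B_{qr_k}(0)}$ given $\mathcal{F}_{r_k}$ and its unconditional marginal. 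The crucial point is that joint locality of $(D_1,D_2)$ applied to the open shell $V = A_{r_k/q',\,q'r_k}(0)$ implies that the internal metrics of the rescaled $D_i$ on $V$ are conditionally independent of the field inside $B_{r_k/q'}(0) \supset B_{qr_k}(0)$ given $\widetilde h$ on a closed thickening of $\overline V$. Consequently, replacing the conditional law of $\widetilde h|_{B_{qr_k}(0)}$ by its unconditional law through the density $H$ does not alter the joint distribution of the objects entering $E_{r_k}(0)$.

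Combining these ingredients, I would run exactly the iterative conditioning argument of Section~3 of~\cite{local-metrics} used in the proof of Lemma~\ref{lem:shell-independence}: working from the innermost shell outward, on the good event at scale $r_k$ one resamples the interior field independently of $\mathcal{F}_{r_k}$, which replaces $\mathbb{P}[E_{r_k}(0)\mid \mathcal{F}_{r_k}]$ by its unconditional value $\mathbb{P}[E_{r_k}(0)] \ge p$ at the cost of a Radon-Nikodym factor controlled by $H$. A standard Bernstein-type large-deviation estimate for the resulting nearly independent Bernoulli variables then yields $\mathbb{P}[\mathcal{N}(K)\ge bK] \ge 1 - e^{-aK}$, provided $p$ is chosen close enough to $1$. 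The only genuinely new point compared with Lemma~\ref{lem:shell-independence} is the verification that resampling the field inside $B_{qr_k}(0)$ leaves the metric factors in $E_{r_k}(0)$ undisturbed; this is the main (though minor) obstacle, and it is precisely what the joint locality definition is designed to guarantee.
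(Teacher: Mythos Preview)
Your proposal is correct and matches the paper's own approach, which simply says the proof ``follows verbatim that of Lemma~\ref{lem:shell-independence}'' and omits further detail; you have in fact spelled out more of the mechanism (in particular the role of joint locality in decoupling the metric factors from the interior resampling) than the paper does. One small slip: the iterative conditioning runs from the outermost shell inward (the filtration $\mathcal{F}_{r_k}$ grows as $r_k$ decreases), not ``from the innermost shell outward,'' but this does not affect the argument.
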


\begin{proof}[Proof of Proposition~\ref{prop:bi-lipschitz}]
    The event $\mathcal{E}(z,r)$ is measurable with respect to
    \[
    e^{-\xi h_r(z)} D_2(\cdot, \cdot; A_{r/2,2r}(z)) \qquad \mbox{and} \qquad e^{-\xi h_r(z)} D_1(\cdot, \cdot; A_{r/2,r}(z)),
    \]
    so we can apply Lemma~\ref{lem:metric-shell-independence} with $E_{r_k}(z) = \mathcal{E}(z,r_k)$ and $r_k = 2^{-k}$. Choosing $a>100d$ and any $b>0$, and letting $p$ be the corresponding constant from Lemma~\ref{lem:metric-shell-independence}, our assumption $\mathbb{P}[\mathcal{E}(z,r)] \geq p$ implies that, for each fixed $z \in \mathbb{R}^d$, with probability at least $1 - O(\epsilon^{10d})$ as $\epsilon \to 0$,
    \begin{equation}\label{eq:prop3.13-event}
    \mbox{there exists $r \in \left[\epsilon, \sqrt{\epsilon}\right] \cap \{2^{-k} : k \in \mathbb{N}\}$} \quad \mbox{such that}\quad \mbox{  $\mathcal{E}(z,r)$ occurs.}
    \end{equation}
    By a union bound, we see that the event in~\eqref{eq:prop3.13-event} holds simultaneously for all $z \in \tfrac{\epsilon}{10d} \mathbb{Z}^d \cap B_{\epsilon^{-1}}(0)$ with polynomially high probability as $\epsilon \to 0$.

    For now on, fix a sequence $\epsilon \to 0$ along which the event in~\eqref{eq:prop3.13-event} holds for every  $z \in \tfrac{\epsilon}{10d} \mathbb{Z}^d \cap B_{\epsilon^{-1}}(0)$. We will show that $D_2(x,y) \leq C D_1(x,y)$ for all $x, y \in \mathbb{R}^d$.
    
    Fix $x,y \in \mathbb{R}^d$ and let $\delta>0$. Let $P$ a path connecting $x$ and $y$ such that \[{\rm len}(P; D_1) \leq D_1(x,y) + \delta.\] We will build a path from $B_{2\sqrt{\epsilon}}(x)$ to $B_{2\sqrt{\epsilon}}(y)$ whose $D_2$-length is bounded by $C \, {\rm len}(P; D_1)$.

    Let $T = {\rm len}(P; D_1)$ and parameterize $P$ by its $D_1$-length. We inductively define a sequence of times $\{t_k\}_{k \geq 0} \subset [0,T]$. Set $t_0 = 0$ and, if $t_k$ has been defined and $t_k<T$, choose $u_k \in \tfrac{\epsilon}{10d} \mathbb{Z}^d \cap B_{\epsilon^{-1}}(0)$ closest to $P(t_k)$ and let $r_k \in \left[\epsilon, \sqrt{\epsilon}\right] \cap \{2^{-m} : m \in \mathbb{N}\}$ be such that $\mathcal{E}(u_k, r_k)$ occurs. By construction, $P(t_k) \in B_{r_k/2}(u_k)$. We then define $t_{k+1}$ to be the first time after $t_k$ at which $P$ leaves $B_{r_k}(u_k)$. If there is no such time, we stop the iteration. Suppose that we obtain $0 = t_0 < t_1 < \ldots < t_M$ in this way. Then, for $0 \leq j \leq M-1$
    \begin{equation}\label{eq:prop3.13-ineq-0}
    t_{j+1} - t_j \geq D_1(\partial B_{r_j/2}(u_j), \partial B_{r_j}(u_j)).
    \end{equation}
    Moreover, the event $\mathcal{E}(u_j, r_j)$ implies that for $0 \leq j \leq M-1$
    \begin{equation}\label{eq:prop3.13-ineq-1}
    \sup_{u,v \in \partial B_{r_j}(u_j)} D_2(u,v; B_{2r_j}(u_j) \setminus \overline{B_{r_j/2}(u_j)}) \leq C \, D_1(\partial B_{r_j/2}(u_j), \partial B_{r_j}(u_j)).
    \end{equation}
    
    We now consider the balls $\{ B_{r_j}(u_j)\}_{0 \leq j \leq M-1}$. By construction $P \subset \cup_{0 \leq j \leq M-1} B_{r_j}(u_j)$ and the latter union is connected. Therefore, the set $\cup_{0 \leq j \leq M-1} \partial B_{r_j}(u_j)$ contains a path connecting $B_{2\sqrt{\epsilon}}(x)$ to $B_{2\sqrt{\epsilon}}(y)$. Let $\widetilde P$ be any such path. Recording the times at which $\widetilde P$ moves from one sphere $\partial B_{r_j}(u_j)$ to another and then performing a loop erasure so that no sphere is visited more than once, we obtain a sequence of points $a_1, a_2,\ldots, a_L$ such that $a_1 \in B_{2\sqrt{\epsilon}}(x)$, $a_L \in B_{2\sqrt{\epsilon}}(y)$, and for each $1 \leq i \leq L - 1$,
    \begin{align*}
    &\mbox{there exists $0 \leq j(i) \leq M - 1$ so that $a_i$ and $a_{i+1}$ both lie on $\partial B_{r_{j(i)}}(u_{j(i)})$,}\\
    &\mbox{and  $j(i_1) \neq j(i_2)$} \quad \mbox{whenever $i_1 \neq i_2$.}
    \end{align*}
    Therefore,
    \begin{align*}
    D_2(B_{2\sqrt{\epsilon}}(x), B_{2\sqrt{\epsilon}}(y)) \leq \sum_{i=1}^{L-1} D_2(a_i, a_{i+1}) \leq \sum_{i=1}^{L-1} \sup_{u,v \in \partial B_{r_{j(i)}}(u_{j(i)})}D_2( u,v; B_{2r_{j(i)}}(u_{j(i)}) \setminus \overline{B_{r_{j(i)/2}}(u_{j(i)})}).
    \end{align*}
    Since the indices $j(i)$ are all different,
    \begin{align*}
    D_2(B_{2\sqrt{\epsilon}}(x), B_{2\sqrt{\epsilon}}(y))  \leq \sum_{j=0}^{M-1} \sup_{u,v \in \partial B_{r_j}(u_j)}D_2( u,v;B_{2r_j}(u_j) \setminus \overline{B_{r_j/2}(u_j)}).
    \end{align*}
    Combining this with~\eqref{eq:prop3.13-ineq-0} and~\eqref{eq:prop3.13-ineq-1} yields 
    \begin{align*}
    D_2(B_{2\sqrt{\epsilon}}(x), B_{2\sqrt{\epsilon}}(y))  &\leq \sum_{j=0}^{M-1} C \,D_1(\partial B_{r_j/2}(u_j), \partial B_{r_j}(u_j)) \leq C  \sum_{j=0}^{M-1} (t_{j+1} - t_j) \\
    &\leq C \,{\rm len}(P; D_1) \leq C(D_1(x,y) + \delta).
    \end{align*}
    First letting $\epsilon \to 0$ and then letting $\delta \to 0$ yields $D_2(x, y) \leq C \,D_1(x,y)$ for all $x,y \in \mathbb{R}^d$. \qedhere
    
\end{proof}

We now show that if a local metric is determined by $h$ up to bi-Lipschitz equivalence, then it is measurable with respect to $h$. The main input is the Efron-Stein inequality.

\begin{prop}[Measurability of local metrics]\label{prop:measurable}
Let $d\ge 2$ and let $h$ be a whole-space LGF. Let $D$ be a random continuous length metric on $\mathbb{R}^d$ coupled with $h$. Assume that $D$ is a local metric for $h$ and that $D$ is determined by $h$ up to bi-Lipschitz equivalence: if we condition on $h$ and sample $D,\widetilde D$ i.i.d.\ from the conditional law of $D$ given $h$, then a.s.\ there exists a random constant $C>1$ (depending on $h$) such that
\[
\widetilde D(z,w)\leq C \, D(z,w),\qquad \forall  z,w\in \mathbb{R}^d.
\]
Then $D$ is a.s.\ determined by $h$ (i.e., we can in fact take $C=1$).
\end{prop}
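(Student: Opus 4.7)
The plan is to sample two conditionally independent copies $D,\widetilde D$ from the conditional law of $D$ given $h$, and to show $D=\widetilde D$ almost surely; by a countable-dense-pair argument this reduces to proving that for each fixed $z,w \in \mathbb{R}^d$ we have $\operatorname{Var}(D(z,w)\mid h)=0$ almost surely. Note that $(D,\widetilde D)$ form a joint local pair for $h$ by the local metric property applied to each marginal, and that the bi-Lipschitz hypothesis applied to the pair $(D,\widetilde D)$ yields a random constant $C<\infty$ with $\widetilde D\le C D$, and by symmetry $D\le C' \widetilde D$.

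First I would set up an Efron-Stein-type variance bound. Fix a bounded open $U\ni z,w$ and for each $n\ge 1$ partition $U$ into dyadic cubes $\{Q_i^n\}_{i=1}^{N_n}$ of side $2^{-n}$. By the local metric property, the internal metric of $D$ on $Q_i^n$ is conditionally independent of the internal metric on $\mathbb{R}^d\setminus\overline{Q_i^n}$ given $h|_{\overline{Q_i^n}}$, hence also given $h$. This justifies a resampling operation: keep $D$ on $\mathbb{R}^d\setminus\overline{Q_i^n}$ fixed and resample the internal metric on $Q_i^n$ from its conditional law given $h$, producing a new metric $D^{(i,n)}$ with the same conditional law as $D$ given $h$. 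Iterating this over the cubes of a dyadic partition, and using that a length metric on $U$ is reconstructed from its internal metrics on a fine partition, shows that the conditional law of $D$ given $h$ is faithfully encoded by a collection of conditionally independent local pieces.

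Next I would estimate the pointwise effect of a single resampling. A near-$D$-geodesic from $z$ to $w$ can be replaced by a detour that uses the resampled metric on $Q_i^n$, giving a bound of the form
\[
|D(z,w)-D^{(i,n)}(z,w)| \;\le\; K\bigl(\operatorname{diam}_D Q_i^n+\operatorname{diam}_{D^{(i,n)}} Q_i^n\bigr),
\]
where $K$ counts the number of times a near-geodesic crosses $Q_i^n$ and is controlled by $D(z,w)/\max_i \operatorname{diam}_D Q_i^n$ via the length-space structure. Applying the Efron-Stein inequality conditionally on $h$,
\[
\operatorname{Var}\bigl(D(z,w)\bigm| h\bigr) \;\le\; \tfrac12\sum_i \mathbb{E}\bigl[(D(z,w)-D^{(i,n)}(z,w))^2 \bigm| h\bigr],
\]
and using the bi-Lipschitz hypothesis (which forces $\operatorname{diam}_{D^{(i,n)}} Q_i^n$ to be comparable to $\operatorname{diam}_D Q_i^n$ up to the random constant $C$), the right-hand side is bounded by a quantity that shrinks to zero as $n\to\infty$ by the continuity of the length metric $D$. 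Sending $n\to\infty$ yields $\operatorname{Var}(D(z,w)\mid h)=0$, as desired.

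The main obstacle is the simultaneous control of three random quantities as the partition refines: the diameters $\operatorname{diam}_D Q_i^n$ (which shrink pointwise but whose squared sum over all cubes in $U$ need not decay in dimensions $d\ge 2$), the number of cubes actually crossed by a near-geodesic, and the bi-Lipschitz constants comparing $D$ to its resamplings. A naive bound $\sum_i \operatorname{diam}_D(Q_i^n)^2$ is insufficient; the refinement which restricts attention to the $O(D(z,w)/\max_i\operatorname{diam}_D Q_i^n)$ cubes actually touched by a near-geodesic is essential, and it requires a careful length-space argument to show that cubes not visited by any near-geodesic contribute negligibly to the Efron-Stein sum — intuitively, because a shortcut created by the resampling inside such a cube would, by the bi-Lipschitz hypothesis, also be available in $D$, contradicting its being a near-geodesic. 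Handling the possibly heavy-tailed bi-Lipschitz constants requires an additional truncation: we carry out the Efron-Stein argument on events of the form $\{C\le M,\ D(z,w)\le M\}$, then let $M\to\infty$ using the comparability of the $i.i.d.$ copies $D,\widetilde D$.
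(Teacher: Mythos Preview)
Your Efron--Stein strategy is exactly the one the paper uses, and the architecture (resample the internal metric on one cell at a time, bound the increment using a near-geodesic, sum) is correct. The gap is in your increment bound: the ``obstacle'' you identify is an artifact of bounding by $K\cdot\operatorname{diam}$ rather than a genuine difficulty.

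The paper bounds $(D^S(z,w)-D(z,w))_+$ not by a crossing count times a diameter but directly by $C\cdot\operatorname{len}(P\cap S;D)+\varepsilon^d$, where $P$ is a near-$D$-geodesic. This is immediate from the bi-Lipschitz hypothesis: the only change in $\operatorname{len}(P;\cdot)$ when you resample on $S$ is $\operatorname{len}(P\cap S;D^S)-\operatorname{len}(P\cap S;D)\le C\,\operatorname{len}(P\cap S;D)$. The Efron--Stein sum is then controlled by
\[
\sum_S \operatorname{len}(P\cap S;D)^2 \;\le\; \Bigl(\max_S \operatorname{len}(P\cap S;D)\Bigr)\cdot\sum_S \operatorname{len}(P\cap S;D) \;=\; \Bigl(\max_S \operatorname{len}(P\cap S;D)\Bigr)\cdot\operatorname{len}(P;D),
\]
and the maximum tends to $0$ by continuity of $D$. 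No restriction to ``visited'' cubes is needed, since $\operatorname{len}(P\cap S;D)=0$ for unvisited $S$ automatically. By contrast, your crossing count $K_i$ need not even be finite for a general continuous path, and the quantity $\sum_i (K_i\,\operatorname{diam}_D Q_i^n)^2$ is not obviously comparable to anything that tends to zero.

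Two further points. First, the paper uses a \emph{randomly shifted} grid, not a deterministic dyadic one, precisely to guarantee that $P$ spends zero $D$-length on the cell boundaries and hence $\operatorname{len}(P;D)=\sum_S\operatorname{len}(P\cap S;D)$ holds exactly; your deterministic partition does not come with this guarantee. Second, your proposed truncation on $\{C\le M\}$ is unnecessary: the entire Efron--Stein computation is carried out conditionally on $h$, which fixes the bi-Lipschitz constant, and finiteness of the conditional second moment of $D(z,w;U)$ follows from the bi-Lipschitz hypothesis itself.
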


\begin{proof}
The proof is the higher-dimensional analogue of \cite[Theorem 1.7]{local-metrics}, proved there in Section 5. Let $U \subset \mathbb{R}^d$ be an open bounded connected set. We will show that $D(\cdot, \cdot; U)$ is a.s.\ determined by $h$. Letting $U$ increase to $\mathbb{R}^d$ yields the desired statement. 

The key input is Efron-Stein inequality~\cite{efron-stein}: for any measurable function $F=F(X_1,\dots,X_n)$ of independent random variables $X_1,\dots,X_n$, and any family $X_1',\dots,X_n'$ of i.i.d.\ copies, writing
\[
F^{(i)} := F(X_1,\dots,X_{i-1},X'_i,X_{i+1},\dots,X_n),
\]
we have
\[
\mathrm{Var}[F]\;\le\; \frac{1}{2} \sum_{i=1}^n \mathbb{E}\bigl[(F-F^{(i)})^2\bigr].
\]


\medskip

\noindent\textbf{Step 1: Random shifted grid and path decomposition.}
Fix $\varepsilon>0$. Sample a random shift $\theta$ uniformly from $[0,\varepsilon]^d$ and let $\mathcal G_\theta$ be the union of all coordinate hyperplanes $\{x_k\in\varepsilon\mathbb Z+\theta_k\}$, $k=1,\dots,d$. Let $\mathcal S_\theta^\varepsilon$ be the collection of open boxes of side length $\varepsilon$ determined by $\mathcal G_\theta$. Let $P$ be any (random) path in $U$ with finite $D$-length, chosen measurably from $(h,D)$. Parametrize $P$ by its $D$-length. Exactly as in \cite[Lemma~5.2]{local-metrics}, using that the parameterized path is independent of $\theta$, one proves that the set of times at which $P$ hits $\mathcal G_\theta$ has Lebesgue measure $0$ a.s., and hence
\begin{equation}\label{eq:lem-measure-1}
\len(P;D)\;=\;\sum_{S\in\mathcal S_\theta^\varepsilon}\len\bigl(P\cap S;D\bigr)\qquad\text{a.s.}
\end{equation}
Let $\widetilde D$ be independently sampled from the conditional law of $D$ given by $h$. By assumption, for some random constant $C>1$, we have $C^{-1} D(x,y) \leq \widetilde D(x,y) \leq C D(x,y)$ for all $x,y \in \mathbb{R}^d$. Therefore, if we parametrize $P$ by its $\widetilde D$-length, then the set of times at which $P$ hits $\mathcal G_\theta$ also has Lebesgue measure $0$ a.s., which implies that
\begin{equation}\label{eq:lem-measure-2}
\len(P;\widetilde D)\;=\;\sum_{S\in\mathcal S_\theta^\varepsilon}\len\bigl(P\cap S; \widetilde D\bigr)\qquad\text{a.s.}
\end{equation}

\medskip\noindent\textbf{Step 2: Reconstruction from grid internal metrics.} For $S \in \mathcal{S}_\theta^\varepsilon$, let $D(\cdot,\cdot;S)$ be the internal metric of $D$ on $S$. Let $\mathcal{S}_\theta^\varepsilon(U)$ be the boxes in $\mathcal{S}_\theta^\varepsilon$ that intersect $U$. We claim that $D(\cdot, \cdot; U)$ is a.s.\ determined by
\[
(h,\theta,\{D(\cdot,\cdot;S):S\in\mathcal{S}_\theta^\varepsilon(U)\}).
\]
Indeed, given these data, let $D,D^0$ be two conditionally i.i.d.\ samples with the same internal metrics on each box:
\[
D(\cdot,\cdot;S) = D^0(\cdot,\cdot;S)\quad\forall S \in \mathcal{S}_\theta^\varepsilon(U).
\]
Fix $x,y \in U$ and $\delta>0$. Let $P$ be a path from $x$ and $y$ in $U$ with ${\rm len}(P; D) \leq D(x,y; U) + \delta$. Using~\eqref{eq:lem-measure-1}, \eqref{eq:lem-measure-2}, and the fact that $D$ and $D^0$ agree on each box, we get a.s. 
\[
{\rm len}(P;D) = \sum_S \len(P\cap S;D) \quad \mbox{and} \quad {\rm len}(P;D^0) = \sum_S \len(P\cap S;D^0) = \sum_S \len(P\cap S;D).
\]
Thus ${\rm len}(P; D^0) = {\rm len}(P; D)$ and hence 
\[D^0(x,y; U) \leq {\rm len}(P; D^0) \leq D(x,y; U) +\delta.\] Letting $\delta \to 0$ gives $D^0(x,y; U)\le D(x,y; U)$. The same argument with $D,D^0$ swapped yields $D(x,y;U)\leq D^0(x,y;U)$. Thus $D=D^0$ a.s.\ on rational pairs, and hence everywhere by continuity.

\medskip

\noindent\textbf{Step 3: Independence across boxes from locality.}
By the definition of a local metric (Definition~\ref{def:local-metric}), for any countable family of pairwise disjoint open sets the corresponding internal metrics are conditionally independent given the field. Thus, conditioning on $(h,\theta)$, the family $\bigl\{D( \cdot,\cdot;S): S\in\mathcal S_\theta^\varepsilon(U)\bigr\}$ is conditionally independent.

\medskip

\noindent\textbf{Step 4: Resampling on a single box and Efron-Stein inequality.}
Fix $x,y \in U$ with $D(x,y) < D(x,\partial U)$. For $S\in\mathcal S_\theta^\varepsilon(U)$, define $D^S$ by resampling the internal metric on $S$ from its conditional law given $(h,\theta)$, while keeping $D(\cdot,\cdot; \tilde S)$ fixed for all $\tilde S \in S_\theta^\varepsilon(U) \setminus \{S\}$. Conditioning on $(h,\theta)$ and applying the Efron-Stein inequality with $F = D(x,y; U)$ and $F^{(S)} = D^S(x,y; U)$ gives
\begin{equation}\label{eq:ES}
\begin{aligned}
\Var\bigl[D(x,y;U) \big| h,\theta\bigr] &\le \frac{1}{2}\sum_{S\in\mathcal S_\theta^\varepsilon(U)} \mathbb E\left[\bigl(D^S(x,y;U)-D(x,y;U)\bigr)^2\Big| h,\theta\right] \\
&=\sum_{S\in\mathcal S_\theta^\varepsilon(U)} \mathbb E\left[\bigl(D^S(x,y;U)-D(x,y;U)\bigr)_+^2\Big| h,\theta\right],
\end{aligned}
\end{equation}
where we write $a_+ = \max\{a, 0\}$ for $a \in \mathbb{R}$ and the second line uses symmetry.

Now we show that the right-hand side of~\eqref{eq:ES} tends to 0 as $\epsilon \to 0$. Let $P$ be a path from $x$ to $y$ in $U$ with $\len(P;D)\leq D(x,y;U)+\epsilon^d$. Using~\eqref{eq:lem-measure-1} and~\eqref{eq:lem-measure-2} together with $C^{-1} D \leq D^S \leq C D$, we have a.s.
\begin{align*}
D^S(x,y; U) - D(x,y; U) &\leq {\rm len}(P; D^S) -  D(x,y; U) \\
&\leq {\rm len}(P; D^S) - {\rm len}(P; D) + \epsilon^d\\
&={\rm len}(P \cap S; D^S) - {\rm len}(P \cap S; D) + \epsilon^d \\
&\leq C \, {\rm len}(P \cap S; D) + \epsilon^d.
\end{align*}
Therefore,
\begin{equation}\label{eq:prop3.15-measure-1}
\begin{aligned}
\sum_{S\in\mathcal S_\theta^\varepsilon(U)} \mathbb E\left[\bigl(D^S(x,y;U)-D(x,y;U)\bigr)_+^2\Big| h,\theta\right] &\leq \sum_{S\in\mathcal S_\theta^\varepsilon(U)} \mathbb{E} \left[ (C \, {\rm len}(P \cap S; D) + \epsilon^d)^2 \Big| h,\theta\right] \\
&\leq 2 C^2 \sum_{S\in\mathcal S_\theta^\varepsilon(U)} \mathbb{E} \left[ {\rm len}(P \cap S; D)^2 \Big| h,\theta\right] + 2 \epsilon^{2d} |S_\theta^\varepsilon(U)|.
\end{aligned}
\end{equation}
Note that
\begin{equation}\label{eq:prop3.15-measure-3}
\sum_{S\in\mathcal S_\theta^\varepsilon(U)} \mathbb{E} \left[ {\rm len}(P \cap S; D) \Big| h,\theta\right] = \mathbb{E} \left[ {\rm len}(P; D) | h,\theta\right] \leq \mathbb{E}[D(x,y; U)|h, \theta] + \epsilon^d,
\end{equation}
which is finite due to the bi-Lipschitz equivalence (see Lemma 5.1 in~\cite{local-metrics}). Moreover, since $D$ is continuous, we have
\begin{equation}\label{eq:prop3.15-measure-2}
\max_{S \in S_\theta^\varepsilon(U)} \len(P\cap S;D)\to 0\quad\mbox{a.s. as $\epsilon \to 0$.}
\end{equation}
Indeed, since $D(x,y) < D(x, \partial U)$, the path $P$ stays uniformly bounded away from $\partial U$ as $\epsilon \to 0$. If the left-hand side of \eqref{eq:prop3.15-measure-2} does not converge to $0$, then there exists $\lambda>0$ and a sequence of $\epsilon_n \to 0$ such that for each $\epsilon_n$ we can find $S \in S_\theta^{\varepsilon_n}(U)$ with $\len(P\cap S;D) \geq \lambda$. In that case, we could replace the segment of $P$ inside $S$ by a path whose $D$-length is $o_{\epsilon_n}(1)$. This produces a path connecting $x$ and $y$ with $D$-length at most 
\[D(x,y; U) + \epsilon_n^d - \lambda + o_{\epsilon_n}(1) < D(x,y;U)\]
for all sufficiently small $\epsilon_n$, which is a contradiction. 

Combing~\eqref{eq:prop3.15-measure-3} and~\eqref{eq:prop3.15-measure-2}, we see that the first term on the right-hand side of~\eqref{eq:prop3.15-measure-1} converges to 0 as $\epsilon \to 0$. Since $|S_\theta^\varepsilon(U)| = O(\epsilon^{-d})$, the second term is of order $O(\epsilon^d)$ and thus also converges to 0. Taking $\epsilon \to 0$ in~\eqref{eq:ES}, we conclude that $D(x,y; U)$ is a.s.\ determined by $h$ whenever $D(x,y) < D(x, \partial U)$. 

Finally, by considering rational pairs $(x,y)$ and using that $D(\cdot, \cdot; U)$ is a continuous length metric, we see that $D(\cdot,\cdot; U)$ is a.s. determined by $h$. Letting $U$ increase to $\mathbb{R}^d$ yields the desired result.
\end{proof}

The main result of this section is the following.

\begin{lemma}\label{lem:measurable}
    Let $h$ be a whole-space LGF with the additive constant chosen so that $h_1(0) = 0$, and let $(h, D_h)$ be the limit in law of $(h, \mathsf{a}_\epsilon^{-1}D_h^\epsilon)$ along a sequence $\epsilon_n \to 0$. Then $D_h$ is almost surely determined by $h$. In particular, $\mathsf{a}_\epsilon^{-1}D_h^\epsilon$ converges in probability to $D_h$ along $\epsilon_n \to 0$.
\end{lemma}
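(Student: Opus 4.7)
The plan is to combine Proposition~\ref{prop:bi-lipschitz} and Proposition~\ref{prop:measurable}. By Lemma~\ref{lem:locality}, $D_h$ is a $\xi$-additive local metric for $h$. Conditionally on $h$, I sample two copies $D_h^{(1)}, D_h^{(2)}$ i.i.d.\ from the conditional law of $D_h$; as noted after the definition of joint local metrics, the pair then forms joint $\xi$-additive local metrics for $h$. To conclude via Proposition~\ref{prop:measurable}, it suffices to find a deterministic constant $C > 0$ such that the bi-Lipschitz hypothesis of Proposition~\ref{prop:bi-lipschitz} is satisfied by $(D_h^{(1)}, D_h^{(2)})$; by symmetry this will give both $C^{-1} D_h^{(1)} \leq D_h^{(2)} \leq C D_h^{(1)}$ almost surely.

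To verify the hypothesis, fix the universal $p \in (0,1)$ from Proposition~\ref{prop:bi-lipschitz}. The tightness across scales established in Lemma~\ref{lem:tight-scale} says that $\{\mathfrak c_r^{-1} e^{-\xi h_r(0)} D_h(r\cdot, r\cdot)\}_{r > 0}$ is tight in the local uniform topology, and every subsequential limit is a continuous metric on $\mathbb{R}^d$. Consequently I can choose deterministic $0 < m < M < \infty$ such that, uniformly in $r > 0$, each of the events
\begin{align*}
\sup_{u,v \in \partial B_r(0)} D_h(u,v; A_{r/2, 2r}(0)) &\leq M \, \mathfrak c_r e^{\xi h_r(0)},\\
D_h(\partial B_{r/2}(0), \partial B_r(0)) &\geq m \, \mathfrak c_r e^{\xi h_r(0)}
\end{align*}
has probability at least $(1+p)/2$. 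Applying the first bound to $D_h^{(2)}$ and the second to $D_h^{(1)}$, and intersecting using the conditional independence of the two copies given $h$, the event $\mathcal{E}(0,r)$ of Proposition~\ref{prop:bi-lipschitz} with $C := M/m$ holds with probability at least $p$ for every $r > 0$. To pass from $z = 0$ to arbitrary $z \in \mathbb{R}^d$, I use that $\tilde h(\cdot) := h(\cdot + z) - h_1(z)$ has the same law as $h$, while the translation-equivariance of the approximations $\mathsf a_\epsilon^{-1} D_h^\epsilon$ passes to the limit and yields $D_{\tilde h}(x,y) = e^{-\xi h_1(z)} D_h(x+z, y+z)$ in law; hence the event $\mathcal{E}_h(z, r)$ has the same probability as $\mathcal{E}_{\tilde h}(0, r)$ with the same constant $C$.

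Proposition~\ref{prop:bi-lipschitz} now yields $D_h^{(2)} \leq C D_h^{(1)}$ almost surely, and symmetry gives the reverse inequality, so the bi-Lipschitz hypothesis of Proposition~\ref{prop:measurable} holds with a deterministic constant. That proposition then concludes that $D_h$ is almost surely determined by $h$, say $D_h = F(h)$. The convergence in probability follows by a standard subsequence argument: given any further subsequence of $\{\epsilon_n\}$, the tightness from Lemma~\ref{lem:tightness-whole-plane} lets me extract a sub-subsequence along which $(h, \mathsf a_\epsilon^{-1} D_h^\epsilon)$ converges in law; the first half of the lemma, applied to this sub-subsequence, forces the limit to be $(h, F(h))$ (uniqueness of $F$ follows from the fact that two deterministic functions of $h$ with the same joint law together with $h$ must coincide a.s.). Hence $\mathsf a_\epsilon^{-1} D_h^\epsilon \to F(h)$ in probability along the sub-subsequence, and a standard subsequence principle promotes this to convergence in probability along $\epsilon_n$. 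The main subtlety is the translation-invariance step: one must ensure that after resampling, the conditional law of the pair $(D_h^{(1)}, D_h^{(2)})$ given $h$ is itself translation-covariant, which follows from the fact that the conditional law of $D_h$ given $h$ inherits translation-covariance from the approximating metrics $\mathsf a_\epsilon^{-1} D_h^\epsilon$.
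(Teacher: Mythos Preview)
Your proof is correct and follows essentially the same route as the paper: locality (Lemma~\ref{lem:locality}) gives the $\xi$-additive local metric property, tightness across scales (Lemma~\ref{lem:tight-scale}) together with translation invariance yields the uniform lower bound on $\mathbb{P}[\mathcal{E}(z,r)]$, Proposition~\ref{prop:bi-lipschitz} gives bi-Lipschitz equivalence of two conditionally independent copies, and Proposition~\ref{prop:measurable} finishes. Two small remarks: the intersection bound for $\mathcal{E}(z,r)$ only needs the union bound $\mathbb{P}[A\cap B]\geq 1-\mathbb{P}[A^c]-\mathbb{P}[B^c]\geq p$, not conditional independence; and your subsequence argument for convergence in probability is exactly the content of the measurability lemma the paper cites (Lemma~1.3 in \cite{lqg-metric-estimates}), so the two proofs are really the same.
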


\begin{proof}
    Let $p \in (0,1)$ be the universal constant in Proposition~\ref{prop:bi-lipschitz}. Lemma~\ref{lem:locality} implies that $D_h$ is a $\xi$-additive local metric for $h$, and hence also a local metric for $h$. Furthermore, Lemma~\ref{lem:tight-scale} implies that there exists $A>1$ such that for all $z \in \mathbb{R}^d$ and $r>0$,
    \begin{align*}
        &\mathbb{P}\left[ D_h(\partial B_{r/2}(z), \partial B_r(z)) \geq A^{-1} \mathfrak c_r e^{\xi h_r(z)} \right] \geq \frac{1 + p}{2} , \\
        &\mathbb{P}\left[ \sup_{u,v \in \partial B_r(z)} D_h(u,v ; B_{2r}(z) \setminus \overline{\partial B_{r/2}(z)}) \leq A \mathfrak c_r e^{\xi h_r(z)} \right] \geq \frac{1 + p}{2}.
    \end{align*}
    If we condition on $h$ and sample two conditionally independent copies $D_h^{(1)}, D_h^{(2)}$ of $D_h$, then the pair $(D_h^{(1)}, D_h^{(2)})$ is a joint $\xi$-additive local metric for $h$, and the above bounds imply that the events $\mathcal{E}(z,r)$ of Proposition~\ref{prop:bi-lipschitz} hold for $D_1=D_h^{(1)}$ and $D_2=D_h^{(2)}$ with probability at least $p$, with $C=A^2$. Therefore, Proposition~\ref{prop:bi-lipschitz} implies that $D_h^{(1)}$ and $D_h^{(2)}$ are bi-Lipschitz equivalent. In particular, $D_h$ is determined by $h$ up to bi-Lipschitz equivalence.

    Applying Proposition~\ref{prop:measurable} now shows that $D_h$ is a.s.\ determined by $h$. Finally, by a standard measurability argument (see Lemma 1.3 in~\cite{lqg-metric-estimates}), we deduce that $\mathsf{a}_\epsilon^{-1}D_h^\epsilon$ converges in probability to $D_h$ along this subsequence. \qedhere

\end{proof}

\subsection{Proof of Theorem~\ref{thm:axiom}}

In this section we finish the proof of Theorem~\ref{thm:axiom}. Let $h$ be a whole-space LGF with the additive constant chosen so that $h_1(0)=0$.

\medskip\noindent\textbf{Step 1: Defining $D_{\mathsf h}$ for a whole-space LGF plus a bounded function and checking Axioms~\ref{axiom-length}, \ref{axiom-weyl}, \ref{axiom-translation}, \ref{axiom-tight}.}
By Lemma~\ref{lem:tightness-whole-plane}, for every sequence $\varepsilon_n \to 0$ there exists a subsequence $\epsilon_n' \to 0$ along which $(h,\mathsf a_{\varepsilon_n'}^{-1}D^{\varepsilon_n'}_h)$ converges in law to $(h,D_h)$. Lemma~\ref{lem:measurable} implies that $D_h$ is almost surely determined by $h$, and the convergence holds in probability. Therefore, we can pass to a further subsequence $\epsilon_n'' \to 0$ along which $(h,\mathsf a_{\varepsilon_n''}^{-1}D^{\varepsilon_n''}_h)$ converges almost surely to $(h,D_h)$.

Let now $f:\mathbb{R}^d\to\R$ be bounded and continuous. By Lemma~\ref{lem:Weyl-scaling}, almost surely, along $\epsilon_n'' \to 0$, we have $\mathsf a_{\varepsilon_n''}^{-1}D^{\varepsilon_n''}_{h+f} \to e^{\xi f}\cdot D_h$ for every such $f$, where $e^{\xi f}\cdot D_h$ is as defined in~\eqref{eq:weyl}. We set
\[
D_{h+f} := e^{\xi f}\cdot D_h .
\]
Then $D_{h+f}$ is a.s.\ determined by $h+f$ and $\mathsf a_{\varepsilon_n''}^{-1}D^{\varepsilon_n''}_{h+f}$ converges almost surely to $D_{h+f}$ as $\epsilon_n'' \to 0$.

Thus we obtain a measurable map $\mathsf h\mapsto D_{\mathsf h}$ from generalized functions to continuous metrics, a.s.\ defined whenever $\mathsf h$ is a whole-space LGF plus a bounded continuous function. By Lemma~\ref{lem:dyadicconv}, $D_\mathsf h$ is a length metric (Axiom~\ref{axiom-length}). Weyl scaling for bounded continuous $f$ (Axiom~\ref{axiom-weyl}) follows from the definition. Axiom~\ref{axiom-translation} is immediate from the almost sure convergence. Lemma~\ref{lem:tight-scale} yields the tightness across scales (Axiom~\ref{axiom-tight}).

\medskip\noindent\textbf{Step 2: Locality (Axiom~\ref{axiom-local}) for a whole-space LGF plus a bounded function.} Let $\mathsf h$ be a whole-space LGF plus a bounded continuous function. We now show that for an open set $V \subset \mathbb{R}^d$, $D_{\mathsf h}(\cdot,\cdot; V)$ is a.s.\ determined by $\mathsf h|_V$. Let $O$ be a bounded open subset of $V$. It suffices to show that for $u, v \in O$ with $D_{\mathsf h}(u,v) < D_{\mathsf h}(u, \partial O)$, the internal distance $D_{\mathsf h}(u,v; O)$ is a.s.\ determined by $\mathsf h|_V$. Then we can increase $O$ to $V$ and deduce that $D_{\mathsf h}(\cdot,\cdot; V)$ is a.s.\ determined by $\mathsf h|_V$.

Let $\bar{\mathsf h}_\epsilon^*$ be the convolution of $\mathsf h$ with $\Psi_\epsilon \mathsf K_\epsilon$ as in~\eqref{eq:sec2-field2}, and let $\bar D_{\mathsf h}^\e$ be the exponential metric associated with it. Lemma~\ref{lem:equifield} implies that $\lim_{\epsilon_n'' \to 0} \sup_{z \in \overline{O}} |\bar{\mathsf h}_\epsilon^*(z) - \mathsf h_\epsilon^*(z)| = 0$. Therefore, the almost sure convergence of $\mathsf a_{\varepsilon_n''}^{-1}D^{\varepsilon_n''}_{\mathsf h}$ to $D_{\mathsf h}$ implies that
\[
\lim_{\epsilon_n'' \to 0} \mathsf a_{\varepsilon_n''}^{-1}\bar D^{\varepsilon_n''}_{\mathsf h}(u,v; O) = D_{\mathsf h}(u,v; O) \quad \mbox{almost surely}.
\]
For all sufficiently small $\epsilon_n''$, the left-hand side is determined by $\mathsf h|_V$, and hence $D_{\mathsf h}(u,v; O)$ is also determined by $\mathsf h|_V$. Increasing $O$ to $V$ yields the desired result. This checks Axiom~\ref{axiom-local} for $\mathsf h$. 

\medskip\noindent\textbf{Step 3: Extension to a whole-space LGF plus a general continuous function.}
Let $h$ be a whole-plane GFF and let $f$ be a (possibly random) unbounded continuous function. For a bounded open set $V\subset\mathbb{R}^d$, choose a smooth compactly supported function $\phi$ with $\phi\equiv 1$ on $V$ and define
\[
D^{V}_{h+f} := D_{h+\phi f}(\cdot,\cdot;V).
\]
By locality (Axiom~\ref{axiom-local}) in the bounded case, $D^{V}_{h+f}$ depends only on $(h+f)|_V$ and is independent of the choice of $\phi$.  

Given a continuous path $P$ contained in some bounded $V$, define its $D_{h+f}$-length to be the $D^{V}_{h+f}$-length of $P$; this does not depend on $V$ by construction. For $z,w\in\mathbb{R}^d$ set $D_{h+f}(z,w)$ to be the infimum of the $D_{h+f}$-lengths of continuous paths from $z$ to $w$. Then $D_{h+f}$ is a length metric on $\mathbb{R}^d$ which is a.s.\ determined by $h+f$, and for each bounded $V$ we have $D_{h+f}(\cdot,\cdot;V)=D^{V}_{h+f}$, hence determined by $(h+f)|_V$, which implies Axiom~\ref{axiom-local}. Axioms~\ref{axiom-weyl},~\ref{axiom-translation}, and~\ref{axiom-tight} are inherited from the bounded case.

\medskip
Combining the three steps, for any sequence $\varepsilon_n\to0$, we have constructed a map $h\mapsto D_h$, a.s.\ defined for every whole-space LGF plus a continuous function, which satisfies Axioms~\ref{axiom-length}--\ref{axiom-tight} and such that there exists a subsequence $\epsilon_n'' \to 0$ with the property for any $\mathsf h$ that is a whole-space LGF plus a bounded continuous function,
\[
\lim_{\epsilon_n'' \to 0} \mathsf a_{\varepsilon_n''}^{-1}D^{\varepsilon_n''}_{\mathsf h} = D_{\mathsf h} \quad \mbox{in probability},
\]
which is exactly the statement of Theorem~\ref{thm:axiom}.

\section{Quantitative properties of weak exponential metrics}\label{sec:moment}

In this section we establish quantitative properties of weak exponential metrics. In Section~\ref{subsec:c-asymp} we prove Theorem~\ref{thm:sharp-c}. We prove Theorem~\ref{thm:moments}, which records moment bounds for several types of distances, in Sections~\ref{subsec:superpolynomial}--\ref{subsec:moment-other}. In Section~\ref{subsec:holder} we prove Theorem~\ref{thm:holder-continuous}. In Section~\ref{sec:KPZ}, we prove Theorem~\ref{thm:kpz}. 

Throughout this section we fix $\gamma \in (0,\sqrt{2d})$ and let $\xi, Q, \mathsf d_\gamma$ be as defined in~\eqref{eq:def-xi-q}. Let $D_h$ be a weak $\gamma$-exponential metric, and let $h$ be a whole-space LGF with additive constant chosen so that $h_1(0)=0$.

\subsection{Up-to-constants bounds for the scaling constants $\mathfrak c_r$}\label{subsec:c-asymp}

In this section we prove Theorem~\ref{thm:sharp-c}. The argument follows the planar strategy of~\cite{dg-constant} with modifications in higher dimensions. We also show that any two weak $\gamma$-exponential metrics are up-to-constants equivalent (Proposition~\ref{prop:up-to-constant-eq}), a fact that may be useful for establishing uniqueness.

Recall from~\eqref{eq:def-exponential-metric} that $D_h^\epsilon$ is the exponential metric built from the mollification $h_\epsilon^*$. In Section~\ref{subsec:localize} we introduced a localized truncation $\bar h_\epsilon^*$, depending only on the $\sqrt{\epsilon}$-neighborhood of its argument. To prove Theorem~\ref{thm:sharp-c}, we need to consider a different truncation depending on a smaller neighborhood. Define
\begin{equation}\label{eq:def-widehat-h-epsilon}
\widehat h_\epsilon^*(z) := Z_\epsilon^{-1} \int_{\mathbb{R}^d} h(w) \widehat\Psi_\epsilon(z-w) \mathsf K_\epsilon(z-w)\, dw \qquad \mbox{for } z \in \mathbb{R}^d,
\end{equation}
where $\widehat\Psi_\epsilon\colon \mathbb{R}^d \to [0,1]$ is a radial bump function with $\widehat\Psi_\epsilon(u)=1$ for $|u|\le \tfrac{1}{2}\epsilon (\log \epsilon^{-1})^{10}$ and $\widehat\Psi_\epsilon(u)=0$ for $|u|\ge \epsilon (\log \epsilon^{-1})^{10}$, and
\[
Z_\epsilon := \int_{\mathbb{R}^d} \widehat\Psi_\epsilon(u) \mathsf K_\epsilon(u)\, du
\]
is a normalizing constant. This normalization makes $\widehat h_\epsilon^*$ shift by the same additive constant as $h$. Let $\widehat D_h^\epsilon$ denote the exponential metric associated with $\widehat h_\epsilon^*$.

The next lemma is the analogue of Lemma~\ref{lem:equifield} for $\widehat h_\epsilon^*$.

\begin{lemma}\label{lem:equifield-sec4}
Let $h$ be a whole-space LGF with additive constant chosen so that $h_1(0)=0$. For any bounded open set $U \subset \mathbb{R}^d$, almost surely
\[
\lim_{\epsilon \to 0} \sup_{z \in \overline U} \big| h_\epsilon^*(z) - \widehat h_\epsilon^*(z) \big| = 0.
\]

\end{lemma}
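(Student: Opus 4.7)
The plan is to mirror the proof of Lemma~\ref{lem:equifield}: exploit the radiality of $\mathsf K_\epsilon$ and $\widehat\Psi_\epsilon$ to reduce everything to a one-dimensional integral against the spherical average $h_r(z)$, then use the kernel tail bound $|\mathsf K(x)| \le C|x|^{-2d+1}$ from Lemma~\ref{lem:kernel} together with the uniform bound $|h_r(z)| \le C(1+|\log r|)$ from Lemma~\ref{lem:sphereavg} (with $\zeta=1/2$) to show the difference tends to zero on $\overline U$. The only new ingredient compared with Lemma~\ref{lem:equifield} is controlling the normalizing constant $Z_\epsilon$. Throughout, write $L := (\log\epsilon^{-1})^{10}$.

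The first step is to estimate $Z_\epsilon$. Using $\int_{\mathbb{R}^d}\mathsf K_\epsilon = 1$ one has
\[
1-Z_\epsilon = \frac{2\pi^{d/2}}{\Gamma(d/2)} \int_0^\infty r^{d-1}\bigl(1-\widehat\Psi_\epsilon(r)\bigr) \mathsf K_\epsilon(r)\,dr,
\]
and the substitution $r=\epsilon t$ combined with $|\mathsf K(t)|\le C|t|^{-2d+1}$ on $t\ge L/2$ yields $|1-Z_\epsilon| = O(L^{-(d-1)})$; in particular $Z_\epsilon\to 1$. Next, using radiality as in Lemma~\ref{lem:equifield},
\[
h_\epsilon^*(z)-\widehat h_\epsilon^*(z) = \frac{2\pi^{d/2}}{\Gamma(d/2)} \int_0^\infty r^{d-1} h_r(z) \mathsf K_\epsilon(r) \bigl(1-Z_\epsilon^{-1}\widehat\Psi_\epsilon(r)\bigr)\,dr,
\]
which I would split at $r=\epsilon L/2$. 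On the inner region $\widehat\Psi_\epsilon\equiv 1$, so the factor reduces to $(1-Z_\epsilon^{-1})\mathsf K_\epsilon(r)$; the $r$-integral against $|h_r(z)|\le C(1+|\log r|)$ is $O(\log(1/\epsilon))$ after substituting $r=\epsilon t$ and using $\int_0^\infty t^{d-1}|\mathsf K(t)|\,dt<\infty$, so together with the prefactor the inner contribution is $O\bigl(\log(1/\epsilon)\cdot L^{-(d-1)}\bigr)$. On the outer region $r\ge \epsilon L/2$, i.e.\ $t\ge L/2$, the tail bound on $\mathsf K$ directly gives a contribution of the same order.

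Both bounds are uniform over $z\in\overline U$ and vanish as $\epsilon\to 0$, since $d\ge 2$ and the exponent $10(d-1)\ge 10$ easily crushes the factor $\log(1/\epsilon)$. The only place that requires some care is the choice of cutoff scale: because $\widehat\Psi_\epsilon$ localizes on the much smaller scale $\epsilon(\log\epsilon^{-1})^{10}$ (as opposed to $\sqrt\epsilon$ in Lemma~\ref{lem:equifield}), the argument works precisely because the polynomial-in-$L$ decay coming from the tail of $\mathsf K$ beats the at most logarithmic growth of $|h_r(z)|$. This is the only genuine subtlety, and it is automatic for $d\ge 2$; no probabilistic input beyond Lemmas~\ref{lem:kernel} and~\ref{lem:sphereavg} is needed.
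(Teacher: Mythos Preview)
Your proposal is correct and follows essentially the same approach as the paper's proof: both reduce to radial integrals against $h_r(z)$, use the tail bound $|\mathsf K(x)|\lesssim|x|^{-2d+1}$ together with Lemma~\ref{lem:sphereavg}, and estimate $|1-Z_\epsilon|=O(L^{-(d-1)})$. The only organizational difference is that the paper first bounds $|Z_\epsilon\widehat h_\epsilon^*-h_\epsilon^*|$ and then handles the normalization via the triangle inequality together with a separate almost-sure bound $\sup_{\overline U}|h_\epsilon^*|\le(\sqrt{2d}+1)\log\epsilon^{-1}$, whereas your direct split of the single integral at $r=\epsilon L/2$ avoids this extra input and relies only on Lemma~\ref{lem:sphereavg}---a slightly cleaner route.
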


\begin{proof}
The argument parallels the proof of Lemma~\ref{lem:equifield}, using Lemma~\ref{lem:sphereavg}. By Lemma~\ref{lem:sphereavg} there is a random constant $C<\infty$ such that
\[
|h_r(z)| \le C \max\{\log\frac{1}{r}, \log r, 1\} \qquad \mbox{for all } z \in \overline U \mbox{ and } r>0.
\]
Using polar coordinates and radiality of $\widehat\Psi_\epsilon$ and $\mathsf K_\epsilon$, we have
\[
h_\epsilon^*(z) = \frac{2\pi^{d/2}}{\Gamma(d/2)} \int_0^\infty r^{d-1} h_r(z) \mathsf K_\epsilon(r)\, dr
\qquad \mbox{and} \qquad
Z_\epsilon \widehat h_\epsilon^*(z) = \frac{2\pi^{d/2}}{\Gamma(d/2)}\int_0^\infty r^{d-1} h_r(z) \widehat\Psi_\epsilon(r) \mathsf K_\epsilon(r)\, dr.
\]
Hence, for $z \in \overline U$,
\begin{align*}
\big| Z_\epsilon \widehat h_\epsilon^*(z) - h_\epsilon^*(z) \big|
&\leq \frac{2\pi^{d/2}}{\Gamma(d/2)} \int_{\epsilon \log(\epsilon^{-1})^{10}/2}^\infty r^{d-1} |h_r(z)| |\mathsf K_\epsilon(r)|\, dr \\
&\le C \int_{\epsilon \log(\epsilon^{-1})^{10}/2}^\infty r^{d-1} \max\{\log\frac{1}{r}, \log r, 1\}\, \epsilon^{-d} |\mathsf K(r/\epsilon)|\, dr.
\end{align*}
By Lemma~\ref{lem:kernel}, there exists $C'>0$ with $|\mathsf K(x)| \le C' |x|^{-2d+1}$ for all $x \in \mathbb{R}^d$. Using this and substituting $r = \epsilon u$, we get
\begin{equation}\label{eq:lem4.1-1}
\begin{aligned}
\big| Z_\epsilon \widehat h_\epsilon^*(z) - h_\epsilon^*(z) \big| &\leq C \int_{(\log \epsilon^{-1})^{10}/2}^\infty u^{d-1} \max\{\log\frac{1}{\epsilon u}, \log \epsilon u, 1\} u^{-2d+1} \,du \\
&\leq C \int_{(\log \epsilon^{-1})^{10}/2}^\infty u^{d-1} \,\sqrt{u}  \,u^{-2d+1} \,du \leq (\log \epsilon^{-1})^{-2},
\end{aligned}
\end{equation}
which tends to $0$ as $\epsilon \to 0$, uniformly in $z \in \overline U$.

Similarly, for all sufficiently small $\epsilon$,
\[
|Z_\epsilon - 1| = \left| \int_{\mathbb{R}^d} \big(\widehat\Psi_\epsilon(u)-1\big)\, \mathsf K_\epsilon(u)\, du \right| \le (\log \epsilon^{-1})^{-2}.
\]
Moreover, $\mathrm{Var}(h_\epsilon^*(z)) = \log \epsilon^{-1} + O(1)$ implies that, almost surely for all sufficiently small $\epsilon$,
\[
\sup_{z \in \overline U} |h_\epsilon^*(z)| \le (\sqrt{2d} + 1)\log \epsilon^{-1}.
\]
Combining these estimates and using the triangle inequality gives
\[
\sup_{z \in \overline U} \big| h_\epsilon^*(z) - \widehat h_\epsilon^*(z) \big|
\le \sup_{z \in \overline U} \big| Z_\epsilon \widehat h_\epsilon^*(z) - h_\epsilon^*(z) \big|
+ |Z_\epsilon - 1|\, Z_\epsilon^{-1} \sup_{z \in \overline U} \big| Z_\epsilon \widehat h_\epsilon^*(z) \big| \to 0 \qquad \mbox{as } \epsilon \to 0. \qedhere
\]
\end{proof}

Recall $D_h^\epsilon$ and $\mathsf a_\epsilon$ from~\eqref{eq:def-exponential-metric} and~\eqref{eq:def-a-epsilon}. The main step in proving Theorem~\ref{thm:sharp-c} is Proposition~\ref{prop:compare-lqg-lfpp} below, which compares $\mathsf a_\epsilon^{-1} D_h^\epsilon$ and $D_h$. Based on Proposition~\ref{prop:compare-lqg-lfpp}, we will then show that $\mathfrak c_r \mathfrak c_s \asymp \mathfrak c_{rs}$ with constants depending only on $D_h$, which implies Theorem~\ref{thm:sharp-c}. To prove Proposition~\ref{prop:compare-lqg-lfpp}, we rely on the shell independence lemma. By comparing the metric $\mathsf a_\epsilon^{-1} \widehat D_h^\epsilon$ (which is close to $\mathsf a_\epsilon^{-1} D_h^\epsilon$ by Lemmas~\ref{lem:sec4.1-close-to-1}) and $D_h$ across shells with radii in $[\epsilon^{1-\zeta/2}, \epsilon^{1-\zeta}]$ and showing that regularity events occur around each point with high probability, we can compare them globally. One subtlety is that the scaling constants for $\mathsf a_\epsilon^{-1} \widehat D_h^\epsilon$ and $D_h$ differ by the factor $\tfrac{r \mathsf a_{\epsilon/r}}{\mathfrak c_r \mathsf a_\epsilon}$, where $r$ is the radius, and there is no \textit{a prior} bound on this ratio. We argue by contradiction that at a large fraction of scales these scaling constants are up-to-constants equivalent; see Lemma~\ref{lem:good-ratios}.

\begin{prop} \label{prop:compare-lqg-lfpp}
Fix $\mathfrak K$ satisfying~\eqref{eq:asmp-kernel} and choose $\mathsf K$ as in Lemma~\ref{lem:kernel}. For each $\zeta \in (0,1)$, there exists a constant $C > 0$, depending only on $\zeta$, $\mathfrak K$, and the law of $D_h$, such that the following is true. 
Let $U\subset \mathbb{R}^d$ be a bounded connected open set.   With probability tending to 1 as $\epsilon \to 0$, 
\begin{equation}\label{eq:prop4.2-1}
\mathsf a_\epsilon^{-1} D_h^\epsilon ( B_{ \epsilon^{1-\zeta}}(z) , B_{ \epsilon^{1-\zeta}}(w) ; B_{ \epsilon^{1-\zeta}}(U) ) 
\leq C\, D_h(z,w ; U)  ,\quad \forall z,w\in U
\end{equation}
and
\begin{equation}\label{eq:prop4.2-2}
D_h ( B_{ \epsilon^{1-\zeta}}(z) , B_{ \epsilon^{1-\zeta}}(w) ; B_{ \epsilon^{1-\zeta}}(U) ) 
\leq C\, \mathsf  a_\epsilon^{-1} D_h^\epsilon(z,w ; U) ,\quad \forall z,w \in U . 
\end{equation}
\end{prop}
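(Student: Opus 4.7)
\textbf{Proof plan for Proposition \ref{prop:compare-lqg-lfpp}.} The plan is to exploit the localized approximation $\widehat h_\epsilon^*$ from \eqref{eq:def-widehat-h-epsilon}, which by Lemma \ref{lem:equifield-sec4} induces a metric $\widehat D_h^\epsilon$ uniformly close to $D_h^\epsilon$ on compacts but measurable with respect to $h$ restricted to an $\epsilon(\log\epsilon^{-1})^{10}$-neighborhood. This locality makes $\widehat D_h^\epsilon$ accessible to the shell independence lemma (Lemma \ref{lem:shell-independence}). After a reduction to $\widehat D_h^\epsilon$, I would work at scales $r \in [\epsilon^{1-\zeta/2}, \epsilon^{1-\zeta}]$, which are all much larger than the localization radius. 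For each center $z \in U$ and each such $r$, scaling invariance of the LGF (Lemma \ref{lem:scale-invariance}), Weyl scaling (Axiom \ref{axiom-weyl}) and tightness across scales (Axiom \ref{axiom-tight}) let me rewrite both $\mathsf a_\epsilon^{-1}\widehat D_h^\epsilon(\cdot,\cdot; A_{r/4,4r}(z))$ and $D_h(\cdot,\cdot;A_{r/4,4r}(z))$ in terms of the same rescaled field $h_z^r := h(r\cdot + z)-h_r(z)$, with normalizing constants proportional to $r\mathsf a_{\epsilon/r} e^{\xi h_r(z)}$ and $\mathfrak c_r e^{\xi h_r(z)}$ respectively.

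Next, I would define a regularity event $E_r(z)$ at scale $r$ around $z$ that requires a bi-directional comparison between the two rescaled internal metrics on $A_{r/4,4r}(z)$, up to multiplicative constants. Since both rescaled metrics are tight as $\epsilon\to 0$ (Lemma \ref{lem:tightness-whole-plane} and Axiom \ref{axiom-tight}) and any subsequential limit induces the Euclidean topology, each $E_r(z)$ has probability $\geq 1-o_\epsilon(1)$ \emph{up to the scaling ratio}
\[
\rho_r(\epsilon) := \frac{r\mathsf a_{\epsilon/r}}{\mathfrak c_r \mathsf a_\epsilon}.
\]
The core obstacle is that $\rho_r(\epsilon)$ is not bounded in $r$ a priori; controlling it is essentially equivalent to Theorem \ref{thm:sharp-c}, proved only afterwards. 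To bypass this I would prove a \emph{good-ratios lemma}: for a positive fraction of the geometrically spaced scales $r_k = 2^{-k} \in [\epsilon^{1-\zeta/2}, \epsilon^{1-\zeta}]$, one has $c \le \rho_{r_k}(\epsilon) \le c^{-1}$ for a constant $c$ depending only on $\zeta$ and the law of $D_h$. The argument is a pigeonhole: if most scales had $\rho_{r_k}$ either very small or very large, then the uniform comparability of $\widehat D_h^\epsilon$ and $D_h$ on the generic shell (available on a subset of scales with no ratio assumption, by tightness applied twice) would force the median of $D_h^\epsilon(0,e_1)$ to deviate from $\mathsf a_\epsilon$, contradicting its definition in \eqref{eq:def-a-epsilon}.

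With the good-ratios lemma in hand, I would combine it with shell independence (Lemma \ref{lem:shell-independence}) applied to the joint events ``$E_{r_k}(z)$ holds and $\rho_{r_k}$ is good''. Shell independence converts the per-scale probability into the statement that, for each fixed $z \in U$, with probability at least $1-\epsilon^{100d}$ at least one scale $r_k \in [\epsilon^{1-\zeta/2}, \epsilon^{1-\zeta}]$ is simultaneously good and regular around $z$. A union bound over $z$ in a fine grid of $U$ with spacing $\epsilon^{1-\zeta}/(10d)$ then ensures that, with probability $1-o_\epsilon(1)$, every $z \in U$ admits such a good regular shell.

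Finally, I would use these shells to transport paths between the two metrics, following the stringing argument in the proof of Proposition \ref{prop:bi-lipschitz}. Given a near-geodesic $P$ in one metric between endpoints $z,w \in U$, parametrized by its length, I would iteratively pick the good regular shell around the current position, record the exit time, and replace the segment of $P$ inside the shell by the controlled comparison on that shell. Because disjoint shells are used (by construction of the iteration), the resulting sum of replacement lengths telescopes and is bounded by a constant multiple of $\mathrm{len}(P;\cdot)$. The endpoints of the modified path lie in $B_{\epsilon^{1-\zeta}}(z)$ and $B_{\epsilon^{1-\zeta}}(w)$, and the whole construction stays inside $B_{\epsilon^{1-\zeta}}(U)$, which yields \eqref{eq:prop4.2-1} and \eqref{eq:prop4.2-2}. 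The main technical obstacle, and the step where the planar argument \cite{dg-constant} does not transfer without modification, is the good-ratios lemma: closing its pigeonhole requires carefully tracking how shell comparability feeds back into the definition of $\mathsf a_\epsilon$ while relying only on tightness, not on any sharp form of $\mathfrak c_r$.
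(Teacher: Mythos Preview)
Your plan matches the paper's proof almost exactly: the localized field $\widehat h_\epsilon^*$, the scale window $[\epsilon^{1-\zeta/2},\epsilon^{1-\zeta}]$, shell regularity events comparing both metrics to $\mathfrak c_r e^{\xi h_r(z)}$ and $r\mathsf a_{\epsilon/r}\mathsf a_\epsilon^{-1}e^{\xi h_r(z)}$, the ratio $\rho_r(\epsilon)=r\mathsf a_{\epsilon/r}/(\mathfrak c_r\mathsf a_\epsilon)$, a good-ratios lemma, shell independence, and the stringing construction of Proposition~\ref{prop:bi-lipschitz} are all exactly what the paper does (Lemmas~\ref{lem:sec4.1-close-to-1}--\ref{lem:good-ratios}).

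The one place where your exposition diverges slightly is the logical ordering around the good-ratios lemma, and your sketch of its proof is the vaguest part of the proposal. The paper does \emph{not} prove good-ratios first and then run the comparison; it reverses the order. It first proves the full comparison (Lemma~\ref{lem:compare-subset}) for an \emph{arbitrary} deterministic subset $\mathcal R^\epsilon$ of the scale window with $|\mathcal R^\epsilon|\ge\tfrac{1}{10}|\mathcal N^\epsilon|$, obtaining bounds that explicitly carry the uncontrolled factors $\max_{r\in\mathcal R^\epsilon}\rho_r$ and $(\min_{r\in\mathcal R^\epsilon}\rho_r)^{-1}$; this costs nothing because the stringing argument works regardless of the size of the ratios. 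Only \emph{then} is the good-ratios lemma (Lemma~\ref{lem:good-ratios}) deduced by applying this already-proved comparison at the fixed macroscopic configuration $z=0$, $w=2e_1$, $U=B_4(0)$: if more than $\tfrac{1}{10}|\mathcal N^\epsilon|$ scales had $\rho_r<c$, one could take $\mathcal R^\epsilon$ to be exactly those scales, and the bound would force $\mathsf a_\epsilon^{-1}\widehat D_h^\epsilon(\partial B_{1/2}(0),\partial B_1(0))$ to be arbitrarily small, contradicting tightness. Your pigeonhole sketch (``force the median of $D_h^\epsilon(0,e_1)$ to deviate from $\mathsf a_\epsilon$'') is morally the same contradiction, but note that the clean way to close it is precisely this bootstrap through the comparison lemma itself; there is no separate ``comparability with no ratio assumption'' available from tightness alone.
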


We now prove Proposition~\ref{prop:compare-lqg-lfpp}. Fix $\zeta \in (0,1)$. For $z\in \mathbb{R}^d$, $\epsilon \in (0,1)$, $r\in (\epsilon,1)$, and $C>0$, let $E_r^\epsilon(z;C)$ be the event that the following holds:
\begin{equation}\label{eq:def-Er(z,C)}
\begin{aligned}
D_h( \mbox{across $A_{r/2,r}(z)$} ) &\geq C^{-1} \mathfrak c_r e^{\xi h_r(z)}, \\
\sup_{u,v \in \partial B_r(z)} D_h(u,v; A_{r/2,2r}(z)) &\leq C \mathfrak c_r e^{\xi h_r(z)}, \\
\mathsf a_\epsilon^{-1} \widehat{D}_h^\epsilon( \mbox{across $A_{r/2,r}(z)$} ) &\geq C^{-1} \frac{r \mathsf a_{\epsilon/r}}{\mathsf a_\epsilon}   e^{\xi h_r(z)}, \\
\mathsf a_\epsilon^{-1} \sup_{u,v \in \partial B_r(z)} \widehat{D}_h^\epsilon(u,v; A_{r/2,2r}(z)) &\leq C \frac{r  \mathsf a_{\epsilon/r}}{\mathsf a_\epsilon}   e^{\xi h_r(z)}  .
\end{aligned}
\end{equation}
Note that the roles of $\mathsf a_\epsilon^{-1} \widehat{D}_h^\epsilon$ and $D_h$ are simply interchanged. 

\begin{lemma}\label{lem:sec4.1-close-to-1}
    For fixed $p \in (0,1)$, we can choose $C >0$ sufficiently large (depending only on $p$, $\mathfrak K$, and the law of $D_h$) such that
\begin{equation}\label{eq:sec4.1-close-to-1}
\mathbb{P}[E_r^\epsilon(z;C)] \geq p, \quad \forall 0<\epsilon < r < 1, \quad z \in \mathbb{R}^d.
\end{equation}
\end{lemma}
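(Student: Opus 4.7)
The plan is as follows. The event $E_r^\epsilon(z;C)$ decomposes into two $D_h$ bounds and two $\mathsf a_\epsilon^{-1}\widehat D_h^\epsilon$ bounds; I will establish each pair separately with probability at least $(1+p)/2$ and combine by a union bound.

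For the two $D_h$ bounds, I first rewrite them in a form amenable to Axiom~\ref{axiom-tight}. By Axioms~\ref{axiom-translation} and~\ref{axiom-weyl}, with $\tilde h := h(\cdot+z) - h_r(z)$ we have $D_h(u+z, v+z; A) = e^{\xi h_r(z)} D_{\tilde h}(u, v; A-z)$ for any Borel set $A$. The field $\tilde h$ is a whole-space LGF plus a bounded continuous function (differing from the standard normalization by a constant function of space), so Axiom~\ref{axiom-tight} applies with $R=\infty$: the family $\{\mathfrak c_s^{-1} e^{-\xi \tilde h_s(0)} D_{\tilde h}(s\cdot, s\cdot)\}_{s>0}$ is tight and every subsequential limit is a continuous metric on $\mathbb{R}^d$. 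Setting $s=r$ and noting that $\tilde h_r(0)=0$ by construction, it follows that $\mathfrak c_r^{-1} D_{\tilde h}(r\cdot, r\cdot)$ is a tight family of continuous metrics at the unit scale. The crossing distance across $A_{1/2,1}(0)$ and the diameter of $\partial B_1(0)$ in $A_{1/2,2}(0)$ are positive continuous functionals on this space, hence tight and bounded away from $0$ and $\infty$ respectively. Thus for any $p \in (0,1)$ we can choose $C=C(p)$ so that both $D_h$ bounds hold with probability at least $(1+p)/2$, uniformly in $r \in (0,1)$ and $z \in \mathbb{R}^d$.

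For the two $\mathsf a_\epsilon^{-1} \widehat D_h^\epsilon$ bounds, I use a scaling identity. A direct change of variables $w = r\tilde w + z$ in $h_\epsilon^* = h * \mathsf K_\epsilon$, together with $\mathsf K_\epsilon(ru)\, r^d = \mathsf K_{\epsilon/r}(u)$ and $\int \mathsf K_{\epsilon/r} = 1$, yields $h_\epsilon^*(rx+z) = (h^{r,z})_{\epsilon/r}^*(x) + h_r(z)$, where $h^{r,z}(x):= h(rx+z) - h_r(z)$ has the same law as $h$ by Lemma~\ref{lem:scale-invariance}. Therefore $D_h^\epsilon(\cdot+z, \cdot+z; A_{r/2,2r}(z)) = r\, e^{\xi h_r(z)}\, D_{h^{r,z}}^{\epsilon/r}(\cdot/r, \cdot/r; A_{1/2,2}(0))$, and after dividing by $\mathsf a_\epsilon$ the two bounds for $D_h^\epsilon$ reduce, via Lemma~\ref{lem:tightness-whole-plane} and the scaling-invariance of the law of $h^{r,z}$, to the unit-scale tightness of $\mathsf a_{\epsilon/r}^{-1} D_{h^{r,z}}^{\epsilon/r}$, giving them with probability at least $(1+p)/2$ uniformly in $r$ and $z$. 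Finally, to replace $D_h^\epsilon$ by $\widehat D_h^\epsilon$ I apply a rescaled analogue of Lemma~\ref{lem:equifield-sec4} to $h^{r,z}$: the same computation as in its proof bounds $\sup_{y \in A_{1/2,2}(0)} |h_\epsilon^*(ry+z) - \widehat h_\epsilon^*(ry+z)|$ by $(\log \epsilon^{-1})^{-1}$ on an event of probability $1-o_\epsilon(1)$, so $\widehat D_h^\epsilon$ and $D_h^\epsilon$ are comparable within a factor of $2$ for $\epsilon$ small, and the bounds transfer after enlarging $C$.

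The main obstacle is the uniformity in $r \in (\epsilon, 1)$ and $z \in \mathbb{R}^d$ at the last step, since Lemma~\ref{lem:sphereavg} is only stated on bounded sets. The remedy is that after the rescaling to $h^{r,z}$ the only probabilistic input needed is a sphere-average bound for $h^{r,z}$ restricted to a fixed ball such as $B_2(0)$, whose law is independent of $r$ and $z$ by Lemma~\ref{lem:scale-invariance}. One must also check that the truncation radius $\epsilon(\log \epsilon^{-1})^{10}$ of $\widehat \Psi_\epsilon$, when rewritten in the rescaled coordinates, remains much larger than the natural scale $\epsilon/r$, so that the estimate in the proof of Lemma~\ref{lem:equifield-sec4} still applies uniformly; this holds since $(\log \epsilon^{-1})^{10} \ge (\log(r/\epsilon))^{10}$ for all $r \in (\epsilon, 1)$.
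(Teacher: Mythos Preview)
Your approach is correct and follows the same overall strategy as the paper: the $D_h$ bounds come from Axiom~\ref{axiom-tight} after translation, the $D_h^\epsilon$ bounds come from the scaling identity combined with the tightness in Lemma~\ref{lem:tightness-whole-plane}, and the passage from $D_h^\epsilon$ to $\widehat D_h^\epsilon$ uses Lemma~\ref{lem:equifield-sec4}. Your identity $h_\epsilon^*(rx+z) = (h^{r,z})^*_{\epsilon/r}(x) + h_r(z)$ is exactly~\eqref{eq:scaled-h-r} rewritten, and the tightness of the family $\{\mathsf a_{\epsilon/r}^{-1} D_{h^{r,z}}^{\epsilon/r}\}$ is precisely what the paper records as~\eqref{eq:lem4.3-tight}.

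Where you diverge is in the transfer step, which you handle more laboriously than necessary. The paper first invokes translation invariance in law (both $D_h$ and $\widehat D_h^\epsilon$ translate covariantly, and the four inequalities in~\eqref{eq:def-Er(z,C)} are invariant under adding a global constant to $h$), reducing to $z=0$ at the outset. Once $z=0$, the shell $A_{r/2,2r}(0)$ lies in the fixed ball $B_2(0)$ for every $r\in(0,1)$, so a single application of Lemma~\ref{lem:equifield-sec4} on $\overline{B_2(0)}$ gives $\sup|h_\epsilon^*-\widehat h_\epsilon^*|\to 0$ uniformly over all relevant spatial points, and the transfer is immediate. Your concern that Lemma~\ref{lem:sphereavg} is only stated on bounded sets is thus dissolved without any rescaled analogue of Lemma~\ref{lem:equifield-sec4}: there is no need to track how the truncation scale behaves under $x\mapsto rx+z$, nor to compare $(\log\epsilon^{-1})^{10}$ with $(\log(r/\epsilon))^{10}$. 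Your workaround via the law of $h^{r,z}$ is valid, but the simpler route is to kill the $z$-dependence first by translation invariance, after which everything happens inside a fixed bounded set.
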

\begin{proof}
    Since $D_h$ and $\widehat D_h^\epsilon$ are both translation invariant in law by Axiom~\ref{axiom-translation} and definition, it suffices to consider the case $z = 0$. Estimates for the first two events in~\eqref{eq:def-Er(z,C)} for $D_h$ follow directly from Axiom~\ref{axiom-tight}. Now we consider the last two events for $\mathsf a_\epsilon^{-1} \widehat{D}_h^\epsilon$. Define $h^r ( \cdot) := h(r \cdot) - h_r(0)$. By Lemma~\ref{lem:scale-invariance}, we have $h^r \overset{d}{=} h$. Recall from~\eqref{eq:scaled-h-r} that
    $$
    \mathsf a_\epsilon^{-1} D_h^\epsilon(ru, rv) = \frac{r \mathsf a_{\epsilon/r}}{\mathsf a_\epsilon} e^{\xi h_r(0)} \times \mathsf a_{\epsilon/r}^{-1} D_{h^r}^{\epsilon/r}(u,v) \qquad \forall u,v \in \mathbb{R}^d.
    $$
    Recall from the proof of Lemma~\ref{lem:tight-scale} that the family $\{\mathsf a_{\varepsilon/r}^{-1} D_{h^r}^{\varepsilon/r}\}_{0<\varepsilon<r,\ r>0}$ is tight and any subsequential limit is a continuous metric on $\mathbb{R}^d$. Therefore, the laws of \begin{equation}\label{eq:lem4.3-tight} \Big{\{} \mathsf a_\epsilon^{-1} D_h^\epsilon(r\cdot , r\cdot)\Big{/}\Big{(} \frac{r \mathsf a_{\epsilon/r}}{\mathsf a_\epsilon} e^{\xi h_r(0)} \Big{)} \Big{\}}_{0 < \epsilon < r,\  r>0}\end{equation} are also tight and any subsequential limit induces the Euclidean topology. This implies that the last two events in~\eqref{eq:def-Er(z,C)}, with $\widehat{D}_h^\epsilon$ replaced by $D_h^\epsilon$, occur with probability arbitrarily close to 1 as $C \to \infty$, uniformly over $\epsilon$ and $r$. Applying Lemma~\ref{lem:equifield-sec4}, the same conclusion applies to $\widehat{D}_h^\epsilon$.
\end{proof}

Define
\begin{equation}
\mathcal{N}^\epsilon := [\epsilon^{1-\zeta/2}, \epsilon^{1-\zeta}/10] \cap \{ 2^{-k} : k \in \mathbb{N} \}.
\end{equation}
We now apply the shell independence lemma (Lemma~\ref{lem:shell-independence}) to this set of radii to compare $\mathsf a_\epsilon^{-1} \widehat D_h^\epsilon$ and $D_h$; see Proposition~\ref{prop:bi-lipschitz} for similar arguments. 

\begin{lemma} \label{lem:compare-subset}
There exists a constant $C > 0$, depending only on $\zeta$, $\mathfrak K$, and the law of $D_h$, such that the following is true. Let $U\subset \mathbb{R}^d$ be a bounded connected open set. Let $\epsilon \in (0,1)$ and let $\mathcal{R}^\epsilon$ be a deterministic subset of $\mathcal{N}^\epsilon$ with $|\mathcal{R}^\epsilon| \geq \tfrac{1}{10} |\mathcal{N}^\epsilon|$. Then, with probability tending to 1 as $\epsilon \to 0$, at a rate uniform in $\mathcal{R}^\epsilon$, we have for all $z,w \in U$
\begin{equation} \label{eq:lem4.7-compare-upper}
\mathsf a_\epsilon^{-1} \widehat D_h^\epsilon ( B_{ \epsilon^{1-\zeta}}(z) , B_{  \epsilon^{1-\zeta}}(w) ; B_{ \epsilon^{1-\zeta}}(U) ) 
\leq C \left(\max_{r \in \mathcal{R}^\epsilon } \frac{r \mathsf a_{\epsilon/r}}{\frk c_r \mathsf a_\epsilon} \right) D_h(z,w ; U) 
\end{equation}
and
\begin{equation} \label{eq:lem4.7-compare-lower}
D_h ( B_{ \epsilon^{1-\zeta}}(z) , B_{ \epsilon^{1-\zeta}}(w) ; B_{ \epsilon^{1-\zeta}}(U) ) 
\leq C \left(\min_{r \in \mathcal{R}^\epsilon } \frac{r \mathsf a_{\epsilon/r}}{\mathfrak c_r \mathsf a_\epsilon} \right)^{-1}  \mathsf a_\epsilon^{-1} \widehat D_h^\epsilon(z,w ; U) .
\end{equation}
\end{lemma}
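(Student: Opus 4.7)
The plan is to adapt the stringing strategy of Proposition~\ref{prop:bi-lipschitz}: the events $E_r^\epsilon(u;C)$ of \eqref{eq:def-Er(z,C)} simultaneously control both $\mathsf a_\epsilon^{-1}\widehat D_h^\epsilon$ and $D_h$ on the shells around $u$, so if I can find, for every $u$ in a fine grid $\mathcal G_\epsilon\subset U$, at least one scale $r(u)\in\mathcal R^\epsilon$ for which $E_{r(u)}^\epsilon(u;C)$ holds, then concatenating the associated sphere-crossings along a near-geodesic will convert $D_h$-length into $\mathsf a_\epsilon^{-1}\widehat D_h^\epsilon$-length up to the factor $\max_{r\in\mathcal R^\epsilon} r\mathsf a_{\epsilon/r}/(\mathfrak c_r\mathsf a_\epsilon)$, and vice versa.

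\textbf{Locality and good scales.} By Axioms~\ref{axiom-local} and~\ref{axiom-weyl}, $e^{-\xi h_r(z)}D_h$ on $A_{r/2,2r}(z)$ is a function of $(h-h_r(z))|_{A_{r/2,2r}(z)}$. The cutoff $\widehat\Psi_\epsilon\mathsf K_\epsilon$ in \eqref{eq:def-widehat-h-epsilon} is supported in a ball of radius $\delta_\epsilon:=\epsilon(\log\epsilon^{-1})^{10}$, and $Z_\epsilon$ normalizes $\widehat h_\epsilon^*$ so that it shifts with $h$; hence $e^{-\xi h_r(z)}\widehat D_h^\epsilon$ on $A_{r/2,2r}(z)$ is measurable with respect to $(h-h_r(z))$ restricted to the fattened shell $A_{r/2-\delta_\epsilon,\,2r+\delta_\epsilon}(z)$. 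For every $r\in\mathcal N^\epsilon$ (so $r\ge\epsilon^{1-\zeta/2}$) and small $\epsilon$, this fattened shell sits inside $A_{r/q',q'r}(z)$ with $q'=3$. Given $p\in(0,1)$, Lemma~\ref{lem:sec4.1-close-to-1} lets me fix $C$ so that $\mathbb P[E_r^\epsilon(z;C)]\ge p$ uniformly. I partition $\mathcal N^\epsilon$ into four arithmetic sub-sequences modulo $4$; within each, consecutive radii have ratio $q=2^{-4}$, so $q(q')^2<1$ and the shells $A_{r/q',q'r}(z)$ are pairwise disjoint, making Lemma~\ref{lem:shell-independence} applicable. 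Choosing $p$ close enough to $1$ forces the output parameters to satisfy $b>9/10$ and $a$ arbitrarily large; summing over the four sub-sequences and taking a union bound over a grid $\mathcal G_\epsilon\subset U$ of spacing $\epsilon^{1-\zeta/2}/(10d)$ and cardinality $O(\epsilon^{-d(1-\zeta/2)})$, I obtain that with probability tending to $1$ (uniformly in $\mathcal R^\epsilon$), every $u\in\mathcal G_\epsilon$ has more than $(9/10)|\mathcal N^\epsilon|$ good scales in $\mathcal N^\epsilon$. Pigeonhole against $|\mathcal R^\epsilon|\ge|\mathcal N^\epsilon|/10$ then yields at least one good scale $r(u)\in\mathcal R^\epsilon$ for each $u\in\mathcal G_\epsilon$.

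\textbf{Stringing and the main obstacle.} On this event, fix $z,w\in U$, $\delta>0$, and take a path $P$ from $z$ to $w$ in $U$ with $\mathrm{len}(P;D_h)\le D_h(z,w;U)+\delta$, parametrized by $D_h$-length. Inductively define $t_0=0$, $u_k\in\mathcal G_\epsilon$ the grid point nearest $P(t_k)$, $r_k:=r(u_k)\in\mathcal R^\epsilon$, and $t_{k+1}$ the first exit time of $P$ from $B_{r_k}(u_k)$. The grid spacing being $\le r_k/10$ forces $P(t_k)\in B_{r_k/2}(u_k)$, so the first inequality in \eqref{eq:def-Er(z,C)} gives $t_{k+1}-t_k\ge C^{-1}\mathfrak c_{r_k}e^{\xi h_{r_k}(u_k)}$. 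Since $r_j\le\epsilon^{1-\zeta}/10$, the sphere union $\bigcup_j\partial B_{r_j}(u_j)$ lies in $B_{\epsilon^{1-\zeta}}(U)$ and, after the loop-erasure step of Proposition~\ref{prop:bi-lipschitz}, carries a chain of sphere-crossings connecting $B_{\epsilon^{1-\zeta}}(z)$ to $B_{\epsilon^{1-\zeta}}(w)$. Combining this with the fourth inequality in \eqref{eq:def-Er(z,C)} yields
\begin{align*}
\mathsf a_\epsilon^{-1}\widehat D_h^\epsilon\bigl(B_{\epsilon^{1-\zeta}}(z),B_{\epsilon^{1-\zeta}}(w);B_{\epsilon^{1-\zeta}}(U)\bigr)
&\le \sum_j C\,\frac{r_j\mathsf a_{\epsilon/r_j}}{\mathsf a_\epsilon}\,e^{\xi h_{r_j}(u_j)}\\
&\le C^2\sum_j\frac{r_j\mathsf a_{\epsilon/r_j}}{\mathfrak c_{r_j}\mathsf a_\epsilon}(t_{j+1}-t_j)\\
&\le C^2\max_{r\in\mathcal R^\epsilon}\frac{r\mathsf a_{\epsilon/r}}{\mathfrak c_r\mathsf a_\epsilon}\bigl(D_h(z,w;U)+\delta\bigr),
\end{align*}
and $\delta\to 0$ gives \eqref{eq:lem4.7-compare-upper}. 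The lower bound \eqref{eq:lem4.7-compare-lower} follows symmetrically from the complementary pair of inequalities in \eqref{eq:def-Er(z,C)}, starting from a near-geodesic for $\mathsf a_\epsilon^{-1}\widehat D_h^\epsilon$. The main technical obstacle is reconciling the fattening $\delta_\epsilon$ (which forces $q'>2$) with the disjoint-shell constraint $q(q')^2<1$ of Lemma~\ref{lem:shell-independence} while still extracting a good scale from the sparse set $\mathcal R^\epsilon$; this is precisely why the lower cutoff in $\mathcal N^\epsilon$ is taken to be $\epsilon^{1-\zeta/2}$, $\mathcal N^\epsilon$ is split into four sub-sequences of ratio $2^{-4}$, and $p$ (hence $C$) is chosen so that $b>9/10$.
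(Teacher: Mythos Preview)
Your proposal is correct and follows essentially the same strategy as the paper: establish via the shell independence lemma that every grid point admits a good scale at which $E_r^\epsilon(u;C)$ holds, then run the stringing argument of Proposition~\ref{prop:bi-lipschitz} to convert $D_h$-length into $\mathsf a_\epsilon^{-1}\widehat D_h^\epsilon$-length (and vice versa) shell by shell.

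One simplification worth noting: you apply Lemma~\ref{lem:shell-independence} to all of $\mathcal N^\epsilon$ (split into four subsequences), force $b>9/10$, and then pigeonhole against $|\mathcal R^\epsilon|\ge|\mathcal N^\epsilon|/10$ to extract a good scale in $\mathcal R^\epsilon$. The paper instead applies Lemma~\ref{lem:shell-independence} directly to the radii in $\mathcal R^\epsilon$ (which still satisfy $r_{k+1}/r_k\le 1/2$ since they are dyadic) and concludes at once that some $r\in\mathcal R^\epsilon$ is good for each grid point; this avoids both the subsequence split and the pigeonhole step. Your concern about the disjoint-shell constraint $q(q')^2<1$ is reasonable care, but Lemma~\ref{lem:shell-independence} as stated imposes no such relation between $q$ and $q'$, so the paper's direct application is legitimate. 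The paper also uses the finer grid $\epsilon\mathbb Z^d\cap B_{\epsilon^{-1}}(0)$ rather than your spacing $\epsilon^{1-\zeta/2}/(10d)$, but either choice works.
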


\begin{proof}
    By Axioms~\ref{axiom-weyl} and~\ref{axiom-local}, the first two events in~\eqref{eq:def-Er(z,C)} for $D_h$ are determined by $(h - h_r(z))|_{A_{r/2, 2r}(z)}$. By the definition of $\widehat h_\epsilon$ from~\eqref{eq:def-widehat-h-epsilon}, the last two events in~\eqref{eq:def-Er(z,C)} for $\widehat D_h^\epsilon$ is determined by $(h - h_r(z))|_{A_{r/4, 4r}(z)}$ for $r \in \mathcal{N}^\epsilon$ and all sufficiently small $\epsilon$. Therefore, similarly to~\eqref{eq:prop3.13-event}, we can use the near-independence across shells (Lemma~\ref{lem:shell-independence}) and~\eqref{eq:sec4.1-close-to-1} to show that there exists $C >0$ (depending only on $\zeta, \mathfrak K$, and $D_h$) such that with polynomially high probability as $\epsilon \to 0$, at a rate uniform in $\mathcal R^\epsilon$, the following event holds:
    \begin{equation}\label{eq:lem4.4-1}
    \mbox{for each $u \in \epsilon \mathbb{Z}^d \cap B_{\epsilon^{-1}}(0)$, there exists $r \in \mathcal{R}^\epsilon$ such that $E_r^\epsilon(u;C)$ occurs.}
    \end{equation}
    
    For now on, we assume~\eqref{eq:lem4.4-1} holds and show~\eqref{eq:lem4.7-compare-upper} and~\eqref{eq:lem4.7-compare-lower}. We will only prove~\eqref{eq:lem4.7-compare-upper}, and~\eqref{eq:lem4.7-compare-lower} follows from essentially the same argument with the roles of $\mathsf a_\epsilon^{-1} \widehat D_h^\epsilon$ and $D_h$ interchanged. Fix $z,w \in U$. Let $P$ a path connecting $z$ and $w$ in $U$ such that ${\rm len}(P; D_h) \leq 2 D_h(z,w; U)$. We will build a path from $B_{\epsilon^{1-\zeta}}(z)$ to $B_{\epsilon^{1-\zeta}}(w)$ in $B_{\epsilon^{1-\zeta}}(U)$ whose $\mathsf a_\epsilon^{-1} \widehat D_h^\epsilon$-length can be upper-bounded. 

    We parameterize $P$ by its $D_h$-length and let $T = {\rm len}(P; D_h)$. We inductively define a sequence of times $\{t_k\}_{k \geq 0} \subset [0,T]$. Let $t_0 = 0$. If $t_k$ has been defined and $t_k<T$, choose $u_k \in \epsilon \mathbb{Z}^d \cap B_{\epsilon^{-1}}(0)$ closest to $P(t_k)$ and let $r_k \in \mathcal{R}^\epsilon$ such that $E_{r_k}^\epsilon(u_k;C)$ occurs. By construction, $P(t_k) \in B_{r_k/2}(u_k)$. We then define $t_{k+1}$ to be the first time after $t_k$ at which $P$ leaves $B_{r_k}(u_k)$. If there is no such time, we stop the iteration. Suppose that we obtain $0 = t_0 < t_1 < \ldots < t_M$ in this way. Then, for $0 \leq j \leq M-1$,
    \[
    t_{j+1} - t_j \geq D_h(\mbox{across $A_{r_j/2,r_j}(u_j)$}).
    \]
    Moreover, the first and fourth inequalities in the event $E_{r_j}^\epsilon(u_j;C)$ imply that for $0 \leq j \leq M-1$,
    \begin{align*}
        \mathsf a_\epsilon^{-1} \sup_{u,v \in \partial B_{r_j}(u_j)} \widehat{D}_h^\epsilon(u,v; A_{r_j/2,2r_j}(u_j)) &\leq C \frac{r_j  \mathsf a_{\epsilon/r_j}}{\mathsf a_\epsilon}   e^{\xi h_{r_j}(u_j)} \leq C^2 \frac{r_j  \mathsf a_{\epsilon/r_j}}{\mathfrak c_{r_j} \mathsf a_\epsilon} D_h(\mbox{across $A_{r_j/2,r_j}(u_j)$}).
    \end{align*}

    Similarly to the proof of Proposition~\ref{prop:bi-lipschitz}, we can construct a path $\widetilde P$ connecting $B_{\epsilon^{1-\zeta}}(z)$ and $B_{\epsilon^{1-\zeta}}(w)$ in $\cup_{0 \leq j \leq M-1} B_{2r_j}(u_j)$ (hence in $B_{\epsilon^{1-\zeta}}(U)$) such that 
    \[
    {\rm len}(\widetilde P; \mathsf a_\epsilon^{-1} \widehat{D}_h^\epsilon) \leq \sum_{j=0}^{M-1} \mathsf a_\epsilon^{-1} \sup_{u,v \in \partial B_{r_j}(u_j)} \widehat{D}_h^\epsilon(u,v; A_{r_j/2,2r_j}(u_j)).
    \]
    Combining this with the preceding inequalities yields
    \begin{align*}
        \mathsf a_\epsilon^{-1} \widehat{D}_h^\epsilon(B_{\epsilon^{1-\zeta}(z)}, B_{\epsilon^{1-\zeta}}(w); B_{\epsilon^{1-\zeta}}(U)) &\leq C^2 \left(\max_{r \in \mathcal{R}^\epsilon } \frac{r \mathsf a_{\epsilon/r}}{\frk c_r \mathsf a_\epsilon} \right) \sum_{j=0}^{M-1} D_h(\mbox{across $A_{r_j/2,r_j}(u_j)$}) \\
        &\leq C^2 \left(\max_{r \in \mathcal{R}^\epsilon } \frac{r \mathsf a_{\epsilon/r}}{\frk c_r \mathsf a_\epsilon} \right) \,{\len}(P; D_h) \leq  2 C^2 \left(\max_{r \in \mathcal{R}^\epsilon } \frac{r \mathsf a_{\epsilon/r}}{\frk c_r \mathsf a_\epsilon} \right)  \,D_h(z,w; U),
    \end{align*}
    which proves~\eqref{eq:lem4.7-compare-upper}. Inequality~\eqref{eq:lem4.7-compare-lower} follows from a similar argument. \qedhere
\end{proof}

In order to deduce Proposition~\ref{prop:compare-lqg-lfpp} from Lemma~\ref{lem:compare-subset}, it suffices to find $\mathcal{R}^\epsilon \subset \mathcal{N}^\epsilon$ such that $\tfrac{r \mathsf a_{\epsilon/r}}{\mathfrak c_r \mathsf a_\epsilon}$ is bounded from above and below, which is in fact a consequence of Lemma~\ref{lem:compare-subset} due to the arbitrary choice of $\mathcal{R}^\epsilon$.

\begin{lemma} \label{lem:good-ratios}
There exists $C > 1$, depending only on $\zeta$, $\mathfrak K$, and the law of $D_h$, such that for each $\epsilon \in (0,1)$, there are at least $|\mathcal{N}^\epsilon|/2$ values of $r\in \mathcal{N}^\epsilon$ such that 
\begin{equation} \label{eq:good-ratios}
C^{-1} \leq \frac{r \mathsf a_{\epsilon/r}}{\mathfrak c_r \mathsf a_\epsilon} \leq C.
\end{equation} 
\end{lemma}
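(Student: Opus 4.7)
The plan is to argue by contradiction. Suppose the conclusion fails, so for every integer $n \geq 1$ there is some $\epsilon_n \in (0,1)$ at which more than $|\mathcal N^{\epsilon_n}|/2$ of the radii in $\mathcal N^{\epsilon_n}$ violate~\eqref{eq:good-ratios} with $C = n$. Since the ratios $\tfrac{r \mathsf a_{\epsilon/r}}{\mathfrak c_r \mathsf a_\epsilon}$ are automatically bounded whenever $\epsilon$ stays bounded away from $0$ (only finitely many radii enter), necessarily $\epsilon_n \to 0$ and $|\mathcal N^{\epsilon_n}| \to \infty$. Passing to a subsequence, I may assume that one failure mode dominates uniformly: there exists $\mathcal R^{\epsilon_n} \subset \mathcal N^{\epsilon_n}$ with $|\mathcal R^{\epsilon_n}| \geq |\mathcal N^{\epsilon_n}|/10$ such that either (a) $\tfrac{r \mathsf a_{\epsilon_n/r}}{\mathfrak c_r \mathsf a_{\epsilon_n}} < 1/n$ for every $r \in \mathcal R^{\epsilon_n}$, or (b) $\tfrac{r \mathsf a_{\epsilon_n/r}}{\mathfrak c_r \mathsf a_{\epsilon_n}} > n$ for every $r \in \mathcal R^{\epsilon_n}$. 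Fix a bounded connected open $U$ containing $\{0, e_1\} \cup B_{1/3}(0)$, enlarged (via Lemma~\ref{lem:cross-estimate}) so that for all sufficiently small $\epsilon_n$ we have $D_h^{\epsilon_n}(0, e_1; U) = D_h^{\epsilon_n}(0, e_1)$ with probability at least $99/100$.

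In case (a), applying~\eqref{eq:lem4.7-compare-upper} from Lemma~\ref{lem:compare-subset} with $z = 0$, $w = e_1$ gives, with probability tending to $1$,
\begin{equation}\label{eq:gr-small}
\mathsf a_{\epsilon_n}^{-1} \widehat D_h^{\epsilon_n}\bigl(B_{\epsilon_n^{1-\zeta}}(0), B_{\epsilon_n^{1-\zeta}}(e_1); B_{\epsilon_n^{1-\zeta}}(U)\bigr) \leq \tfrac{C_1}{n}\, D_h(0, e_1; U),
\end{equation}
whose right-hand side tends to $0$ in probability as $n \to \infty$. On the other hand, any path in $B_{\epsilon_n^{1-\zeta}}(U)$ realizing the left-hand side must cross the shell $A_{\epsilon_n^{1-\zeta}, 1/3}(0)$, so the left-hand side is bounded below by $\mathsf a_{\epsilon_n}^{-1} \widehat D_h^{\epsilon_n}(\partial B_{\epsilon_n^{1-\zeta}}(0), \partial B_{1/3}(0))$. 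By Lemma~\ref{lem:equifield-sec4} applied on a fixed neighborhood of $\overline{B_{1/3}(0)}$, this is asymptotically equivalent to $\mathsf a_{\epsilon_n}^{-1} D_h^{\epsilon_n}(\partial B_{\epsilon_n^{1-\zeta}}(0), \partial B_{1/3}(0))$, and Lemma~\ref{lem:tightness-whole-plane}---combined with the fact that every subsequential limit of $\mathsf a_\epsilon^{-1} D_h^\epsilon$ is a continuous metric on $\mathbb{R}^d$, hence assigns positive distance from $0$ to $\partial B_{1/3}(0)$---gives a uniform-in-$n$ lower bound in probability by some $c > 0$. This contradicts~\eqref{eq:gr-small} for large $n$.

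In case (b), applying~\eqref{eq:lem4.7-compare-lower} instead yields, with probability tending to $1$,
\begin{equation}\label{eq:gr-large}
D_h\bigl(B_{\epsilon_n^{1-\zeta}}(0), B_{\epsilon_n^{1-\zeta}}(e_1); B_{\epsilon_n^{1-\zeta}}(U)\bigr) \leq \tfrac{C_1}{n}\, \mathsf a_{\epsilon_n}^{-1} \widehat D_h^{\epsilon_n}(0, e_1; U).
\end{equation}
On the geodesic-containment event for $U$ together with Lemma~\ref{lem:equifield-sec4}, the factor $\mathsf a_{\epsilon_n}^{-1} \widehat D_h^{\epsilon_n}(0, e_1; U)$ equals $(1 + o(1))\,\mathsf a_{\epsilon_n}^{-1} D_h^{\epsilon_n}(0, e_1)$, which is tight by Lemma~\ref{lem:tightness-whole-plane}; hence the right-hand side of~\eqref{eq:gr-large} tends to $0$ in probability. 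However, since $D_h$ is a continuous metric on $\mathbb{R}^d$ and $B_{\epsilon_n^{1-\zeta}}(0), B_{\epsilon_n^{1-\zeta}}(e_1)$ shrink to the distinct points $0, e_1$, the triangle inequality forces the left-hand side to be bounded below eventually by $\tfrac{1}{2} D_h(0, e_1) > 0$ almost surely, a contradiction. The chief delicate point is producing uniform-in-$\epsilon$ two-sided bounds on the relevant $\mathsf a_\epsilon^{-1} \widehat D_h^\epsilon$-distances that stay bounded away from $0$ and $\infty$; this rests on Lemma~\ref{lem:equifield-sec4}, on Lemma~\ref{lem:tightness-whole-plane} together with the continuity of subsequential limits, and on Lemma~\ref{lem:cross-estimate} to keep the near-geodesic inside $U$.
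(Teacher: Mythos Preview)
Your proof is correct and follows essentially the same strategy as the paper: feed a set of ``bad'' radii into Lemma~\ref{lem:compare-subset} and contradict the two-sided non-degeneracy of $\mathsf a_\epsilon^{-1}\widehat D_h^\epsilon$ and $D_h$. The paper's version is a bit more direct: rather than a contradiction over $n\to\infty$ with subsequence extraction, it fixes $z=0$, $w=2e_1$, $U=B_4(0)$, bounds the left side of~\eqref{eq:lem4.7-compare-upper} below by the fixed crossing distance $\mathsf a_\epsilon^{-1}\widehat D_h^\epsilon(\partial B_{1/2}(0),\partial B_1(0))$ (avoiding your shrinking shell $A_{\epsilon_n^{1-\zeta},1/3}(0)$ and the attendant limit argument), and concludes immediately that at most $|\mathcal N^\epsilon|/10$ radii can have ratio below some $c>0$; the symmetric bound from~\eqref{eq:lem4.7-compare-lower} handles the other tail, giving $8/10>1/2$ good radii without any case split or geodesic-containment event.
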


\begin{proof}
    Take $z = 0$, $w = 2 e_1 = (2,0,\ldots, 0)$ and $U = B_4(0)$ in~\eqref{eq:lem4.7-compare-upper} from Lemma~\ref{lem:compare-subset}. We have for any deterministic $\mathcal{R}^\epsilon \subset \mathcal{N}^\epsilon$ with $|\mathcal{R}^\epsilon| \geq \tfrac{1}{10} |\mathcal{N}^\epsilon|$, with probability tending to 1 as $\epsilon \to 0$, independent of the choice of $\mathcal{R}^\epsilon$,
    \begin{equation}\label{eq:lem4.4}
    \mathsf a_\epsilon^{-1} \widehat D_h^\epsilon ( \partial B_{1/2}(0) , \partial B_1(0)) \leq \mathsf a_\epsilon^{-1} \widehat D_h^\epsilon ( B_{ \epsilon^{1-\zeta}}(z) , B_{  \epsilon^{1-\zeta}}(w) ; B_{ \epsilon^{1-\zeta}}(U) ) 
    \leq C \left(\max_{r \in \mathcal{R}^\epsilon } \frac{r \mathsf a_{\epsilon/r}}{\frk c_r \mathsf a_\epsilon} \right) D_h(z,w ; U) .
    \end{equation}
    By Lemmas~\ref{lem:tightness-whole-plane} and~\ref{lem:equifield-sec4}, we know that as $\epsilon \to 0$, $\mathsf a_\epsilon^{-1} \widehat D_h^\epsilon $ is tight with respect to the local uniform topology and any subsequential limit induces the Euclidean topology. In particular, this implies that as $\epsilon \to 0$, $\mathsf a_\epsilon^{-1} \widehat D_h^\epsilon ( \partial B_{1/2}(0) , \partial B_1(0)) $ is tight and bounded away from 0 in probability. Therefore, there exists $c>0$ such that for at least $\tfrac{9}{10} |\mathcal{N}^\epsilon|$ radii in $\mathcal{N}^\epsilon$, we have
    \[
    \frac{r \mathsf a_{\epsilon/r}}{\frk c_r \mathsf a_\epsilon} \geq c.
    \]
    Otherwise, we could choose $\mathcal{R}^\epsilon$ so that $\max_{r \in \mathcal{R}^\epsilon} \frac{r \mathsf a_{\epsilon/r}}{\frk c_r \mathsf a_\epsilon} < c$, yielding a contradiction with~\eqref{eq:lem4.4}. Similar we can use~\eqref{eq:lem4.7-compare-lower} to show that for at least $\tfrac{9}{10} |\mathcal{N}^\epsilon|$ radii in $\mathcal{N}^\epsilon$, we have
    \[
    \frac{r \mathsf a_{\epsilon/r}}{\frk c_r \mathsf a_\epsilon} \leq c^{-1},
    \] 
    where we decrease $c$ if necessary. This concludes the proof.
\end{proof}

    We now finish the proof of Proposition~\ref{prop:compare-lqg-lfpp}.
\begin{proof}[Proof of Proposition~\ref{prop:compare-lqg-lfpp}]
    By Lemma~\ref{lem:good-ratios}, we can pick $\mathcal{R}^\epsilon \subset \mathcal{N}^\epsilon$ with $|\mathcal{R}^\epsilon|\geq \tfrac{1}{10} |\mathcal{N}^\epsilon|$ such that~\eqref{eq:good-ratios} holds. Applying Lemma~\ref{lem:compare-subset} with this choice of $\mathcal{R}^\epsilon$ completes the proof.
\end{proof}

    We now proceed with the proof of Theorem~\ref{thm:sharp-c}.
\begin{proof}[Proof of Theorem~\ref{thm:sharp-c}]
    \textbf{Step 1.} Let $\zeta = 1/2$ and fix any $\mathfrak K$ satisfying~\eqref{eq:asmp-kernel}. We first show that there exists $C>1$, depending only on $\mathfrak K$ and the law of $D_h$, such that for all $R\geq 1$, the following holds when $\epsilon$ is sufficiently small (which may depend on $R$):
    \begin{equation}\label{eq:thm1.6-1}
    C^{-1} \leq \frac{r \mathsf a_{\epsilon/r}}{\mathfrak c_r \mathsf a_\epsilon } \leq C, \qquad \forall 10 \epsilon^{1 - \zeta} \leq r \leq R.
    \end{equation}
    This essentially follows from Proposition~\ref{prop:compare-lqg-lfpp}. Indeed, by~\eqref{eq:prop4.2-1}, with probability tending to 1 as $\epsilon \to 0$, for all $z \in \partial B_{r/2}(0)$ and $w \in \partial B_{3r}(0)$, and with $U = B_{4R}(0)$, we have
    \[
    \mathsf a_\epsilon^{-1} D_h^\epsilon (\mbox{across } A_{r, 2r } (0)) \leq \mathsf a_\epsilon^{-1} D_h^\epsilon ( B_{ \epsilon^{1-\zeta}}(z) , B_{ \epsilon^{1-\zeta}}(w) ; B_{ \epsilon^{1-\zeta}}(U) ) 
    \leq C\, D_h(z,w ; U).
    \]
    Taking the infimum over $z,w$ on the right-hand side gives
    \[
    \mathsf a_\epsilon^{-1} D_h^\epsilon ( \mbox{across } A_{r, 2r}(0)) 
    \leq C \, D_h(\mbox{across } A_{r/2, 3r } (0)).
    \]
    By Axiom~\ref{axiom-tight}, the family $\{\mathfrak c_r^{-1} e^{-\xi h_r(0)} D_h(\mbox{across } A_{r/2, 3r } (0))\}_{0 < r <\infty}$ is tight. It follows from~\eqref{eq:lem4.3-tight} (see also Lemma~\ref{lem:tight-scale}) that the laws of $ \{ \mathsf a_\epsilon^{-1} D_h^\epsilon(\mbox{across } A_{r, 2r}(0))/(\frac{r \mathsf a_{\epsilon/r}}{\mathsf a_\epsilon} e^{\xi h_r(0)} ) \}_{0 < \epsilon < r,\  r>0}$ are tight and any subsequential limit is positive. Therefore, there exists $C>1$ (independent of $R$) such that for all sufficiently small $\epsilon$,
    \[
    \mathfrak c_r \geq C^{-1} \frac{r \mathsf a_{\epsilon/r}}{\mathsf a_\epsilon},\qquad \forall 10 \epsilon^{1 - \zeta} \leq r \leq R.
    \]
    The other inequality in~\eqref{eq:thm1.6-1} follows from a similar argument by applying~\eqref{eq:prop4.2-2} from Proposition~\ref{prop:compare-lqg-lfpp}. 
    
    \textbf{Step 2.} It follows from~\eqref{eq:thm1.6-1} that for any $r,s>0$, as $\epsilon$ is sufficiently small,
    \begin{equation*}
    \mathfrak c_r \mathfrak c_s \asymp \frac{r\mathsf a_{\epsilon/r}}{\mathsf a_\epsilon } \times \frac{s \mathsf a_{\epsilon/(sr)}}{\mathsf a_{\epsilon/r}} = \frac{rs\mathsf a_{\epsilon/(sr)}}{\mathsf a_\epsilon } \asymp \mathfrak c_{sr}, 
    \end{equation*}
    where the implicit constants in $\asymp$ depend only on the law of $D_h$. Combined with a subadditive inequality (see, e.g., Lemma 3.8 in~\cite{dg-constant}), this yields that there exists $\alpha \in \mathbb{R}$ such that $\mathfrak c_{2^{-n}} \asymp 2^{-\alpha n}$ for $n \in \mathbb{N}$. Using Axiom~\ref{axiom-tight} and $\mathfrak c_r \asymp \tfrac{\mathfrak c_1}{\mathfrak c_{1/r}}$, we further obtain $\mathfrak c_{r} \asymp r^\alpha$ for all $r>0$. By Proposition 1.1 in~\cite{dgz-exponential-metric}, we have $\mathsf a_\epsilon = \epsilon^{1 - \xi Q + o(1)}$ as $\epsilon \to 0$. Combining this with~\eqref{eq:thm1.6-1}, we deduce that $\alpha = \xi Q$. 
\end{proof}

Now we show that any two weak $\gamma$-exponential metrics are up-to-constants equivalent.

\begin{prop}\label{prop:up-to-constant-eq}
    Fix $\gamma \in (0,\sqrt{2d})$. Let $D_h$ and $\widetilde D_h$ be two weak $\gamma$-exponential metrics. Then there exists a deterministic constant $C>1$ (depending on the laws of $D_h$ and $\widetilde D_h$) such that almost surely
    \[
    C^{-1} D_h(z,w) \leq \widetilde D_h(z,w) \leq C D_h(z,w), \qquad \forall z,w \in \mathbb{R}^d.
    \]
\end{prop}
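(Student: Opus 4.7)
The plan is to apply Proposition~\ref{prop:bi-lipschitz} to the pair $(D_h, \widetilde D_h)$, in both directions. The argument runs parallel to the final step of the proof of Lemma~\ref{lem:measurable} and proceeds in three stages.

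First, I would verify that $(D_h, \widetilde D_h)$ is a joint $\xi$-additive local metric for $h$. Since a weak $\gamma$-exponential metric is by definition a measurable map $h \mapsto D_h$ from $\mathcal{D}'(\mathbb{R}^d)$ to continuous metrics, both $D_h$ and $\widetilde D_h$ are $\sigma(h)$-measurable. Axiom~\ref{axiom-local} then implies that for every open $V \subset \mathbb{R}^d$, both $D_h(\cdot,\cdot;V)$ and $\widetilde D_h(\cdot,\cdot;V)$ are measurable with respect to $h|_V \subset h|_{\overline V}$, so the pair $(D_h(\cdot,\cdot;V), \widetilde D_h(\cdot,\cdot;V))$ is $\sigma(h|_{\overline V})$-measurable and hence trivially conditionally independent of any other random variable given $h|_{\overline V}$; this gives joint locality. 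The $\xi$-additive property follows from Axiom~\ref{axiom-weyl} (Weyl scaling) applied with the constant shift $f \equiv -h_r(z)$, which gives that $(e^{-\xi h_r(z)} D_h, e^{-\xi h_r(z)} \widetilde D_h)$ is a joint local metric for the shifted field $h - h_r(z)$.

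Next, I would use Axiom~\ref{axiom-tight} together with Theorem~\ref{thm:sharp-c} and translation invariance (Axiom~\ref{axiom-translation}) to produce uniform probabilistic lower bounds. By Theorem~\ref{thm:sharp-c}, we may take $\mathfrak c_r = r^{\xi Q}$ in Axiom~\ref{axiom-tight} for both metrics, so the families $\{r^{-\xi Q} e^{-\xi h_r(0)} D_h(r\cdot,r\cdot)\}_{0<r<\infty}$ and the analogous one for $\widetilde D_h$ are tight with continuous-metric subsequential limits. Combined with translation invariance, this yields: for any $p \in (0,1)$ there is a constant $A > 0$, depending only on $p$ and the laws of $D_h, \widetilde D_h$, such that for every $z \in \mathbb{R}^d$ and $r > 0$,
\begin{align*}
&\mathbb{P}\left[ D_h(\partial B_{r/2}(z), \partial B_r(z)) \geq A^{-1} r^{\xi Q} e^{\xi h_r(z)} \right] \geq \tfrac{1+p}{2}, \\
&\mathbb{P}\left[ \sup_{u,v \in \partial B_r(z)} \widetilde D_h(u,v; A_{r/2, 2r}(z)) \leq A\, r^{\xi Q} e^{\xi h_r(z)} \right] \geq \tfrac{1+p}{2}.
\end{align*}
Intersecting these two events, the event $\mathcal{E}(z,r)$ from Proposition~\ref{prop:bi-lipschitz} with $D_1 = D_h$, $D_2 = \widetilde D_h$, and $C = A^2$ holds with probability at least $p$ for every $z$ and $r$.

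Finally, taking $p$ to equal the universal constant from Proposition~\ref{prop:bi-lipschitz} and applying that proposition produces a deterministic constant (namely $A^2$) such that almost surely $\widetilde D_h(z,w) \leq A^2 D_h(z,w)$ for all $z, w \in \mathbb{R}^d$. Swapping the roles of $D_h$ and $\widetilde D_h$ in the previous step and reapplying Proposition~\ref{prop:bi-lipschitz} yields the reverse inequality; the maximum of the two constants is the desired $C$. I expect the only mildly delicate point to be the careful verification of joint $\xi$-additive locality in the first step, since the two metrics may a priori be coupled with $h$ in different ways. However, the measurability built into the definition of a weak exponential metric makes this step essentially automatic, so the substantive content of the argument is the uniform tightness-across-scales estimate used in the second step.
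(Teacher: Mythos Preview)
Your proof is correct and is in fact somewhat cleaner than the paper's argument. The paper does not invoke Proposition~\ref{prop:bi-lipschitz} directly; instead it observes that the proof of Lemma~\ref{lem:compare-subset} goes through verbatim with $\mathsf a_\epsilon^{-1}\widehat D_h^\epsilon$ replaced by $\widetilde D_h$ (the scaling constants match by Theorem~\ref{thm:sharp-c}), obtaining comparison inequalities of the form $\widetilde D_h(B_{\epsilon^{1-\zeta}}(z), B_{\epsilon^{1-\zeta}}(w); B_{\epsilon^{1-\zeta}}(U)) \leq C\, D_h(z,w;U)$, and then sends $\epsilon\to0$ and lets $U\nearrow\mathbb{R}^d$ using that $D_h$-geodesics stay in bounded regions.

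These are really the same underlying argument: Proposition~\ref{prop:bi-lipschitz} packages the shell-iteration/stringing construction that the paper re-runs inside Lemma~\ref{lem:compare-subset}. Your route is more modular, since it reuses Proposition~\ref{prop:bi-lipschitz} as a black box, exactly as in the proof of Lemma~\ref{lem:measurable}. The paper's route has the minor advantage that it does not need to verify joint $\xi$-additive locality separately; but as you note, for metrics that are genuinely measurable functions of $h$ (as opposed to merely coupled with $h$), Axiom~\ref{axiom-local} makes this verification immediate.
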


\begin{proof}
    We claim that for each $\zeta \in (0,1)$, there exists a constant $C>0$ (depending only on $\zeta$ and the laws of $D_h$ and $\widetilde D_h$) such that for any bounded connected open set $U$, with probability tending to 1 as $\epsilon \to 0$,
    \begin{align*}
    &\widetilde D_h ( B_{\epsilon^{1-\zeta}}(z) , B_{\epsilon^{1-\zeta}}(w) ; B_{\epsilon^{1-\zeta}}(U) ) 
    \leq C\, D_h(z,w ; U)  ,\quad \forall z,w\in U,\\
    &D_h ( B_{\epsilon^{1-\zeta}}(z) , B_{\epsilon^{1-\zeta}}(w) ; B_{ \epsilon^{1-\zeta}}(U) ) 
    \leq C \, \widetilde D_h(z,w ; U) ,\quad \forall z,w \in U . 
    \end{align*}
    In fact this follows verbatim from the proof of Lemma~\ref{lem:compare-subset}, with $\mathsf a_\epsilon^{-1} \widehat D_h^\epsilon$ replaced by $\widetilde D_h$. Note that the scaling constants for $D_h$ and $\widetilde D_h$ are the same.

    Let $\zeta = 1/2$. Fix $z,w \in \mathbb{R}^d$. Sending $\epsilon \to 0$ in the first inequality gives that almost surely
    \begin{equation}\label{eq:prop4.6-1}
    \widetilde D_h ( z , w) \leq C \,D_h(z,w ; U).
    \end{equation}
    It follows from Axiom~\ref{axiom-tight} that for any $p \in (0,1)$ and $A>0$, there exists $R>1$ (depending only on $p,A$, and the law of $D_h$) such that for all $r>0$,
    \[
    \mathbb{P}\left[\sup_{u,v \in B_r(0)} D_{h}(u,v) \leq \frac{1}{A} D_h(\partial B_r(0), \partial B_{Rr}(0))\right] \geq p.
    \]
    Therefore, the $D_h$-geodesic between $z$ and $w$ is almost surely contained in some bounded region. Hence letting $U$ increase to $\mathbb{R}^d$ in~\eqref{eq:prop4.6-1} yields $\widetilde D_h ( z , w) \leq C D_h(z,w)$. By considering rational pairs $(z,w)$ and using that $D_h$ and $\widetilde D_h$ are continuous metrics, we get that almost surely $\widetilde D_h ( z , w ) \leq C D_h(z,w)$ for all $z,w \in \mathbb{R}^d$. The reverse inequality is obtained by interchanging the roles of $D_h$ and $\widetilde D_h$.
\end{proof}

\subsection{Superpolynomial concentration bound for set-to-set distances}\label{subsec:superpolynomial}

In this section, we prove superpolynomial concentration bounds for the set-to-set distance (Proposition~\ref{prop:superpolynomial-cross}) and the around distance (Proposition~\ref{prop:superpolynomial-around}). The proof strategy is similar to that of \cite[Proposition 3.1]{lqg-metric-estimates}.

\begin{prop}\label{prop:superpolynomial-cross}
    Let $U \subset \mathbb{R}^d$ be connected and open. Let $K_1, K_2 $ be two disjoint connected compact subsets of $U$ that are not singletons. For each $\mathbbm{r}>0$, with superpolynomially high probability as $A \to \infty$, at a rate uniform in $\mathbbm{r}$, we have
    $$
    A^{-1} \mathfrak c_{\mathbbm{r}} e^{\xi h_{\mathbbm{r}}(0)} \leq D_h(\mathbbm{r} K_1, \mathbbm{r} K_2; \mathbbm{r} U) \leq A \mathfrak c_{\mathbbm{r}} e^{\xi h_{\mathbbm{r}}(0)}.
    $$
\end{prop}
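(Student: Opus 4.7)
The plan is to adapt the planar strategy of~\cite[Proposition 3.1]{lqg-metric-estimates} to higher dimensions by invoking the shell independence lemma (Lemma~\ref{lem:shell-independence}) and the distance around a shell (see~\eqref{eq:def-around}) in place of planar intersection arguments. By Axiom~\ref{axiom-translation} and the scaling invariance of the whole-space LGF (Lemma~\ref{lem:scale-invariance}), together with the uniform tightness from Axiom~\ref{axiom-tight}, the analysis reduces to the case $\mathbbm{r}=1$ with $h_1(0)=0$, where one must prove $D_h(K_1,K_2;U)\in[A^{-1},A]$ with superpolynomial probability as $A\to\infty$ and rates uniform in $\mathbbm{r}$.

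For the upper bound, fix a piecewise smooth curve $\gamma\subset U$ from $K_1$ to $K_2$ with $\delta:=\mathrm{dist}(\gamma,\partial U)>0$. For $x\in\mathbb{R}^d$, $r>0$, and $C_0>0$, define the good event
\[
\mathcal G(x,r,C_0) := \bigl\{ D_h(\mbox{around } A_{r/2,r}(x)) \leq C_0 \mathfrak c_r e^{\xi h_r(x)}\bigr\} \cap \Bigl\{\sup_{u,v\in B_{2r}(x)} D_h(u,v;B_{4r}(x)) \leq C_0 \mathfrak c_r e^{\xi h_r(x)}\Bigr\}.
\]
By Axiom~\ref{axiom-tight}, for any $p\in(0,1)$ there exists $C_0=C_0(p)$ with $\mathbb P[\mathcal G(x,r,C_0)]\geq p$ uniformly in $x,r$. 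By Axioms~\ref{axiom-weyl} and~\ref{axiom-local}, $\mathcal G(x,r,C_0)$ depends only on the restrictions of $h-h_r(x)$ and the corresponding metric to a bounded shell around $x$, so Lemma~\ref{lem:shell-independence} applies at scales $r_k=2^{-k}\delta/100$. A union bound over centers in a fine lattice covering $\gamma$ gives, with failure probability at most $e^{-aK/2}$, at least $bK$ good scales at every lattice point. Chaining the good shells via a stringing argument (following the strategy outlined in Section~1.4) along a dense discretization $x_1,\ldots,x_N$ of $\gamma$ yields the bound $D_h(K_1,K_2;U)\leq \sum_{\ell=1}^N C_0 \mathfrak c_{r_\ell} e^{\xi h_{r_\ell}(x_\ell)}$. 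Using $\mathfrak c_r=r^{\xi Q}$ (Theorem~\ref{thm:sharp-c}) and the Gaussian concentration of the spherical averages (Lemma~\ref{lem:sphereavg}), and choosing $K$ polylogarithmic in $A$, this sum is $\leq A$ with superpolynomial probability.

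For the lower bound, fix $x_0\in K_1$ and let $\rho_0:=\tfrac12\min\{\mathrm{dist}(x_0,K_2),\mathrm{dist}(x_0,\partial U)\}>0$. For every $r<\rho_0$ the shell $A_{r/2,r}(x_0)$ lies in $U$ and separates $x_0$ from $K_2$, hence
\[
D_h(K_1,K_2;U)\ \geq\ D_h(\mbox{across } A_{r/2,r}(x_0)).
\]
Axiom~\ref{axiom-tight} yields $\mathbb P[D_h(\mbox{across } A_{r/2,r}(x_0))\geq C_0^{-1}\mathfrak c_r e^{\xi h_r(x_0)}]\geq p$, and Lemma~\ref{lem:shell-independence} applied at scales $r_k=2^{-k}\rho_0$ produces a good scale for some $k\leq K$ with probability $\geq 1-e^{-aK}$. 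Combining with $\mathfrak c_r=r^{\xi Q}$ and Gaussian lower-tail estimates for $h_r(x_0)$ (Lemma~\ref{lem:sphereavg}) yields $D_h(K_1,K_2;U)\geq A^{-1}$ with superpolynomial probability.

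The principal obstacle is the upper bound: in dimension $d\geq 3$ paths do not disconnect shells, so the planar intersection arguments of~\cite{lqg-metric-estimates} do not transfer, and one must implement the stringing argument on the around-distance concretely. Moreover, since $\xi Q<1$, a naive Euclidean chain at spacing $\rho$ yields a total $\sim \rho^{\xi Q-1}$ that diverges as $\rho\to 0$; one must therefore balance $\rho$, $K$, and $A$ so that the sum over good scales is $\leq A$ while the failure probability remains superpolynomially small in $A$.
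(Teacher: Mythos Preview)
Your overall strategy---shell independence plus a stringing/chaining argument for the upper bound and a crossing estimate for the lower bound---matches the paper's approach. The upper bound sketch is essentially correct and close to what the paper does (the paper works at scales in $[\epsilon^2\mathbbm r,\epsilon\mathbbm r]$ over a lattice of spacing $\epsilon^2\mathbbm r/(10d)$, then chooses $\epsilon$ as a power of $A$, rather than fixing a macroscopic starting scale $\delta/100$); your acknowledgment that the parameter balancing is the main work is apt, though your invocation of Lemma~\ref{lem:sphereavg} should be replaced by a quantitative Gaussian tail bound as in the paper's Lemma~\ref{lem:4.5}.

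There is, however, a genuine gap in your lower bound. You claim that for $x_0\in K_1$ and $r<\rho_0$ the shell $A_{r/2,r}(x_0)$ separates $x_0$ from $K_2$, and hence $D_h(K_1,K_2;U)\geq D_h(\text{across }A_{r/2,r}(x_0))$. The separation statement is correct, but the inequality is not: $D_h(K_1,K_2;U)$ is an infimum over paths starting \emph{anywhere} in $K_1$, and a path starting at some $z\in K_1\setminus B_{r/2}(x_0)$ need not cross your shell at all. Since $K_1$ is not a singleton and you take $r$ arbitrarily small, $K_1\not\subset B_{r/2}(x_0)$ is the generic situation. The paper avoids this by arguing path-by-path: it takes a union bound so that the good event $E_r(w;C)$ holds at \emph{every} point $w$ of a fine lattice, and then for an \emph{arbitrary} path $P$ from $K_1$ to $K_2$ chooses $w$ to be a lattice point closest to $P$; by construction $P$ enters $B_{r/2}(w)$ and must cross $A_{r/2,r}(w)$, yielding the crossing lower bound. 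Your single-center argument cannot be repaired without this uniformity over paths.
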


The proof of Proposition~\ref{prop:superpolynomial-cross} is another application of the shell independence lemma (Lemma~\ref{lem:shell-independence}) and Axiom~\ref{axiom-tight}. Using them, with high probability, we can cover $\mathbbm{r} U$ by small Euclidean shells $\{A_{r/2,2r}(w)\}$ such that the $D_h$-across and $D_h$-around distances are both up-to-constants equivalent to $r^{\xi Q} e^{\xi h_r(w)}$. This allows us to control the $D_h$-distances globally in terms of $r^{\xi Q}$ and $e^{\xi h_r(w)}$. Next, we compare these quantities with $\mathfrak c_{\mathbbm r}$ and  $e^{\xi h_{\mathbbm r}(0)}$ (Lemma~\ref{lem:4.5}). By choosing the scales appropriately as functions of $A$, we obtain the proposition.

For $C>1$, $z \in \mathbb{R}^d$, and $r>0$, let $E_r(z; C)$ denote the event that
\begin{equation}\label{eq:def-sec4.2-ErC}
\sup_{u,v \in \partial B_r(z)} D_h(u,v; A_{r/2,2r}(z)) \leq C r^{\xi Q} e^{\xi h_r(z)} \quad \mbox{and} \quad D_h(\partial B_{r/2}(z), \partial B_r(z) ) \geq C^{-1} r^{\xi Q} e^{\xi h_r(z)}.
\end{equation}
By Axioms~\ref{axiom-weyl} and~\ref{axiom-local}, the event $E_r(z; C)$ is measurable with respect to $(h - h_r(z))|_{A_{r/2, 2r}(z)}$ and thus fits into the assumptions of Lemma~\ref{lem:shell-independence}. Moreover, by Axioms~\ref{axiom-translation} and~\ref{axiom-tight}, $E_r(z; C)$ occurs with probability arbitrarily close to 1 as $C \to \infty$, at a rate uniform in $z$ and $r$. Applying Lemma~\ref{lem:shell-independence} yields the following lemma (see, e.g.,~\eqref{eq:prop3.13-event} for a similar application).

\begin{lemma}\label{lem:sec4.2-lem4.8}
    For each $M > 0$, there exists $C  = C(M) > 1$ such that with probability at least $1-O(\epsilon^M)$ as $\epsilon \to 0$, at a rate uniform in $\mathbbm{r}$, the following holds. For each $w \in \tfrac{\epsilon^2}{10d} \mathbbm r \mathbb{Z}^d \cap B_{\epsilon^{-1} \mathbbm r}(0)$, there exists $r\in [\epsilon^2 \mathbbm r , \epsilon \mathbbm r] \cap \{2^{-k} \mathbbm r : k \in \mathbb{N} \}$ such that $E_r(w;C)$ occurs.  
\end{lemma}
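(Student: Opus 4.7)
The plan is to apply the shell independence lemma (Lemma~\ref{lem:shell-independence}) at a single grid point and then take a union bound. The paragraph immediately preceding the lemma statement already supplies the two key inputs: the event $E_r(w;C)$ is measurable with respect to $(h-h_r(w))|_{A_{r/2,2r}(w)}$, and $\mathbb{P}[E_r(w;C)]\to 1$ as $C\to\infty$, uniformly in $w\in\mathbb{R}^d$ and $r>0$.

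Fix $M>0$. For each grid point $w$, I would apply Lemma~\ref{lem:shell-independence} with parameters $q=1/8$ and $q'=2$, so that $q(q')^{2}=1/2<1$ and the thickened shells $A_{r_k/2,2r_k}(w)$ are pairwise disjoint across scales, to the subsampled dyadic sequence
\[
r_k := 8^{-k}\cdot 8^{-k_0}\,\mathbbm r, \qquad k=1,\ldots,K,
\]
with $k_0\in\mathbb{N}$ and $K=\lfloor \tfrac{1}{3}\log_2 \epsilon^{-1}\rfloor$ chosen so that every $r_k$ lies in $[\epsilon^{2}\mathbbm r,\epsilon\mathbbm r]\cap\{2^{-j}\mathbbm r:j\in\mathbb{N}\}$. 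Fix $b=1/2$ and take $a\geq 3(\ln 2)(M+3d)$. By the first part of Lemma~\ref{lem:shell-independence} there is a corresponding threshold $p=p(a,b,q,q')\in(0,1)$, and by the uniform tightness above I can choose $C=C(M)$ large enough that $\mathbb{P}[E_r(w;C)]\geq p$ for all $r>0$ and $w\in\mathbb{R}^d$. Lemma~\ref{lem:shell-independence} then yields
\[
\mathbb{P}\bigl[\exists\, r\in\{r_k\}_{k=1}^K \colon E_r(w;C)\ \mbox{occurs}\bigr]\;\geq\;\mathbb{P}[\mathcal{N}(K)\geq K/2]\;\geq\;1-e^{-aK}\;\geq\;1-\epsilon^{M+3d},
\]
for every $w$ and every $\mathbbm r>0$, provided $\epsilon$ is sufficiently small.

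Finally, the grid $\tfrac{\epsilon^{2}}{10d}\mathbbm r\,\mathbb{Z}^d\cap B_{\epsilon^{-1}\mathbbm r}(0)$ has cardinality at most $(20d)^d\epsilon^{-3d}$, independent of $\mathbbm r$, so a union bound over $w$ gives the claimed failure probability $O(\epsilon^M)$. Uniformity in $\mathbbm r$ is automatic: the per-scale probability $p$ and the quantities appearing in $E_r(w;C)$ are scale-invariant in $r$ and translation-invariant in $w$ by Axioms~\ref{axiom-translation} and~\ref{axiom-tight}, while both $K$ and the grid cardinality depend only on $\epsilon$ and $d$. This is a fairly routine application of the shell independence machinery; the only point that requires any care is subsampling the dyadic scales at ratio $1/8$ so that the consecutive shells on which the events $E_{r_k}(w;C)$ depend remain pairwise disjoint.
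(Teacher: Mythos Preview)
Your proposal is correct and follows the same route as the paper: apply the shell independence lemma (Lemma~\ref{lem:shell-independence}) at each grid point, using the measurability and uniform high-probability facts recorded just before the lemma, and then take a union bound over the $O(\epsilon^{-3d})$ grid points. The paper does not spell out these details and simply refers to~\eqref{eq:prop3.13-event} for the analogous computation; your explicit choice of $a,b,K$ and the grid-size count are exactly what that reference encodes. One minor remark: your subsampling at ratio $1/8$ to force the shells $A_{r_k/2,2r_k}$ to be pairwise disjoint is harmless but not actually needed---Lemma~\ref{lem:shell-independence} imposes no relation between $q$ and $q'$, and the paper's own application in~\eqref{eq:prop3.13-event} uses $r_k=2^{-k}$ with $q'=2$, where consecutive shells overlap.
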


Next, we upper-bound $|h_r(w) - h_{\mathbbm r}(0)|$ for those $r$ and $w$ satisfying the above lemma.

\begin{lemma}\label{lem:4.5}
    There exists a universal constant $\lambda>0$ (depending only on $d$) such that the following holds. For each $M > \lambda$, with probability at least $1 - O(\epsilon^{M/\lambda})$ as $\epsilon \to 0$, at a rate uniform in $\mathbbm{r}$, we have for all $w \in \tfrac{\epsilon^2}{10d} \mathbbm r \mathbb{Z}^d \cap B_{\epsilon^{-1} \mathbbm r}(0)$ and $r \in [\epsilon^2 \mathbbm r, \epsilon \mathbbm r] \cap \{2^{-k} \mathbbm{r} : k \in \mathbb{N} \} $,
    \begin{equation}\label{eq:lem4.5}
    |h_r(w) - h_{\mathbbm r}(0)| \leq \sqrt{M} \log \epsilon^{-1}.
    \end{equation}
\end{lemma}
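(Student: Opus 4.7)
The plan is to reduce the statement to the case $\mathbbm r=1$ via the scaling invariance of the LGF, establish a Gaussian variance bound of order $\log\epsilon^{-1}$ uniformly over the grid, and conclude by a Gaussian tail bound combined with a union bound.

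First, by the scaling invariance of the whole-space LGF (Lemma~\ref{lem:scale-invariance}), the field $\tilde h(z):=h(\mathbbm r z)-h_{\mathbbm r}(0)$ is again a whole-space LGF with $\tilde h_1(0)=0$, and it satisfies $\tilde h_{r/\mathbbm r}(w/\mathbbm r)=h_r(w)-h_{\mathbbm r}(0)$. Relabelling $(\tilde h,w/\mathbbm r,r/\mathbbm r)$ as $(h,w,r)$, the lemma reduces to the following $\mathbbm r$-independent statement: assuming $h_1(0)=0$, with probability at least $1-O(\epsilon^{M/\lambda})$ as $\epsilon\to0$ we have $|h_r(w)|\le\sqrt M\log\epsilon^{-1}$ for all $w\in\tfrac{\epsilon^2}{10d}\Z^d\cap B_{\epsilon^{-1}}(0)$ and $r\in[\epsilon^2,\epsilon]\cap\{2^{-k}:k\in\N\}$. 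Uniformity in $\mathbbm r$ is then automatic.

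Second, I would establish a variance bound: there exists a constant $C_d>0$, depending only on $d$, such that
\[
\Var(h_r(w))\le C_d\log\epsilon^{-1}\qquad\mbox{for all }w\in B_{\epsilon^{-1}}(0)\mbox{ and }r\in[\epsilon^2,1].
\]
Let $\mu_{\rho,z}$ denote the uniform probability measure on $\partial B_\rho(z)$. Since $\mu_{r,w}-\mu_{1,0}$ has zero mean, the pairing is well defined and by~\eqref{eq:lgf-cov}
\[
\Var\bigl(h_r(w)-h_1(0)\bigr)=\iint\log\frac{1}{|x-y|}\,d(\mu_{r,w}-\mu_{1,0})(x)\,d(\mu_{r,w}-\mu_{1,0})(y).
\]
Expanding the bilinear form yields a self-energy $\iint\log|x-y|^{-1}d\mu_{r,w}d\mu_{r,w}=\log(1/r)+O(1)$, a second self-energy of size $O(1)$, and a cross integral $\iint\log|x-y|^{-1}d\mu_{r,w}d\mu_{1,0}$ which by direct radial computation is bounded in absolute value by $\log_+|w|+O(1)$. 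Summing, $\Var(h_r(w))\le\log(1/r)+2\log_+|w|+O(1)=O(\log\epsilon^{-1})$ throughout our range.

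Third, since $h_1(0)=0$ each $h_r(w)$ is centered Gaussian, so by the standard Gaussian tail bound
\[
\mathbb{P}[\,|h_r(w)|\ge\sqrt M\log\epsilon^{-1}\,]\le 2\exp\Bigl(-\frac{M\log\epsilon^{-1}}{2C_d}\Bigr)=2\epsilon^{M/(2C_d)}.
\]
The grid contains $O(\epsilon^{-3d})$ points $w$ and $O(\log\epsilon^{-1})$ dyadic scales $r$, so a union bound produces a total failure probability $O(\epsilon^{M/(2C_d)-3d-\zeta})$ for any small $\zeta>0$ (absorbing the logarithmic factor). Choosing $\lambda=\lambda(d)$ sufficiently large, for example $\lambda=12dC_d+1$, ensures $M/(2C_d)-3d-1\ge M/\lambda$ for all $M>\lambda$, which gives the desired $1-O(\epsilon^{M/\lambda})$ bound.

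The only step with genuine content is the variance computation, and the one slightly delicate point is controlling the cross integral $\iint\log|x-y|^{-1}d\mu_{r,w}d\mu_{1,0}$ in the intermediate regime $|w|\asymp 1$, where $\partial B_r(w)$ and $\partial B_1(0)$ may be tangent or intersect. This is handled by splitting into the regimes $|w|\ge 2$ (large-distance expansion gives $-\log|w|+o(1)$), $|w|\le 1/2$ (the integral is $O(1)$ since $\partial B_r(w)$ sits comfortably inside $B_1(0)$), and the compact intermediate regime $1/2\le|w|\le 2$ (where the logarithmic potential of a small-radius sphere is uniformly bounded on any fixed compact set). Everything else is routine.
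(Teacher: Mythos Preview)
Your approach is correct and essentially identical to the paper's: reduce to $\mathbbm r=1$ by scaling, observe that $h_r(w)-h_{\mathbbm r}(0)$ is centered Gaussian with variance $O(\log\epsilon^{-1})$, apply a Gaussian tail bound, and union bound over the $O(\epsilon^{-3d-10})$ pairs $(w,r)$. The paper is terser, simply asserting the variance bound without your explicit computation; one small slip in your writeup is that in the intermediate regime $1/2\le|w|\le 2$ the logarithmic potential of $\mu_{r,w}$ is not uniformly bounded pointwise (it is $\sim\log(1/r)$ at the center $y=w$), but what you actually need is that its integral against $\mu_{1,0}$ is $O(1)$, which holds since a logarithmic singularity is integrable on the $(d-1)$-sphere.
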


\begin{proof} 
    By the scaling invariance of $h$ (Lemma~\ref{lem:scale-invariance}), we may assume $\mathbbm{r} = 1$. Note that $h_r(w) - h_{\mathbbm r}(0)$ is centered Gaussian with variance $O(\log \epsilon^{-1})$. Therefore, for some universal constant $C>0$ and any $M>1$,
    \[
    \mathbb{P}\left[|h_r(w) - h_{\mathbbm r}(0)| > \sqrt{M} \log \epsilon^{-1}\right] = O( \epsilon^{M/C}).
    \]
    There are at most $O(\epsilon^{-3d - 10})$ choices of $(w, r)$. Taking a union bound over all such pairs and choosing $\lambda>0$ sufficiently large then yields the desired bound of order $O(\epsilon^{M/\lambda})$.
\end{proof}

We are now ready to prove Proposition~\ref{prop:superpolynomial-cross}.

\begin{proof}[Proof of Proposition~\ref{prop:superpolynomial-cross}]
    Let $\lambda$ be as in Lemma~\ref{lem:4.5} and fix any $M>\lambda$. Let $C = C(M)>1$ be the constant from Lemma~\ref{lem:sec4.2-lem4.8}. Suppose that the events in Lemmas~\ref{lem:sec4.2-lem4.8} and~\ref{lem:4.5} both occur. We first lower-bound $D_h(\mathbbm{r} K_1, \mathbbm{r} K_2; \mathbbm{r} U)$. 
    
    For any path $P$ connecting $\mathbbm r K_1$ and $\mathbbm r K_2$ in $\mathbbm r U$, let $w$ be a point in $\tfrac{\epsilon^2}{10d} \mathbbm{r} \mathbb{Z}^d$ which is closest to this path. Let $r \in [\epsilon^2 \mathbbm r, \epsilon \mathbbm r] \cap \{2^{-k} \mathbbm{r} : k \in \mathbb{N} \}$ be such that $E_r(w; C)$ occurs. By construction, $P$ crosses $A_{r/2,r}(w)$. Therefore, by the event $E_r(w; C)$ from~\eqref{eq:def-sec4.2-ErC},
    \[
    {\rm len}(P; D_h) \geq D_h(\partial B_{r/2}(w), \partial B_r(w)) \geq C^{-1} \mathfrak c_r e^{\xi h_r(w)}.
    \]
    By Theorem~\ref{thm:sharp-c}, we may assume $\mathfrak c_t = t^{\xi Q}$ for $t >0$. Combined this with $r \geq \epsilon^2 \mathbbm r$ and~\eqref{eq:lem4.5}, we get
    \[
    {\rm len}(P; D_h) \geq C^{-1} \mathfrak \epsilon^{2 \xi Q} \mathfrak c_{\mathbbm r} e^{\xi (h_{\mathbbm{r}}(0) - \sqrt{M} \log \epsilon^{-1}) } = C^{-1}  \epsilon^{2 \xi Q + \xi \sqrt{M} } \mathfrak c_{\mathbbm r} e^{\xi h_{\mathbbm{r}}(0) }.
    \]
    Since $P$ is arbitrary, we obtain
    \begin{equation}\label{eq:prop4.7-lower}
    D_h(\mathbbm{r} K_1, \mathbbm{r} K_2; \mathbbm{r} U) \geq C^{-1}  \epsilon^{2 \xi Q + \xi \sqrt{M} } \mathfrak c_{\mathbbm r} e^{\xi h_{\mathbbm{r}}(0) }.
    \end{equation}
    Choosing $\epsilon$ such that $C^{-1}\epsilon^{2 \xi Q + \xi \sqrt{M} } = A^{-1}$, we see that the probability that the events in Lemmas~\ref{lem:sec4.2-lem4.8} and~\ref{lem:4.5} both occur is at least $1 - O_\epsilon(\epsilon^M) - O_\epsilon(\epsilon^{M/\lambda})$ as $\epsilon \to 0$, which equals
    \begin{equation}\label{eq:prop4.7-probab}
    1 -O_A\Big((C A^{-1})^{\frac{M}{2 \xi Q + \xi \sqrt{M}}}\Big) -  O_A\Big((C A^{-1})^{\frac{M/\lambda}{2 \xi Q + \xi \sqrt{M}}}\Big) \quad \mbox{as $A \to \infty$}.
    \end{equation}
    Since $M$ can be arbitrary large, this holds with superpolynomially high probability as $A \to \infty$.

    To upper-bound $D_h(\mathbbm{r} K_1, \mathbbm{r} K_2; \mathbbm r U)$, we can find a lattice path $(w_1,\ldots, w_L)$ on $\tfrac{\epsilon^2}{10d} \mathbbm{r} \mathbb{Z}^d$ connecting $\mathbbm r K_1$ and $\mathbbm r K_2$ inside $\mathbbm r U$, with $L \leq C' \epsilon^{-2d}$ for some $C' = C'(U) > 0$. For each $1 \leq i \leq L$, by Lemma~\ref{lem:sec4.2-lem4.8}, there exists $r_i \in [\epsilon^2 \mathbbm r, \epsilon \mathbbm r] \cap \{2^{-k} \mathbbm{r} :  k \in \mathbb{N}\}$ such that $E_{r_i}(w_i; C)$ occurs. As in the proof of Proposition~\ref{prop:bi-lipschitz}, we can construct a path $\widetilde P$ connecting $\mathbbm r K_1$ and $\mathbbm r K_2$ in $\cup_{1 \leq i \leq L} B_{2r_i}(w_i)$ such that
    \[
    {\rm len}(\widetilde P; D_h) \leq \sum_{i=1}^L \sup_{u,v \in \partial B_{r_i}(w_i)} D_h(u,v; A_{r_i/2,2r_i}(w_i)).
    \]
    Using the event $E_{r_i}(w_i; C)$ from~\eqref{eq:def-sec4.2-ErC}, Theorem~\ref{thm:sharp-c} and~\eqref{eq:lem4.4-1}, we get
    \[
     {\rm len}(\widetilde P; D_h) \leq \sum_{i=1}^L C r_i^{\xi Q} e^{\xi h_{r_i}(w_i)} \leq \sum_{i=1}^L C \epsilon^{2\xi Q} {\mathbbm{r}}^{\xi Q} e^{\xi (h_{\mathbbm r}(0) + \sqrt{M} \log \epsilon^{-1})}.
    \]
    Combined with $L \leq C' \epsilon^{-2d}$, we get
    \begin{equation}\label{eq:prop4.7-upper}
    D_h(\mathbbm{r} K_1, \mathbbm{r} K_2; \mathbbm{r} U) \leq C C' \epsilon^{-2d + 2 \xi Q - \xi \sqrt{M}} \mathbbm{r}^{\xi Q} e^{\xi h_{\mathbbm r}(0)}.
    \end{equation}
    Choosing $\epsilon$ such that $C C' \epsilon^{-2d + 2 \xi Q - \xi \sqrt{M}} = A$ yields the desired upper bound. As in~\eqref{eq:prop4.7-probab}, since $M$ can be taken arbitrarily large, this bound holds with superpolynomially high probability as $A \to \infty$.
\end{proof}

Recall from~\eqref{eq:def-around} the definition of $D_h$-distance around a shell. For $z \in \mathbb{R}^d$ and $r_2>r_1>0$,
\[
    D_h({\rm around} \mbox{ } A_{r_1,r_2}(z)) := \sup_{P_1, P_2} D_h(P_1, P_2; A_{r_1,r_2}(z)),
\]
where the supremum is taken over all continuous paths connecting the inner and outer boundaries of $A_{r_1,r_2}(z)$.

\begin{prop}\label{prop:superpolynomial-around}
    Let $r_2 > r_1>0$. For each $\mathbbm{r}>0$ and $z \in \mathbb{R}^d$, with superpolynomially high probability as $A \to \infty$, at a rate uniform in $\mathbbm{r}$ and $z$, we have 
    $$
    A^{-1} \mathfrak c_{\mathbbm{r}} e^{\xi h_{\mathbbm{r}}(z)} \leq D_h({\rm around} \mbox{ } A_{r_1 \mathbbm r,r_2 \mathbbm r}(z)) \leq A \mathfrak c_{\mathbbm{r}} e^{\xi h_{\mathbbm{r}}(z)}.
    $$
\end{prop}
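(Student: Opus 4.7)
The plan is to adapt the proof of Proposition~\ref{prop:superpolynomial-cross} to the around distance. By Axiom~\ref{axiom-translation} we may reduce to $z = 0$ and write $A := A_{r_1\mathbbm r, r_2\mathbbm r}(0)$. Fix $M > \lambda$ and let $C = C(M)$ be the constant from Lemma~\ref{lem:sec4.2-lem4.8}; we condition on the intersection of the events in Lemmas~\ref{lem:sec4.2-lem4.8} and~\ref{lem:4.5}, which holds with probability at least $1 - O(\epsilon^M) - O(\epsilon^{M/\lambda})$. The parameter $\epsilon$ will be chosen as a function of $A$ so that this produces a superpolynomial bound exactly as in~\eqref{eq:prop4.7-probab}.

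For the lower bound, take the two antipodal radial crossings
\[
P_1 := \{t e_1 : t \in [r_1\mathbbm r, r_2\mathbbm r]\}, \qquad P_2 := \{-t e_1 : t \in [r_1\mathbbm r, r_2\mathbbm r]\},
\]
so that $D_h({\rm around}\ A) \geq D_h(P_1, P_2; A)$. Any continuous path $Q \subset A$ from $u_1 \in P_1$ to $u_2 \in P_2$ satisfies $|u_1 - u_2| \geq 2 r_1 \mathbbm r$. Following Proposition~\ref{prop:superpolynomial-cross}, take $w \in \tfrac{\epsilon^2}{10 d}\mathbbm r \mathbb{Z}^d$ closest to $Q$ and $r \in [\epsilon^2\mathbbm r, \epsilon\mathbbm r] \cap \{2^{-k}\mathbbm r : k \in \mathbb{N}\}$ such that $E_r(w; C)$ occurs, so that some $q \in Q$ lies in $B_{r/10}(w)$. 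As soon as $\epsilon < r_1$ we have $r < r_1 \mathbbm r$, so the separation $|u_1 - u_2| > 2r$ forces at least one endpoint of $Q$, say $u_2$, to lie outside $B_r(w)$; the subpath of $Q$ from $q$ to $u_2$ crosses $\partial B_{r/2}(w)$ before reaching $\partial B_r(w)$, so~\eqref{eq:def-sec4.2-ErC} gives
\[
\mathrm{len}(Q; D_h) \geq D_h(\partial B_{r/2}(w), \partial B_r(w)) \geq C^{-1}\mathfrak c_r e^{\xi h_r(w)}.
\]
Combining this with Theorem~\ref{thm:sharp-c}, $r \geq \epsilon^2 \mathbbm r$, and~\eqref{eq:lem4.5} exactly as in the derivation of~\eqref{eq:prop4.7-lower} yields $\mathrm{len}(Q; D_h) \geq C^{-1}\epsilon^{2\xi Q + \xi\sqrt M}\mathfrak c_{\mathbbm r} e^{\xi h_{\mathbbm r}(0)}$; taking the infimum over $Q$ and choosing $\epsilon$ with $C^{-1}\epsilon^{2\xi Q + \xi\sqrt M} = A^{-1}$ produces the required lower bound.

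For the upper bound, observe that $D_h({\rm around}\ A) \leq \sup_{u, v \in A} D_h(u, v; A)$, because for any crossing paths $P_1', P_2' \subset A$ and any $u \in P_1'$, $v \in P_2'$ one has $D_h(P_1', P_2'; A) \leq D_h(u, v; A)$. It therefore suffices to bound the $D_h$-internal diameter of $A$. For fixed $u, v \in A$, since $A$ is connected and open, the upper bound construction of Proposition~\ref{prop:superpolynomial-cross} applies verbatim: build a lattice path on $\tfrac{\epsilon^2}{10 d}\mathbbm r\mathbb{Z}^d \cap A$ from $u$ to $v$ of length at most $C'(r_1, r_2)\epsilon^{-2d}$, then concatenate the small-shell estimates from $E_{r_i}(w_i; C)$ to obtain, as in~\eqref{eq:prop4.7-upper}, a path of $D_h$-length at most $C C'\epsilon^{-2d + 2\xi Q - \xi\sqrt M}\mathfrak c_{\mathbbm r} e^{\xi h_{\mathbbm r}(0)}$. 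Choosing $\epsilon$ so this equals $A$ completes the proof.

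The main obstacle is the lower bound, where one must ensure every path $Q$ from $P_1$ to $P_2$ inside $A$ crosses some small shell $A_{r/2, r}(w)$ on which $E_r(w; C)$ gives a useful estimate. The antipodal choice of $P_1, P_2$ provides the Euclidean separation $|u_1 - u_2| \geq 2 r_1 \mathbbm r$, which combined with $r < r_1 \mathbbm r$ forces an endpoint of $Q$ to lie outside $B_r(w)$. Uniformity in $z$ and $\mathbbm r$ is inherited from Axiom~\ref{axiom-translation} and the uniformity already present in Lemmas~\ref{lem:sec4.2-lem4.8} and~\ref{lem:4.5}.
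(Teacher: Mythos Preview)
Your lower bound is correct and matches the paper's approach exactly: fix two specific crossing paths and invoke the across-shell estimate from the proof of Proposition~\ref{prop:superpolynomial-cross}.

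Your upper bound has a small gap. You reduce to bounding the full internal diameter $\sup_{u,v\in A}D_h(u,v;A)$ and then assert that the construction from~\eqref{eq:prop4.7-upper} ``applies verbatim'' with $K_1=\{u\}$, $K_2=\{v\}$. There are two issues. First, Proposition~\ref{prop:superpolynomial-cross} explicitly requires $K_1,K_2$ to be non-singletons: the concatenated path $\widetilde P$ built on the spheres $\partial B_{r_j}(w_j)$ does not reach an isolated point $u$ inside $B_{r_1/2}(w_1)$, it only reaches $\partial B_{r_1}(w_1)$. Second, for $u,v$ arbitrarily close to $\partial A$, the balls $B_{2r_j}(w_j)$ (with $r_j\le\epsilon\mathbbm r$) near the endpoints of the lattice path may exit $A$, so the resulting path need not lie in $A$ and hence does not control the \emph{internal} distance. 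Both problems disappear only if $u,v$ stay at distance $\gtrsim\epsilon\mathbbm r$ from $\partial A$, which you cannot guarantee uniformly.

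The fix, which is what the paper does, is to avoid the internal diameter entirely. Any crossing path must intersect the sphere $\partial B_{(r_1+r_2)\mathbbm r/2}(0)$, so for each pair $P_1',P_2'$ you may choose $u\in P_1'$ and $v\in P_2'$ on this fixed compact sub-sphere, which lies well inside $A$. Then the lattice-path construction from~\eqref{eq:prop4.7-upper} (with $K_1,K_2$ taken to be, say, short arcs on this sphere near $u,v$, or simply noting that the construction produces a path in a $3\epsilon\mathbbm r$-neighborhood of the middle sphere) gives a path in $A$ of the required length, \emph{simultaneously} for all choices of $P_1',P_2'$ because the good events $E_{r_i}(w_i;C)$ and the length bound $L\le C'\epsilon^{-2d}$ are uniform over lattice paths in the sub-shell. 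This is precisely the observation the paper records as ``\eqref{eq:prop4.7-upper} in fact holds simultaneously for $(K_1,K_2)=(P_1,P_2)$''.
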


\begin{proof}[Proof of Proposition~\ref{prop:superpolynomial-around}]
    By Axiom~\ref{axiom-translation}, we may assume $z = 0$. The lower bound follows by considering two fixed disjoint paths $P_1$ and $P_2$ crossing $A_{r_1,r_2}(0)$ and observing that~\eqref{eq:prop4.7-lower} from the proof of Proposition~\ref{prop:superpolynomial-cross} still holds for $(U, K_1,K_2) = (A_{r_1,r_2}(0), P_1,P_2)$ and the same $\mathbbm r$. The upper bound follows by noting that~\eqref{eq:prop4.7-upper} in fact holds simultaneously for $(K_1,K_2) = (P_1,P_2)$, where $P_1$ and $P_2$ are any paths crossing $A_{r_1,r_2}(0)$, with $U = A_{r_1,r_2}(0)$ and the same $\mathbbm r$.
\end{proof}

\subsection{Diameter moment bounds}\label{subsec:diameter}

In this section we prove the diameter moment bound in Theorem~\ref{thm:moments}. Let \[p_0 = \frac{2d \mathsf d_\gamma}{\gamma^2} \qquad \mbox{and} \qquad S= (0,1)^d.\]

\begin{prop}\label{compactbound}
Let $U \subset \R^d$ be open, and let $K \subset U$ be a compact connected set with more than one point. Then for all $p \in (-\infty,p_0)$ there exists a constant $c_p = c_p(U,K) < \infty$ such that for all $\mathbbm{r}>0$,
\[
\mathbb{E}\left[\left(\mathfrak c_{\mathbbm{r}}^{-1}e^{-\xi h_{\mathbbm{r}}(0)}\sup_{z,w \in \mathbbm{r}K} D_h(z,w;\mathbbm{r}U)\right)^p\right] \leq c_p.
\]
\end{prop}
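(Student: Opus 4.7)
The plan is to handle the negative and positive moment ranges separately, after reducing to $\mathbbm r = 1$. The reduction is essentially free: Propositions~\ref{prop:superpolynomial-cross} and~\ref{prop:superpolynomial-around} are stated with uniform-in-$\mathbbm r$ bounds, so the constants produced by the argument will be uniform; equivalently, by Axioms~\ref{axiom-weyl}--\ref{axiom-translation}, Theorem~\ref{thm:sharp-c}, Lemma~\ref{lem:scale-invariance}, and Proposition~\ref{prop:up-to-constant-eq}, the rescaled quantity $\mathfrak c_{\mathbbm r}^{-1} e^{-\xi h_{\mathbbm r}(0)}\sup_{z,w \in \mathbbm r K} D_h(z,w;\mathbbm r U)$ is bi-Lipschitz comparable to its $\mathbbm r = 1$ analogue with a deterministic constant. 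For negative moments $p<0$ the argument is short: since $K$ is connected with more than one point, I can choose two disjoint connected compact subsets $K_1, K_2 \subset K$ (e.g., the intersections of $K$ with two small disjoint balls that each meet $K$), and note that $\sup_{z,w \in K} D_h(z,w; U) \geq D_h(K_1, K_2; U)$; the lower tail in Proposition~\ref{prop:superpolynomial-cross} applied to $(K_1,K_2)$ then yields all negative moments.

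\textbf{Positive moments.} For $p \in [0, p_0)$ I would use a multiscale chaining argument based on Proposition~\ref{prop:superpolynomial-around}. Fix a small scale $r > 0$ and let $\{z_i\}$ be an $r$-spaced lattice covering a neighborhood of $\overline U$. For each $i$, Proposition~\ref{prop:superpolynomial-around} bounds $D_h({\rm around}\ A_{r/2,2r}(z_i)) \leq A\, r^{\xi Q} e^{\xi h_r(z_i)}$ outside a superpolynomially small bad event. A chaining argument analogous to those used in Lemma~\ref{lem:compare-subset} and in the proof of Proposition~\ref{prop:superpolynomial-cross} lets one stitch these ``around'' pieces into a path joining any two points of $K$ inside $U$, giving
$$
\sup_{z,w \in K} D_h(z,w;U) \;\leq\; C \sum_{i} r^{\xi Q} e^{\xi h_r(z_i)}
$$
outside the bad event. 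The right-hand side is a Riemann sum for a subcritical Gaussian multiplicative chaos-type mass associated with the parameter $\xi$, whose $p$-th moment is finite for $p < p_0$; combined with superpolynomial tails on the bad events this gives the desired bound.

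\textbf{Main obstacle.} The technical heart of the positive-moments argument is obtaining the sharp critical exponent $p_0 = 2d\mathsf d_\gamma/\gamma^2$. A naive single-scale Minkowski bound on the sum above with $\sim r^{-d}$ terms gives a much weaker exponent of the form $2(\xi Q - d)/\xi^2$, which can even be negative in interesting regimes, so the bound degenerates as $r \to 0$. The sharp threshold $p_0$ arises only after a carefully balanced multiscale decomposition in which different regions are covered at different scales and the near-independence of good events across disjoint concentric shells (Lemma~\ref{lem:shell-independence}) is used to multiply probabilities, so that the total moment telescopes into a GMC-type moment whose finiteness threshold is exactly $p_0$. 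In two dimensions this is carried out in \cite[Section~3]{lqg-metric-estimates} and \cite{Pfeffer-weak-metric}; adapting it to higher dimensions requires using the distance around a shell defined in~\eqref{eq:def-around} in place of planar annulus-crossing arguments, but the combinatorial structure of the decomposition is analogous.
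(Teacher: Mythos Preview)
Your treatment of the negative moments is correct and matches the paper: the superpolynomial lower tail of Proposition~\ref{prop:superpolynomial-cross} gives all negative moments immediately.

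For positive moments, however, the mechanism you describe is not the one that actually yields the sharp exponent $p_0$, either in this paper or in \cite[Section~3]{lqg-metric-estimates}. In particular, neither Lemma~\ref{lem:shell-independence} nor any GMC moment computation enters the argument. What both papers do is the following. First, a sharp tail bound for the simultaneous maximum of spherical averages over \emph{all} dyadic scales is proved (here Lemma~\ref{circavgdiff2}): for $q\in(\sqrt{2d},Q)$,
\[
\mathbb{P}\Bigl[\,\sup_{n\ge 0}\,\sup_{w\in 2^{-n-1}\mathbbm r\mathbb Z^d\cap B_{R\mathbbm r}(0)}|h_{2^{-n}\mathbbm r}(w)-h_{\mathbbm r}(0)|>\log(C\,2^{qn})\ \text{for some }n\Bigr]\;\le\;C^{-q-\sqrt{q^2-2d}+o_C(1)}.
\]
The exponent $q+\sqrt{q^2-2d}$ comes from a balance over the scale $n\asymp\alpha\log C$ at which the supremum is attained, and this is the entire source of the sharp threshold. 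On the complementary good event, Proposition~\ref{prop:superpolynomial-around} bounds the around distance of each shell $A_{2^{-n-1}\mathbbm r,2^{-n}\mathbbm r}(w)$ by $C^{\xi}2^{-(Q-q)\xi n}\mathfrak c_{\mathbbm r}e^{\xi h_{\mathbbm r}(0)}$ up to lower order; summing the geometric series over $n$ gives a deterministic diameter bound $O_C(C^\xi)\mathfrak c_{\mathbbm r}e^{\xi h_{\mathbbm r}(0)}$ with the above tail probability. This translates into the tail estimate $\mathbb P[\text{normalized diameter}>\tilde C]\le\tilde C^{-\xi^{-1}(q+\sqrt{q^2-2d})+o(1)}$, and sending $q\to Q$ yields $\xi^{-1}(Q+\sqrt{Q^2-2d})=2d\mathsf d_\gamma/\gamma^2=p_0$.

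Your single-scale bound $\sup D_h\le C\sum_i r^{\xi Q}e^{\xi h_r(z_i)}$ is true on the good event but, as you note, gives nothing useful for moments. The resolution is \emph{not} to interpret this sum as a GMC mass and take moments (the prefactor is $r^{\xi Q}$, not $r^d$, so there is no limiting measure to speak of), nor to use shell independence to multiply probabilities. The fix is to control the maximum of $h_r(z_i)$ over all scales $r$ simultaneously with the correct tail, and then sum deterministically. Once you have the tail bound for the unit cube, the general $(K,U)$ follows by covering $K$ with finitely many translated dilates of the cube inside $U$, exactly as you suggest for the reduction step.
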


We will deduce Proposition~\ref{compactbound} from the following special case.

\begin{prop}\label{squarebound}
For all $p \in (-\infty,p_0)$ there exists a constant $c_p>0$ such that for all $\mathbbm{r}>0,$
\[
\mathbb{E}\left[\left(\mathfrak c_{\mathbbm{r}}^{-1}e^{-\xi h_{\mathbbm{r}}(0)}\sup_{z,w \in \mathbbm{r}S} D_h(z,w;\mathbbm{r}S)\right)^p\right] \leq c_p.
\]
\end{prop}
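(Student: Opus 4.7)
Let $X_{\mathbbm r} := \mathfrak c_{\mathbbm r}^{-1}e^{-\xi h_{\mathbbm r}(0)}\sup_{z,w\in\mathbbm r S}D_h(z,w;\mathbbm r S)$, so the target is $\sup_{\mathbbm r>0}\mathbb{E}[X_\mathbbm r^p]\le c_p$. The argument splits into the (easy) negative moments and the (delicate) positive moments.

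\emph{Negative moments.} For $p<0$, pick any two disjoint connected compact non-singleton subsets $K_1,K_2\subset S$ (e.g.\ two disjoint closed line segments in $S$). Since $\sup_{z,w\in\mathbbm r S}D_h(z,w;\mathbbm r S)\ge D_h(\mathbbm r K_1,\mathbbm r K_2;\mathbbm r S)$, Proposition~\ref{prop:superpolynomial-cross} gives $X_\mathbbm r\ge A^{-1}$ with superpolynomially high probability as $A\to\infty$, uniformly in $\mathbbm r$. Integrating in $A$ yields $\mathbb E[X_\mathbbm r^p]\le c_p$ for every $p<0$.

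\emph{Positive moments, $0\le p<p_0$.} Fix $n\in\mathbb{N}$ and let $r=\mathbbm r/n$; cover $\overline{\mathbbm r S}$ by shells $A_j:=A_{r/2,2r}(z_j)$ with $z_j$ on an $O(r)$-spaced lattice in $\overline{\mathbbm r S}$, so $|J|=O(n^d)$. I would proceed in three steps.

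Step~1 (stringing bound). For any $u,v\in\mathbbm r S$ one can find a chain $\mathcal C(u,v)=(z_{j_1},\ldots,z_{j_L})$ of $L=O(n)$ overlapping inner balls $B_{r/2}(z_{j_l})$ with $u\in B_{r/2}(z_{j_1})$ and $v\in B_{r/2}(z_{j_L})$. I would build a path from $u$ to $v$ in $\mathbbm r S$ by concatenating detours in consecutive shells: the ``entry'' and ``exit'' portions of the chain-path in each $A_{j_l}$ each lie on a path crossing $A_{j_l}$, so by the definition~\eqref{eq:def-around} of around-distance the detour in $A_{j_l}$ costs at most $D_h(\text{around }A_{j_l})$. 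This gives
\[
D_h(u,v;\mathbbm r S)\;\le\;C\sum_{l=1}^{L} D_h(\text{around }A_{j_l}).
\]

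Step~2 (inserting the shell estimate). Proposition~\ref{prop:superpolynomial-around} together with $\mathfrak c_r/\mathfrak c_\mathbbm r=n^{-\xi Q}$ from Theorem~\ref{thm:sharp-c} yields $D_h(\text{around }A_j)\le Y_j\,\mathfrak c_r e^{\xi h_r(z_j)}$ for every $j$, where $Y_j$ has superpolynomial right tail uniformly in $j$, $r$, $\mathbbm r$. Hence, writing $g_j:=h_r(z_j)-h_\mathbbm r(0)$,
\[
X_\mathbbm r\;\le\;C\max_{u,v\in\mathbbm r S}\,\sum_{l=1}^{L}\,Y_{j_l}\,n^{-\xi Q}\,e^{\xi g_{j_l}}.
\]

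Step~3 (Gaussian/GMC moment computation). The vector $(g_j)_{j\in J}$ is centered Gaussian with variance $\log n+O(1)$ and covariances comparable to $\log(\mathbbm r/(|z_j-z_{j'}|\vee r))$. Combining a subcritical one-dimensional GMC moment bound along a single chain with a union bound over the $O(n^{2(d-1)})$ chain endpoints, and using H\"older to absorb the superpolynomial tails of the $Y_{j_l}$, one shows that the $p$-th moment of the right-hand side above is bounded uniformly in $n$ for every $p<p_0$. The sharpness of the threshold $p_0=2d\mathsf d_\gamma/\gamma^2$ follows from the identities $Q=d/\gamma+\gamma/2$ and $\xi=\gamma/\mathsf d_\gamma$, which arrange the prefactor $n^{-\xi Q}$ to exactly compensate the Gaussian growth of the chain sum at $p=p_0$. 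Letting $n\to\infty$ (or choosing $n$ optimally depending on the failure scale of the around-distance bound) then gives $\sup_\mathbbm r\mathbb E[X_\mathbbm r^p]\le c_p$.

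\emph{Main obstacle.} The principal difficulty is the stringing bound of Step~1: in $d\ge 3$ paths do not disconnect shells, so the around-distance only controls internal distances between \emph{crossing paths}. The detour construction must therefore match the entry and exit portions of the chain path to genuine crossings of each $A_{j_l}$, while ensuring the assembled path stays in $\mathbbm r S$. The second subtle point is Step~3: the crude GMC bound yields only $p<2d/\xi^2$, which is strictly larger than $p_0$; getting the sharp exponent $p_0$ requires delicate balancing of the prefactor $n^{-\xi Q}$ against the union-bound over chain endpoints, together with the correlations between the Gaussian weights along consecutive balls of a chain.
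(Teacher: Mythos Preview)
Your treatment of negative moments via Proposition~\ref{prop:superpolynomial-cross} is correct and matches the paper. The positive-moment strategy, however, is genuinely different from the paper's and contains a real gap.

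The paper's argument is \emph{multi-scale}, not single-scale-plus-GMC. The key technical input is Lemma~\ref{circavgdiff2}: for $q\in(\sqrt{2d},Q)$, with probability $\ge 1-C^{-q-\sqrt{q^2-2d}+o_C(1)}$ one has $|h_{2^{-n}\mathbbm r}(w)-h_{\mathbbm r}(0)|\le\log(C\,2^{qn})$ \emph{simultaneously for all} $n\ge 0$ and all lattice points $w$. The exponent $q+\sqrt{q^2-2d}$ arises by optimizing, over the scale $n$ at which the deviation first occurs, the trade-off between the Gaussian cost $(q+\alpha^{-1})^2/2$ and the entropy $d$ at scale $2^{-n}\asymp C^{-\alpha}$. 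On this good event, the paper bounds the $D_h$-diameter of each dyadic sub-cube at scale $2^{-n}\mathbbm r$ by summing around-distances over the geometric sequence of scales $2^{-m}\mathbbm r$, $m\ge n$: each contributes $\lesssim C^{\xi}2^{-\xi(Q-q)m}\mathfrak c_{\mathbbm r}e^{\xi h_{\mathbbm r}(0)}$, and the sum converges because $q<Q$. This yields $\mathbb P(X_{\mathbbm r}>\tilde C)\le\tilde C^{-\xi^{-1}(q+\sqrt{q^2-2d})+o(1)}$, and sending $q\to Q$ gives the sharp threshold $p_0=\xi^{-1}(Q+\sqrt{Q^2-2d})=2d\mathsf d_\gamma/\gamma^2$.

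Your single-scale approach has two problems. First, Step~1 only controls $D_h$ between \emph{crossing paths} of the shells $A_{j_l}$; it does not reach the actual points $u,v$ sitting inside $B_{r/2}(z_{j_1})$ and $B_{r/2}(z_{j_L})$. Bridging that last gap from $u$ to $\partial B_{r}(z_{j_1})$ forces you to iterate at scales $r/2,r/4,\ldots$, which is exactly the multi-scale structure you were trying to avoid. Second, Step~3 is where the sharp exponent lives, and you have not supplied an argument: the ``crude GMC bound'' along a single chain does not interact correctly with the union bound over $O(n^{2(d-1)})$ endpoint pairs to produce $p_0$, and the correlations you invoke are not the mechanism the paper uses. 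The sharpness comes not from GMC moment identities but from the scale-optimization in Lemma~\ref{circavgdiff2}.
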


We will need the following lemma on spherical average fluctuations.

\begin{lemma}\label{circavgdiff} Let $q>d$, $R>0$, and $\mathbbm{r}>0$. Then, as $\epsilon \to 0$, with probability $1-O_\e(\e^{\tfrac{q^2}{2}-d})$ uniformly in $\mathbbm r$, we have
\[
\sup\left\{|h_{\e\mathbbm{r}}(w)-h_{\mathbbm{r}}(0)|:w\in B_{R\mathbbm{r}}(0)\cap \tfrac{\e\mathbbm{r}}{4}\mathbb{Z}^d\right\} \leq q \log \e^{-1}.
\]
\end{lemma}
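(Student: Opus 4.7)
The plan is to reduce to the case $\mathbbm r=1$ by the scaling invariance of the whole-space LGF and then bound a supremum of Gaussians by a tail bound plus a union bound.

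First I would apply Lemma~\ref{lem:scale-invariance}: since $h$ is a whole-space LGF with $h_1(0)=0$, the rescaled field $\tilde h(x) := h(\mathbbm r x) - h_{\mathbbm r}(0)$ has the same law as $h$, and in particular $\tilde h_1(0)=0$. Under the substitution $w = \mathbbm r w'$ we have $\tilde h_\epsilon(w') = h_{\epsilon\mathbbm r}(\mathbbm r w') - h_{\mathbbm r}(0)$, while $B_{R\mathbbm r}(0)\cap \tfrac{\epsilon\mathbbm r}{4}\mathbb Z^d$ becomes $B_R(0)\cap \tfrac{\epsilon}{4}\mathbb Z^d$. Hence the event in the lemma is equivalent (in law, uniformly in $\mathbbm r$) to
\[
\sup\Big\{|\tilde h_\epsilon(w')|: w'\in B_R(0)\cap \tfrac{\epsilon}{4}\mathbb Z^d\Big\} \leq q\log\epsilon^{-1},
\]
and the $\mathbbm r$-uniformity follows automatically.

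Next I would estimate $\sigma_{\epsilon,w}^2 := \operatorname{Var}(\tilde h_\epsilon(w))$. Using the covariance formula~\eqref{eq:lgf-cov} for the spherical averages of $\tilde h$ together with the normalization $\tilde h_1(0)=0$, a direct computation (as in Section 11 of~\cite{fgf-survey}) gives
\[
\sigma_{\epsilon,w}^2 = \log \epsilon^{-1} + O_R(1), \qquad w\in B_R(0),\ 0<\epsilon<1,
\]
with the $O_R(1)$ term uniform in $w\in B_R(0)$.

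Then I would apply the standard Gaussian tail bound to the centered Gaussian $\tilde h_\epsilon(w)$: for $q>0$,
\[
\mathbb P\bigl[|\tilde h_\epsilon(w)|>q\log\epsilon^{-1}\bigr]
 \leq 2\exp\!\left(-\frac{q^2(\log\epsilon^{-1})^2}{2\sigma_{\epsilon,w}^2}\right)
 = O_\epsilon\bigl(\epsilon^{q^2/2}\bigr),
\]
where the implicit constant depends only on $R$ (through the $O_R(1)$ above), and in particular is uniform in $w\in B_R(0)$ and in $\mathbbm r$. Finally, since $|B_R(0)\cap \tfrac{\epsilon}{4}\mathbb Z^d| = O_\epsilon(\epsilon^{-d})$, a union bound over the $O_\epsilon(\epsilon^{-d})$ lattice points yields probability $O_\epsilon(\epsilon^{q^2/2 - d})$. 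The hypothesis $q>d$ and $d\geq 2$ give $q^2/2 > d^2/2 \geq d$, so the exponent $q^2/2 - d$ is positive, which is exactly the claimed bound.

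There is no real obstacle here; the only mild care is that the $O_R(1)$ in $\sigma_{\epsilon,w}^2$ must be absorbed so that the exponent $q^2/2$ in the Gaussian tail is achieved to leading order (as opposed to $q^2/(2+o(1))$), but this is automatic since $q^2(\log\epsilon^{-1})^2/(2(\log\epsilon^{-1}+O(1))) = (q^2/2)\log\epsilon^{-1} + O(1)$.
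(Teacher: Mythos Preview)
Your proof is correct and follows essentially the same approach as the paper: compute the variance of $h_{\epsilon\mathbbm r}(w)-h_{\mathbbm r}(0)$ as $\log\epsilon^{-1}+O(1)$, apply a Gaussian tail bound to get $O_\epsilon(\epsilon^{q^2/2})$ for each lattice point, then union-bound over $O_\epsilon(\epsilon^{-d})$ points. Your explicit reduction to $\mathbbm r=1$ via Lemma~\ref{lem:scale-invariance} is a clean way to make the uniformity in $\mathbbm r$ transparent, whereas the paper leaves this implicit in the variance computation.
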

\begin{proof}
Let $s \in (0,q).$ Since $t \mapsto h_{e^{-t}\e\mathbbm{r}}(w)-h_{\e\mathbbm{r}}(w)$ has a Gaussian tail, there exist constants $c_1,\;c_2,\;p_1,\;p_2$ such that
\[
\mathbb{P}\left(|h_{\e \mathbbm{r}}(w)-h_{\mathbbm{r}}(0)| \leq (q-s)(\log \e^{-1})\right) \geq 1-O_\e\left(\e^{\tfrac{(q-s)^2}{2}}\right).
\]
We obtain
\[
\mathbb{P}\left(|h_{\mathbbm{r}}(w)-h_{\mathbbm{r}}(0)| \leq q\log \e^{-1}\right) \geq 1-O_\e(\e^{\tfrac{q^2}{2}}).
\]
Now taking a union bound over $O_\e(\e^{-d})$ values of $w \in B_{R\mathbbm{r}}(0)\cap \left(\frac{\e\mathbbm{r}}{4}\mathbb{Z}^d\right)$ we conclude.
\end{proof}

For the proof of Proposition \ref{squarebound} we will need the following lemma.
\begin{lemma}\label{circavgdiff2}
Let $R>0$ and $q>\sqrt{2d}$. For each $C>1$ and $\mathbbm r>0$, it holds with probability $1 - C^{-q - \sqrt{q^2-2d} + o_C(1)}$ as $C \to \infty$, at a rate uniform in $\mathbbm{r}$, that
\[
\sup\left\{|h_{2^{-n}\mathbbm{r}}(w)-h_{\mathbbm{r}}(0)|:w\in B_{R\mathbbm{r}}\cap \left(2^{-n-1}\mathbbm{r}\mathbb{Z}^d\right)\right\} \leq \log(C2^{qn}), \qquad \forall n \in \mathbb{N}.
\]
\end{lemma}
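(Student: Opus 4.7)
The plan is to reduce to $\mathbbm{r}=1$ via the scaling invariance of the whole-space LGF (Lemma~\ref{lem:scale-invariance}), and then to estimate the probability that the event fails at a given scale $n$ and lattice point $w$ by a direct Gaussian tail bound on the centered Gaussian $h_{2^{-n}}(w) - h_1(0)$. The essential input is the uniform variance estimate
\[
\mathrm{Var}\bigl(h_{2^{-n}}(w) - h_1(0)\bigr) = n \log 2 + O(1) \qquad \mbox{for all } n \ge 0, \ w \in B_R(0),
\]
which follows from the LGF covariance formula~\eqref{eq:lgf-cov} and is exactly the kind of estimate that underlies Lemma~\ref{circavgdiff}. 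Applying the Gaussian tail to a single $(n,w)$ pair with $t = \log(C 2^{qn})$ and then taking a union bound over the $O(2^{nd})$ lattice points at scale $n$ yields a per-scale contribution of order
\[
C' \cdot 2^{nd} \exp\!\left( -\frac{(\log C + q n \log 2)^2}{2 n \log 2 + O(1)} \right).
\]

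I would then extract the announced exponent by a saddle-point analysis across scales. Writing $L = \log C$, $a = n \log 2$, and $\beta := q^2/2 - d > 0$ (positive since $q > \sqrt{2d}$), the log of the per-scale bound reads $-qL - \bigl(L^2/(2a) + \beta a\bigr) + O(1)$. The function $L^2/(2a) + \beta a$ attains its minimum over $a > 0$ at $a^\ast = L/\sqrt{2\beta}$, with value $\sqrt{2\beta}\, L = \sqrt{q^2 - 2d}\cdot L$. Consequently, each individual scale contributes at most $C^{-q - \sqrt{q^2 - 2d}}$, and the dominant contribution comes from scales $n \asymp a^\ast/\log 2 = \Theta(\log C)$.

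It remains to sum over $n \in \mathbb{N}$. Since the exponent is locally quadratic near $a^\ast$ with curvature $\Theta(1/L)$, a standard discrete Laplace estimate shows the sum picks up only a multiplicative factor $O(\sqrt{L}) = O(\sqrt{\log C})$, which is absorbed into the $o_C(1)$ term in the exponent; scales far from the saddle contribute exponentially less and are handled by a crude tail bound (using $\beta > 0$ on one side and the boundedness of $n$ when $a \ll a^\ast$ on the other). The resulting total bound is therefore $C^{-q - \sqrt{q^2 - 2d} + o_C(1)}$, as claimed.

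The main obstacle, and the step requiring the most care, is keeping the $O(1)$ error in the variance estimate genuinely uniform in $(n, w)$ so that it does not contaminate the leading-order exponent after optimization in $n$. Concretely, one needs $\mathrm{Cov}(h_1(0), h_{2^{-n}}(w))$ bounded uniformly over $n \ge 0$ and $w \in B_R(0)$, while $\mathrm{Var}(h_{2^{-n}}(w)) = n \log 2 + O(1)$ uniformly in the same range. Both follow from the LGF covariance formula together with the continuity of the spherical average process (Section~11 of~\cite{fgf-survey}); once these are in place, the saddle-point bookkeeping sketched above is routine and produces the stated bound.
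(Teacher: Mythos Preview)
Your proposal is correct and follows essentially the same approach as the paper: a Gaussian tail bound on $h_{2^{-n}}(w)-h_1(0)$ with variance $n\log 2+O(1)$, a union bound over the $O(2^{nd})$ lattice points, and then an optimization over scales that produces the exponent $-q-\sqrt{q^2-2d}$. The paper parametrizes scales by $\alpha$ with $2^n\approx C^\alpha$, applies Lemma~\ref{circavgdiff} with $q+1/\alpha$ in place of $q$ on a mesh of $\alpha$-values, and then sends the mesh to zero, whereas you carry out the equivalent continuous saddle-point directly in $a=n\log 2$; the two optimizations reduce to the same one-variable minimization and yield the same leading exponent.
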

\begin{proof}
Let $E_r^n$ be the event defined by
\[
E_r^n:= \left\{\sup\left\{|h_{2^{-n}\mathbbm{r}}(w)-h_\mathbbm{r}(0)|:w \in B_{R\mathbbm{r}}(0)\cap \left(2^{-n-1}\mathbb{Z}^d\right)\right\} \leq \log(2C^{qn})\right\}.
\]
Let $\zeta>0,$ and let $\alpha_0, \ldots , \alpha_N$ be a partition of $[\zeta,\zeta^{-1}],$ $\zeta = \alpha_0 < \cdots < \alpha_N = \zeta^{-1}$ such that $\alpha_k-\alpha_{k-1} \leq \zeta$ for all $1 \leq k \leq N.$ Now using Lemma \ref{circavgdiff} with $\e=2^{-n},$ $\nu=0,$ $q+\frac{1}{\alpha_k}$ instead of $q,$ we obtain that for any $n \geq 0$ such that $2^n \in [C^{\alpha_{k-1}},C^{\alpha_k}],$ $ 1\leq k \leq N,$
\begin{align*}
\mathbb{P}\left((E_{\mathbbm{r}}^n)^c\right) & \leq \mathbb{P}\left(\sup \left\{|h_{2^{-n}\mathbbm{r}}(w)-h_{\mathbbm{r}}(0)|:w \in B_{R\mathbbm{r}}(0)\cap (2^{-n-1}\mathbbm{r}\mathbb{Z}^d)\right\}> \left(q+\frac{1}{\alpha_k}\right)\log(2^n)\right)\\
&\leq 2^{-\tfrac{n}{2}(q+\alpha_k^{-1})2-d} \leq C^{-\alpha_{k-1}\left(\tfrac{(q+\alpha_k^{-1})^2}{2}-d\right)} \leq C^{d\alpha_k-\frac{(q\alpha_k+1)^2}{2\alpha_k}+o_\zeta(1)}.
\end{align*}
Using a union bound over $n$ such that $2^n \in \left[C^{\alpha_{k-1}},C^{\alpha_k}\right]$ we obtain
\[
\mathbb{P}\left(E_{\mathbbm{r}}^n, \; \forall n \geq 0, C^{\alpha_{k-1}} \leq 2^n \leq C^{\alpha_k}\right) \geq 1-C^{d\alpha_k - \frac{(q\alpha_k+1)2}{2\alpha_k}+o_\zeta(1)}.
\]
Again using Lemma \ref{circavgdiff} with $\e=2^{-n},$ $\nu=0,$ $q+\zeta$ instead of $q,$ we obtain
\[
\mathbb{P}((E_{\mathbbm{r}}^n)^c) \leq 2^{-n(\tfrac{(q+\zeta)^2}{2}-d)}
\]
and thus adding over all $n$ we obtain
\[
\mathbb{P}(E_{\mathbbm{r}}^n, \; \forall n \in \mathbb{N} : 2^N \geq C^{\frac{1}{\zeta}}) \geq 1-C^{-\frac{(q+\zeta)^2-2d}{2\zeta} + o_C(1)}.
\]
Finally, if $n \geq 0,$ $2^n \leq C^\zeta,$ then using a union bound we obtain
\[
\mathbb{P}((E_{\mathbbm{r}}^n)^c) \leq C^{d\zeta-\frac{(q\zeta+1)^2}{2\zeta}+o_C(1)}
\]
and so
\[
\mathbb{P}(E_{\mathbbm{r}}^n, \; \forall n \in \mathbb{N}: 2^n \geq C^\zeta) \geq 1-C^{d\zeta - \frac{(q\zeta+1)^2}{2\zeta}+o_C(1)}.
\]
After maximizing the quantity
\[
d\alpha-\frac{(q\alpha+1)^2}{2\alpha}
\]
with respect to $\alpha,$ we conclude
\[
\mathbb{P}(E_{\mathbbm{r}}^n \; \forall n \geq 0) \geq 1 - C^{-q - \sqrt{q^2-2d}+o_\zeta(1) + o_C(1)}.
\]
Taking the limit as $\zeta \to 0$ we conclude.
\end{proof}

We now Proposition \ref{squarebound}.

\begin{proof}[Proof of Proposition \ref{squarebound}]
For $p<0,$ the bound follows directly from Proposition \ref{prop:superpolynomial-cross}, so we may restrict to $p>0$. 

Let $q \in (\sqrt{2d},Q).$ Then by Lemma \ref{circavgdiff2} we have with probability $1 - C^{-q - \sqrt{q^2-2d} + o_C(1)}$ that
\[
\sup\left\{|h_{2^{-n}\mathbbm{r}}(w) - h_{\mathbbm{r}}(0)|:w \in \mathbbm{r}(0,1)^d \cap (2^{-n-1}\mathbbm{r}\mathbb{Z}^d)\right\} \leq \log(C 2^{qn}) \; \forall n \geq 0.
\]
By Propositions \ref{prop:superpolynomial-around} and \ref{prop:superpolynomial-cross} and Theorem~\ref{thm:sharp-c}, we have that
\[
A^{-1} \mathbbm{r}^{\xi Q} e^{\xi h_{\mathbbm{r}}(0)} \leq D_h({\rm around} \mbox{ } B(z,r_1 \mathbbm r) \setminus \overline{B(z,r_2 \mathbbm r)}) \leq A \mathbbm{r}^{\xi Q} e^{\xi h_{\mathbbm{r}}(0)}.
\]
Since the metric $D_h$ is continuous, if $z \in \mathbbm{r}(0,1)^d$ and we let $S_n(z)$ be the the cube of sidelength $2^{-n}\mathbbm{r}$ containing $z,$ then $\mathrm{Diam}_{D_h}(S_n(z))$ tends to zero as $n\to \infty.$ Hence
\[
\sup_{w\in S_{N_C}(z)} D_h(z,w;\mathbbm{r}(0,1)^d) \leq C^\xi \mathbbm{r}^{\xi Q} e^{\xi h_{\mathbbm{r}}(0)}\sum_{n=N_C}^\infty 2^{-(Q-q-\zeta)\xi n+o_n(n)} \leq O_C(C^\xi) \mathbbm{r}^{\xi Q} e^{\xi h_{\mathbbm{r}}(0)}
\]
with superpolynomially high probability in $C.$ Since this holds for every $z \in \mathbbm{r}(0,1)^d$ we obtain that with probability at least $1-C^{-q-\sqrt{q^2-2d}+o_C(1)}$ for each $n\geq N_C:= \lfloor \log_2 C^\zeta\rfloor,$ each cube with corners in $2^{-n}\mathbbm{r}\mathbb{Z}$ has $D_h(\cdot,\cdot;\mathbbm{r}(0,1)^d)$ diameter at most $O_C(C^\xi) \mathbbm{r}^{\xi Q} e^{\xi h_{\mathbbm{r}}(0)}.$ Let $\mathcal{R}_{\mathbbm{r}}^n$ denote the set of parallelepipeds of sidelengths $2^{-n}\mathbbm{r}\times 2^{-n-1}\mathbbm{r} \times \cdots \times 2^{-n-1}\mathbbm{r}.$ Now note that we have that for any parallelepiped $R \in \mathcal{R}_{\mathbbm{r}}^n$ there is a path $P_R$ between the smaller faces of $R$ such that
\[
D_{\mathrm{across}}(P_R) \leq C^\xi 2^{-(Q-q-\zeta)\xi n+o_n(n)}\mathbbm{r}^{\xi Q} e^{\xi h_{\mathbbm{r}}(0)}
\]
where we used the fact that by definition of $D_{\mathrm{across}},$ if $R,R'$ are parallelipipeds with sidelengths $2^{-n}\mathbbm{r}\times 2^{-n-1}\mathbbm{r} \times \cdots \times 2^{-n-1}\mathbbm{r},$ and $P,P'$ are paths connecting the square sides of $R$ and $R'$ respectively, then $D_h(P_1,P_2) \lesssim D_{\mathrm{across}}(R)+D_{\mathrm{across}}(R').$ Note that any $2^{-n}\mathbbm{r}$ sidelength cube whose vertices have coordinates who are integer multiples of ${2^{-n}\mathbbm{r}}$ contain exactly $d2^{d-1}$ parallelepipeds in $\mathcal{R}_{\mathbbm{r}}^n.$

By continuity of the $D_h$ metric, note that $\mathrm{Diam}_{D_h}(S_n(z)) \to 0$ as $n\to \infty.$ Therefore
\[
\sup_{w \in S_{N_C}(z)} D_h(z,w;\mathbbm{r}(0,1)^d) \leq C^\xi \mathbbm{r}^{\xi Q}e^{\xi h_{\mathbbm{r}}(0)} \sum_{n=N_C}^\infty 2^{-(Q-q-\zeta)\xi n +o_n(n)} \leq O_C(C^\xi)\mathbbm{r}^{\xi Q}e^{\xi h_{\mathbbm{r}}(0)}.
\]
Since this holds for any $z \in \mathbbm{r}(0,1)^d,$ with probability at least $1-C^{-1-\sqrt{q^2-2d}+o_C(1)},$ for each $n\geq N_C,$ each cube of sidelength $2^{-n}\mathbbm{r}$ has $D_h(\cdot,\cdot;\mathbbm{r}(0,1)^d)$-diameter at most $O_C(C^{\xi+\zeta})\mathbbm{r}^{\xi Q}e^{\xi h_{\mathbbm{r}}(0)}.$

Since $2^{N_C} \leq C^\zeta,$ we see that
\[
\mathbb{P}\left((\mathbbm{r}^{-1}e^{-\xi h_{\mathbbm{r}}(0)})^{\xi Q} \sup_{z,w \in \mathbbm{r}(0,1)^d}D_h(z,w;\mathbbm{r}(0,1)^d)>\tilde{C}\right)\leq \tilde{C}^{-\xi^{-1}(q+\sqrt{q^2-2d})+o_{\tilde{C}}(1)}.
\]
Sending $q \to Q$ we conclude. \qedhere

\end{proof}

\begin{proof}[Proof of Proposition \ref{compactbound} using Proposition \ref{squarebound}.]
The case $p<0$ again follows from Proposition \ref{prop:superpolynomial-cross}. Let us assume that $0 \leq p < p_0.$ Then we can cover $K$ by finitely many translations of dilations of $S,$ so that $S_1, \ldots , S_n \subset U.$ Let $u_k, \rho_k$ be such that $S_k = u_k + \rho_kS$ and $\rho_k>0.$ Then by Proposition \ref{squarebound} together with translation invariance for $D_h,$ there is a constant $\tilde{C}_p$ such that
\[
\mathbb{E}\left(\left(\mathfrak c_{\mathbbm{r}\rho_k}^{-1} e^{-\xi h_{\mathbbm{r}\rho_k}(\mathbbm{r}u_k)} \sup_{z,w\in \mathbbm{r}S_k} D_h (z,w;\mathbbm{r}S_k)\right)^p\right) \lesssim \tilde{C}_p.
\]
 Now using the fact that $h_{\mathbbm{r}\rho_k}(\mathbbm{r}u_k) - h_{\mathbbm{r}}(0)$ is Gaussian with constant order variance together with Theorem \ref{thm:sharp-c}, we conclude. 
\end{proof}

\subsection{Moment bounds for point-to-set and point-to-point distances}\label{subsec:moment-other}

In this section we prove items~\ref{moment-point-point} and~\ref{moment-point-set} in Theorem \ref{thm:moments}. Recall that $h$ is normalized so that $h_1(0)=0$. Items~\ref{moment-point-point} and~\ref{moment-point-set} will follow from the estimates below. In this section we let $\mathfrak{c}_r:=r^{\xi Q}.$
\begin{prop}\label{thing1}
Let $\alpha \in \R$ and set $h^\alpha:=h-\alpha \log|\cdot|$. If $\alpha<Q$, then there exists a deterministic function $\psi:[0,\infty)\to[0,\infty)$ (depending on $\alpha$ and the law of $D_h$) such that $\psi$ is bounded near $0$ and $\psi(t)/t \to 0$ as $t\to \infty$, with the following property. For each $\mathbbm{r}>0$, with superpolynomially high probability as $C\to \infty$, at a rate uniform in $\mathbbm{r}$, we have
\begin{equation}\label{prev}
C^{-1} \frac{\mathfrak c_\mathbbm{r}}{\mathbbm{r}^{\alpha \xi}} \int_0^\infty e^{\xi h_{\mathbbm{r}e^{-t}}(0)-\xi(Q-\alpha)t-\psi(t)} dt \leq D_{h^\alpha} (0,\p B_{\mathbbm{r}}(0)) \leq C \frac{\mathfrak c_\mathbbm{r}}{\mathbbm{r}^{\alpha \xi}} \int_0^\infty e^{\xi h_{\mathbbm{r}e^{-t}}(0)-\xi(Q-\alpha)t+\psi(t)} dt
\end{equation}
and the $D_{h^\alpha}$-distance around $A_{\mathbbm{r}/e, \mathbbm{r}}(0)$ is at most the right-hand side of \eqref{prev}. 

If $\alpha>Q,$ then almost surely $D_{h^\alpha} (0,z) = \infty$ for every $z \in \R^d\setminus\{0\}.$
\end{prop}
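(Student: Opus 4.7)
The plan is to decompose the punctured ball $B_\mathbbm{r}(0)\setminus\{0\}$ into concentric shells $A_k := A_{\mathbbm{r} e^{-(k+1)},\mathbbm{r} e^{-k}}(0)$ for $k\ge 0$, control the $D_{h^\alpha}$-distance on each shell using the superpolynomial estimates of Propositions~\ref{prop:superpolynomial-cross}--\ref{prop:superpolynomial-around}, and sum over $k$. On each $A_k$ the weight $|x|^{-\xi\alpha}$ varies by at most the bounded factor $e^{|\xi\alpha|}$, so by Weyl scaling (Axiom~\ref{axiom-weyl}) the $D_{h^\alpha}$-across and $D_{h^\alpha}$-around distances on $A_k$ are comparable, up to constants depending only on $\xi,\alpha$, to $(\mathbbm{r} e^{-k})^{-\xi\alpha}$ times the corresponding $D_h$-distances. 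Combined with $\mathfrak c_r=r^{\xi Q}$ (Theorem~\ref{thm:sharp-c}), the nominal $D_h$-scale $\mathfrak c_{\mathbbm{r} e^{-k}} e^{\xi h_{\mathbbm{r} e^{-k}}(0)}$ on shell $k$ is turned into
\[
\frac{\mathfrak c_\mathbbm{r}}{\mathbbm{r}^{\xi\alpha}}\,e^{-\xi(Q-\alpha)k}\,e^{\xi h_{\mathbbm{r} e^{-k}}(0)},
\]
which is precisely the integrand of \eqref{prev} evaluated at $t=k$.

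\textbf{Union bound with slow degradation.} Choose a deterministic $\psi:[0,\infty)\to[0,\infty)$ that is bounded near $0$, tends to $+\infty$, satisfies $\psi(t)/t\to 0$, and also $\sum_{k\ge 0}e^{-M\psi(k)}<\infty$ for every $M>0$ (for instance $\psi(t)=\sqrt{t+1}$). For each $k$, apply Propositions~\ref{prop:superpolynomial-cross} and~\ref{prop:superpolynomial-around} to $A_k$ with the constant $A:=Ce^{\psi(k)}$; using uniformity in the scale $\mathbbm{r} e^{-k}$, a union bound over $k$ gives that with SPHP as $C\to\infty$, uniformly in $\mathbbm{r}$, simultaneously for every $k\ge 0$,
\[
C^{-1}e^{-\psi(k)}\,\mathfrak c_{\mathbbm{r} e^{-k}} e^{\xi h_{\mathbbm{r} e^{-k}}(0)} \;\le\; D_h(\text{across }A_k) \;\le\; D_h(\text{around }A_k) \;\le\; Ce^{\psi(k)}\,\mathfrak c_{\mathbbm{r} e^{-k}} e^{\xi h_{\mathbbm{r} e^{-k}}(0)}.
\]

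\textbf{From shell estimates to \eqref{prev}.} For the upper bound, fix a point $z_k$ on each sphere $\partial B_{\mathbbm{r} e^{-k}}(0)$. A standard argument (any two crossings of $A_k$ can be joined by a detour of around-length, which dominates the across distance) gives $D_{h^\alpha}(z_{k+1},z_k;A_k)\le O(1)\cdot D_{h^\alpha}(\text{around }A_k)$, so concatenating through consecutive shells yields
\[
D_{h^\alpha}(0,\partial B_\mathbbm{r}(0)) \;\le\; O(1)\sum_{k\ge 0} D_{h^\alpha}(\text{around }A_k),
\]
which matches the right-hand side of \eqref{prev}; the same upper bound controls the around distance of $A_{\mathbbm{r}/e,\mathbbm{r}}(0)$ from the $k=0$ term alone. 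For the lower bound, any continuous path from $0$ to $\partial B_\mathbbm{r}(0)$ must cross each $A_k$, hence
\[
D_{h^\alpha}(0,\partial B_\mathbbm{r}(0)) \;\ge\; \sum_{k\ge 0} D_{h^\alpha}(\text{across }A_k).
\]
Converting the Riemann sums to the integrals in \eqref{prev} uses continuity of the spherical average process in $r$ (Kolmogorov continuity) to replace $h_{\mathbbm{r} e^{-k}}(0)$ by $h_{\mathbbm{r} e^{-t}}(0)$ on $t\in[k,k+1]$; the within-interval fluctuation of $h$ is Gaussian with bounded variance and is absorbed into a slightly enlarged $\psi$.

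\textbf{The divergent case and main obstacle.} For $\alpha>Q$, apply the lower bound: the $k$-th term is at least $\mathbbm{r}^{\xi(Q-\alpha)}\exp\bigl(\xi(\alpha-Q)k+\xi h_{\mathbbm{r} e^{-k}}(0)-\psi(k)\bigr)$. Since the increments of $h_{\mathbbm{r} e^{-k}}(0)$ at dyadic scales are Gaussian, the law of the iterated logarithm gives $|h_{\mathbbm{r} e^{-k}}(0)|=O(\sqrt{k\log\log k})$ almost surely, while $\xi(\alpha-Q)k$ is linear and $\psi(k)=o(k)$. Hence the exponent tends to $+\infty$ and the sum diverges a.s., so $D_{h^\alpha}(0,\partial B_\mathbbm{r}(0))=\infty$; taking a countable family of radii tending to $0$ and using the triangle inequality yields $D_{h^\alpha}(0,z)=\infty$ for every $z\ne 0$, almost surely. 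The main obstacle is managing the union bound across infinitely many shells while retaining superpolynomial control; this is what forces the deterministic degradation $\psi$, and it is crucial that $\psi(t)/t\to 0$ so that $\psi$ does not absorb the linear gain $\xi(Q-\alpha)t$ in the integrand on the upper bound side.
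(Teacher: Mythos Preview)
Your approach is essentially the paper's: decompose into concentric shells, apply the superpolynomial concentration bounds (Propositions~\ref{prop:superpolynomial-cross}--\ref{prop:superpolynomial-around}) on each shell with a slowly growing slack, take a union bound, and sum. Your packaging of the union bound via a single $\psi$ with $\sum_k e^{-M\psi(k)}<\infty$ for every $M>0$ is in fact cleaner than the paper's two-regime split ($k\le C^{1/\zeta}$ versus $k>C^{1/\zeta}$), and achieves the same superpolynomial rate in $C$.

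Two technical points need fixing. First, the chained inequality $D_h(\text{across }A_k)\le D_h(\text{around }A_k)$ is false in general (think of a very thick Euclidean shell, where the across distance is large but any two crossings can be linked near the inner sphere); you only need, and only have, the separate bounds $D_h(\text{across }A_k)\ge C^{-1}e^{-\psi(k)}(\text{scale})$ and $\max\{D_h(\text{across }A_k),\,D_h(\text{around }A_k)\}\le Ce^{\psi(k)}(\text{scale})$. Second, your upper-bound construction with fixed points $z_k\in\partial B_{\mathbbm{r}e^{-k}}(0)$ and the claim $D_{h^\alpha}(z_{k+1},z_k;A_k)\le O(1)\cdot D_{h^\alpha}(\text{around }A_k)$ is not justified: the around distance bounds the internal distance between two crossing \emph{paths}, not between two boundary \emph{points}, and you have no control on the $D_h$-length of the radial segments through $z_k$. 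The standard fix (and what the paper does) is to use overlapping two-step shells: take a short crossing $\pi_k$ of $A_{\mathbbm{r}e^{-(k+2)},\mathbbm{r}e^{-k}}(0)$ (existence from the upper bound on across); then $\pi_k$ and $\pi_{k+1}$ both cross $A_{\mathbbm{r}e^{-(k+2)},\mathbbm{r}e^{-(k+1)}}(0)$, and the around distance of that shell links them with a path of length at most $Ce^{\psi(k)}(\text{scale})$. Concatenating $\pi_k$'s with these links gives the upper bound. With these corrections your argument goes through.
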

\begin{prop}\label{thing2}
Let $\alpha,\beta \in \R$, let $z,w \in \mathbb{R}^d$ be distinct, and set $h^{\alpha, \beta}:=h-\alpha \log|\cdot -z| -\beta \log |\cdot - w|$. If $\alpha,\beta<Q$, then there exists a deterministic function $\psi:[0,\infty)\to[0,\infty)$ (depending only on $\alpha, \beta$, and the law of $D_h$) such that $\psi$ is bounded near $0$ and $\psi(t)/t \to 0$ as $t\to \infty$, with the following property. For each $\mathbbm{r}>0,$ with superpolynomially high probability as $C\to \infty$, at a rate uniform in deterministic $z,w, \mathbbm r$, we have
\begin{equation}
D_{h^{\alpha,\beta}} (z,w) \geq C^{-1} \frac{\mathfrak c_\mathbbm{r}}{\mathbbm{r}^{\alpha \xi}} \int_0^\infty \left(e^{\xi h_{\mathbbm{r}e^{-t}}(0)-\xi(Q-\alpha)t-\psi(t)}+e^{\xi h_{\mathbbm{r}e^{-t}}(0)-\xi(Q-\beta)t-\psi(t)}\right) dt
\end{equation}
and
\begin{equation}
D_{h^{\alpha,\beta}} (z,w;B_{8\mathbbm{r}}(z)) \leq C \frac{\mathfrak c_\mathbbm{r}}{\mathbbm{r}^{\alpha \xi}} \int_0^\infty \left(e^{\xi h_{\mathbbm{r}e^{-t}}(0)-\xi(Q-\alpha)t-\psi(t)}+e^{\xi h_{\mathbbm{r}e^{-t}}(0)-\xi(Q-\beta)t+\psi(t)}\right) dt
\end{equation}
If either $\alpha>Q$ or $\beta>Q,$ then $D_{h^\alpha} (z,w) = \infty$ a.s..
\end{prop}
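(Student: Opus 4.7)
\textbf{Proof proposal for Proposition~\ref{thing2}.} By Axiom~\ref{axiom-translation} we may assume $z=0$ throughout, and we set $\rho := |w|$. The key observation is that on the ball $B_{\rho/4}(z)$, the function $-\beta\log|\cdot - w|$ is continuous and uniformly bounded by $|\beta|\log(4/3\rho) + O(1)$; symmetrically, $-\alpha\log|\cdot|$ is bounded on $B_{\rho/4}(w)$. By Weyl scaling (Axiom~\ref{axiom-weyl}), the internal metrics $D_{h^{\alpha,\beta}}(\cdot,\cdot;B_{\rho/4}(z))$ and $D_{h^\alpha}(\cdot,\cdot;B_{\rho/4}(z))$ are comparable up to a deterministic multiplicative constant (depending on $\beta$ and $\rho$, absorbed into $\psi$), and analogously near $w$. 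The degenerate case is immediate: if $\alpha>Q$, then the second clause of Proposition~\ref{thing1} applied to $h^\alpha$ gives $D_{h^\alpha}(0,u) = \infty$ for every $u \neq 0$; Weyl scaling with the bounded function $-\beta\log|\cdot-w|$ on $B_{\rho/4}(0)$ then forces $D_{h^{\alpha,\beta}}(0,\partial B_{\rho/4}(0)) = \infty$, and hence $D_{h^{\alpha,\beta}}(z,w) = \infty$ almost surely. The case $\beta>Q$ is symmetric.

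\textbf{Lower bound.} Assume now $\alpha,\beta<Q$ and assume also that $\mathbbm r \le \rho/4$ (the bound for larger $\mathbbm r$ follows by choosing a smaller radius, since the integral from $0$ to $\infty$ depends on $\mathbbm r$ only by a shift in $t$). Any continuous path from $z$ to $w$ must contain a subpath from $z$ to $\partial B_{\mathbbm r}(z)$ that lies in $B_{\mathbbm r}(z)$, and a subpath from $w$ to $\partial B_\mathbbm r(w)$ that lies in $B_\mathbbm r(w)$, and these subpaths are disjoint since the balls are. Therefore
\[
D_{h^{\alpha,\beta}}(z,w) \;\ge\; D_{h^{\alpha,\beta}}\bigl(z,\partial B_\mathbbm r(z)\bigr) \;+\; D_{h^{\alpha,\beta}}\bigl(w,\partial B_\mathbbm r(w)\bigr).
\]
Applying Weyl scaling to absorb $-\beta\log|\cdot-w|$ on $B_\mathbbm r(z)$, and then the lower bound of Proposition~\ref{thing1} to $h^\alpha$ at scale $\mathbbm r$, controls the first summand by the desired integral near $z$; symmetrically for $w$. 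The extra additive constants produced by Weyl scaling (bounded in absolute value by $|\alpha|\vee|\beta|$ times a logarithm comparable to $\log(\rho/\mathbbm r) \le C t$) are absorbed into $\psi$ via its sublinearity.

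\textbf{Upper bound.} For the upper bound on $D_{h^{\alpha,\beta}}(z,w;B_{8\mathbbm r}(z))$ we may assume $\mathbbm r \ge |w|/8$, otherwise $w\notin B_{8\mathbbm r}(z)$ and the bound is vacuous. Set $r_0 := \min(\mathbbm r,\rho/16)$. I will concatenate three paths. First, the upper-bound part of Proposition~\ref{thing1}, applied to the $\alpha$-field on $B_{r_0}(z)$ (with $-\beta\log|\cdot - w|$ absorbed as a bounded Weyl factor), produces a path $P_1$ from $z$ to some point $z_1\in\partial B_{r_0}(z)$ whose $D_{h^{\alpha,\beta}}$-length is controlled by the first integral. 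Symmetrically one produces $P_2$ from $w$ to some $w_1\in\partial B_{r_0}(w)$ controlled by the second integral. Second, the ``distance around'' clause at the end of Proposition~\ref{thing1}, applied to the shell $A_{r_0/e,r_0}(z)$ (respectively around $w$), yields a path $Q_1$ inside $A_{r_0/e,r_0}(z)\subset B_{8\mathbbm r}(z)$ connecting $z_1$ to the inner boundary of a suitable annular region, and similarly $Q_2$ near $w$, with lengths dominated by the respective integrals. Third, the two annular regions sit inside a single Euclidean shell $A_{r_1,r_2}(z)$ of radii $r_1\asymp r_2\asymp \mathbbm r$ with $\overline{A_{r_1,r_2}(z)}\subset B_{8\mathbbm r}(z)$ and disjoint from $B_{r_0/e}(z)\cup B_{r_0/e}(w)$; Proposition~\ref{prop:superpolynomial-around} combined with Weyl scaling (the log-singularities at $z$ and $w$ are bounded on this shell) provides a connecting path of $D_{h^{\alpha,\beta}}$-length $O(\mathfrak c_\mathbbm r e^{\xi h_\mathbbm r(0)})$, which is absorbed into the second integral evaluated at $t=0$.

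\textbf{Main obstacle.} The main subtlety, and where I expect the bulk of the technical work to lie, is ensuring that the sub-linear correction $\psi$ is uniform in the deterministic data $(z,w,\mathbbm r)$: the Weyl factors introduced by each log-singularity are bounded on the relevant ball only by a constant times $\log(\rho/\mathbbm r)$, and one must verify that these, together with the fluctuations of $h_r(z)-h_r(w)$ across scales near $\mathbbm r$ and the stitching of $P_1,Q_1,$ connector, $Q_2,P_2$ into a single admissible path in $B_{8\mathbbm r}(z)$, all fit into a single function $\psi$ with $\psi(t)/t\to 0$. Translation invariance of the law of $h$ and the uniformity in $\mathbbm r$ already built into Proposition~\ref{thing1} are the key inputs that make this bookkeeping go through.
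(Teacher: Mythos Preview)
Your proposal is correct and follows essentially the same strategy as the paper: both proofs invoke Proposition~\ref{thing1} separately near each singularity (splitting any $z$-to-$w$ path into an initial segment near $z$ and a terminal segment near $w$ for the lower bound; concatenating two radial paths with an annular connector for the upper bound) and handle the case $\max\{\alpha,\beta\}>Q$ via the degenerate clause of Proposition~\ref{thing1}. Your treatment is in fact more explicit than the paper's at the one place where care is genuinely needed: you spell out the Weyl-scaling step that absorbs the \emph{other} log-singularity (which is bounded on a small ball around each point) before invoking Proposition~\ref{thing1}, whereas the paper's proof applies that proposition without comment on this point.

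One small correction: your claim that the upper bound is ``vacuous'' when $w\notin B_{8\mathbbm r}(z)$ is not right---in that case $D_{h^{\alpha,\beta}}(z,w;B_{8\mathbbm r}(z))=+\infty$ and the inequality \emph{fails}, not trivially holds. The statement should be read (as the paper implicitly does, and as its sole application in Proposition~\ref{thing22} makes explicit) with $\mathbbm r$ comparable to $|z-w|$, say $\mathbbm r=|z-w|/2$.
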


Note that as a direct consequence of Propositions \ref{thing1} and \ref{thing2} we obtain the following.
\begin{prop}\label{thing11}
Let $\alpha \in \R$ and $h^\alpha = h-\alpha \log |\cdot|.$ If $\alpha < Q,$ then for any $p<\frac{d \mathsf d_\gamma}{\gamma}(Q-\alpha)$ there is a constant $C_p$ such that for any $\mathbbm{r}>0$
\[
\mathbb{E}\left(\left(\mathbbm{r}^{-\xi Q} \mathbbm{r}^{\alpha\xi}e^{-\xi h_{\mathbbm{r}}(0)}D_{h^\alpha}(0,\p B_{\mathbbm{r}}(0))\right)^p\right) \leq C_p.
\]
If $\alpha>Q,$ then a.s. $D_{h^\alpha}(0,z) = \infty$ for all $z \in \p B_{\mathbbm{r}}(0).$
\end{prop}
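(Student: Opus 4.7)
The plan is to combine Proposition~\ref{thing1} with a moment estimate for an exponential functional of the spherical-average Gaussian process. Setting
\[
X := \mathbbm{r}^{-\xi Q}\mathbbm{r}^{\alpha\xi}e^{-\xi h_\mathbbm{r}(0)}\, D_{h^\alpha}(0,\partial B_\mathbbm{r}(0)), \qquad G_t := h_{\mathbbm{r}e^{-t}}(0) - h_\mathbbm{r}(0),
\]
and $Y^{\pm} := \int_0^\infty e^{\xi G_t - \xi(Q-\alpha)t \pm \psi(t)}\,dt$, Proposition~\ref{thing1} gives $C^{-1}Y^{-}\le X\le C\,Y^{+}$ off an event whose probability is $o(C^{-N})$ as $C\to\infty$ for every $N$, uniformly in $\mathbbm{r}$. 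By the scaling invariance of the whole-space LGF (Lemma~\ref{lem:scale-invariance}) the law of $(G_t)_{t\ge 0}$ is independent of $\mathbbm r$, and a direct computation from the LGF covariance formula shows that $G$ is a centered Gaussian process with stationary increments and $\mathrm{Var}(G_t)=t+O(1)$ (in $d=2$ it is exactly a standard Brownian motion). The proof thus reduces to a moment estimate for the Gaussian exponential functional $Y^{\pm}$ with drift $\xi(Q-\alpha)$.

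For $1\le q<2(Q-\alpha)/\xi$, the Gaussian bound $\|e^{\xi G_t}\|_q=e^{q\xi^2\mathrm{Var}(G_t)/2}\lesssim e^{q\xi^2 t/2}$ combined with Minkowski's integral inequality yields
\[
\|Y^{+}\|_q \;\lesssim\; \int_0^\infty e^{q\xi^2 t/2-\xi(Q-\alpha)t+\psi(t)}\,dt \;<\; \infty,
\]
where finiteness uses $\psi(t)/t\to 0$. I then transfer this to a moment bound on $X$ via tail exchange: applying Proposition~\ref{thing1} at level $C=t^{\epsilon}$ for small $\epsilon>0$,
\[
\mathbb{P}[X>t] \;\le\; \mathbb{P}\bigl[Y^{+}>t^{1-\epsilon}\bigr] + o(t^{-N\epsilon}),
\]
and combining with Markov's inequality at some $q\in\bigl(p/(1-\epsilon),\,2(Q-\alpha)/\xi\bigr)$ makes $\int t^{p-1}\mathbb{P}[X>t]\,dt$ finite, yielding $\mathbb{E}[X^p]<\infty$ for every $0<p<2(Q-\alpha)/\xi$. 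The range $0<p<1$ is subsumed in the case $p=1$, and the negative moments follow from the symmetric lower bound $Y^{-}\ge c\int_0^1 e^{\xi G_t}\,dt$ (using boundedness of $\psi$ near $0$) together with the classical fact that $\int_0^1 e^{\xi G_t}\,dt$ has finite negative moments of all orders, combined with an analogous lower-tail exchange for $X$.

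For $\alpha>Q$ the drift $-\xi(Q-\alpha)$ is strictly positive, and $G_t/t\to 0$ almost surely (from $\mathrm{Var}(G_t)=O(t)$ via Borel-Cantelli along dyadic times together with a Borel-TIS maximal estimate on dyadic intervals). Together with $\psi(t)/t\to 0$ this forces $\xi G_t-\xi(Q-\alpha)t-\psi(t)\to +\infty$, so $Y^{-}=\infty$ almost surely. The lower bound from Proposition~\ref{thing1} then gives $D_{h^\alpha}(0,\partial B_{\mathbbm r}(0))=\infty$ a.s., and since any path from $0$ to $z\ne 0$ must cross a sphere about $0$, we conclude $D_{h^\alpha}(0,z)=\infty$ for every $z\ne 0$.

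The main obstacle is the tail-exchange step: Proposition~\ref{thing1} only supplies a family of high-probability events parametrized by $C$, and converting these into a polynomial tail bound on $X$ at the sharp threshold requires optimizing $C$ as a small power of the tail level and verifying that the trade-off between the available $Y^{+}$ moments (for all $q<2(Q-\alpha)/\xi$) and the superpolynomial error still allows one to reach every $p$ strictly below the threshold. The hypothesis $\psi(t)/t\to 0$ is precisely what collapses the Minkowski integral to the clean threshold $2(Q-\alpha)/\xi$; any weaker control on $\psi$ would shift the exponent.
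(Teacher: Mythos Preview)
Your overall strategy is the same as the paper's: reduce to Proposition~\ref{thing1}, then control the moments of the exponential functional $Y^{\pm}$ of the drifted spherical-average process, and pass to $X$ by a tail-exchange argument exploiting the superpolynomial error. The paper carries out the middle step via the appendix result Proposition~\ref{expbound}, which gives the sharp tail $\mathbb{P}[Y^{+}>x]\le x^{-2(Q-\alpha)/\xi+o(1)}$ from a running-maximum estimate for $G_t-\xi^{-1}(Q-\alpha)t$; your route through Minkowski's integral inequality is more elementary and yields the same threshold when it applies.

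There is, however, a genuine gap. Minkowski's inequality in $L^q$ requires $q\ge 1$, so your bound $\|Y^{+}\|_q<\infty$ is only available on the range $1\le q<2(Q-\alpha)/\xi$. When $\alpha\in[Q-\xi/2,Q)$ this range is empty: indeed $\mathbb{E}[Y^{+}]=\int_0^\infty e^{\xi^2\mathrm{Var}(G_t)/2-\xi(Q-\alpha)t+\psi(t)}\,dt=\infty$ in that regime, so the ``$0<p<1$ is subsumed in $p=1$'' step fails, and neither the $q<1$ subadditivity trick nor Jensen recovers the correct exponent (both reduce to the same divergent first-moment integral). In this range the dominant contribution to $Y^{+}$ comes from scales $t\asymp \log x$ where $\xi G_t$ is atypically large, and one must use the correlation structure of $G$ across scales rather than a scale-by-scale moment bound. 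The paper's Proposition~\ref{expbound} does exactly this: it shows that $\{Y^{+}>x\}$ forces $\sup_{t}(\xi G_t-\xi(Q-\alpha)t)\gtrsim\log x$ up to a superpolynomially small error, and then bounds the running maximum directly to get the tail exponent $2(Q-\alpha)/\xi$ for all $\alpha<Q$. Your negative-moment and $\alpha>Q$ arguments are fine as written.
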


\begin{prop}\label{thing22}
Let $\alpha,\beta\in\R,$ and let $h^{\alpha,\beta}:= h -\alpha\log |\cdot - z| -\beta \log |\cdot - w|.$ Set $\mathbbm{r} = \tfrac{|z-w|}{2}.$ If $\alpha,\beta < Q,$ then for all $p < \tfrac{d \mathsf d_\gamma}{\gamma} (Q-\max\{\alpha,\beta\}),$ there exists a constant $C_p>0$ such that for every choice of $z,w$ as above, we have
\[
\mathbb{E}\left(\left(\mathbbm{r}^{-\xi Q}\mathbbm{r}^{\alpha \xi} e^{-\xi h_{\mathbbm{r}}(z)}D_{h^\alpha}(z,w;B_{8\mathbbm{r}}(z))\right)^p\right) \leq C_p.
\]
If $\max\{\alpha,\beta\}>Q$ then a.s. $D_{h^{\alpha,\beta}}(z,w)=\infty.$
\end{prop}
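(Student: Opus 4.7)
By the symmetry $(z,w,\alpha,\beta)\leftrightarrow(w,z,\beta,\alpha)$, assume WLOG $\alpha=\max\{\alpha,\beta\}$. The plan is to derive Proposition~\ref{thing22} from Proposition~\ref{thing2} in exactly the same way Proposition~\ref{thing11} is derived from Proposition~\ref{thing1}: take the $p$-th moment of the upper bound using the Gaussian moment generating function for the sphere average process, combined via Minkowski's integral inequality.

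For the finite case $\alpha,\beta<Q$, start from the upper bound in Proposition~\ref{thing2}. Dividing by $\mathfrak{c}_\mathbbm{r}\mathbbm{r}^{-\alpha\xi}e^{\xi h_\mathbbm{r}(z)}$ and using $\mathfrak{c}_\mathbbm{r}=\mathbbm{r}^{\xi Q}$ (Theorem~\ref{thm:sharp-c}), it suffices to bound the $p$-th moment of
\[
\mathcal{I}_\gamma:=\int_0^\infty e^{\xi(h_{\mathbbm{r} e^{-t}}(c_\gamma)-h_\mathbbm{r}(z))-\xi(Q-\gamma)t+\psi(t)}\,dt,\qquad \gamma\in\{\alpha,\beta\},
\]
where $c_\alpha=z$ and $c_\beta=w$ (interpreting the integrands in Proposition~\ref{thing2} as sphere averages around the singularity each term controls; this is forced once one unwinds the proof of Proposition~\ref{thing2} and translates so that each term is centered at its own singularity). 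The process $t\mapsto h_{\mathbbm{r} e^{-t}}(c_\gamma)-h_\mathbbm{r}(z)$ is centered Gaussian with variance $t+O(1)$ uniform in $\mathbbm{r}$ and in pairs $(z,w)$ with $|z-w|=2\mathbbm{r}$, the $O(1)$ for $c_\gamma=w$ coming from $\mathrm{Var}(h_\mathbbm{r}(w)-h_\mathbbm{r}(z))=O(1)$ when $|z-w|/\mathbbm{r}$ is fixed. Applying Minkowski's integral inequality (for $p\geq 1$) together with the Gaussian MGF bound $\mathbb{E}[e^{p\xi X_t}]\leq Ce^{p^2\xi^2 t/2}$ reduces $\mathbb{E}[\mathcal{I}_\gamma^p]^{1/p}$ to $\int_0^\infty e^{(p\xi^2/2-\xi(Q-\gamma))t+\psi(t)}\,dt$, which is finite exactly when $p<2(Q-\gamma)/\xi=\tfrac{2(Q-\gamma)\mathsf{d}_\gamma}{\gamma}$ (recall $\psi(t)/t\to 0$). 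For $0<p<1$, a dyadic decomposition of the integral and a standard tail bound on $\sup_{t\in[n,n+1]}X_t$ give the same threshold. The binding constraint is $\gamma=\alpha=\max\{\alpha,\beta\}$, yielding the claimed moment range. Negative $p$ are handled using the lower bound in Proposition~\ref{thing2} together with Proposition~\ref{prop:superpolynomial-cross}.

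For the divergent case $\alpha>Q$, Proposition~\ref{thing1} applied (after translation) to $h^\alpha_z:=h-\alpha\log|\cdot-z|$ yields $D_{h^\alpha_z}(z,z')=\infty$ a.s. for every $z'\neq z$. On any bounded open neighborhood $V$ of $z$ with $w\notin\overline V$, the function $-\beta\log|\cdot-w|$ is bounded continuous, so $h^{\alpha,\beta}|_V=h^\alpha_z|_V+(\text{bounded continuous})$. Weyl scaling (Axiom~\ref{axiom-weyl}) then implies $D_{h^{\alpha,\beta}}(z,\partial V)=\infty$ a.s., whence $D_{h^{\alpha,\beta}}(z,w)=\infty$ a.s.

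The main obstacle is the uniform-in-$(z,w,\mathbbm{r})$ control of the Gaussian integrands $\mathcal{I}_\gamma$, since the sphere averages involved are centered at two distinct points whose distance is comparable to $\mathbbm{r}$. The key inputs here are the scaling invariance of the whole-space LGF (Lemma~\ref{lem:scale-invariance}), which reduces the problem to $\mathbbm{r}=1$, and the uniform bound on $\mathrm{Var}(h_1(w)-h_1(z))$ over $|z-w|=2$; together they give a universal (in $z,w,\mathbbm{r}$) Gaussian description of the processes $h_{\mathbbm{r} e^{-t}}(c_\gamma)-h_\mathbbm{r}(z)$ used above. The second subtlety is the precise form of the integrands in Proposition~\ref{thing2}: one must verify that they can be expressed with sphere averages centered at the relevant singularities, which is consistent with how Proposition~\ref{thing2} is established.
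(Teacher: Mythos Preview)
Your proposal is correct and follows the paper's overall strategy---reduce via Proposition~\ref{thing2} to a moment bound on the exponential integral, then control that integral---but the key technical step is carried out differently. The paper invokes Proposition~\ref{expbound} from the appendix, which gives the sharp two-sided tail estimate $\mathbb{P}(\mathcal{I}_\gamma>x)\asymp x^{-2(Q-\gamma)/\xi+o(1)}$ and reads off the moment range from the upper tail. You instead bound $\mathbb{E}[\mathcal{I}_\gamma^p]$ directly by Minkowski's integral inequality and the Gaussian moment generating function (with a subadditivity/dyadic argument for $0<p<1$). Your route is more elementary and bypasses the appendix entirely; the paper's route yields the matching lower tail, which is genuinely needed later (e.g., in the optimality argument of Lemma~\ref{optimal}). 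Both give the same threshold $p<2(Q-\max\{\alpha,\beta\})/\xi=\tfrac{2(Q-\max\{\alpha,\beta\})\mathsf d_\gamma}{\gamma}$; the ``$d$'' in the displayed range of the proposition appears to be a typo for ``$2$'', consistent with Theorem~\ref{thm:moments} and the remark after it. Your reading of the integrands in Proposition~\ref{thing2} (sphere averages centered at $z$ and $w$ for the $\alpha$- and $\beta$-terms, respectively) is also the correct one, as the proof of Proposition~\ref{thing2} makes clear despite the typographical ``$0$'' in its statement.
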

\begin{proof}[Proof of Proposition \ref{thing11}]
By Proposition \ref{thing1}, with superpolynomially high probability as $C\to\infty$ we have that
\[
\mathbbm{r}^{-\xi Q} \mathbbm{r}^{\alpha\xi}e^{-\xi h_{\mathbbm{r}}(0)}D_{h^\alpha}(0,\p B_{\mathbbm{r}}(0)) \lesssim \int_0^\infty e^{\xi (h_{\mathbbm{r}e^{-t}}(0)-h_{\mathbbm{r}}(0))-\xi(Q-\alpha)t+\psi(t)} dt.
\]
Using Proposition \ref{expbound} to bound the LHS we conclude.
\end{proof}

\begin{proof}[Proof of Proposition \ref{thing22}]
In this case we obtain
\[
\mathbbm{r}^{-\xi Q}\mathbbm{r}^{\alpha \xi} e^{-\xi h_{\mathbbm{r}}(z)}D_{h^\alpha}(z,w;B_{8\mathbbm{r}}(z)) \lesssim \int_0^\infty \left(e^{\xi (h_{\mathbbm{r}e^{-t}}(0)-h_{\mathbbm{r}}(0))-\xi(Q-\alpha)t-\psi(t)}+e^{\xi (h_{\mathbbm{r}e^{-t}}(0)-h_{\mathbbm{r}}(0))-\xi(Q-\beta)t+\psi(t)}\right) dt.
\]
Splitting the integral and using Proposition \ref{expbound} we conclude.
\end{proof}

Items~\ref{moment-point-set} and~\ref{moment-point-point} in Theorem \ref{thm:moments} follow from Propositions \ref{thing1} and \ref{thing2} in the same way that Theorems 1.10 and 1.11 in \cite{lqg-metric-estimates} follow from Propositions 3.14 and 3.15 therein.

We now focus on the proofs of Propositions~\ref{thing1} and~\ref{thing2}.

\begin{proof}[Proof of Proposition \ref{thing1}]
Let $\zeta \in (0,1).$ By Propositions \ref{prop:superpolynomial-around}, \ref{prop:superpolynomial-cross} and Axiom~\ref{axiom-weyl}, together with a union bound over all $[0,C^{\frac{1}{\zeta}}]\cap \mathbb{Z},$ we see that with polynomially high probability  as $C\to 0,$
\begin{equation}\label{cond1}
D_{h^\alpha}(\p B_{\mathbbm{r}e^{-k-1}}(0),\p B_{\mathbbm{r}e^{-k}}(0)) \geq C^{-1} \mathfrak c_{\mathbbm{r} e^{-k}} \mathbbm{r}^{-\xi \alpha} \exp(\xi h_{\mathbbm{r}e^{-k}}(0) + \xi \alpha k)
\end{equation}
\begin{equation}\label{cond2}
\mbox{There is a path from }\p B_{\mathbbm{r}e^{-k-2}(0)} \mbox{ to } \p B_{\mathbbm{r}e^{-k}(0)} \mbox{ with } D_{h^\alpha} \mbox{-length at most }C\mathfrak c_{\mathbbm{r}e^{-k}r^{-\xi \alpha}}\exp(\xi h_{\mathbbm{r}e^{-k}}(0)+\xi \alpha k)
\end{equation}
\begin{equation}\label{cond3}
\mbox{The $D_{h^\alpha}$ distance around }B_{\mathbbm{r}e^{-k}}(0) \setminus \bar{B}_{\mathbbm{r}e^{-k-1}}(0) \mbox{ has $D_{h^\alpha}$-length at most }C \mathfrak c_{\mathbbm{r}e^{-k}}\mathbbm{r}^{-\xi\alpha}\exp(\xi h_{\mathbbm{r}e^{-k}}(0)+\xi \alpha k).
\end{equation}
Moreover, using Propositions \ref{prop:superpolynomial-around} and \ref{prop:superpolynomial-cross} with $\xi^\zeta$ and using another union bound over all $k,$ we see that with superpolynomially high probability as $C\to \infty,$ \eqref{cond1}, \eqref{cond2}, \eqref{cond3} hold along with
\begin{equation}\label{cond4}
\mbox{The $D_{h^\alpha}$-distance around } B_{\mathbbm{r}e^{-k}(0)}\setminus  B_{\mathbbm{r}e^{-k-2}(0)} \mbox{ is at most } k^\zeta \mathfrak c_{\mathbbm{r}}r^{}-\xi \alpha\exp(\xi h_{\mathbbm{r}e^{-k}}(0)+\xi \alpha k) \mbox{ for }k\leq C^{\frac{1}{\zeta}}
\end{equation}
and
\begin{equation}\label{cond5}
\mbox{The $D_{h^\alpha}$-distance around } B_{\mathbbm{r}e^{-k}(0)}\setminus  B_{\mathbbm{r}e^{-k-2}(0)} \mbox{ is at most } k^\zeta \mathfrak c_{\mathbbm{r}}r^{}-\xi \alpha\exp(\xi h_{\mathbbm{r}e^{-k}}(0)+\xi \alpha k) \mbox{ for }k\geq C^{\frac{1}{\zeta}}.
\end{equation}
We will assume from now on that \eqref{cond1}, \eqref{cond2}, \eqref{cond3} hold for $k \leq C^{\frac{1}{\zeta}},$ and \eqref{cond4} and \eqref{cond5} hold for $k\geq C^{\frac{1}{\zeta}},$ since their intersection is an event with superpolynomially high probability.

These conditions imply that with superpolynomially high probability, we have
\begin{eqnarray}\label{rightside}
C^{-1} \frac{\mathfrak c_\mathbbm{r}}{\mathbbm{r}^{\alpha\xi}} \sum_{k=0}^{\lfloor C^{\frac{1}{\zeta}}\rfloor} e^{\xi h_{\mathbbm{r}e^{-k}}(0)-\xi(Q-\alpha)k-\phi(k)}\leq D_{h^\alpha}(0,\p B_\mathbbm{r}(0))\nonumber\\
\leq C \frac{\mathfrak c_\mathbbm{r}}{\mathbbm{r}^{\alpha\xi}} \sum_{k=0}^{\lfloor C^{\frac{1}{\zeta}}\rfloor} e^{\xi h_{\mathbbm{r}e^{-k}}(0)-\xi(Q-\alpha)k-\phi(k)} + \frac{\mathfrak c_\mathbbm{r}}{\mathbbm{r}^{\alpha\xi}} \sum_{k=\lfloor C^{\frac{1}{\zeta}}\rfloor+1}^\infty k^\zeta e^{\xi h_{\mathbbm{r}e^{-k}}(0)-\xi(Q-\alpha)k-\phi(k)}.
\end{eqnarray}
Moreover by \eqref{cond2} and \eqref{cond3} we have that the $D_{h^\alpha}$-distance around $B_{\mathbbm{r}}(0)\setminus B_{\frac{\mathbbm{r}}{e}}(0)$ is at most the right hand side of \eqref{rightside}.

Now using the superpolynomial tail estimates on $h_{\mathbbm{r}e^{-t}}(0)-h_{\mathbbm{r}}(0)$ together with a union bound we see that with superpolynomially high probability as $C\to \infty,$ we have
\begin{equation}\label{cond6}
\sup_{t \in [k,k+1]}|h_{\mathbbm{r}e^{-t}}(0)-h_{\mathbbm{r}e^{-k}}(0)| \leq \frac{1}{\xi} \log C
\end{equation}
for all integers $k \leq C^{\frac{1}{\zeta}}.$ Letting $\psi(t)=\phi(\lfloor t\rfloor),$ we see that if \eqref{cond6} holds, then for all integers $k \leq C^{\frac{1}{\zeta}}$
\[
e^{\xi h_{\mathbbm{r}e^{-k}}(0)-\xi(Q-\alpha)k-\phi(k)} \geq C^{-1} \int_k^{k+1} e^{\xi h_{\mathbbm{r}e^{-t}}(0)-\xi(Q-\alpha)t -\psi(t)} dt,
\]
\[
e^{\xi h_{\mathbbm{r}e^{-k}}(0)-\xi(Q-\alpha)k+\phi(k)} \leq C \int_k^{k+1} e^{\xi h_{\mathbbm{r}e^{-t}}(0)-\xi(Q-\alpha)t +\psi(t)} dt,
\]
and hence after summing over integers $k\leq C^{\frac{1}{\zeta}}$ we obtain
\begin{equation}\label{eq1}
\sum_{k=0}^{\lfloor C^{\frac{1}{\zeta}}\rfloor}e^{\xi h_{\mathbbm{r}e^{-k}}(0)-(Q-\alpha)k-\phi(k)} \geq C^{-1} \int_0^{\lfloor C^{\frac{1}{\zeta}}\rfloor+1} e^{\xi h_{\mathbbm{r}e^{-t}}(0)-\xi(Q-\alpha)t-\psi(t)} dt,
\end{equation}
\begin{equation}\label{eq2}
\sum_{k=0}^{\lfloor C^{\frac{1}{\zeta}}\rfloor}e^{\xi h_{\mathbbm{r}e^{-k}}(0)-(Q-\alpha)k+\phi(k)} \leq C \int_0^{\lfloor C^{\frac{1}{\zeta}}\rfloor+1} e^{\xi h_{\mathbbm{r}e^{-t}}(0)-\xi(Q-\alpha)t+\psi(t)} dt,
\end{equation}

Since $t \mapsto h_{\mathbbm{r}e^{-t}(0)}-h_{\mathbbm{r}}(0)$ has a superpolynomial tail, if $q$ is chosen small enough such that $\xi q (Q-\alpha)\geq \frac{\xi^2 q^2}{2},$ then
\begin{eqnarray*}
\mathbb{E}\left(\left(\int_{\lfloor C^{\frac{1}{\zeta}}\rfloor}^\infty e^{\xi h_{\mathbbm{r}e^{-t}}(0)-\xi(Q-\alpha)t +\psi(t)} dt\right)^q\right)& \lesssim& e^{q h_{\mathbbm{r}}(0)} \int_{\lfloor C^{\frac{1}{\zeta}}\rfloor}^\infty \exp\left(-\left(\xi q(Q-\alpha -\frac{\xi^2 q^2}{2})\right)t +o_t(t)\right) dt\\
&\lesssim& e^{q h_{\mathbbm{r}}(0)} \exp\left(-\frac{1}{2}\left(\xi q (Q-\alpha) -\frac{\xi^2 q^2}{2}\right) C^{\frac{1}{\zeta}}\right).
\end{eqnarray*}
By Chebyshev we obtain that
\[
\mathbb{P}\left(\int_{\lfloor C^{\frac{1}{\zeta}} \rfloor}^\infty e^{\xi h_{\mathbbm{r}e^{-t}}(0)-\xi(Q-\alpha)t + \psi(t)}dt>e^{\xi h_{\mathbbm{r}}(0)-C^{\frac{1}{2\zeta}}}\right) 
\]
decays faster than any (negative) power of $C.$ On the other hand, by a Gaussian tail bound, we have that
\[
\mathbb{P}\left(\int_0^{\lfloor C^{\frac{1}{\zeta}} \rfloor} e^{\xi h_{\mathbbm{r}e^{-t}}(0)-\xi(Q-\alpha)t + \psi(t)}dt<e^{\xi h_{\mathbbm{r}}(0)-C^{\frac{1}{2\zeta}}}\right) 
\]
Decays faster than any negative power of $C,$ thus with superpolynomially high probability,
\[
\int_0^\infty e^{\xi h_{\mathbbm{r}e^{-t}}(0)-\xi(Q-\alpha)t + \psi(t)}dt>e^{\xi h_{\mathbbm{r}}(0)-C^{\frac{1}{2\zeta}}}  \leq 2 \int_0^{\lfloor C^{\frac{1}{\zeta}} \rfloor} e^{\xi h_{\mathbbm{r}e^{-t}}(0)-\xi(Q-\alpha)t + \psi(t)}dt
\]
Similarly, with superpolynomially high probability as $C\to\infty$ we have that
\[
\sum_{k=\lfloor C^{\frac{1}{\zeta}} \rfloor+1}^\infty k^\zeta e^{\xi h_{\mathbbm{r}e^{-k}}(0)-\xi(Q-\alpha)k-\phi(k)} \leq \int_0^\infty e^{\xi h_{\mathbbm{r}e^{-t}}(0)-\xi(Q-\alpha)t-\phi(t)} dt.
\]
Combining these last two statements with \eqref{eq1} and \eqref{eq2} we obtain that with superpolynomially high probability as $C\to \infty,$ \eqref{prev} and the statement right after hold $2C^2$ instead of $C.$ Since this is true with superpolynomially high probability as $C\to \infty,$ this completes the $\alpha<Q$ case.

If $\alpha>Q,$ a.s. $e^{\xi h_{\mathbbm{r}e^{-k}}(0)-\xi(Q-\alpha)k-\phi(k)} \geq e^{\beta k}$ for any $0< \beta< Q-\alpha$ for large enough $k.$ By Borel-Cantelli, this implies that $D_{h^\alpha} (0,\p B_{\mathbbm{r}}(0)) \geq C^{-1} e^{\beta \lfloor C^{\frac{1}{\zeta}}\rfloor} \to \infty$ as $C\to \infty.$ Since this holds for all rational $\mathbbm{r},$ this completes the proof.
\end{proof}

\begin{proof}[Proof of Proposition \ref{thing2}]
Note that by the LQG coordinate change, the conclusions of Proposition \ref{thing1} still hold if we instead $z$ in place of $0$ with a uniform in $z$ and $\mathbbm{r}$ rate. Since any path between $z$ and $w$ must contain a path from $z$ to $\p B_{|z-w|/2}(z)$ and a path from $w$ to $\p B_{|z-w|/2}(w),$ by the lower bound in Lemma \ref{thing1} we obtain the first assertion in Proposition \ref{thing2}. We also obtain the assertion in the case $\alpha>Q$ or $\beta>Q.$

Now, suppose $\alpha<Q.$ Using Proposition \ref{thing1} for $8\mathbbm{r}$ instead of $\mathbbm{r}$ we obtain that there is path from $z$ to $\p B_{8 \mathbbm{r}}(z)$ of length at most $\int_{-\log 8}^\infty e^{\xi h_{\mathbbm{r}e^{-t}}(z)-\xi(Q-\alpha)t + \psi(t)}$ and the distance around $B_{8\mathbbm{r}}(z)\setminus B_{8\mathbbm{r}/e}(z)$ is also at most $\int_{-\log 8}^\infty e^{\xi h_{\mathbbm{r}e^{-t}}(z)-\xi(Q-\alpha)t + \psi(t)},$ with identical statements for $w.$ Since $w \in B_{8\mathbbm{r}}(z),$ this implies the Proposition's statement with 0 replaced by $-\log 8.$ To obtain the full result, we first use the fact that $t\mapsto h_{\mathbbm{r}e^{-t}}(z)-h_{\mathbbm{r}}(z)$ has an exponential tail to see that
\[
\sup_{t \in [-\log 8,0]} h_{\mathbbm{r}e^{-t}}(z)\leq \inf_{[0,\log 2]} h_{\mathbbm{r}e^{-t}}(z)+\log C
\]
with superpolynomial probability as $C\to\infty.$ Hence with superpolynomial probability as $C\to\infty$ we have
\begin{eqnarray*}
\int_{-\log 8}^\infty e^{\xi h_{\mathbbm{r}e^{-t}}(z)-\xi(Q-\alpha)t + \psi(t)} \leq \int_0^\infty e^{\xi h_{\mathbbm{r}e^{-t}}(z)-\xi(Q-\alpha)t + \psi(t)} + C^\zeta \int_0^{\log 2} e^{\xi h_{\mathbbm{r}e^{-t}}(z)-\xi(Q-\alpha)t + \psi(t)},
\end{eqnarray*}
with an identical estimate for $w.$ After combining with the statement of Proposition \ref{thing2} with $0$ replaced by $-\log 8$ we conclude.
\end{proof}

Finally we prove Theorem~\ref{thm:moments}.

\begin{proof}[Proof of Theorem~\ref{thm:moments}]
    Item~\ref{moment-diam} follows from Proposition~\ref{compactbound}. Item~\ref{moment-set-set} follows from Proposition~\ref{prop:superpolynomial-cross}. Items~\ref{moment-point-set} and~\ref{moment-point-point} are proved in Section~\ref{subsec:moment-other}.
\end{proof}

\subsection{H\"older Continuity}\label{subsec:holder}

In this section, we prove Theorem~\ref{thm:holder-continuous}. We will need the following important lemma.

\begin{prop}\label{holderprop}
Let $K \subset \mathbb{R}^d$ be a compact set and let $\chi \in (0, \xi (Q-\sqrt{2d}))$ and $\chi' > \xi (Q+\sqrt{2d}).$ For each $\mathbbm{r}>0,$ we have with polynomially probability that as $\e \to 0$ at a rate uniform in $\mathbbm{r}$ that
\[
\left|\frac{u-v}{\mathbbm{r}}\right|^{\chi'} \leq \mathfrak c_{\mathbbm{r}}^{-1} e^{-\xi h_{\mathbbm{r}}(0)} D_h(u,v)\leq \left|\frac{u-v}{\mathbbm{r}}\right|^{\chi}
\]
for all $u,v \in \mathbbm{r}K$ with $|u-v|\leq \e\mathbbm{r}.$
\end{prop}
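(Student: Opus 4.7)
The plan is a two-sided chaining argument over dyadic scales. By Theorem~\ref{thm:sharp-c} we may take $\mathfrak c_r=r^{\xi Q}$. Fix a compact neighborhood $\bar K$ of $K$ and a small $\delta>0$, and set $\mathcal L_k:=\mathbbm r\cdot(2^{-k-5}\mathbb Z^d\cap \bar K)$ (cardinality $O(2^{dk})$) for each $k\geq 0$. The three ingredients I will combine are: (i) Proposition~\ref{compactbound} with Chebyshev, which for each $p\in(0,p_0)$ gives
\[
\sup_{x,y\in B_{2^{-k}\mathbbm r}(z)}D_h(x,y;B_{2\cdot 2^{-k}\mathbbm r}(z))\;\leq\; 2^{-k(\xi Q-\delta)}\mathfrak c_{\mathbbm r}e^{\xi h_{2^{-k}\mathbbm r}(z)},
\]
failing with probability $O(2^{-kp\delta})$ uniformly in $k,z,\mathbbm r$; (ii) Proposition~\ref{prop:superpolynomial-cross} applied to $K_1=\partial B_{1/2}(0)$, $K_2=\partial B_1(0)$, $U=A_{1/4,2}(0)$ at scale $2^{-k}\mathbbm r$, translated by $z$ via Axiom~\ref{axiom-translation}, which gives the matching superpolynomial lower tail
\[
D_h\bigl(\partial B_{2^{-k-1}\mathbbm r}(z),\partial B_{2^{-k}\mathbbm r}(z);\,A_{2^{-k-2}\mathbbm r,2^{-k+1}\mathbbm r}(z)\bigr)\;\geq\;2^{-k(\xi Q+\delta)}\mathfrak c_{\mathbbm r}e^{\xi h_{2^{-k}\mathbbm r}(z)};
\]
and (iii) the standard Gaussian tail bound $|h_{2^{-k}\mathbbm r}(z)-h_{\mathbbm r}(0)|\leq(\sqrt{2d}+\delta)k\log 2$, whose failure probability is $O(2^{-k(\sqrt{2d}+\delta)^2/2+O(1)})$ per $(z,k)$. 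Taking $p$ close to $p_0=\tfrac{2d\mathsf d_\gamma}{\gamma^2}$ so that $p\delta>d$ and union bounding over $|\mathcal L_k|=O(2^{dk})$ points and over $k\geq\log_2\epsilon^{-1}$, the total failure is $O(\epsilon^q)$ for some $q>0$ uniformly in $\mathbbm r$.

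On this good event, the upper and lower Hölder bounds follow by direct chaining. For the upper bound, given $u,v\in\mathbbm r K$ with $|u-v|\leq\epsilon\mathbbm r$, I pick $k$ with $2^{-k-1}\mathbbm r\leq|u-v|\leq 2^{-k}\mathbbm r$ and $z\in\mathcal L_k$ nearest to $u$; then both $u$ and $v$ lie in $B_{2\cdot 2^{-k}\mathbbm r}(z)$, and combining (i) with (iii) gives $D_h(u,v)\leq\mathfrak c_{\mathbbm r}e^{\xi h_{\mathbbm r}(0)}(2^{-k})^{\xi(Q-\sqrt{2d})-O(\delta)}$. For the lower bound, I pick $k$ with $2^{-k+3}\mathbbm r\leq|u-v|\leq 2^{-k+4}\mathbbm r$ and $z\in\mathcal L_k$ within $2^{-k-5}\mathbbm r$ of $u$, so that $u\in B_{2^{-k-1}\mathbbm r}(z)$ while $|v-z|\geq|u-v|-|u-z|>2^{-k}\mathbbm r$. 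Any path from $u$ to $v$ then contains a sub-arc from $\partial B_{2^{-k-1}\mathbbm r}(z)$ to $\partial B_{2^{-k}\mathbbm r}(z)$ lying in $A_{2^{-k-1}\mathbbm r,2^{-k}\mathbbm r}(z)\subset A_{2^{-k-2}\mathbbm r,2^{-k+1}\mathbbm r}(z)$, whose length is bounded below by (ii) combined with (iii), giving $D_h(u,v)\geq\mathfrak c_{\mathbbm r}e^{\xi h_{\mathbbm r}(0)}(2^{-k})^{\xi(Q+\sqrt{2d})+O(\delta)}$. Since $2^{-k}\asymp|u-v|/\mathbbm r$ on the chosen scale, shrinking $\delta$ relative to the gaps $\xi(Q-\sqrt{2d})-\chi$ and $\chi'-\xi(Q+\sqrt{2d})$ absorbs both the $O(\delta)$ errors and the $O(1)$ constants into a slight change of exponent, yielding the stated inequalities.

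The main technical delicacy is the bookkeeping in (i): the per-scale polynomial tail rate $O(2^{-kp\delta})$ must beat the union-bound factor $2^{dk}$ coming from $|\mathcal L_k|$. This forces $p\delta>d$, which is compatible with $p<p_0$ precisely because $p_0=\tfrac{2d\mathsf d_\gamma}{\gamma^2}$ is the sharp exponent in Proposition~\ref{compactbound}; one must first pick $p$ close to $p_0$ and only then shrink $\delta$. A secondary issue, specific to $d\geq 3$, is that the higher-dimensional setting precludes planar intersection arguments to force crossings; however, the Hölder statement fortunately only requires crossings of concentric shells, which is automatic once one picks the scale so that $u$ lies inside the inner ball and $v$ outside the outer ball.
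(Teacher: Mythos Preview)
Your lower-bound argument (the $\chi'$ side) is fine, but the upper-bound argument has a genuine gap that prevents you from reaching the full range $\chi\in(0,\xi(Q-\sqrt{2d}))$. The problem is in the last paragraph, where you write ``one must first pick $p$ close to $p_0$ and only then shrink $\delta$.'' These two requirements are incompatible: once $p<p_0$ is fixed, the union-bound constraint $p\delta>d$ forces $\delta>d/p_0=\gamma^2/(2\mathsf d_\gamma)=\xi\gamma/2$, which is a \emph{fixed positive} lower bound. Plugging your estimates (i) and (iii) together gives the exponent $\xi(Q-\sqrt{2d})-(1+\xi)\delta$, so the best your scheme can reach is $\chi<\xi(Q-\sqrt{2d}-\gamma/2)=\xi(d/\gamma-\sqrt{2d})$. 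For $\gamma\geq\sqrt{d/2}$ this is non-positive and the argument gives nothing; for smaller $\gamma$ it gives a strictly suboptimal exponent.

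The loss comes from separating the diameter bound (normalized by the small-scale average $e^{\xi h_{2^{-k}\mathbbm r}(z)}$) from the circle-average fluctuation: controlling them pointwise on the same high-probability event throws away their near-independence. The paper's fix (Lemma~\ref{lemmathing}) is to combine them \emph{inside} the moment: since $h_{2\epsilon\mathbbm r}(z)-h_{\mathbbm r}(z)$ is independent of $e^{-\xi h_{2\epsilon\mathbbm r}(z)}D_h(\cdot,\cdot;B_{2\epsilon\mathbbm r}(z))$, one gets directly
\[
\mathbb E\Bigl[\bigl(\mathfrak c_{\mathbbm r}^{-1}e^{-\xi h_{\mathbbm r}(0)}\textstyle\sup_{u,v\in B_{\epsilon\mathbbm r}(z)}D_h(u,v;B_{2\epsilon\mathbbm r}(z))\bigr)^p\Bigr]\leq \epsilon^{\xi Qp-\xi^2p^2/2+o_\epsilon(1)},
\]
and then Chebyshev with the \emph{optimal} choice $p=(\xi Q-\chi)/\xi^2$ yields a tail of order $\epsilon^{(\xi Q-\chi)^2/(2\xi^2)}$. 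This beats the $\epsilon^{-d}$ union bound exactly when $\chi<\xi(Q-\sqrt{2d})$, which is the sharp range. The same mechanism (with negative moments from Proposition~\ref{prop:superpolynomial-cross}) handles the $\chi'$ side, though as noted your separated argument already suffices there because (ii) has superpolynomial tails.
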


For this, we will need the following lemma.
\begin{lemma}\label{lemmathing}
For all $s \in (0,\xi Q),$ each $\mathbbm{r}>0$ and each $z \in \mathbbm{r}K,$
\[
\mathbb{P}\left(\sup_{u,v \in B_{\e\mathbbm{r}}(z)}D_h(u,v;B_{2\e\mathbbm{r}}(z))\leq \e^s \mathfrak c_{\mathbbm{r}}e^{\xi h_{\mathbbm{r}}(0)}\right) \geq 1 - \e^{\frac{(\xi Q-s)^2}{2\xi^2} +o_\e(1)}
\]
as $\e \to 0.$
\end{lemma}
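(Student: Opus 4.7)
The plan is to combine the diameter moment bound from Proposition~\ref{compactbound} with a Gaussian moment for the spherical–average gap $Z := h_{\epsilon\mathbbm{r}}(z) - h_{\mathbbm{r}}(0)$, and then to apply Markov's inequality at the optimal exponent to extract the sharp Gaussian tail.

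First, by translation invariance (Axiom~\ref{axiom-translation}), Weyl scaling (Axiom~\ref{axiom-weyl}), and the scaling invariance of the LGF (Lemma~\ref{lem:scale-invariance}), I would reduce Proposition~\ref{compactbound} to the setup at scale $\epsilon\mathbbm{r}$ centered at $z$ (taking $K=\overline{B_1(0)}$ and $U=B_2(0)$), obtaining
\[
\mathbb{E}[X^p] \leq c_p \quad \text{for all } p<p_0=\tfrac{2d\mathsf{d}_\gamma}{\gamma^2},
\]
where $X:=\mathfrak{c}_{\epsilon\mathbbm{r}}^{-1}\,e^{-\xi h_{\epsilon\mathbbm{r}}(z)}\sup_{u,v\in B_{\epsilon\mathbbm{r}}(z)}D_h(u,v;B_{2\epsilon\mathbbm{r}}(z))$, with $c_p$ uniform in $\epsilon,\mathbbm{r},z\in\mathbbm{r}K$. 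Using $\mathfrak{c}_r=r^{\xi Q}$ (Theorem~\ref{thm:sharp-c}), the complement of the desired event is exactly $\{Xe^{\xi Z}>\epsilon^{s-\xi Q}\}$, and $Z$ is centered Gaussian with variance $\sigma^2=\log\epsilon^{-1}+O(1)$ uniformly in $z\in\mathbbm{r}K$ (by direct covariance computations for the LGF).

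Markov's inequality at exponent $p$ then gives $\mathbb{P}(Xe^{\xi Z}>\epsilon^{s-\xi Q})\leq \epsilon^{p(\xi Q-s)}\,\mathbb{E}[X^pe^{p\xi Z}]$. The key estimate is
\[
\mathbb{E}[X^pe^{p\xi Z}] \leq c'_p\,\exp(p^2\xi^2\sigma^2/2),
\]
which is immediate if $X$ and $Z$ are independent. Optimizing the resulting bound over $p$ at $p^*=(\xi Q-s)/\xi^2$ produces the sharp exponent $(\xi Q-s)^2/(2\xi^2)$; the admissibility condition $p^*<p_0$ is equivalent to $\xi Q-s<2d/\mathsf{d}_\gamma$, which always holds since $\xi Q=(d+\gamma^2/2)/\mathsf{d}_\gamma<2d/\mathsf{d}_\gamma$ when $\gamma^2<2d$. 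To handle the correlation between $X$ (a fine-scale functional on $B_{2\epsilon\mathbbm{r}}(z)$) and $Z$ (involving spherical averages up to scale $\mathbbm{r}$), I would apply the domain Markov property (Lemma~\ref{lem:markov-whole}) at an intermediate scale $r_0\in(\epsilon\mathbbm{r},\mathbbm{r})$: decompose $h=h^{\mathrm{ext}}+h^{\mathrm{int}}$ with $h^{\mathrm{ext}}$ the $s$-harmonic extension of $h|_{B_{r_0}(z)^c}$ and $h^{\mathrm{int}}$ an independent LGF on $B_{r_0}(z)$, and then invoke the good event from Lemma~\ref{lem:absolute-continuity} to show that, up to a bounded Radon–Nikodyn factor, $X$ is a functional of $h^{\mathrm{int}}$ alone and hence essentially independent of the external part of $Z$. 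This reduces the joint moment essentially to the independent case.

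The main technical obstacle is precisely this decoupling of the fine- and coarse-scale pieces of $h$: a naive union-bound split of $\{Xe^{\xi Z}>M\}$ into $\{X>A\}\cup\{e^{\xi Z}>M/A\}$ loses a positive constant in the exponent regardless of the choice of $A$, producing only $\epsilon^{(\xi Q-s)^2/(2\xi^2)+C}$ for some $C>0$ rather than the claimed $o(1)$ correction, since the moment bound $\mathbb{P}(X>A)\leq c_p A^{-p}$ is limited by $p<p_0$. An equivalent clean route is a Cameron–Martin shift (Proposition~\ref{prop:cameron}) by the minimal $\mathcal{H}^s$-norm function $f$ with $f_{\epsilon\mathbbm{r}}(z)-f_{\mathbbm{r}}(0)=\tfrac{\xi Q-s}{\xi}\log\epsilon^{-1}$: by Riesz representation $\|f\|^2=\tfrac{(\xi Q-s)^2}{\xi^2}\log\epsilon^{-1}(1+o(1))$, the target event becomes typical under the shifted measure (since $Z$ acquires mean $\tfrac{\xi Q-s}{\xi}\log\epsilon^{-1}$), and the Radon–Nikodyn factor contributes exactly $\exp(-\|f\|^2/2)=\epsilon^{(\xi Q-s)^2/(2\xi^2)+o(1)}$, which is the claimed bound.
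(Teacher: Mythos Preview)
Your overall strategy matches the paper's exactly: apply the diameter moment bound of Proposition~\ref{compactbound} at scale $\epsilon\mathbbm r$, combine with a Gaussian exponential moment for the spherical-average increment, then Markov at the optimal exponent $p^*=(\xi Q-s)/\xi^2$. Your admissibility check $p^*<p_0$ is also identical to the paper's implicit use of $\gamma^2<2d$.

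The only place you diverge is the decoupling of $X$ and $Z$, which you treat as the main obstacle and propose to resolve via a domain-Markov decomposition at an intermediate radius plus the absolute-continuity Lemma~\ref{lem:absolute-continuity}, or alternatively by a Cameron--Martin tilt. Both would work, but the paper bypasses this entirely with one observation: normalize at radius $2\epsilon\mathbbm r$ rather than $\epsilon\mathbbm r$. By locality (Axiom~\ref{axiom-local}) and Weyl scaling (Axiom~\ref{axiom-weyl}), the quantity
\[
e^{-\xi h_{2\epsilon\mathbbm r}(z)}\,D_h(\,\cdot\,,\,\cdot\,;B_{2\epsilon\mathbbm r}(z))
\]
is measurable with respect to $(h-h_{2\epsilon\mathbbm r}(z))|_{B_{2\epsilon\mathbbm r}(z)}$, and the paper asserts this is independent of $h_{2\epsilon\mathbbm r}(z)-h_{\mathbbm r}(z)$. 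The joint moment then factors \emph{exactly}, with no Radon--Nikodym correction and no loss in the exponent. The difference $h_{\epsilon\mathbbm r}(z)-h_{2\epsilon\mathbbm r}(z)$ has $O(1)$ variance, so switching the normalization radius is harmless and absorbed into the $o_\epsilon(1)$. Your more elaborate decoupling machinery is correct but unnecessary here.
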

\begin{proof}
Note that $h_{2 \e \mathbbm{r}}(z)-h_{\mathbbm{r}}(z)$ is a centered Gaussian with variance $\log \e^{-1} - \log 2$ and is independent from $\left(h-h_{2\e\mathbbm{r}}(z)\right)|_{B_{2\e\mathbbm{r}}(z)}.$ Then by axioms II, III, $h_{2\e\mathbbm{r}}(z) - h_{\mathbbm{r}}(z)$ is independent from
\[
D_{h-h_{2\e\mathbbm{r}}(z)(u,v; B_{2\mathbbm{r}})(z)} = e^{-\xi h_{2\e\mathbbm{r}}(z)} D_h(u,v;B_{2\e\mathbbm{r}}(z)).
\]
Now using Theorem~\ref{thm:sharp-c} and Proposition \ref{compactbound}, we have
\begin{eqnarray*}
&&\mathbb{E}\left(\left(\mathfrak c_\mathbbm{r}^{-1}e^{-\xi h_{\mathbbm{r}}(0)}\sup_{u,v \in B_{\e\mathbbm{r}}} D_h(u,v;B_{2\e\mathbbm{r}}(z))\right)^p\right)\\ && = \left(\frac{c_{\e\mathbbm{r}}}{\mathfrak c_{\mathbbm{r}}}\right)^p \mathbb{E}\left(e^{\xi_p (h_{\e\mathbbm{r}}(z)-h_{\mathbbm{r}}(z))}\right) \mathbb{E}\left(\left(\mathfrak c_\mathbbm{r}^{-1}e^{-\xi h_{\mathbbm{r}}(0)}\sup_{u,v \in B_{\e\mathbbm{r}}} D_h(u,v;B_{2\e\mathbbm{r}}(z))\right)^p\right)\\
&&\leq \e^{\xi Q_p - \frac{\xi^2p^2}{2}+o_\e(1)}.
\end{eqnarray*}
Combining this with Chebyshev we see that
\[
\mathbb{P}\left(\sup_{u,v \in B_{\e\mathbbm{r}}} D_h(u,v;B_{2\e\mathbbm{r}}(z)) > \e^s \mathfrak c_{\mathbbm{r}} e^{\xi h_{\mathbbm{r}(z)}}\right) \leq \e^{\e^{p \xi Q}-\frac{p^2\xi^2}{2}-ps +o_\e(1)}.
\]
Taking $p = \frac{(\xi Q-s)}{\xi^2}$ we conclude.
\end{proof}
\begin{lemma}
For each $\chi \in (0, \xi (Q-\sqrt{2d}))$ and each $\mathbbm{r}>0,$ it holds with polynomially high probability as $\e \to 0$ at a rate which is uniform in $\mathbbm{r}$ that
\[
\mathfrak c_{\mathbbm{r}}^{-1} e^{-\xi h_{\mathbbm{r}}(0)} D_h(u,v;B_{2|u-v|}(u))\leq \left|\frac{u-v}{\mathbbm{r}}\right|^\chi
\]
for all $u,v \in \mathbbm{r}K$ with $|u-v| \leq \e\mathbbm{r}.$ Moreover, it also holds with polynomially high probability as $\e\to 0$ at a rate uniform in $\mathbbm{r}$ that for each $K \in \mathbb{N}_0$ and each cube of side length $2^{-K}\e\mathbbm{r},$ with corners in $2^{-K}\e \mathbbm{r} \mathbb{Z}^d$ which intersects $\mathbbm{r}K,$ we have
\[
\mathfrak c_{\mathbbm{r}}^{-1} e^{-\xi h_{\mathbbm{r}}(0)}\sup_{u,v \in S} D_h(u,v ; S) \leq (2^{-k}\e)^{\chi}.
\]
\end{lemma}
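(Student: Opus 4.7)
The plan is to deduce both bounds from Lemma \ref{lemmathing} by applying it at every dyadic scale simultaneously. I would fix an intermediate exponent $s \in (\chi, \xi(Q - \sqrt{2d}))$, which is possible by hypothesis. This choice guarantees
$$
\tfrac{(\xi Q - s)^2}{2\xi^2} - d > 2\eta
$$
for some $\eta > 0$, which is the engine that makes the dyadic union bound summable. Set $\delta_n := 2^{-n}\epsilon\mathbbm{r}$ for $n \in \mathbb{N}_0$. For each $n$, I would cover a bounded Euclidean neighborhood of $\mathbbm{r} K$ by balls $\{B_{\delta_n}(z)\}_{z \in \mathcal Z_n}$ with $|\mathcal Z_n| = O_{K,d}(\delta_n^{-d})$, arranged so that every dyadic cube of side length $\delta_n$ with corners in $\delta_n \mathbb Z^d$ meeting $\mathbbm{r} K$ is contained in some $B_{\delta_n}(z)$.

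Applying Lemma \ref{lemmathing} with small parameter $2^{-n}\epsilon$ and exponent $s$ to each $z \in \mathcal Z_n$, and then taking a union bound, the probability that any dyadic cube $S$ at scale $n$ meeting $\mathbbm{r} K$ violates
$$
\sup_{u,v \in S} D_h(u,v;S) \leq (2^{-n}\epsilon)^s \mathfrak c_\mathbbm{r} e^{\xi h_\mathbbm{r}(0)}
$$
is at most $O_{K,d}(\delta_n^{-d}) \cdot (2^{-n}\epsilon)^{\frac{(\xi Q - s)^2}{2\xi^2} + o_\epsilon(1)} \leq (2^{-n}\epsilon)^{2\eta}$ for $\epsilon$ small enough, uniformly in $n$ since $2^{-n}\epsilon \leq \epsilon$. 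Summing over $n \geq 0$ yields a total failure probability of $O(\epsilon^{2\eta})$, polynomially small in $\epsilon$, with uniformity in $\mathbbm{r}$ inherited from Lemma \ref{lemmathing}.

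On this good event, using $s > \chi$ and $2^{-n}\epsilon < 1$ (valid for $\epsilon < 1$), we have $(2^{-n}\epsilon)^s \leq (2^{-n}\epsilon)^\chi$, which is the second assertion. For the first assertion, given $u,v \in \mathbbm{r} K$ with $|u-v| \leq \epsilon\mathbbm{r}$, I would choose the unique $n$ with $\delta_n \in (\tfrac{|u-v|}{20\sqrt d}, \tfrac{|u-v|}{10\sqrt d}]$. Then $u$ and $v$ lie in a chain of at most $O_d(1)$ neighboring dyadic cubes of side $\delta_n$ all meeting $\mathbbm{r} K$, and by the triangle inequality the union of this chain is contained in $B_{2|u-v|}(u)$. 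Concatenating paths realizing the internal-diameter bound in each cube yields a path from $u$ to $v$ inside $B_{2|u-v|}(u)$ of $D_h$-length at most $C_d (|u-v|/\mathbbm{r})^s \mathfrak c_\mathbbm{r} e^{\xi h_\mathbbm{r}(0)}$. Since $s > \chi$, once $\epsilon$ is small enough that $C_d \epsilon^{s-\chi} \leq 1$ the constant is absorbed, giving the desired $(|u-v|/\mathbbm{r})^\chi \mathfrak c_\mathbbm{r} e^{\xi h_\mathbbm{r}(0)}$.

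The main obstacle is ensuring the union bound across all dyadic scales is summable; this is exactly what the hypothesis $\chi < \xi(Q-\sqrt{2d})$ provides through the strict inequality $\tfrac{(\xi Q - s)^2}{2\xi^2} > d$. Without it, the geometric factor $\delta_n^{-d}$ would overwhelm the Gaussian tail in Lemma \ref{lemmathing} at fine scales, and the multiscale argument would fail.
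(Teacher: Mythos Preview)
Your proposal is correct and follows essentially the same route as the paper: apply Lemma~\ref{lemmathing} at each dyadic scale $2^{-n}\varepsilon$, take a union bound over a lattice of $O((2^{-n}\varepsilon)^{-d})$ centers, and sum over $n\ge 0$; the hypothesis $\chi<\xi(Q-\sqrt{2d})$ is exactly what makes the exponent $\tfrac{(\xi Q-s)^2}{2\xi^2}$ exceed $d$ so that the double union bound is summable with a polynomial margin. The only difference is cosmetic: the paper applies Lemma~\ref{lemmathing} with $s=\chi$ directly, whereas you insert an intermediate $s\in(\chi,\xi(Q-\sqrt{2d}))$ so that the $O_d(1)$ constant from your chain-of-cubes argument can be absorbed into $(|u-v|/\mathbbm r)^{s-\chi}$ for small $\varepsilon$; this is a cleaner way to get the stated inequality without a leading constant.
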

\begin{proof}
The bound \eqref{eq1} follows from Lemma \ref{lemmathing} applied with $s=\chi$ and with $2^{-K}\e$ for $K \in \mathbb{N}_0$ instead of $\e,$ and a union bound over all $z \in B_{\e\mathbbm{r}} \cap (2^{-K-2}\e\mathbbm{r}\mathbb{Z}^d)$ and then over all $K \in \mathbb{N}_0.$
\end{proof}
\begin{lemma}
Let $s>\xi Q,$ $\mathbbm{r}>0$ and $z \in \mathbbm{r}K.$ Then
\[
\mathbb{P}(D_h(B_{\e\mathbbm{r}}(z), \p B_{2\e\mathbbm{r}}(z))) \geq 1-\e^{\frac{(s-\xi Q)^2}{2\xi^2}+o_\e(1)}
\]
as $\e \to 0$ uniformly over $\mathbbm{r}$ and $z \in \mathbbm{r}K.$
\end{lemma}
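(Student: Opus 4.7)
The plan is to mirror Lemma~\ref{lemmathing}, replacing the positive-moment bound on the internal diameter by a negative-moment bound on the across distance. Define
\[
F := \mathfrak c_{2\e\mathbbm{r}}^{-1} e^{-\xi h_{2\e\mathbbm{r}}(z)} D_h\bigl(B_{\e\mathbbm{r}}(z),\p B_{2\e\mathbbm{r}}(z)\bigr).
\]
By Weyl scaling (Axiom~\ref{axiom-weyl}), locality (Axiom~\ref{axiom-local}), translation invariance (Axiom~\ref{axiom-translation}), and the scale invariance of the whole-space LGF (Lemma~\ref{lem:scale-invariance}), the law of $F$ depends only on $(h-h_{2\e\mathbbm{r}}(z))|_{B_{2\e\mathbbm{r}}(z)}$ and is therefore identical for every $\e\mathbbm{r}>0$ and every $z\in\mathbb R^d$. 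In particular $F$ is independent of the centered Gaussian $h_{2\e\mathbbm{r}}(z)-h_\mathbbm{r}(z)$, which has variance $\log\e^{-1}-\log 2 + O(1)$. Applying Theorem~\ref{thm:moments}\ref{moment-set-set} to the disjoint connected compact sets $\overline{B_{1/2}(0)}$ and $\p B_1(0)$ inside $U=B_2(0)$ yields $\mathbb E[F^{-p}]<\infty$ for every $p\ge 0$, with a constant uniform in $\e\mathbbm{r}$ and $z$.

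Next I would use Theorem~\ref{thm:sharp-c} to write $\mathfrak c_{2\e\mathbbm{r}}/\mathfrak c_\mathbbm{r}=(2\e)^{\xi Q}$. Combining this with the independent decomposition above, for each $p>0$
\[
\mathbb E\!\left[\bigl(\mathfrak c_\mathbbm{r}^{-1} e^{-\xi h_\mathbbm{r}(z)} D_h(B_{\e\mathbbm{r}}(z),\p B_{2\e\mathbbm{r}}(z))\bigr)^{-p}\right]
= (2\e)^{-p\xi Q}\,\mathbb E\!\left[e^{-\xi p(h_{2\e\mathbbm{r}}(z)-h_\mathbbm{r}(z))}\right]\,\mathbb E[F^{-p}]
= \e^{-p\xi Q - \xi^2 p^2/2 + o_\e(1)}.
\]
Markov's inequality then gives
\[
\mathbb P\!\left(D_h(B_{\e\mathbbm{r}}(z),\p B_{2\e\mathbbm{r}}(z))< \e^s \mathfrak c_\mathbbm{r} e^{\xi h_\mathbbm{r}(z)}\right)
\le \e^{p(s-\xi Q) - \xi^2 p^2/2 + o_\e(1)}.
\]
Choosing the optimal $p=(s-\xi Q)/\xi^2>0$ (admissible since $s>\xi Q$) produces exponent $(s-\xi Q)^2/(2\xi^2)$, exactly the target decay rate.

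It remains to convert the bound from $h_\mathbbm{r}(z)$ to $h_\mathbbm{r}(0)$. By Lemma~\ref{lem:scale-invariance} the Gaussian increment $h_\mathbbm{r}(z)-h_\mathbbm{r}(0)$ has variance bounded uniformly in $\mathbbm{r}>0$ and $z\in\mathbbm{r}K$, so with probability at least $1-\e^{100}$ we have $|h_\mathbbm{r}(z)-h_\mathbbm{r}(0)|\le \log\log\e^{-1}$, and the corresponding multiplicative error is absorbed into the $o_\e(1)$ in the exponent. I expect the only delicate point to be the uniformity in $(\mathbbm{r},z)$ of the constant $\mathbb E[F^{-p}]$; once this is recognized as a restatement of the fact that the rescaled across-distance of $A_{1/2,1}(0)$ has a law independent of $\mathbbm{r}$ and $z$ (via Weyl scaling together with Lemma~\ref{lem:scale-invariance}), the rest of the argument is a direct one-scale computation.
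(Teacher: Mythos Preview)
Your proposal is correct and follows essentially the same route as the paper's proof: obtain uniform finite negative moments for the normalized across-distance (you invoke Theorem~\ref{thm:moments}\ref{moment-set-set}, the paper cites the equivalent Proposition~\ref{prop:superpolynomial-cross}/\ref{prop:superpolynomial-around}), use the independence of this quantity from the sphere-average increment $h_{2\e\mathbbm{r}}(z)-h_{\mathbbm{r}}(z)$ together with Theorem~\ref{thm:sharp-c}, and then apply Chebyshev and optimize over $p=(s-\xi Q)/\xi^2$. The only cosmetic differences are that you normalize by $h_{2\e\mathbbm{r}}(z)$ rather than $h_{\e\mathbbm{r}}(z)$ and you include the final $h_\mathbbm{r}(z)\to h_\mathbbm{r}(0)$ step explicitly; both are absorbed into the $o_\e(1)$.
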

\begin{proof}
The proof is similar to that of Lemma \ref{lemmathing}. By Proposition \ref{prop:superpolynomial-around}, $c_{\e\mathbbm{r}}^{-1} e^{-\xi h_{\e\mathbbm{r}}(z)} D_h(B_{\e\mathbbm{r}}(z) , \p B_{2\e\mathbbm{r}}(z))$ has finite moments of all negative orders which are bounded above uniformly over all $z \in \mathbb{C}$ and $\mathbbm{r}>0.$ By the same computations as in Lemma \ref{lemmathing}, we have
\[
\mathbb{E}\left(\mathfrak c_{\mathbbm{r}}^{-1}e^{-\xi h_{\mathbbm{r}}(z)} D_h(B_{\e\mathbbm{r}}(z) , \p B_{2\e\mathbbm{r}}(z))^{-p}\right) = \e^{-\xi Qp - \frac{\xi^2 p^2}{2}+o_\e(1)}
\]
Uniformly over all $z \in \mathbb{C}$ and $\mathbbm{r}>0.$ The rest of the proof proceeds similarly using Chebyshez’s inequality.
\end{proof}
\begin{lemma}\label{other}
For each $\chi’>\xi (Q+\sqrt{2d})$ and each $\mathbbm{r}>0,$ it holds with polynomially high probability as $\e\to 0$ at a rate which is uniform in $\mathbbm{r}>0$ that
\[
\mathfrak c_{\mathbbm{r}}^{-1} e^{-\xi h_{\mathbbm{r}}(0)} D_h(u,v) \geq \left|\frac{u-v}{\mathbbm{r}}\right|^{\chi’}
\]
for all $|u-v|\leq \e.$
\end{lemma}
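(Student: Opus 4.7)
The plan is to apply the preceding lemma at a dyadic family of scales together with a spatial grid of centers, then conclude by a union bound. Fix $s \in (\xi(Q+\sqrt{2d}),\,\chi')$, which is possible since $\chi' > \xi(Q+\sqrt{2d})$. For each integer $n\ge 0$, set $\delta_n := 2^{-n}\e$, and let $\mathcal{Z}_n$ be a $(\delta_n\mathbbm{r}/8)$-spaced lattice in a fixed compact neighborhood of $\mathbbm{r}K$, so that $|\mathcal{Z}_n| = O(\delta_n^{-d})$. Applying the preceding lemma to the ball $B_{\delta_n\mathbbm{r}/8}(z)$ (i.e., with $\delta_n/8$ in place of $\e$) and using its uniformity in the scale and center, for each $z\in\mathcal{Z}_n$ we obtain
\[
\mathbb{P}\Bigl[D_h\bigl(B_{\delta_n\mathbbm{r}/8}(z),\,\p B_{\delta_n\mathbbm{r}/4}(z)\bigr) \;\ge\; (\delta_n/8)^{s}\,\mathfrak{c}_{\mathbbm{r}}\,e^{\xi h_{\mathbbm{r}}(0)}\Bigr] \;\ge\; 1 - \delta_n^{(s-\xi Q)^2/(2\xi^2)+o(1)},
\]
with the $o(1)$ uniform in $\mathbbm{r}$ and $z$ as $\delta_n\to 0$. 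A union bound over $z\in\mathcal{Z}_n$ and $n\ge 0$ bounds the total failure probability by
\[
\sum_{n\ge 0} O(\delta_n^{-d})\cdot \delta_n^{(s-\xi Q)^2/(2\xi^2)+o(1)} \;=\; \sum_{n\ge 0}(2^{-n}\e)^{(s-\xi Q)^2/(2\xi^2)-d+o(1)},
\]
which, since $(s-\xi Q)^2/(2\xi^2) > d$ by our choice of $s$, is $O(\e^{\kappa})$ for some $\kappa > 0$, uniformly in $\mathbbm{r}$.

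On the complementary good event, take $u,v\in\mathbbm{r}K$ with $|u-v|\le\e\mathbbm{r}$ and let $n\ge 0$ be the unique integer with $\delta_{n+1}\mathbbm{r} < |u-v| \le \delta_n\mathbbm{r}$. Pick $z\in\mathcal{Z}_n$ with $|z-u|\le\delta_n\mathbbm{r}/8$; then $u\in B_{\delta_n\mathbbm{r}/8}(z)$ and
\[
|z-v| \;\ge\; |u-v| - |z-u| \;\ge\; \delta_n\mathbbm{r}/2 - \delta_n\mathbbm{r}/8 \;=\; 3\delta_n\mathbbm{r}/8 \;>\; \delta_n\mathbbm{r}/4,
\]
so $v$ lies outside $\overline{B_{\delta_n\mathbbm{r}/4}(z)}$. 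Every path from $u$ to $v$ therefore crosses the shell $A_{\delta_n\mathbbm{r}/8,\,\delta_n\mathbbm{r}/4}(z)$, and the defining inequality of the good event gives
\[
\mathfrak{c}_{\mathbbm{r}}^{-1}\,e^{-\xi h_{\mathbbm{r}}(0)}\,D_h(u,v) \;\ge\; 8^{-s}\,\delta_n^{s} \;\ge\; 8^{-s}\,(|u-v|/\mathbbm{r})^{s} \;\ge\; (|u-v|/\mathbbm{r})^{\chi'},
\]
where the last step uses $s<\chi'$ together with $|u-v|/\mathbbm{r}\le\e$, which can be made smaller than $8^{-s/(\chi'-s)}$ for $\e$ sufficiently small.

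The principal obstacle is the summability in the union bound: to outweigh the $O(\delta_n^{-d})$ spatial grid at each dyadic scale, the Gaussian-type exponent $(s-\xi Q)^2/(2\xi^2)$ must strictly exceed $d$, which forces $s > \xi(Q+\sqrt{2d})$. This is precisely the threshold that determines the optimal H\"older exponent $\xi^{-1}(Q+\sqrt{2d})^{-1}$ appearing in Theorem~\ref{thm:holder-continuous}; the complementary upper bound in Proposition~\ref{holderprop} (already proved via the first direction) combined with the present lower bound then establishes both sides of the H\"older continuity.
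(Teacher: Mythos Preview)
Your proof is correct and follows essentially the same approach as the paper: apply the preceding crossing-distance lower bound at dyadic scales $2^{-n}\e$ over a lattice of centers, then take a union bound, with summability guaranteed precisely by $(s-\xi Q)^2/(2\xi^2)>d$. Your introduction of an intermediate exponent $s\in(\xi(Q+\sqrt{2d}),\chi')$ to absorb the constant factors is a clean detail that the paper's terse one-line sketch leaves implicit.
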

\begin{proof}
This follows from Lemma \ref{lemmathing} applied with $s = \chi’$ and with $2^{-K}\e$ for $K \in \mathbb{N}_0$ instead of $\e$ together with a union bound over $z \in B_{\e\mathbbm{r}}(K)\cap \left(2^{-K-2}\e\mathbbm{r}\mathbb{Z}^d\right)$ and then over all $K \in \mathbb{N}_0$ and then over all $K \in \mathbb{N}_0.$
\end{proof}
Proposition \ref{holderprop} now follows from Lemma \ref{lemmathing} and Lemma \ref{other}.

Finally the last step is to show that the H\"older exponent is optimal.

\begin{lemma}\label{optimal}
Let $V \in \R^d$ be an open set. Then almost surely, the identity map on $V$ equipped with the Euclidean metric to $(V,D_h|_V)$ is not h\"older for any exponent greater than $\xi (Q-\sqrt{2d}).$ Moreover the inverse of this map is not H\"older continuous for any exponent greater than $\xi^{-1} (Q+\sqrt{2d})^{-1}.$ 
\end{lemma}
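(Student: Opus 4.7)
The plan is to deduce optimality from the existence of $\alpha$-thick and $\alpha$-thin points of the log-correlated Gaussian field (for every $\alpha\in(0,\sqrt{2d})$) combined with the superpolynomial bounds on $D_h$-across and $D_h$-around distances obtained in Propositions~\ref{prop:superpolynomial-cross} and~\ref{prop:superpolynomial-around}. Recall that by \cite{thick,otherthick} the LGF satisfies $\limsup_{r\to 0}|h_r(z)|/\log r^{-1}\le \sqrt{2d}$ almost surely uniformly on compact sets, and for each $\alpha\in(0,\sqrt{2d})$ there almost surely exist thick points $z\in V$ with $\liminf_{n\to\infty} h_{r_n}(z)/\log r_n^{-1}\ge \alpha$ along some sequence $r_n\to 0$ of negative powers of $2$; by the symmetry $h\leftrightarrow -h$ there also exist $\alpha$-thin points with $\limsup_{n\to\infty} h_{r_n}(z)/\log r_n^{-1}\le -\alpha$ along some such sequence.

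For the first assertion, fix $\chi>\xi(Q-\sqrt{2d})$ and choose $\alpha\in(0,\sqrt{2d})$ such that $\xi(Q-\alpha)<\chi$. Apply Proposition~\ref{prop:superpolynomial-cross} to each dyadic pair $(K_1,K_2)=(\overline{B_{r/2}(w)},\partial B_r(w))$ for $w\in V\cap 2^{-n}\mathbb{Z}^d$ and $r\in\{2^{-k}\}_{k\ge 0}$, together with a union bound and Borel-Cantelli. Using Theorem~\ref{thm:sharp-c} to replace $\mathfrak c_r$ by $r^{\xi Q}$, this yields a (random, finite) $A>0$ and a polylogarithmic factor such that almost surely
\[
D_h\bigl(\partial B_{r/2}(w),\partial B_r(w)\bigr)\ge A^{-1}(\log r^{-1})^{-A}\,r^{\xi Q}e^{\xi h_r(w)}
\]
simultaneously for all such $(w,r)$. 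Combined with the almost sure continuity of the sphere average process (Lemma~\ref{lem:sphereavg}) which lets us approximate a thick point $z\in V$ by lattice points $w_n$, this gives along the thick-point subsequence $r_n\to 0$
\[
D_h\bigl(\partial B_{r_n/2}(z),\partial B_{r_n}(z)\bigr)\ge r_n^{\xi(Q-\alpha)+o_n(1)}.
\]
Taking $u_n\in\partial B_{r_n/2}(z)$ and $v_n\in\partial B_{r_n}(z)$, we have $|u_n-v_n|\le 2r_n$ while $D_h(u_n,v_n)\ge r_n^{\xi(Q-\alpha)+o_n(1)}$, so $D_h(u_n,v_n)/|u_n-v_n|^{\chi}\to\infty$, contradicting $\chi$-H\"older continuity of the identity map on any neighborhood of $z$. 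Since $\chi>\xi(Q-\sqrt{2d})$ is arbitrary, the first claim follows.

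For the second assertion, fix $\chi'>\xi^{-1}(Q+\sqrt{2d})^{-1}$ and choose $\alpha\in(0,\sqrt{2d})$ with $\xi^{-1}(Q+\alpha)^{-1}<\chi'$. Applying the upper-bound half of Propositions~\ref{prop:superpolynomial-cross} and~\ref{prop:superpolynomial-around} with a Borel-Cantelli argument in the same way, there is almost surely a random $A>0$ such that for all $(w,r)$ along the dyadic lattice
\[
\sup_{u,v\in\partial B_r(w)} D_h\bigl(u,v;A_{r/2,2r}(w)\bigr)\le A(\log r^{-1})^A\,r^{\xi Q}e^{\xi h_r(w)}.
\]
Let $z\in V$ be an $\alpha$-thin point with sequence $r_n\to 0$, and pick $u_n,v_n\in\partial B_{r_n}(z)$ with $|u_n-v_n|\asymp r_n$. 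Then $D_h(u_n,v_n)\le r_n^{\xi(Q+\alpha)-o_n(1)}$, so
\[
\frac{|u_n-v_n|}{D_h(u_n,v_n)^{\chi'}}\ge r_n^{1-\xi(Q+\alpha)\chi'+o_n(1)}\to\infty,
\]
contradicting $\chi'$-H\"older continuity of the inverse identity map at $z$.

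The main technical obstacle will be upgrading the ``tightness-in-law along each fixed dyadic scale'' provided by Propositions~\ref{prop:superpolynomial-cross}-\ref{prop:superpolynomial-around} and Theorem~\ref{thm:sharp-c} to an a.s. statement holding \emph{simultaneously} along the random subsequence $(r_n)$ of scales produced by the thick/thin point at the random location $z$. The key trick will be to apply the superpolynomial bounds to a countable family of deterministic lattice pairs $(w,r)$, absorb the resulting union-bound loss into polylogarithmic corrections that are harmless compared with the polynomial factors $r^{\xi(Q\pm\alpha)}$, and then transfer from the lattice point $w$ closest to $z$ to $z$ itself using the H\"older regularity of $D_h$ from Proposition~\ref{holderprop} and the continuity of the sphere average process (Lemma~\ref{lem:sphereavg}).
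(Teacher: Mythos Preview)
Your approach is correct and genuinely different from the paper's. The paper proceeds via the Gaussian multiplicative chaos rooted-measure description: it samples a point $\mathbbm{z}$ from the $\alpha$-GMC measure $\mu_h^\alpha$, so that after tilting the field near $\mathbbm{z}$ looks like $\tilde h-\alpha\log|\cdot-\mathbbm{z}|$ for an untilted LGF $\tilde h$; then Proposition~\ref{thing1} together with Proposition~\ref{expbound} gives directly $D_h(\mathbbm{z},\partial B_r(\mathbbm{z}))=r^{\xi(Q-\alpha)+o_r(1)}$, handling both directions at once as $\alpha$ ranges over $(-\sqrt{2d},\sqrt{2d})$. Your route instead combines the a.s.\ existence of $\alpha$-thick and $\alpha$-thin points with the scale-uniform distance bounds of Propositions~\ref{prop:superpolynomial-cross}--\ref{prop:superpolynomial-around}, avoiding GMC entirely. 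The paper's argument is more self-contained (it does not invoke thick-point existence as a black box) and yields the two-sided scaling in one stroke; yours is more elementary once thick/thin points are taken as input.

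One correction: the union bound over $O(r^{-d})$ lattice points at each dyadic scale cannot be absorbed into a \emph{polylogarithmic} prefactor. Superpolynomial decay $o(A^{-M})$ only gives a summable total failure probability when $A$ grows like a small power $r^{-\delta}$, so the a.s.\ simultaneous bound you actually obtain is $D_h(\partial B_{r/2}(w),\partial B_r(w))\ge r^{\xi Q+\delta}e^{\xi h_r(w)}$. This is harmless, since $\chi>\xi(Q-\alpha)$ strictly lets you take $\delta$ small enough that $\xi(Q-\alpha)+\delta<\chi$ (and analogously in the thin-point direction). Also, for the transfer from lattice points to the random thick/thin point, Lemma~\ref{lem:sphereavg} is not quite the right citation: what you need is the uniform increment bound $\sup_{|z-w|\le r}|h_r(z)-h_r(w)|=o(\log r^{-1})$ a.s., which follows from the $O(1)$ variance of these increments via a lattice union bound as in the argument around~\eqref{eq:sec5.1.1-2}.
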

\begin{proof}
By Axiom \ref{axiom-weyl}, we can assume without loss of generality that $h_1(0)=0.$ We can additionally assume that $V$ is bounded with a smooth boundary.

Let $h^V$ be the zero boundary part of $h$ chosen so that $h-h^V$ is a random function such that $\Delta^{\frac{d}{2}}(h-h^V) = 0.$

Let $\alpha \in (-\sqrt{2d},\sqrt{2d}).$  Let $\mathbbm{z}$ be sampled uniformly from $\mu_h^\alpha$ normalized to be a probability measure. Let $\tilde{\mathbb{P}}$ be the law of $(h,\mathbbm{z})$ weighted by the total mass $\mu_{h^V}^\alpha(V).$ Then a sample of the $\tilde{\mathbbm{P}}$ measure can be obtained by first sampling $\tilde{h}$ from the unweighted marginal law of $h,$ then independently sampling $\mathbbm{z}$ uniformly with respect to the Lebesgue measure and setting $h = \tilde{h}-\alpha \log |\cdot - \mathbbm{z}|+g_\mathbbm{z}$ where $g_\mathbbm{z}$ is a deterministic continuous function.

By Proposition \ref{thing1}, together with the fact that $g_\mathbbm{z}$ is bounded in a neighborhood of $\mathbbm{z}$ and Borel-Cantelli we have that
\[
D_h(\mathbbm{z},\p B_r(\mathbbm{z})) =r^{o_r(1)} \frac{c_r}{r^{\alpha\xi}} \int_0^\infty e^{\xi \tilde{h}_{re^{-t}}(\mathbbm{z})-\xi(Q-\alpha) + o_t(t)} dt
\]
where the $o_t(t)$ is deterministic and tends to $0$ as $t \to 0.$ Using Proposition \ref{expbound}, we see that
\[
\int_0^\infty e^{\xi \tilde{h}_{re^{-t}}(\mathbbm{z})-\xi(Q-\alpha) t + o_t(t)} dt = r^{o_r(1)} e^{\xi \tilde{h}_r(\mathbbm{z})} = r^{o_r(1)}.
\]
Since $\alpha$ can be chosen to be arbitrarily close to $ \sqrt{2d}$ this implies that the identity map from the Euclidean metric to the LQG metric is not H\"older continuous, and neither is its inverse. This completes the proof.
\end{proof}

We now prove Theorem \ref{thm:holder-continuous}.

\begin{proof}[Proof of Theorem \ref{thm:holder-continuous}.]
The proof follows by combining Proposition \ref{holderprop} and Lemma \ref{optimal}.
\end{proof}

\subsection{KPZ relation for the weak exponential metric: proof of Theorem~\ref{thm:kpz}}\label{sec:KPZ}

In this section, we prove the KPZ relation for the weak exponential metric (Theorem~\ref{thm:kpz}). The proof is similar to Sections 2 and 3 of~\cite{GP-kpz}. 

Recall that $h$ is a whole-space LGF with additive constant chosen so that $h_1(0) = 0$. For $\alpha \in [-\sqrt{2d}, \sqrt{2d}]$, define the thick-point set
\begin{equation}\label{eq:thick-point}
\mathcal{T}^\alpha_h := \Big{\{} z \in \mathbb{R}^d: \lim_{\epsilon \to 0} \frac{h_\epsilon(x)}{\log \epsilon^{-1}} = \alpha \Big{\}}.
\end{equation}
By \cite[Theorem 4.1]{GHM-KPZ}\footnote{Theorem 4.1 in~\cite{GHM-KPZ} is stated for the two-dimensional GFF, but the same proof extends to the LGF in $d \geq 3$.}, we know that if $X$ is a deterministic Borel set or a random Borel set independent of $h$, then the Euclidean Hausdorff dimension of $X \cap \mathcal{T}^\alpha_h$ is given by
\begin{equation}\label{eq:dim-thick}
{\rm dim}^0_{\mathcal H}(X \cap \mathcal{T}^\alpha_h) = \max \Big{\{} {\rm dim}^0_{\mathcal H}(X) - \frac{\alpha^2}{2}, 0 \Big{\}}.
\end{equation}
Moreover, when ${\rm dim}^0_{\mathcal H}(X) < \frac{\alpha^2}{2}$, we have $X \cap \mathcal{T}^\alpha_h = \emptyset$ a.s.

To prove Theorem~\ref{thm:kpz}, we first prove the following proposition.

\begin{prop}\label{prop:kpz-1}    
Let $\gamma \in (0,\sqrt{2d})$ and let $D_h$ be a weak $\gamma$-exponential metric. If $X$ is a deterministic Borel set or a random Borel set independent of $h$ and $\alpha \in [-\sqrt{2d}, \sqrt{2d}]$, then almost surely
    $$
    {\rm dim}^\gamma_{\mathcal H}(X \cap \mathcal{T}^\alpha_h) =  \frac{1}{\xi(Q - \alpha)} {\rm dim}^0_{\mathcal H}(X \cap \mathcal{T}^\alpha_h). 
    $$
\end{prop}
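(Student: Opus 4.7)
My plan is to reduce Proposition~\ref{prop:kpz-1} to a uniform two-sided asymptotic for the $D_h$-size of small Euclidean balls centered at $\alpha$-thick points, and then deduce the dimension equality by standard Hausdorff-measure covering arguments, following the two-dimensional proof of~\cite{GP-kpz}.

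\textbf{The key two-sided bound.} I aim to show that for every $\varepsilon>0$, almost surely every $z\in\mathcal T^\alpha_h$ satisfies, for all sufficiently small $r>0$,
\begin{equation}\label{eq:kpz-plan-sided}
r^{\xi(Q-\alpha)+\varepsilon} \;\le\; D_h\bigl(z,\partial B_r(z)\bigr) \;\le\; \sup_{u,v\in\overline{B_r(z)}} D_h(u,v) \;\le\; r^{\xi(Q-\alpha)-\varepsilon}.
\end{equation}
By Theorem~\ref{thm:sharp-c} we may take $\mathfrak c_r=r^{\xi Q}$, and by definition of $\mathcal T^\alpha_h$ we have $h_r(z)=-\alpha\log r+o(\log r^{-1})$, so the natural target is $\mathfrak c_r e^{\xi h_r(z)} = r^{\xi(Q-\alpha)+o(1)}$. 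At a deterministic lattice center $w$ and a deterministic dyadic scale $r_n=2^{-n}$, the superpolynomial concentration in Propositions~\ref{prop:superpolynomial-cross} and~\ref{prop:superpolynomial-around}, applied to $K_1=\partial B_{r_n/50}(w), K_2=\partial B_{r_n/4}(w)$ and to the diameter of $\overline{B_{2r_n}(w)}$, pins both distances to $\mathfrak c_{r_n}e^{\xi h_{r_n}(w)}$ up to multiplicative factors $r_n^{\pm\varepsilon/2}$ off an event of superpolynomial probability. A union bound over polynomially many pairs $(w,r_n)\in (r_n\mathbb Z^d\cap B_R(0))\times\{2^{-n}\}$ combined with Borel--Cantelli upgrades this to a simultaneous bound for all such $(w,r_n)$ for all $n$ sufficiently large.

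\textbf{Transfer to arbitrary thick points and the covering argument.} For $z\in\mathcal T^\alpha_h$, pick the nearest lattice point $w$ with $|z-w|\le r_n/100$: then any path from $z$ to $\partial B_{r_n}(z)$ crosses the shell $A_{r_n/50,r_n/4}(w)$ (giving the lower bound in \eqref{eq:kpz-plan-sided}), while $\overline{B_{r_n}(z)}\subset\overline{B_{2r_n}(w)}$ (giving the upper bound). The Gaussian comparison $h_{r_n}(w)=h_{r_n}(z)+O(1)$ for $|z-w|\le r_n/100$, together with the thick-point asymptotic at $z$, then converts $\mathfrak c_{r_n}e^{\xi h_{r_n}(w)}$ into $r_n^{\xi(Q-\alpha)+o(1)}$; monotonicity (or H\"older continuity from Theorem~\ref{thm:holder-continuous}) interpolates across scales. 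Granting \eqref{eq:kpz-plan-sided}, both inequalities in Proposition~\ref{prop:kpz-1} follow by the classical covering argument: for the upper bound, take any small-radii Euclidean cover $\{B_{r_i}(z_i)\}$ of $X\cap\mathcal T^\alpha_h$ with centers in $X\cap\mathcal T^\alpha_h$ and $\sum_i r_i^{s}<\delta$ for $s>\dim_{\mathcal H}^0(X\cap\mathcal T^\alpha_h)$, and note that \eqref{eq:kpz-plan-sided} bounds each $D_h$-diameter by $r_i^{\xi(Q-\alpha)-\varepsilon}$, so $\mathcal H^{s/(\xi(Q-\alpha)-\varepsilon)}_{D_h}(X\cap\mathcal T^\alpha_h)\le\delta$; for the lower bound, the inequality $D_h(z,\partial B_r(z))\ge r^{\xi(Q-\alpha)+\varepsilon}$ implies that any small $D_h$-ball at a thick point is contained in a Euclidean ball of radius equal to the $D_h$-radius to the power $1/(\xi(Q-\alpha)+\varepsilon)$, so any $D_h$-cover of $X\cap\mathcal T^\alpha_h$ (with centers moved into the set at the cost of doubling radii) produces a Euclidean cover giving the matching lower bound. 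Sending $\varepsilon\to 0$ completes the proof.

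\textbf{Main obstacle.} The principal difficulty is the uniform bound \eqref{eq:kpz-plan-sided}. The superpolynomial (rather than merely polynomial) tails in Propositions~\ref{prop:superpolynomial-cross} and~\ref{prop:superpolynomial-around}, and in the sharpened diameter bound underlying Theorem~\ref{thm:moments}~\ref{moment-diam}, are essential, since the union bound must absorb polynomially many lattice centers per scale and a Borel--Cantelli step over all dyadic scales. A secondary subtlety is that the covers in the dimension argument have centers in $X\cap\mathcal T^\alpha_h$ rather than on a lattice; this is handled by the buffer built into the shells $\partial B_{r/50}(w)$, $\partial B_{r/4}(w)$ together with the uniform stochastic continuity of the spherical-average process at fixed scale $r$, which controls the passage $h_{r}(w)=h_{r}(z)+O(1)$ uniformly in the relevant range of $(z,w,r)$.
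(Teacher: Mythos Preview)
Your lower bound on $\dim^\gamma_{\mathcal H}$ (coming from the lower bound on $D_h(z,\partial B_r(z))$) is essentially the paper's Step~2: Proposition~\ref{prop:superpolynomial-cross} does give superpolynomial tails for crossing distances, so the union bound over $O(r_n^{-d})$ lattice centers per scale and the Borel--Cantelli step go through, and the containment of $D_h$-balls in small Euclidean balls follows. That half is fine.

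The upper bound half has a genuine gap. To run your union bound over all lattice centers and then send $\varepsilon\to 0$, you need a \emph{superpolynomial} upper-tail bound on the normalized diameter $\mathfrak c_r^{-1}e^{-\xi h_r(w)}\sup_{u,v\in B_{2r}(w)}D_h(u,v)$. No such bound is available: Theorem~\ref{thm:moments}\ref{moment-diam} (via Proposition~\ref{compactbound}) only yields $p$-th moments for $p<p_0=2d\mathsf d_\gamma/\gamma^2$, i.e.\ a polynomial tail of fixed exponent $p_0$, and neither Proposition~\ref{prop:superpolynomial-cross} nor~\ref{prop:superpolynomial-around} controls the diameter (they bound set-to-set and around distances, respectively; the diameter requires summing contributions from \emph{all} smaller scales, which is why its tail is only polynomial). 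With polynomial tails, your Borel--Cantelli forces $\varepsilon>2d/p_0=\xi\gamma$, and the resulting bound $\dim^\gamma_{\mathcal H}\le \dim^0_{\mathcal H}/\bigl(\xi(Q-\alpha)-\xi\gamma\bigr)$ is strictly weaker than the claim.

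The paper avoids this by not separating the two steps. Instead of first proving a uniform a.s.\ diameter bound and then restricting to thick points, Lemma~\ref{lem:sec4.6-Dh} bounds
\[
\mathbb{E}\Bigl[\mathbbm{1}_{\{|h_r(z)-\alpha\log r^{-1}|\le\delta\log r^{-1}\}}\bigl(\sup_{u,v\in B_r(z)}D_h(u,v)\bigr)^p\Bigr]
\]
directly. Via the Markov decomposition $h=\mathfrak h+\mathring h$ on $B_{2r}(z)$, the thick-point indicator (which is essentially a condition on $\mathfrak h_r(z)$) is approximately independent of the internal diameter (which is determined by $\mathring h$), so the bound picks up an extra factor $r^{(|\alpha|-\delta)_+^2/2}$ from the Gaussian tail of $\mathfrak h_r(z)$. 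Summed over a Euclidean cover of $X$, this extra factor exactly compensates for the merely polynomial diameter tail and makes the covering sum finite for the correct range of $p$.
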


We have the following corollary of Theorem~\ref{thm:kpz} and Proposition~\ref{prop:kpz-1}. 

\begin{cor}\label{cor:hausdorff-2}
    Let $X$ be a deterministic Borel set or a random Borel set independent of $h$, and let $\alpha = Q - \sqrt{Q^2 - 2 \dim_{\mathcal H}^0 X}$. Then ${\rm dim}^\gamma_{\mathcal H}(X) = {\rm dim}^\gamma_{\mathcal H}(X \cap \mathcal{T}^\alpha_h)$. Moreover, we have ${\rm dim}^\gamma_{\mathcal H}(\mathbb{R}^d) = {\rm dim}^\gamma_{\mathcal H}(\mathcal{T}^\gamma_h) = \mathsf d_\gamma$. 
\end{cor}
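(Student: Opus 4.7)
My approach is to derive this corollary as a short computation from Theorem~\ref{thm:kpz}, Proposition~\ref{prop:kpz-1}, and the thick-point dimension formula~\eqref{eq:dim-thick}, with the definition of $\alpha$ in the statement being rigged precisely to produce a key cancellation. Set $d_0 := \dim_{\mathcal H}^0 X$ and $\alpha := Q - \sqrt{Q^2 - 2 d_0}$. When $X$ is random, I would first condition on $X$, so that $d_0$ (and hence $\alpha$) becomes deterministic; since $X$ is independent of $h$, the conditional law of $h$ is unchanged and all previously established results apply verbatim. The first bookkeeping step is to verify $\alpha \in [0, \sqrt{2d}]$ so that Proposition~\ref{prop:kpz-1} is applicable: nonnegativity is immediate from $Q > \sqrt{2d}$ and $d_0 \leq d$, while the upper bound reduces, after squaring, to the same inequality $Q > \sqrt{2d}$.

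The heart of the argument is the algebraic identity built into the choice of $\alpha$: squaring $Q - \alpha = \sqrt{Q^2 - 2 d_0}$ yields $d_0 - \alpha^2/2 = \alpha(Q - \alpha)$, which is automatically nonnegative. Feeding this into~\eqref{eq:dim-thick} I obtain $\dim_{\mathcal H}^0(X \cap \mathcal{T}^\alpha_h) = \alpha(Q - \alpha)$, and then Proposition~\ref{prop:kpz-1} produces
\[
\dim_{\mathcal H}^\gamma(X \cap \mathcal{T}^\alpha_h) = \frac{\alpha(Q - \alpha)}{\xi(Q - \alpha)} = \frac{\alpha}{\xi},
\]
after cancellation of the factor $Q - \alpha$. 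On the other hand, the inverse form of Theorem~\ref{thm:kpz} applied directly to $X$ gives $\dim_{\mathcal H}^\gamma(X) = \xi^{-1}(Q - \sqrt{Q^2 - 2 d_0}) = \alpha/\xi$. Comparing the two expressions establishes the first assertion.

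For the remaining claim I would specialize to $X = \mathbb{R}^d$, for which $d_0 = d$. The defining relation $Q = d/\gamma + \gamma/2$ from~\eqref{eq:def-xi-q} is exactly $(Q - \gamma)^2 = Q^2 - 2d$, so $\alpha = \gamma$, and the first part of the corollary gives $\dim_{\mathcal H}^\gamma(\mathbb{R}^d) = \dim_{\mathcal H}^\gamma(\mathcal{T}^\gamma_h)$. The identification of this common value with $\mathsf d_\gamma$ is the content of Corollary~\ref{cor:hausdorff}, which is equivalent here to $\alpha/\xi = \gamma/\xi = \mathsf d_\gamma$ via the relation $\xi = \gamma/\mathsf d_\gamma$.

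There is no substantive obstacle in this argument; all the analytic work is already in Proposition~\ref{prop:kpz-1} and Theorem~\ref{thm:kpz}. The only points requiring minor care are the measurability reduction from random $X$ to a deterministic value of $d_0$ by conditioning on $X$, and the verification that $\alpha$ lies in the admissible range $[-\sqrt{2d}, \sqrt{2d}]$; both are essentially free from the standing hypotheses.
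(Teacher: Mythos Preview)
Your proposal is correct and follows essentially the same route as the paper: apply Proposition~\ref{prop:kpz-1} with $\alpha = Q - \sqrt{Q^2 - 2\dim_{\mathcal H}^0 X}$ together with~\eqref{eq:dim-thick} to compute $\dim_{\mathcal H}^\gamma(X\cap\mathcal T_h^\alpha)$, and compare with the value of $\dim_{\mathcal H}^\gamma(X)$ given by Theorem~\ref{thm:kpz}. Your additional care in verifying $\alpha\in[0,\sqrt{2d}]$ and in handling random $X$ by conditioning is sound and simply fills in details the paper leaves implicit.
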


\begin{proof}[Proof of Corollary~\ref{cor:hausdorff-2} assuming Theorem~\ref{thm:kpz} and Proposition~\ref{prop:kpz-1}]
    Taking $\alpha =  Q - \sqrt{Q^2 - 2 \dim_{\mathcal H}^0 X}$ in Proposition~\ref{prop:kpz-1} and using~\eqref{eq:dim-thick} gives 
    \[
    {\rm dim}^\gamma_{\mathcal H}(X \cap \mathcal{T}^\alpha_h) = \frac{1}{\xi}\bigl(Q - \sqrt{Q^2 - 2 {\rm dim}_{\mathcal H}^0 X}\bigr),
    \]
    which equals ${\rm dim}^\gamma_{\mathcal H}(X)$ by Theorem~\ref{thm:kpz}.
\end{proof}

We will need the following estimate for $D_h$.

\begin{lemma}\label{lem:sec4.6-Dh}
    Let $R \geq 1$, $\alpha \in [-\sqrt{2d},\sqrt{2d}]$, $\delta >0$, and $p \in (0,\frac{2 d \mathsf d_\gamma}{\gamma^2})$. Then, for all $z \in B_R(0)$,
    \begin{align*}
    \mathbb{E}\Big{[}\mathbbm{1}_{\{|h_r(z) - \alpha \log r^{-1}| \leq \delta \log r^{-1}\}}\Bigl(\sup_{u,v \in B_r(z)} D_h( u, v)\Bigr)^p\Big{]} &\leq r^{\xi p (Q - \alpha - \delta)+ \frac{1}{2}(|\alpha| - \delta)_+^2 + o_r(1)}, \\
    \mathbb{E}\Big{[}\Bigl(\sup_{u,v \in B_r(z)} D_h( u, v)\Bigr)^p\Big{]} &\leq r^{\xi Qp - \frac{1}{2}\xi^2p^2 + o_r(1)} \qquad \mbox{as $r \to 0$},
    \end{align*}
    where the $o_r(1)$ terms are uniform in $z$.
\end{lemma}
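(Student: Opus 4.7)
The plan is to prove the first inequality via Gaussian disintegration on $h_r(z)$ combined with Weyl scaling (Axiom~\ref{axiom-weyl}), and to deduce the second inequality from the first by discretizing $h_r(z)$ and optimizing. The underlying uniform moment bound, obtained by applying Proposition~\ref{compactbound} with $K = \overline{B_1(0)}$ and $U = B_2(0)$ and then translating the base point via Axioms~\ref{axiom-translation} and~\ref{axiom-weyl}, reads
\[
\mathbb{E}\Bigl[\Bigl(\mathfrak{c}_r^{-1} e^{-\xi h_r(w)} \sup_{u,v\in B_r(w)} D_h(u,v;B_{2r}(w))\Bigr)^p\Bigr] \leq c_p \qquad \forall r>0,\ w\in\mathbb{R}^d,\ p \in (0,p_0).
\]

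The Gaussian structure I exploit is the decomposition $h = h_r(z)\psi_{r,z} + \hat h$, where $\psi_{r,z}(w) := \mathrm{Cov}(h(w),h_r(z))/\sigma_r^2$ is the conditional slope, $\sigma_r^2 := \mathrm{Var}(h_r(z)) = \log r^{-1} + O_R(1)$, and $\hat h$ is a centered Gaussian field independent of $h_r(z)$. A direct computation using the spherical mean-value identity $\tfrac{1}{|\partial B_r(z)|}\int_{\partial B_r(z)}\log\tfrac{1}{|w-y|}\,dy = \log\tfrac{1}{\max(|w-z|,r)} + O(1)$—valid in every dimension—yields $\psi_{r,z}(w) = 1 + O_R(1/\log r^{-1})$ uniformly for $w \in B_{2r}(z)$ and $z \in B_R(0)$. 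Applying Axiom~\ref{axiom-weyl} gives $D_h = e^{\xi h_r(z)\psi_{r,z}(\cdot)}\cdot D_{\hat h}$; on the event $E_\delta := \{|h_r(z) - \alpha\log r^{-1}| \leq \delta\log r^{-1}\}$, the bound $|h_r(z)| \leq (|\alpha|+\delta)\log r^{-1}$ combined with the $\psi_{r,z}\approx 1$ estimate gives
\[
\sup_{u,v\in B_r(z)} D_h(u,v;B_{2r}(z)) \leq C_{R,\alpha,\delta}\, e^{\xi h_r(z)} \sup_{u,v\in B_r(z)} D_{\hat h}(u,v;B_{2r}(z)) \qquad \text{on } E_\delta,
\]
and the independence of $\hat h$ from $h_r(z)$ factors the expectation as $\mathbb{E}[\mathbbm{1}_{E_\delta}(\sup D_h)^p] \leq C\, \mathbb{E}[\mathbbm{1}_{E_\delta} e^{\xi p h_r(z)}]\cdot \mathbb{E}[(\sup D_{\hat h})^p]$.

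The Gaussian integral $\mathbb{E}[\mathbbm{1}_{E_\delta} e^{\xi p h_r(z)}]$ is handled by a short case analysis based on whether the mode $\xi p \sigma_r^2$ lies inside, above, or below $[(\alpha-\delta)\log r^{-1},(\alpha+\delta)\log r^{-1}]$; in all three cases the integral is at most $r^{-\xi p(\alpha+\delta) + (|\alpha|-\delta)_+^2/2 + o_r(1)}$. The second factor $\mathbb{E}[(\sup D_{\hat h})^p]$ is controlled via the reverse identity $D_{\hat h} = e^{-\xi h_r(z)\psi_{r,z}(\cdot)}\cdot D_h$ restricted to the event $A := \{|h_r(z)|\leq \log r^{-1}\}$, on which $\xi h_r(z)(\psi_{r,z}-1) = O_R(1)$ on $B_{2r}(z)$. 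This gives $(\sup D_{\hat h})^p \leq C\, e^{-\xi p h_r(z)}(\sup D_h)^p$ on $A$, so Proposition~\ref{compactbound} yields $\mathbb{E}[\mathbbm{1}_A (\sup D_{\hat h})^p] \leq C\, \mathbb{E}[e^{-\xi p h_r(z)}(\sup D_h)^p] \leq C c_p r^{\xi Qp}$; combined with the independence of $\hat h$ from $\mathbbm{1}_A$ and $\mathbb{P}[A]\to 1$, this gives $\mathbb{E}[(\sup D_{\hat h})^p] \leq C'_p r^{\xi Qp}$. Assembling the three factors gives the first inequality.

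For the second inequality I partition $\mathbb{R}$ into intervals of width $\delta\log r^{-1}$ with midpoints $\alpha_n\delta\log r^{-1}$, apply the first inequality with $\delta/2$ and $\alpha = \alpha_n \delta$ to each, and optimize: the exponent $\xi p(Q - \alpha_n - \delta/2) + (|\alpha_n|-\delta/2)_+^2/2$ is maximized at $\alpha_n^* \approx \xi p + \delta/2$ with maximum value $\xi pQ - \xi^2 p^2/2 - O(\delta)$. Summing the $O(\delta^{-1})$ central terms and bounding the tail $|\alpha_n| > \sqrt{2d}+1$ by the same argument plus Gaussian tails gives $\mathbb{E}[(\sup D_h)^p] \leq r^{\xi pQ - \xi^2 p^2/2 - O(\delta) + o_r(1)}$. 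Sending $\delta = \delta_r \downarrow 0$ slowly (e.g., $\delta_r = 1/\log\log r^{-1}$) absorbs the $O(\delta)$ error into $o_r(1)$ and yields the second inequality. The main technical ingredient throughout will be the estimate $\psi_{r,z}(w) = 1 + O_R(1/\log r^{-1})$ on $B_{2r}(z)$, since everything else is bookkeeping; this rests on the $d$-dimensional mean-value identity noted above, which serves as the substitute for the constancy of the harmonic extension used in the two-dimensional GFF arguments.
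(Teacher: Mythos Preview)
Your approach is correct and takes a genuinely different route from the paper's. The paper uses the domain Markov decomposition $h|_{B_{2r}(z)} = \mathfrak h + \mathring h$ from Lemma~\ref{lem:markov-whole}, where $\mathfrak h$ is the $s$-harmonic extension and $\mathring h$ is an independent zero-boundary LGF; the oscillation $O = \sup_{B_{3r/2}(z)}|\mathfrak h - \mathfrak h_r(z)|$ is controlled by Borel--TIS, and H\"older's inequality absorbs it into an $o_r(1)$. You instead use the one-dimensional Gaussian regression $h = h_r(z)\psi_{r,z} + \hat h$, and the role of the oscillation bound is played by the explicit covariance estimate $\psi_{r,z} = 1 + O_R(1/\log r^{-1})$ on $B_{2r}(z)$. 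Your approach is more elementary in that it avoids the Markov property and Borel--TIS, relying only on the spherical-mean computation for $\log\frac{1}{|\cdot|}$; the paper's approach is more structural and recycles machinery already developed for the shell-independence lemma.

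Two remarks on execution. First, your route to the second inequality via discretizing $h_r(z)$ and summing the first inequality over slices is a detour: within your own setup you can write $\sup_{B_r} D_h(\cdot,\cdot;B_{2r}) \le e^{\xi h_r(z) + \xi|h_r(z)|\epsilon_r}\sup_{B_r} D_{\hat h}(\cdot,\cdot;B_{2r})$ with $\epsilon_r = O_R(1/\log r^{-1})$, factor by independence, and compute $\mathbb E[e^{\xi p h_r(z) + \xi p|h_r(z)|\epsilon_r}] \le 2 e^{\xi^2 p^2(1+\epsilon_r)^2\sigma_r^2/2} = r^{-\xi^2 p^2/2 + o_r(1)}$ directly. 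This is essentially what the paper does in its Step~2 (with $\mathfrak h_r(z)$ in place of $h_r(z)$) and sidesteps the bookkeeping of uniform constants across infinitely many $\alpha_n$ and the tail $|\alpha_n| > \sqrt{2d}+1$ (which, since $\xi p$ can exceed $\sqrt{2d}$, may contain the optimizer $\alpha^* \approx \xi p$). Second, a small slip: in your optimization you want the \emph{minimum} of the exponent $\xi p(Q-\alpha-\delta/2) + \tfrac12(|\alpha|-\delta/2)_+^2$, not the maximum, since the smallest exponent gives the dominant term in the sum; the minimum is indeed $\xi pQ - \xi^2 p^2/2 - O(\delta)$ at $\alpha = \xi p + \delta/2$.
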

\begin{proof}
    We assume $r<1$. The constants $C$ in this proof may change from line to line but are independent of $z$ and $r$. By Lemma~\ref{lem:markov-whole}, we have \begin{equation}\label{eq:lem4.27-decompose}
        h|_{B_{2r}(z)} = \mathfrak{h}+ \mathring{h},
    \end{equation}where $\mathfrak{h}$ is a random $s$-harmonic function on $B_{2r}(z)$ determined by $h|_{\mathbb{R}^d \setminus B_{2r}(z)}$, and $\mathring{h}$ is a LGF on $B_{2r}(z)$ (minus its average over $\partial B_1(0)$), which is independent of $h|_{\mathbb{R}^d \setminus B_{2r}(z)}$. 
    
    Define
    \[
    O := \sup_{u,v \in B_{3r/2}(z)}|\mathfrak{h}(u) - \mathfrak{h}(v)|.
    \]
    Since for $u,v \in B_{2r}(z)$, $\mathfrak{h}(u) - \mathfrak{h}(v)$ is a smooth centered Gaussian process in $(u,v)$, the Borel-TIS inequality~\cite{itsjustborell,sudakov} implies that there exists a constant $A>0$ such that 
    \begin{equation}\label{eq:lem4.27-os}
    \mathbb{P}[|O| \geq t] \leq Ae^{-t^2/A}, \qquad t>0.
    \end{equation}
    By the scaling invariance of $h$ (Lemma~\ref{lem:scale-invariance}), we may choose $A$ independent of $z$ and $r$. 

    \medskip
    
    \noindent\textbf{Step 1.} We first show that
    \begin{equation}\label{eq:lem4.27-internal}
       \mathbb{E}\Big{[}\Bigl(\sup_{u,v \in B_r(z)} D_{\mathring h}( u, v; B_{3r/2}(z))\Bigr)^p\Big{]} \leq r^{\xi Qp + o_r(1)} \qquad \mbox{as $r \to 0$}, 
    \end{equation}
    with $o_r(1)$ uniform in $z$. 
    
    By Proposition~\ref{compactbound} and Theorem~\ref{thm:sharp-c} (recall that we may take $\mathfrak c_r = r^{\xi Q}$), we know that for every $s \in (0, \frac{2d \mathsf d_\gamma}{\gamma^2})$ there exists $C_s>0$ such that
    \begin{equation}\label{eq:lem4.27-2}
        \mathbb{E} \Big{[} \Bigl(  e^{-\xi h_{r}(z)} \sup_{u,v \in B_r(z)} D_h\left( u, v ; B_{3r/2}(z)   \right)   \Bigr)^s \Big{]} \leq C_s \, r^{\xi Q s},  \qquad \forall r \in (0,1), \mbox{ } z \in \mathbb{R}^d.
    \end{equation}
    
    On the other hand, by~\eqref{eq:lem4.27-decompose} and Axiom~\ref{axiom-weyl}, 
    \begin{align*}
        \sup_{u,v \in B_r(z)} D_{\mathring h}\left( u, v ; B_{3r/2}(z)   \right) &\leq e^{- \xi \inf_{w \in B_{3r/2}(z)} \mathfrak h(w)}\sup_{u,v \in B_r(z)} D_h\left( u, v ; B_{3r/2}(z)   \right)\\
        &\leq e^{ - \xi \mathfrak h_r(z) + \xi O} \sup_{u,v \in B_r(z)} D_h\left( u, v ; B_{3r/2}(z)   \right)\\
        &= e^{\xi (\mathring{h}_r(z) + O)} \Bigl( e^{-\xi h_r(z) }\sup_{u,v \in B_r(z)} D_h\left( u, v ; B_{3r/2}(z)  \right) \Bigr).
    \end{align*} 
    Fix $q>1$ close enough to $1$ so that $pq < \frac{2d \mathsf d_\gamma}{\gamma^2}$. By H\"older's inequality,
    \begin{align*}
        &\quad \mathbb{E}\Big{[}\Bigl(\sup_{u,v \in B_r(z)} D_{\mathring h}( u, v; B_{3r/2}(z))\Bigr)^p\Big{]} \leq \mathbb{E} \Big[ e^{\xi p (\mathring{h}_r(z) + O)} \Bigl(e^{ - \xi h_r(z)} \sup_{u,v \in B_r(z)} D_h\left( u, v ; B_{3r/2}(z)   \right) \Bigr)^p \Big]\\
        &\leq \mathbb{E}\left[e^{\xi p (\mathring{h}_r(z) + O) q/(q-1)}\right]^{1-1/q}\mathbb{E} \Big[ \Bigl(e^{ - \xi h_r(z)} \sup_{u,v \in B_r(z)} D_h\left( u, v ;B_{3r/2}(z)   \right) \Bigr)^{pq} \Big]^{1/q} \leq C \, r^{\xi Qp},
    \end{align*}
    where in the last inequality we used $\mathbb{E}[\mathring{h}_r(z)^2] = O(1)$,~\eqref{eq:lem4.27-os}, and~\eqref{eq:lem4.27-2} applied with $s = pq$.
    
    \medskip
    
    \noindent\textbf{Step 2.} We now prove the second inequality in the lemma. By~\eqref{eq:lem4.27-decompose} and Axiom~\ref{axiom-weyl},
    \begin{equation}\label{eq:lem4.25-est-metric}
    \begin{aligned}
        \sup_{u,v \in B_r(z)} D_h( u, v; B_{3r/2}(z)) &\leq e^{\xi \sup_{w \in B_{3r/2}(z)} \mathfrak h(w)} \sup_{u,v \in B_r(z)} D_{\mathring h}( u, v; B_{3r/2}(z))  \\
        &\leq e^{\xi (\mathfrak h_r(z) + O)} \sup_{u,v \in B_r(z)} D_{\mathring h}( u, v; B_{3r/2}(z)).
    \end{aligned}
    \end{equation}
    Since $\mathfrak h$ and $\mathring h$ are independent and $D_{\mathring h}( \cdot, \cdot ; B_{3r/2}(z))$ is determined by $\mathring h$ by Axiom~\ref{axiom-local}, we have
    \[
    \mathbb{E}\Big{[}\Bigl(\sup_{u,v \in B_r(z)} D_h( u, v; B_{3r/2}(z))\Bigr)^p\Big{]} \leq \mathbb{E}\Big{[}e^{\xi p (\mathfrak h_r(z) + O)} \Big] \; \mathbb{E}\Big{[}\Bigl(\sup_{u,v \in B_r(z)} D_{\mathring h}( u, v; B_{3r/2}(z))\Bigr)^p\Big{]}.
    \]
    For any $q>1$, H\"older's inequality gives
    \[
    \mathbb{E}\Big{[}e^{\xi p (\mathfrak h_r(z) + O)} \Big] \leq \mathbb{E}\Big[e^{\xi pq\mathfrak h_r(z) }\Big]^{1/q} \; \mathbb{E}\Big[e^{\xi pO q/(q-1) }\Big]^{1 - 1/q} \leq Cr^{-\frac{1}{2} \xi^2p^2q}.
    \]
    Combining this with~\eqref{eq:lem4.27-internal}, we obtain
    \[
    \mathbb{E}\Big{[}\Bigl(\sup_{u,v \in B_r(z)} D_h( u, v; B_{3r/2}(z))\Bigr)^p\Big{]} \leq C r^{\xi Q p - \frac{1}{2} \xi^2p^2q}.
    \]
    Since $q$ can be taken arbitrarily close to $1$, we let $q \to 1$ at a sufficiently slow rate as $r \to 0$. Using that $\sup_{u,v \in B_r(z)} D_h( u, v; B_{3r/2}(z)) \geq \sup_{u,v \in B_r(z)} D_h( u, v)$, we obtain the second inequality.

    \medskip

    \noindent\textbf{Step 3.} We finally prove the first inequality in the lemma. We first consider replacing $h_r(z)$ with $\mathfrak h_r(z)$. By~\eqref{eq:lem4.25-est-metric}, we have
    \begin{align*}
        &\quad \mathbb{E}\Big{[}\mathbbm{1}_{\{|\mathfrak h_r(z) - \alpha \log r^{-1}| \leq \delta \log r^{-1}\}}\Bigl(\sup_{u,v \in B_r(z)} D_h( u, v; B_{3r/2}(z))\Bigr)^p\Big{]} \\
        &\leq \mathbb{E}\Big{[}\mathbbm{1}_{\{|\mathfrak h_r(z) - \alpha \log r^{-1}| \leq \delta \log r^{-1}\}} e^{\xi p (\mathfrak h_r(z) + O)} \Bigl(\sup_{u,v \in B_r(z)} D_{\mathring h}( u, v; B_{3r/2}(z))\Bigr)^p\Big{]}\\
        &= \mathbb{E}\Big{[}\mathbbm{1}_{\{|\mathfrak h_r(z) - \alpha \log r^{-1}| \leq \delta \log r^{-1}\}} e^{\xi p (\mathfrak h_r(z) + O)} \Big] \; \mathbb{E} \Big[ \Bigl(\sup_{u,v \in B_r(z)} D_{\mathring h}( u, v; B_{3r/2}(z))\Bigr)^p\Big{]},
    \end{align*}
    where the last equality is by the independence of $\mathfrak h$ and $\mathring h$.  For any $q>1$, H\"older's inequality gives
    \begin{align*}
        &\quad \mathbb{E}\Big{[}\mathbbm{1}_{\{|\mathfrak h_r(z) - \alpha \log r^{-1}| \leq \delta \log r^{-1}\}} e^{\xi p (\mathfrak h_r(z) + O)} \Big] \leq e^{\xi p(\alpha + \delta) \log r^{-1}} \mathbb{E}\Big{[}\mathbbm{1}_{\{|\mathfrak h_r(z) - \alpha \log r^{-1}| \leq \delta \log r^{-1}\}} e^{\xi p O} \Big]\\
        &\leq e^{\xi p(\alpha + \delta) \log r^{-1}} \mathbb{E}\Big{[}\mathbbm{1}_{\{|\mathfrak h_r(z) - \alpha \log r^{-1}| \leq \delta \log r^{-1}\}}\Big]^{1/q} \mathbb{E}\Big[e^{\xi p O q/(q-1)} \Big]^{1-1/q}\leq r^{-\xi p (\alpha + \delta) + (|\alpha|-\delta)_+^2/(2q) +o_r(1)}.
    \end{align*}
    Sending $q \to 1$ at a sufficiently slow rate as $r \to 0$ and combining with~\eqref{eq:lem4.27-internal} gives
    \[
    \mathbb{E}\Big{[}\mathbbm{1}_{\{|\mathfrak h_r(z) - \alpha \log r^{-1}| \leq \delta \log r^{-1}\}}\Bigl(\sup_{u,v \in B_r(z)} D_h( u, v; B_{3r/2}(z))\Bigr)^p\Big{]}  \leq r^{\xi p (Q - \alpha - \delta) + \frac{1}{2}(|\alpha|-\delta)_+^2 +o_r(1)}.
    \]
    Finally, by Hölder's inequality and $\mathbb{E}[\mathring h_r(z)^2] = O(1)$, for any fixed $\epsilon>0$, there exists $c>0$ such that
    \[
    \mathbb{E}\Big{[}\mathbbm{1}_{\{|\mathring h_r(z)| \geq \epsilon \log r^{-1} \}}\Bigl(\sup_{u,v \in B_r(z)} D_h( u, v; B_{3r/2}(z))\Bigr)^p\Big{]}  \leq r^{c \log r^{-1}}.
    \]
    Combining these two preceding estimates and using  $\sup_{u,v \in B_r(z)} D_h( u, v; B_{3r/2}(z)) \geq \sup_{u,v \in B_r(z)} D_h( u, v)$ gives the first inequality.
\end{proof}

We now prove Proposition~\ref{prop:kpz-1}.
\begin{proof}[Proof of Proposition~\ref{prop:kpz-1}]
Fix $\alpha \in [-\sqrt{2d}, \sqrt{2d}]$. Since $X$ is independent of $h$, by conditioning on $X$ we may assume that $X$ is deterministic. By the countable stability of Hausdorff dimension, we may assume that $X$ is bounded. Suppose that $X \subset B_M(0)$ for some $M>1$. We also assume that ${\rm dim}^0_{\mathcal H}(X) \geq \frac{\alpha^2}{2}$; otherwise $X \cap \mathcal{T}^\alpha_h = \emptyset$ a.s.\ by~\eqref{eq:dim-thick}, and both sides are zero.

\medskip
\noindent\textbf{Step 1.} We first show that 
\begin{equation}\label{eq:prop4.25-upper}
{\rm dim}^\gamma_{\mathcal H}(X \cap \mathcal{T}^\alpha_h) \leq \frac{1}{\xi(Q - \alpha)} {\rm dim}^0_{\mathcal H}(X \cap \mathcal{T}^\alpha_h).
\end{equation}

Fix any
\[
x > {\rm dim}^0_{\mathcal H}(X) \qquad \mbox{and} \qquad \delta>0,
\]
which will eventually tend to ${\rm dim}^0_{\mathcal H}(X)$ and $0$, respectively. For $u>0$ and integers $n\geq 1$, define 
\begin{equation}\label{eq:def-Thaun}
\mathcal{T}_h^{\alpha; u, n} = \Big{\{}z \in \mathbb{R}^d: \frac{h_\epsilon(z)}{\log \epsilon^{-1}} \in [\alpha - u, \alpha + u] \,\mbox{ for all }0 < \epsilon \leq 2^{-n} \Big{\}}.
\end{equation}
Recall $\mathcal{T}_h^{\alpha}$ from~\eqref{eq:thick-point}. Then
\begin{equation}\label{eq:sec5.1.1-inclusion}
\mathcal{T}_h^{\alpha} \subset \bigcup_{n \geq 1}  \mathcal{T}_h^{\alpha; u, n} \quad \mbox{and} \quad \mathcal{T}_h^{\alpha} = \bigcap_{u>0} \bigcup_{n \geq 1}  \mathcal{T}_h^{\alpha; u, n}.
\end{equation}

We now give an upper bound for ${\rm dim}^\gamma_{\mathcal H}(X \cap \mathcal{T}_h^{\alpha; \delta, n})$ for any integer $n \geq 1$. Observe that there exists a random $\overline{r} > 0$ (depending on $h$) such that the following holds for all $\epsilon \in (0, \overline{r}]$ and $u, v \in B_{2M}(0)$ with $|u-v| \leq \epsilon$:
\begin{equation}\label{eq:sec5.1.1-2}
|h_\epsilon(u) - h_\epsilon(v)| \leq \delta \log \epsilon^{-1}.
\end{equation}
(Note that $h_\epsilon(u) - h_\epsilon(v)$ is a centered Gaussian with variance $O(1)$ for such $u,v$, so \eqref{eq:sec5.1.1-2} follows from a union bound over a discrete net and the Borel-Cantelli lemma.) 

Since $x > {\rm dim}^0_{\mathcal H}(X)$, for any integer $k \geq 1$, we can choose a covering of $X$ by balls $\{B_{r_i}(z_i) \}_{i \geq 1}$ such that 
\begin{equation}\label{eq:sec5.1.1-hausdorff}
\sum_{i \geq 1} r_i^x \leq 2^{-k} \quad \mbox{and} \quad \sup_{i \geq 1} r_i \leq \max \{ \overline{r}, 2^{-n}\}.
\end{equation}
Let \[I := \{i \geq 1: B(z_i, r_i) \cap \mathcal{T}_h^{\alpha; \delta, n}  \neq \emptyset \}.\] Then $X \cap \mathcal{T}_h^{\alpha; \delta, n} \subset \cup_{i \in I} B(z_i, r_i)$. For any $i \in I$, there exists $w \in B(z_i,r_i) \cap \mathcal{T}_h^{\alpha; \delta, n}$. By~\eqref{eq:sec5.1.1-2}, we have $|h_{r_i}(z_i) - h_{r_i}(w)| \leq \delta \log r_i^{-1}$. By the definition of $\mathcal{T}_h^{\alpha; \delta, n}$ in~\eqref{eq:def-Thaun} and $r_i \leq 2^{-n}$, we have $\frac{h_{r_i}(w_i)}{\log r_i^{-1}} \in [\alpha - \delta, \alpha + \delta]$. Therefore,
$$
|h_{r_i}(z_i) - \alpha \log r_i^{-1}| \leq 2 \delta \log r_i^{-1}.
$$

Hence, for $p \in (0, \frac{2d \mathsf d_\gamma}{\gamma^2})$, Lemma~\ref{lem:sec4.6-Dh} (applied with $2 \delta$ in place of $\delta$ and $R = 2M$) gives
\begin{align*}
    \quad \mathbb{E} \Big{[}\sum_{i \in I} \Bigl(\sup_{u,v \in B_{r_i}(z_i)} D_h(u,v) \Bigr)^p \Big{]}& \leq \sum_{i \geq 1} \mathbb{E} \Big{[}\mathbbm{1}_{\{|h_{r_i}(z_i) - \alpha \log r_i^{-1}| \leq 2 \delta \log r_i^{-1}\}}  \Bigl(\sup_{u,v \in B_{r_i}(z_i)} D_h(u,v) \Bigr)^p \Big{]} \\
    &\leq \sum_{i \geq 1} r_i^{\xi p (Q - \alpha - 2\delta)+ \frac{1}{2}(|\alpha| - 2\delta)_+^2 + o_n(1)},
\end{align*}
where $o_n(1) \to 0$ as $n \to \infty$ and is uniform in $i$.

Choose $p$ such that \[\xi p (Q - \alpha - 2\delta)+ \frac{1}{2}(|\alpha| - 2\delta)_+^2 = x + \delta^2.\] 
Because $x$ and $\delta$ are chosen close to $\dim^0_{\mathcal H}(X)$ and $0$, respectively, and using that $d \geq \dim^0_{\mathcal H}(X) \geq \frac{\alpha^2}{2}$, $\alpha \in [-\sqrt{2d},\sqrt{2d}]$, one checks that such $p$ belongs to $(0,\frac{2d \mathsf d_\gamma}{\gamma^2})$. Combining the preceding inequality with~\eqref{eq:sec5.1.1-hausdorff}, we get for sufficiently large $n$,
\[
\mathbb{E} \Bigl[\sum_{i \in I} \Bigl(\sup_{u,v \in B_{r_i}(z_i)} D_h(u,v) \Bigr)^p \Bigr] 
\leq \sum_{i \geq 1} r_i^{x + \delta^2 + o_n(1)} \leq \sum_{i \geq 1} r_i^x \leq 2^{-k}.
\]
By Markov's inequality, with probability at least $1 - 2^{-k/2}$, the sum in brackets is at most $2^{-k/2}$. By the Borel-Cantelli lemma, this event occurs a.s.\ for all sufficiently large integer $k$. It follows from the definition of $D_h$-Hausdorff dimension that for sufficiently large $n$,
$$
{\rm dim}^\gamma_{\mathcal H}(X \cap \mathcal{T}_h^{\alpha; \delta, n}) \leq  p = \frac{x - \frac{1}{2}(|\alpha| - 2\delta)_+^2 + \delta^2}{\xi ( Q - \alpha - 2\delta)}.
$$
Letting $x$ to ${\rm dim}^0_{\mathcal H}(X)$ and using that $\mathcal{T}_h^{\alpha; \delta, n}$ is increasing in $n$ gives 
$$
{\rm dim}^\gamma_{\mathcal H}\bigl(X \cap \bigcup_{n \geq 1} \mathcal{T}_h^{\alpha; \delta, n}\bigr) \leq  \frac{{\rm dim}^0_{\mathcal H}(X) - \frac{1}{2}(|\alpha| - 2\delta)_+^2 + \delta^2}{\xi ( Q - \alpha - 2\delta)}.
$$
Combining this with~\eqref{eq:sec5.1.1-inclusion} and then letting $\delta \to 0$, and using~\eqref{eq:dim-thick}, yields~\eqref{eq:prop4.25-upper}.

\medskip

\noindent\textbf{Step 2.} We now show that 
\begin{equation}\label{eq:prop4.25-lower}
{\rm dim}^\gamma_{\mathcal H}(X \cap \mathcal{T}^\alpha_h) \geq \frac{1}{\xi(Q - \alpha)} {\rm dim}^0_{\mathcal H}(X \cap \mathcal{T}^\alpha_h).
\end{equation}

Fix any $\delta>0$, which will eventually tend to 0. The key estimate we will use about $D_h$ is Proposition~\ref{prop:superpolynomial-cross}: there exists a random integer $m_* \geq 1$ (depending on $h$) such that
\begin{equation}\label{lem-annulus-dist-uniform}
D_h(\partial B_{2^{-m-1}}(z), \partial B_{2^{-m}}(z)) \geq 2^{-(\xi Q +\delta)m} e^{\xi h_{2^{-m}}(z)}
\end{equation}
for all integer $m \geq m_*$ and all $z \in \frac{1}{10d} 2^{-m} \mathbb{Z}^d \cap B_{2M}(0)$. In fact, by Proposition~\ref{prop:superpolynomial-cross}, for fixed $m$ and $z$, the event in~\eqref{lem-annulus-dist-uniform} occurs with superpolynomially high probability in $2^{-m}$ as $m \to \infty$, which implies \eqref{lem-annulus-dist-uniform} by the Borel-Cantelli lemma.

Fix an integer $n \geq 1$. We claim that \begin{equation}\label{eq:sec5.1.1-claim}
\begin{aligned}
&\mbox{for any $D_h$-metric ball $\widetilde B_r(z) := \{y \in \mathbb{R}^d: D_h(y,z) < r\}$ that intersects $X \cap \mathcal{T}_h^{\alpha} \cap \mathcal{T}^{\alpha; \delta, n}_h$},\\
&\mbox{we can cover it by a Euclidean ball with radius $r^{\tfrac{1}{\xi(Q - \alpha) + (1+ 2\xi)\delta} +o_r(1)}$ as $r \to 0$,}
\end{aligned}
\end{equation}
where the $o_r(1)$ term is uniform in $z$.

We may assume $r$ is sufficiently small. Let $m$ be a large integer to be chosen depending on $r$. Let $w \in \widetilde B_r(z) \cap X \cap \mathcal{T}_h^{\alpha} \cap \mathcal{T}^{\alpha; \delta, n}_h$. Choose $w_m \in \frac{1}{10d} 2^{-m} \mathbb{Z}^d$ closest to $w$. Then $w \in B_{2^{-m-1}}(w_m)$. By the definition of $\mathcal{T}_h^{\alpha; \delta, n}$ in~\eqref{eq:def-Thaun}, we have $h_{2^{-m}}(w)/\log 2^m \in [\alpha - \delta, \alpha + \delta]$ whenever $m \geq n$. Moreover, \eqref{eq:sec5.1.1-2} implies that $|h_{2^{-m}}(w) - h_{2^{-m}}(w_m)| \leq \delta \log 2^m$. Combining these gives
\[
|h_{2^{-m}}(w_m) - \alpha \log 2^m | \leq 2 \delta \log 2^m.
\]
Together with~\eqref{lem-annulus-dist-uniform}, this yields
\[
D_h(\partial B_{2^{-m-1}}(w_m), \partial B_{2^{-m}}(w_m)) \geq 2^{-(\xi Q +\delta)m} e^{\xi h_{2^{-m}}(w_m)} \geq 2^{-m[\xi(Q-\alpha) + (1 + 2 \xi)\delta]}.
\]
Therefore, if $2^{-m[\xi(Q-\alpha) + (1 + 2 \xi)\delta]} \geq 2r$, then $\widetilde B_r(z)$ is contained in $B_{2^{-m}}(w_m)$. This proves~\eqref{eq:sec5.1.1-claim}.

It follows from \eqref{eq:sec5.1.1-claim} and the definitions of Euclidean and $D_h$-Hausdorff dimensions that 
\[
\dim^\gamma_{\mathcal H}(X \cap \mathcal{T}_h^{\alpha} \cap \mathcal{T}_h^{\alpha; \delta, n}) \geq 
\frac{1}{\xi(Q - \alpha) + (1+ 2\xi)\delta} \dim^0_{\mathcal H}(X \cap \mathcal{T}_h^{\alpha} \cap \mathcal{T}_h^{\alpha; \delta, n}), \qquad \forall n\geq 1.
\]
Therefore,
\[
\dim^\gamma_{\mathcal H}(X \cap \mathcal{T}_h^{\alpha}) \geq 
\frac{1}{\xi(Q - \alpha) + (1+ 2\xi)\delta} \dim^0_{\mathcal H}\bigl(X \cap \mathcal{T}_h^{\alpha} \cap \bigcup_{n \geq 1}\mathcal{T}_h^{\alpha; \delta, n}\bigr).
\]
Combining this with~\eqref{eq:sec5.1.1-inclusion} and letting $\delta \to 0$ gives~\eqref{eq:prop4.25-upper}.
\end{proof}

We now prove Theorem~\ref{thm:kpz}.

\begin{proof}[Proof of Proof of Theorem~\ref{thm:kpz}]

It suffices to prove that
\[
{\rm dim}_{\mathcal H}^\gamma X = \frac{1}{\xi}\bigl(Q - \sqrt{Q^2 - 2 {\rm dim}_{\mathcal H}^0 X}\bigr).
\]
For the lower bound, note that ${\rm dim}^\gamma_{\mathcal H}(X) \geq {\rm dim}^\gamma_{\mathcal H}(X \cap \mathcal{T}^\alpha_h)$ for every $\alpha$. Taking $\alpha = Q - \sqrt{Q^2 - 2 \dim_{\mathcal H}^0 X}$ in Proposition~\ref{prop:kpz-1} and using~\eqref{eq:dim-thick} gives the desired lower bound.

We now prove the upper bound. We assume that $X$ is deterministic and bounded. Fix $x > {\rm dim}^0_{\mathcal H}(X)$, which will eventually tend to ${\rm dim}^0_{\mathcal H}(X)$. By the definition of Euclidean Hausdorff dimension, for each integer $k \geq 1$ we can choose a deterministic covering $\{B_{r_i}(z_i)\}_{i \geq 1}$ of $X$ such that \[
\sum_{i \geq 1} r_i^x <2^{-k} \qquad  \mbox{and} \qquad \sup_{i \geq 1} r_i \leq 2^{-k}.\] By Lemma~\ref{lem:sec4.6-Dh}, for any $p \in (0, \frac{2d \mathsf d_\gamma}{\gamma^2})$,
\[
\mathbb{E} \Bigl[ \sum_{i \geq 1} \Bigl(\sup_{u,v \in B_{r_i}(z_i)} D_h(u,v) \Bigr)^p \Bigr]
\leq \sum_{i \geq 1} r_i^{\xi Q p - \tfrac{1}{2}\xi^2 p^2 + o_k(1)},
\]
where the $o_k(1)$ term is uniform in $i$, using that $r_i \leq 2^{-k}$.

Suppose that $\xi Q p - \tfrac{1}{2}\xi^2 p^2 > x$. Then, for $k$ large enough, the right-hand side is bounded by $2^{-k}$. By Markov's inequality and the Borel-Cantelli lemma, this implies that the sum in brackets converges to 0 a.s.\ as $k \to \infty$. By the definition of $D_h$-Hausdorff dimension, we conclude that $\dim^\gamma_{\mathcal H}(X) \leq p$. Since this holds for every $p$ with $\xi Q p - \tfrac{1}{2}\xi^2 p^2 > x$, we obtain $\dim^\gamma_{\mathcal H}(X) \leq \frac{1}{\xi}(Q - \sqrt{Q^2 - 2 x})$. Letting $x \to {\rm dim}^0_{\mathcal H}(X)$ yields the desired upper bound.
\end{proof}

\appendix

\section{Appendix}

In this section we prove a general estimate for integrals of exponentials of Brownian-like Gaussian processes with a continuity estimate condition.

We will consider Gaussian processes $X_t$ such that
\begin{equation}\label{gc1}
\mathbb{E}(X_t) =0,\; \mathbb{E}(X_t) = t+o_t(1),
\end{equation}
\begin{equation}\label{gc2}
\mathbb{P}(\sup_{0<s<1} X_s-X_0 \geq C) \lesssim e^{-c_0 C^2}
\end{equation}
for some absolute constant $C_0.$ Note in particular that this implies that $X_t$ has a Gaussian tail.
\begin{lemma}\label{globalmaxbound}
    Let $X_t$ be a centered Gaussian process such that \eqref{gc1} and \eqref{gc2} hold. Additionally, let $a>0$ be a positive real number, and let $G_t:=X_t-at$ be the drifted process. Then there exist positive constants $c_1,c_2$ such that
    \[
c_1 e^{-(2a+o_y(1)) y} \leq \mathbb{P}( \mathrm{sup}_{t \geq 0} G_t >y) \leq c_2 e^{-(2a+o_y(1)) y}.
    \]
\end{lemma}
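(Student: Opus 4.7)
This is the classical tail estimate for the running maximum of a Gaussian process with linear drift. If $X_t$ were standard Brownian motion, $\sup_{t\geq 0}(B_t-at)$ would be exponentially distributed with rate $2a$ by the reflection principle. Under the weaker hypotheses \eqref{gc1}--\eqref{gc2} the same exponent $2a$ persists modulo an $o_y(1)$ correction. The factor $2$ is not obtained from any single Chernoff estimate, but emerges from optimizing the union bound over the time scale $n\asymp y/a$. Throughout we work in the regime $y\to\infty$.

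For the upper bound, partition $[0,\infty)=\bigcup_{n\geq 0}[n,n+1]$ and note that on each interval
\[
\sup_{t\in[n,n+1]}G_t \;\leq\; X_n-an+\xi_n,\qquad \xi_n:=\sup_{s\in[0,1]}(X_{n+s}-X_n).
\]
Hypothesis \eqref{gc2}, applied to the shifted process $X_{n+\cdot}-X_n$ (which in our applications is equal in law to $X-X_0$ by the scale invariance of Lemma~\ref{lem:scale-invariance}), yields $\mathbb{P}[\xi_n\geq B]\lesssim e^{-c_0 B^2}$ uniformly in $n$. For a parameter $\delta\in(0,1)$ to be sent to $0$ slowly, a union bound splits $\{X_n-an+\xi_n>y\}$ into $\{X_n>(1-\delta)y+an\}$ and $\{\xi_n>\delta y\}$. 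The standard Gaussian tail bound together with \eqref{gc1} gives
\[
\mathbb{P}[X_n>(1-\delta)y+an]\;\leq\;\exp\!\Bigl(-\frac{((1-\delta)y+an)^2}{2n(1+o_n(1))}\Bigr).
\]
Expanding the square produces three summands $(1-\delta)ay$, $(1-\delta)^2 y^2/(2n)$, and $a^2 n/2$ in the exponent. By AM--GM the last two are minimized at $n_\star\asymp (1-\delta)y/a$ with minimum value $(1-\delta)ay$, so a Laplace-type summation yields
\[
\sum_{n\geq 0}\mathbb{P}[X_n>(1-\delta)y+an]\;\leq\; \mathrm{poly}(y)\cdot e^{-2(1-\delta)ay}.
\]
The $\xi_n$-contribution is handled by splitting at $N_y:=\lceil y^2\rceil$: for $n\leq N_y$, $\sum \mathbb{P}[\xi_n>\delta y]\leq N_y\, e^{-c_0\delta^2 y^2}$, which is sub-exponential in $y$; for $n>N_y$, the Gaussian bound for $X_n$ alone decays super-geometrically in $n$ and sums to a negligible quantity. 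Choosing $\delta=1/\log y$ then yields the bound $c_2 e^{-(2a+o_y(1))y}$.

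The matching lower bound is easier. Evaluating at the optimal time $t_\star=y/a$ identified by the upper bound,
\[
\mathbb{P}\Bigl[\sup_{t\geq 0}G_t>y\Bigr]\;\geq\; \mathbb{P}[G_{y/a}>y]\;=\;\mathbb{P}[X_{y/a}>2y].
\]
By \eqref{gc1} $X_{y/a}$ is centered Gaussian with variance $\sigma^2=(y/a)(1+o_y(1))$, and the standard Gaussian lower tail $\mathbb{P}[\mathcal{N}(0,\sigma^2)>\lambda]\geq (c\sigma/\lambda)e^{-\lambda^2/(2\sigma^2)}$ with $\lambda=2y$ produces a polynomial prefactor $c'/\sqrt{y}$ times $e^{-2ay(1+o_y(1))}$. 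Both factors are absorbed into the $o_y(1)$ in the exponent, giving the claimed $c_1 e^{-(2a+o_y(1))y}$.

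The main technical obstacle is the factor of $2$: a direct Chernoff bound at any single time gives only $e^{-ay}$, and the correct rate $e^{-2ay}$ appears only after the Laplace-type summation around $n\approx y/a$, which plays the role of the reflection principle at the quantitative level. The secondary issue is to bookkeep the continuity-modulus tails $\mathbb{P}[\xi_n>\delta y]$ so that they never dominate the Gaussian tail for $X_n$, which is why the split at $N_y=\lceil y^2\rceil$ and the slow choice $\delta=1/\log y$ are needed.
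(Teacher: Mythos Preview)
Your proof is correct and follows essentially the same approach as the paper: both argue the lower bound by evaluating at the single time $t=y/a$, and both prove the upper bound by discretizing to integer times, controlling the one-step oscillation via \eqref{gc2}, and summing the Gaussian tails $\exp(-(y+ak)^2/(2k))$ to extract the rate $2a$. The only cosmetic difference is the choice of splitting threshold for the oscillation term: the paper uses the fixed level $y^{2/3}$ and truncates the time axis at $y^2$ outright, whereas you use $\delta y$ with $\delta=1/\log y$ and handle $n>y^2$ by a separate crude bound; both choices are absorbed into the $o_y(1)$.
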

\begin{proof}
We will show each inequality independently. First, note that
\[
\mathbb{P}( \mathrm{sup}_{t \geq 0} G_t >y) \geq \mathbb{P}( G_{\frac{y}{a}} >y) = \mathbb{P}( X_{\frac{y}{a}} >2y) \lesssim e^{-\left(\frac{(2y)^2}{\left(\frac{2y}{a}\right)}\right)} \leq e^{-(2a+o(1))y}.
\]
This proves that $\mathbb{P}( \mathrm{sup}_{t \geq 0} G_t >y)$ is bounded below. For the other direction, note that
\begin{eqnarray*}
\mathbb{P}\left(\sup_{0\leq t \leq y^2} G_t>y\right) &\leq& \mathbb{P}(\sup_{t \in[0,y^2]\cap \mathbb{Z}} G_t > y-y^{\frac{2}{3}}) + \mathbb{P}\left(\sup_{t \in [0,y^2]\cap \mathbb{Z}}\sup_{t<s<t+1}G_s-G_t\geq y^{\frac{2}{3}}\right).
\end{eqnarray*}
By \eqref{gc2} we have that
\[
\mathbb{P}\left(\sup_{t \in [0,y^2]\cap \mathbb{Z}}\sup_{t<s<t+1}G_s-G_t\geq y^{\frac{2}{3}}\right) \leq y^2 e^{-c_0y^{\frac{4}{3}}}.
\]
Therefore
\begin{eqnarray*}
\mathbb{P}\left(\sup_{0\leq t \leq y^2} G_t>y\right) &\lesssim& \sum_{k=0}^{\lfloor y^2 \rfloor} \mathbb{P}(X_k > y-y^{\frac{2}{3}+ak}) + y^2 e^{-c_0y^{\frac{4}{3}}}\\
&\lesssim& \sum_{k=0}^{\lfloor y^2 \rfloor} \exp\left(-\frac{(y+ak)^2}{2k}\right)+y^2 e^{-c_0y^{\frac{4}{3}}}\\
&\lesssim & e^{-(2a+o(1))y}.
\end{eqnarray*}
This completes the proof.
\end{proof}

Now we prove the following bound on an exponential integral of processes satisfying \eqref{gc1} and \eqref{gc2}.

\begin{prop}\label{expbound}
Let $X_t$ be a centered Gaussian process such that \eqref{gc1} and \eqref{gc2} hold. Additionally, let $a>0$ be a positive real number, and let $G_t:=X_t-at$ be the drifted process. Then there exist constants $c_1,c_2>0$ such that
\[
c_1 x^{-2a+o(1)} \leq\mathbb{P}\left(\int_0^\infty e^{G_t} dt >x\right) \leq c_2 x^{-2a+o(1)}.
\]
\end{prop}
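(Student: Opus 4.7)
The plan is to prove the two one-sided bounds separately, with $y := \log x$ as the natural scale and $t_0 := y/a$ identified as the optimal time at which the drifted process is most likely to reach level $y$, as already emerges from the proof of Lemma~\ref{globalmaxbound}.

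For the upper bound I would discretize and take a union bound. Partition $[0,\infty)$ into unit intervals and set $\bar G_k := \sup_{t \in [k,k+1]} G_t = G_k + O_k$ with $O_k := \sup_{t \in [k,k+1]}(X_t - X_k)$. Then $\int_0^\infty e^{G_t}\,dt \le \sum_{k=0}^\infty e^{\bar G_k}$, so if this exceeds $x$ we must have $\bar G_k \ge y - 2\log k$ for some $k \ge 1$. Using \eqref{gc1} one gets $\mathbb{P}(G_k \ge z) \le \exp(-(z+ak)^2/(2k)(1+o_k(1)))$ for $z+ak > 0$, while \eqref{gc2} gives a Gaussian tail for $O_k$. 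Combining them (for example by spending $O(\log y)$ on $O_k$ via \eqref{gc2}, which is superpolynomially cheap) yields
\[
\mathbb{P}(\bar G_k \ge y - 2\log k) \le \exp\!\Bigl(-\tfrac{(y - 2\log k + ak)^2}{2k}(1+o(1))\Bigr).
\]
The exponent is minimized for $k \approx y/a$ with minimum value $2ay + O(\log y)$, so a union bound over $k$ yields $x^{-2a + o(1)}$.

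For the lower bound I would take $\mathcal A := \{X_{t_0} \ge 2y\}$ with $t_0 = y/a$. By \eqref{gc1} and the standard Gaussian tail, $\mathbb{P}(\mathcal A) = x^{-2a + o(1)}$. On a conditional-probability-$\tfrac12$ subset of $\mathcal A$ I want $G_t \ge y - O(1)$ for all $t \in [t_0, t_0+1]$, which contributes $\ge e^{y-O(1)} = cx$ to the integral. To control $X$ near $t_0$ I decompose
\[
X_t = \lambda_t X_{t_0} + Z_t, \qquad \lambda_t := \operatorname{Cov}(X_t, X_{t_0})/\operatorname{Var}(X_{t_0}),
\]
with $Z_t$ Gaussian and independent of $X_{t_0}$. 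From the identity $\operatorname{Cov}(X_t, X_{t_0}) = \tfrac12(\operatorname{Var}(X_t) + \operatorname{Var}(X_{t_0}) - \operatorname{Var}(X_t - X_{t_0}))$, \eqref{gc1}, and the fact that $\operatorname{Var}(X_t - X_{t_0}) = O(1)$ for $t \in [t_0, t_0+1]$ (inherited from \eqref{gc2} and its symmetric counterpart), I get $\lambda_t = 1 + O(1/t_0)$ and $\operatorname{Var}(Z_t) = O(1)$. Since $Z_t = (X_t - X_{t_0}) + (1-\lambda_t)X_{t_0}$, with $(1-\lambda_t)X_{t_0}$ of $O(1)$ variance, the quantity $\sup_{t\in[t_0,t_0+1]} |Z_t|$ also has a Gaussian tail, so one can fix $C$ with $\mathbb{P}(\sup_{t\in[t_0,t_0+1]} |Z_t| \le C) \ge \tfrac12$. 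By independence from $X_{t_0}$ this event is still $\tfrac12$-likely on $\mathcal A$, and on the intersection
\[
G_t = \lambda_t X_{t_0} + Z_t - at \ge 2y - O(y/t_0) - C - a(t_0+1) = y - O(1),
\]
completing the lower bound.

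The main obstacle is the conditional continuity step in the lower bound: since $X$ is neither Markov nor assumed to have stationary increments, $X_t - X_{t_0}$ need not be independent of $X_{t_0}$. The Gaussian split handles this, but requires two quantitative inputs extracted from the hypotheses: (i) $\lambda_t - 1 = O(1/t_0)$, so that $\lambda_t X_{t_0}$ differs from $X_{t_0}$ by at most an additive constant at the scale $t_0 = y/a$; and (ii) the independent residual $Z_t$ has bounded variance and Gaussian oscillations, which reduces to \eqref{gc2} via the rewrite $Z_t = (X_t - X_{t_0}) + (1-\lambda_t) X_{t_0}$. Once these are in place both bounds collapse to routine optimization of $k$ (resp.\ $t_0$) around $y/a$.
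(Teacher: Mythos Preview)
Your proof is correct and shares the paper's outline. The upper bound is the same idea: the paper first restricts to $t\le (\log x)^4$ via an auxiliary event $E$ and then invokes Lemma~\ref{globalmaxbound}, while you inline the same discretize-and-union-bound over unit intervals; the arithmetic optimizing the exponent $(y+ak)^2/(2k)$ at $k\approx y/a$ is identical. For the lower bound, both you and the paper target $t_0=y/a$ and want $G_t\ge y$ on $[t_0,t_0+1]$, but the paper simply writes $\mathbb{P}(G_t\ge y\text{ for all }t\in[t_0,t_0+1])\ge \mathbb{P}(G_{t_0}\ge y)$, which as stated is the wrong direction. Your Gaussian-regression decomposition $X_t=\lambda_t X_{t_0}+Z_t$ is exactly what is needed to repair this step: it splits off an independent $O(1)$-scale residual $Z_t$ so that conditioning on $\{X_{t_0}\ge 2y\}$ does not distort the local fluctuations, and the ``symmetric counterpart'' of \eqref{gc2} you appeal to is simply the fact that a centered Gaussian process and its negative have the same law, so $\sup_t(X_{t_0}-X_t)$ automatically inherits the sub-Gaussian tail of $\sup_t(X_t-X_{t_0})$.
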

\begin{proof}
We show each inequality separately. We start by showing the lower bound. Let $y=\log x.$ Then note that if $G_t \geq y$ for $t \in \left[\frac{y}{a},\frac{y}{a}+1\right],$ then $\int_0^\infty e^{G_t} dt >x.$ This implies that
\[
\mathbb{P}\left(\int_0^\infty e^{G_t} dt >x\right) \geq \mathbb{P}\left(G_t \geq y \mbox{ for }t \in \left[\frac{y}{a},\frac{y}{a}+1\right]\right) \geq \mathbb{P} (G_{\frac{y}{a}}\geq y) = \mathbb{P}(X_{\frac{y}{a}}\geq 2y)\geq e^{-2ay}.
\]
This proves the lower bound. For the upper bound, we let $E$ be the event
\[
E = \left\{G_t \leq -\frac{1}{2}at \mbox{ for all } t \geq (\log x)^4\right\}.
\]
Using that
\[
\mathbb{P}\left(G_t >-\frac{1}{2}at\right) = \mathbb{P}\left(X_t\geq \frac{1}{2}at\right) \lesssim \exp\left(-\frac{1}{8}a^2 t\right) \ll x^{-2a},
\]
we see that $1- \mathbb{P}(E)\ll X^{-2a}.$ Now we see that if $E$ holds and
\[
\int_0^\infty e^{G_t} dt \geq x,
\]
then $\sup_{0\leq t \leq (\log x)^4}G_t \geq \log \left(\frac{x}{(\log x)^4}\right) = \log x (1-o(1)).$ Therefore
\[
\mathbb{P}\left(\int_0^\infty e^{G_t} dt >x\right) \lesssim (1-\mathbb{P}(E)) + \mathbb{P}\left(\sup_{0\leq t \leq (\log x)^4}G_t \geq \log x (1-o(1))\right).
\]
Using Lemma \ref{globalmaxbound}, we conclude.
\end{proof}

\bibliographystyle{alpha}
\bibliography{ref}

\end{document}